\def\qed{\hfill$\Box$ \vskip.3cm}
\def\1{\raisebox{2pt}{\rm{$\chi$}}}
\def\XXint#1#2#3{{\setbox0=\hbox{$#1{#2#3}{\int}$}
     \vcenter{\hbox{$#2#3$}}\kern-.5\wd0}}
\newcommand{\llceil}{\left\lceil}
\newcommand{\rrceil}{\right\rceil}
\newcommand{\threepartdef}[6]
{
	\left\{
		\begin{array}{lll}
			#1 & \mbox{if } #2 \\
			#3 & \mbox{if } #4 \\
			#5 & \mbox{if } #6
		\end{array}
	\right.
}
\newcommand{\mres}{\mathbin{\vrule height 1.6ex depth 0pt width
    0.13ex\vrule height 0.13ex depth 0pt width 1.3ex}}
\pgfplotsset{compat=1.15}
\newtheorem{theorem}{Theorem}[section]
\newtheorem{lemma}[theorem]{Lemma}
\newtheorem{definition}[theorem]{Definition}
\newtheorem{proposition}[theorem]{Proposition}
\newtheorem{corollary}[theorem]{Corollary}
\newtheorem*{theorem*}{\it Theorem}
\numberwithin{equation}{section}
\begin{document}

\title[Double bubbles on square lattice]{The double-bubble problem on the square lattice}


\author[M. Friedrich]{Manuel Friedrich}
\address[Manuel Friedrich]{Department of Mathematics, FAU Erlangen-N\"urnberg, Cauerstra\ss e 1, 91058 Erlangen, Germany}
\email{manuel.friedrich@uni-muenster.de}
\urladdr{\url{https://www.uni-muenster.de/AMM/en/Friedrich/}}

\author[W. G\'orny]{Wojciech G\'orny}
\address[Wojciech G\'orny]{Faculty of Mathematics, Informatics and
  Mechanics, University of Warsaw, Banacha 2, 02-097 Warsaw, Poland
  and Faculty of Mathematics, University of
  Vienna, Oskar-Morgenstern-Platz 1, A-1090 Vienna, Austria}
\email{wojciech.gorny@univie.ac.at}
\urladdr{\url{https://www.mat.univie.ac.at/~wgorny}}

\author[U. Stefanelli]{Ulisse Stefanelli}
\address[Ulisse Stefanelli]{Faculty of Mathematics, University of
  Vienna, Oskar-Morgenstern-Platz 1, A-1090 Vienna, Austria,
Vienna Research Platform on Accelerating
  Photoreaction Discovery, University of Vienna, W\"ahringerstra\ss e 17, 1090 Vienna, Austria,
 \& Istituto di
  Matematica Applicata e Tecnologie Informatiche {\it E. Magenes}, via
  Ferrata 1, I-27100 Pavia, Italy
}
\email{ulisse.stefanelli@univie.ac.at}
\urladdr{\url{http://www.mat.univie.ac.at/~stefanelli}}



\keywords{Double bubble, square lattice, optimal point configuration,
  Wulff shape. \\
\indent 2020 {\it Mathematics Subject Classification:} 
49Q10. 
}

\begin{abstract}

We investigate  minimal-perimeter configurations  of two finite sets of points on the square lattice. This corresponds to a
lattice version of the classical double-bubble problem. We  give  a detailed
description of the fine geometry of  minimisers  and, in some
parameter regime, we compute the optimal  perimeter  as a function of the
size of the point sets. Moreover,
we provide a sharp bound on the difference between two minimisers,
which are generally not unique, and use it to rigorously
identify their Wulff shape, as the size of the point sets  scales up.
\end{abstract}

\maketitle


\section{Introduction}

The classical double-bubble problem is concerned with  the shape of
two sets of given volume under minimisation of their surface area. In the
Euclidean space,  minimisers  are enclosed by three spherical
caps, intersecting at an angle of $2\pi/3$. The proof of this fact in
${\mathbb R}^2$ dates back to \cite{Foisy}, and has then been
extended to
${\mathbb R}^3$ \cite{Hutchings} and ${\mathbb R}^n$ for $n \geq
4$ \cite{Reichardt}.  See also \cite{Cicalese2} for a quantitative stability analysis
in two dimensions. A number of variants of the problem  has  also been
tackled, including double bubbles in spherical and hyperbolic spaces 
\cite{Corneli,Corneli2,Cotton,Masters},  hyperbolic surfaces \cite{Boyer}, cones
\cite{Lopez,Morgan}, the $3$-torus \cite{Carrion,Corneli0},  the
Gau\ss\ space \cite{Corneli2,Milman},  and in the anisotropic
Grushin plane \cite{Franceschi}. 

The aim of this paper is to tackle a  lattice  version of the
double-bubble problem. We restrict our attention to the 
square lattice ${\mathbb Z}^2$ and define the {\it lattice length} of
the interface
separating two disjoint sets $C,\,D \subset {\mathbb Z}^2$ as $Q(C,D)
= \#\{(c,d)\in C\times D  \colon \, |c-d|=1\}$,  where $|\cdot|$
is the Euclidean norm.  The {\it lattice
  double-bubble problem} consists in finding 
two distinct lattice subsets  
$A$ and $B$ of fixed sizes  $N_A,N_B \in \mathbb{N}$  solving
\begin{equation}
  \label{eq:dbp}
  \min\{P(A,B) \colon \ A,\, B \subset {\mathbb Z}^2, \ A \cap B =
  \emptyset, \ \#A = N_A, \ \#B = N_B\},
\end{equation}
where the {\it lattice perimeter} $P(A,B)$ is defined by
\begin{align}\label{eq:dbp3} P(A,B) &= Q(A,A^c) + Q(B,B^c) - 2\beta Q(A,B)\notag\\
  &= Q(A,A^c\setminus B) + Q(B,B^c\setminus A) +  (2-2\beta) Q(A,B).
\end{align}
The latter  definition  features the parameter $\beta\in (0,1)$.
Note that the classical double-bubble case corresponds to the choice
$\beta=1/2$. In the following, we allow for the more general
$\beta\in (0,1)$, for this will be  relevant  in connection
with applications,  see Section \ref{sec:equiv}.  In particular,
$\beta$ models the interaction between the two sets. The reader is
referred to
\cite{Futer} where cost-minimizing networks featuring different
interaction costs are considered. 

Analogously to the Euclidean case, we prove that minimisers $(A,B)$ of
\eqref{eq:dbp} are connected ($A$, $B$, and $A\cup B$ are connected in
the usual lattice sense, see below). Call {\it isoperimetric}
those subsets of the lattice which minimize $C \mapsto Q(C,C^c)$ under
given cardinality.  Without claiming completeness, the reader is
referred to the monograph \cite{Harper} and to \cite{Bezrukov0,Biskup,Bobkov,Bollobas,Wang} for a
minimal collection of results on discrete
isoperimetric inequalities, to \cite{Cicalese,Mainini,Mainini2} for sharp
fluctuation estimates, and to \cite{Barrett} for
some numerical approximation.   A second analogy with the Euclidean setting is that optimal pairs $(A,B)$ {\it do
  not} consist of the mere union of two isoperimetric sets $A$ and
$B$, for the onset of an interface between $A$ and $B$ influences
their shape.

\begin{figure}[h]

\definecolor{zzttqq}{rgb}{0.6,0.2,0.}
\begin{tikzpicture}[line cap=round,line join=round,>=triangle 45,x=2.0cm,y=2.0cm]
\clip(-2.3,-0.2) rectangle (1.3,2.2);
\fill[line width=2.pt,color=zzttqq,fill=zzttqq,fill opacity=0.1] (-0.5,2.) -- (1.,2.) -- (1.,0.) -- (-0.5,0.) -- cycle;
\draw[line width=0.5pt] (-2.,2.)-- (-0.5,2.);
\draw[line width=0.5pt] (-0.5,2.)-- (-0.5,0.);
\draw[line width=0.5pt] (-0.5,0.)-- (-2.,0.);
\draw[line width=0.5pt] (-2.,0.)-- (-2.,2.);
\draw[line width=0.5pt] (-0.5,2.)-- (1.,2.);
\draw[line width=0.5pt] (1.,2.)-- (1.,0.);
\draw[line width=0.5pt] (1.,0.)-- (-0.5,0.);
\end{tikzpicture}

\caption{A minimiser for $\beta=1/2$}
\label{fig:aspectratio}
  \end{figure}
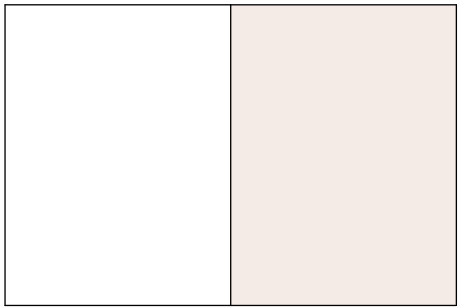

 Differently from  the Euclidean case, existence of minimisers for
\eqref{eq:dbp} is here obvious, for the minimisation problem is finite. Moreover,
the geometry of the intersection of interfaces is much simplified, as
effect of the discrete geometry of the underlying lattice. In
particular, all interfaces meet at multiples of $\pi/2$ angles.

At finite sizes $N_A, \,  N_B $, boundary effects are relevant and a
whole menagerie of minimisers  of \eqref{eq:dbp} may arise, depending
on the specific values of $N_A,  \, N_B $, and $\beta$. Indeed,
although uniqueness holds in some special cases, it cannot be expected
in general. We are however able to prove  an a priori estimate on the
symmetric distance of two minimisers, which differ  at most by  
$N^{1/2}_A=N^{1/2}_B$  points.

As size scales up, whereas properly rescaled isoperimetric sets approach the square, $A$ and $B$ converge to suitable rectangles. In the limit $N_A = N_B \to \infty$ (and for $\beta=1/2$), we prove that minimisers of \eqref{eq:dbp}
converge to the {\it Wulff shape} configuration of Figure
\ref{fig:aspectratio}. That is, uniqueness is restored in the Wulff
shape limit.  In fact, 
 in the crystalline-perimeter case, the 
double-bubble problem for
$\beta=1/2$ has been already tackled in \cite{Morgan98}, see also the recent \cite{Duncan0} for an elementary
proof of the existence of minimisers. The case $\beta\not = 1/2$ is
addressed in \cite{Wecht} instead. In particular, the different possible
  geometries of the Wulff shape, corresponding to different volume
  fractions of the two phases, have been identified.

Let us now present our main results. We start  by associating to each   $\mathcal{V} \subset
{\mathbb Z}^2$  
the corresponding {\it unit-disk graph}, namely the undirected simple graph  $G=(\mathcal{V},\mathcal{E})$,  where vertices are identified  with the points in  $\mathcal{V}$,  and the set  $\mathcal{E}\subset \mathcal{V} \times \mathcal{V}$  of edges contains one edge for each pair of points in  $\mathcal{V}$  at distance $1$. We say that a subset 
$ \mathcal{V}  \subset {\mathbb Z}^2$ is {\it connected} if the corresponding unit-disk
graph is connected. Moreover, we indicate by $R_z : = {\mathbb  Z}\times
\{z\}$ and $C_z=\{z\}\times {\mathbb  Z}$ rows and columns, for all $z \in {\mathbb  Z}$.

    Our main findings read as follows. 

\begin{theorem}\label{thm:main} 
  Let $(A,B)$ solve the double-bubble problem \eqref{eq:dbp}. Then,
  \begin{enumerate}
  \item[\rm i] {\rm (Connectedness)} The sets $A$, $B$, and $A \cup B$ are
    connected. Moreover, the sets $A\cap R_z$, $B\cap R_z$, $(A \cup B)\cap R_z$,
    $A\cap C_z$, $B\cap C_z$, and $(A \cup B)\cap C_z$  are connected
    (possibly being empty) for all $z\in {\mathbb
      Z}$;\\ 
  \item[\rm ii] {\rm (Separation)} If $\max\{x  \colon\, (x,z)\in A\} \leq \min\{x 
    \colon\, (x,z)\in B\}  - 1 $ {\rm for some} $z \in  {\mathbb  Z} $, then the
    same holds with equality {\rm for all}   $z \in  {\mathbb  Z} $
    (whenever not empty). An analogous
    statement is valid for columns,  possibly after  exchanging the role of $A$
    and $B$;\\
  \item[\rm iii] {\rm (Interface)} Let $I\subset {\mathbb R}^2$ be the
    set of midpoints of segments connecting points in $A$ with points in
    $B$ at distance $1$. Then, for all $x\in I$ there exists $y\in I\setminus\{x\}$
    with $|x-y| \in \lbrace   1/ \sqrt{2}, 1\rbrace $ and $I$ can be
     included in the image of a piecewise-affine curve  $\iota \colon [0,1] \to {\mathbb R}^2 $ with monotone components. 
\end{enumerate}
If $N_A=N_B= N $ and $\beta\leq 1/2$, we additionally have that 
\begin{enumerate}
\item[\rm iv]  {\rm (Minimal perimeter)}  
    \begin{align}\label{eq: periper}
    P(A,B)=\min_{h \in \mathbb{N}} \big(4\llceil N/h\rrceil + 2h( 2 - \beta )\big),
    \end{align}
where all minimisers $h$ satisfy $|h- \sqrt{2  N /(2-\beta)}| \le C_\beta N^{1/4}$ for some  constant $C_\beta$      only  depending on $\beta$. For $\beta \in \mathbb{R} \setminus \mathbb{Q}$, there exists a unique minimiser of \eqref{eq: periper}. 
\item[\rm v]    {\rm (Explicit solution)}  Let $h$ minimize
  \eqref{eq: periper} and $\ell
  \in {\mathbb N}$ and $0 \le r < h$  be given with $ N  = h\ell
  +r$. Then, letting 
  \begin{align*}
    A'&:=\{(x,y)\in {\mathbb Z}^2 \, \colon \, x \in[-\ell + 1 ,0], \, y \in
        [1,h] \ \text{or} \ x=-\ell,\, y \in [1,r]\},\\
     B'&:=\{(x,y)\in {\mathbb Z}^2 \, \colon \, x \in[1,\ell], \, y \in
        [1,h] \ \text{or} \ x=\ell+1,\, y \in [1,r]\},
\end{align*}
the pair $(A',B')$ solves the
    double-bubble problem \eqref{eq:dbp};\\
  \item[\rm vi] {\rm (Fluctuations)} There exists a constant $C_\beta$
     only  depending on $\beta$  and an isometry $T$ of ${\mathbb
       Z}^2$   such that 
\begin{align}\label{eq:fluct}
\# (A \triangle  T(A')) + \# (B
    \triangle T( B')) & \leq C_\beta N^{1/2} \quad \quad \text{if $\beta \in \mathbb{R}\setminus \mathbb{Q}$}, \notag \\
    \# (A \triangle  T(A')) + \# (B
    \triangle T( B')) & \leq C_\beta N^{3/4} \quad \quad \text{if $\beta \in \mathbb{Q}$}
\end{align}
 where the pair $(A',B')$ is defined in {\rm v}. 
 (See beginning of Section \ref{sec:law} for the definition of 
 isometry.) 
  \end{enumerate}
\end{theorem}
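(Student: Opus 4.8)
The plan is to prove the six claims in roughly the order stated, since each builds on the previous ones, and then to note which single estimate is the real engine of the theorem.

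\textbf{Connectedness (i).} The strategy is a standard rearrangement/exchange argument adapted to the double-bubble functional \eqref{eq:dbp3}. Suppose a row slice $A\cap R_z$ is disconnected, say it consists of two or more maximal runs separated by gaps. Sliding the runs together along $R_z$ (possibly displacing $B$-cells and $(A\cup B)$-cells) cannot increase $Q(A,A^c\setminus B)$, $Q(B,B^c\setminus A)$, or $Q(A,B)$: a horizontal gap between two $A$-runs contributes at least two horizontal interface edges that are removed by closing it, while the vertical interface count is non-increasing because one is compressing columns. The same argument works for columns and for $B$ and $A\cup B$; combining the row- and column-slice connectivity forces $A$, $B$, and $A\cup B$ to be connected as well (any two cells are joined through a monotone staircase). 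One must be slightly careful that closing a gap in an $A$-slice does not create a gap in a $B$-slice; this is handled by choosing the order of operations (process $A\cup B$ first to make the union a "staircase-convex" set, then process $A$ and $B$ inside it). I expect this step to be mostly bookkeeping.

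\textbf{Separation (ii) and interface (iii).} Given (i), each row slice of $A$ and of $B$ is an interval; the separation statement says that in every nonempty row the $A$-interval lies entirely to the left of the $B$-interval (or entirely to the right, consistently). If this failed in some row while holding in another, there would be a row where the intervals are "interleaved" or reversed relative to a neighbouring row, and a reflection/transposition of a sub-block strictly decreases $Q(A,B)$ without changing cardinalities or the self-perimeters — contradiction. For (iii), once $A$ and $B$ are row- and column-convex and separated, the midpoint set $I$ is exactly the set of midpoints of the edges crossing the shared boundary; row/column convexity forces this boundary to be a monotone lattice staircase, so consecutive midpoints differ by $1$ (two collinear boundary edges) or $1/\sqrt2$ (an inner or outer corner), and the whole set lies on a single piecewise-affine curve with monotone components. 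This is where the geometric picture of Figure~\ref{fig:aspectratio} is pinned down.

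\textbf{Minimal perimeter and explicit solution (iv)--(v), the crux.} Now specialise to $N_A=N_B=N$, $\beta\le 1/2$. By (i)--(iii) an optimal configuration is (up to isometry) two column-and-row-convex sets sharing a single vertical staircase interface. The key reduction is: among such configurations, for a fixed "height" $h$ (the vertical extent of the shared interface region), the cheapest arrangement is the near-rectangle $(A',B')$ of part~v, with the two almost-$h\times\ell$ rectangles glued along a full vertical segment of length $h$ and the remainders $r$ stacked flush. For that configuration a direct count gives $Q(A,A^c\setminus B)+Q(B,B^c\setminus A)=4\lceil N/h\rceil$ (the outer perimeter of each rectangle of area $N$ and one side $h$, counted via $\lceil N/h\rceil$ columns) and $(2-2\beta)Q(A,B)=2h(1-\beta)$... wait, reconcile with \eqref{eq: periper}: writing $P=Q(A,A^c)+Q(B,B^c)-2\beta Q(A,B)$ and $Q(A,B)=h$, $Q(A,A^c)=Q(B,B^c)=2\lceil N/h\rceil+2h$ gives $P=4\lceil N/h\rceil+4h-2\beta h=4\lceil N/h\rceil+2h(2-\beta)$, as claimed. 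The lower bound $P(A,B)\ge \min_h(\,\cdots\,)$ requires showing no staircase interface can beat the straight one: a staircase of the same vertical span $h$ has $Q(A,B)\ge h$ with equality iff straight, and any extra interface length is not compensated by savings in the outer perimeter when $\beta\le1/2$ (here the sign condition on $\beta$ is used: making $Q(A,B)$ larger costs $2(1-\beta)\ge1$ per edge in $P$, which dominates any outer-perimeter gain). Then \eqref{eq: periper} is an elementary one-dimensional minimisation: $f(h)=4\lceil N/h\rceil+2h(2-\beta)$, compare with $4N/h+2h(2-\beta)$ whose minimiser is $h_*=\sqrt{2N/(2-\beta)}$ and which is flat to second order there, so the ceiling perturbation of size $O(1)$ moves the integer minimiser by at most $O(N^{1/4})$; uniqueness for irrational $\beta$ follows because $f(h)=f(h')$ would force $2(2-\beta)(h-h')\in 4\mathbb{Z}$ once $\lceil N/h\rceil$ and $\lceil N/h'\rceil$ are fixed, impossible unless $h=h'$. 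The main obstacle is the lower-bound half of (iv): ruling out that some clever non-rectangular, non-straight-interface configuration does better. I would handle it by the slicing/compression estimates of (i)--(iii): given any minimiser, project onto the $x$-axis to read off the number of occupied columns $c$, note $P\ge 4c + 2h(2-\beta)$ wait — more carefully, bound the outer perimeter below by $2(\text{\#columns})+2(\text{\#rows})$ for each set and the interface below by $h$, then optimise; the column/row counts satisfy $(\text{\#columns})(\text{\#rows})\ge N$, reducing to the same $f$.

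\textbf{Fluctuations (vi).} With (iv)--(v) in hand, this is a stability statement: any minimiser has perimeter exactly $\min_h f(h)$, hence its "effective height" $h$ is within $C_\beta N^{1/4}$ of $h_*$ and its column/row counts are within $O(N^{1/4})$ of those of $(A',B')$, by the quantitative version of the one-dimensional minimisation (strict convexity of the continuous surrogate gives a quadratic well, so $f(h)-\min f\gtrsim (h-h_*)^2/h_*$, and $P(A,B)=\min f$ forces $(h-h_*)^2\lesssim h_* $, i.e. $|h-h_*|\lesssim N^{1/4}$). Each unit of height-discrepancy or column-count discrepancy accounts for $O(N^{1/2})$ misplaced cells, giving the symmetric-difference bound $C_\beta N^{3/4}$ in general. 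For irrational $\beta$, uniqueness of the minimiser $h$ upgrades this: the height is exactly $h_*$-rounded, so the only freedom is in how the $r$ remainder cells are attached and in the microscopic shape of the interface, each costing $O(N^{1/2})$ cells — giving the sharper $C_\beta N^{1/2}$. The isometry $T$ absorbs the lattice symmetries (the interface can be horizontal or vertical, and $A$, $B$ can be swapped or reflected). I expect the only subtlety here to be verifying that the interface staircase in a minimiser is "close to straight" in the symmetric-difference sense — but that follows because, by (iii) and the perimeter formula, a staircase with $k$ corners has interface length $h+\Theta(k)$ in the relevant metric, costing $\Theta(k(1-\beta))$ extra in $P$ unless compensated, so $k=O(1)$ for irrational $\beta$ and $k=O(N^{1/4})$ in general, each corner displacing $O(N^{1/2})$ cells.
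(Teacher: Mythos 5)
Parts (i)--(iii) of your proposal follow essentially the paper's route (row/column compression and monotonicity of the interface), and your computation of the perimeter of the explicit configuration in (v), together with the one-dimensional analysis of $f(h)=4\lceil N/h\rceil+2h(2-\beta)$ and the irrationality argument for uniqueness, match Theorem \ref{thm:classIexact}. The genuine gap is exactly where you flag it: the lower-bound half of (iv). Your proposed reduction --- bound each outer perimeter below by twice the number of occupied rows plus twice the number of occupied columns, bound the interface, and optimise --- does not ``reduce to the same $f$''. Concretely, writing $r_A,c_A,r_B,c_B$ for the row/column counts and using $Q(A,A^c)=2(r_A+c_A)$, $Q(A,B)\le \min\{r_A,r_B\}+\min\{c_A,c_B\}$, the best you can extract is $P\ge 2(1-\beta)(r_A+c_A)+2(r_B+c_B)\ge 4\sqrt{N}\,(2-\beta)$, whereas the true minimum is $4\sqrt{N}\sqrt{4-2\beta}+O(1)$, and $(2-\beta)<\sqrt{4-2\beta}$ for all $\beta\in(0,1)$. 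So the projection bound is strictly too weak: it cannot rule out configurations in which the interface wraps around a corner of $A$ (the paper's Class $\mathcal{III}$) or has a long horizontal portion (Classes $\mathcal{IV}$, $\mathcal{V}$). Note also a sign issue: in $P=Q(A,A^c)+Q(B,B^c)-2\beta Q(A,B)$ the interface enters negatively, so ``interface bounded below by $h$'' works against you; in the positive-coefficient form the outer terms exclude the interface edges and the projection identity no longer applies to them.

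What is missing is precisely the content of the paper's Sections \ref{sec:classi}--\ref{sec:reg2}: a classification of the admissible interface shapes into five classes and explicit rearrangement moves showing that, for $N_A=N_B$, Classes $\mathcal{II}$ and $\mathcal{III}$ are never optimal (this uses a point-counting comparison of $N_A$ and $N_B$, not a perimeter projection) and that, for $\beta\le 1/2$, any Class-$\mathcal{IV}$ or $\mathcal{V}$ minimiser can be converted into a Class-$\mathcal{I}$ one of equal energy. Your heuristic that ``extra interface length is not compensated'' is in fact false as a strict statement: Proposition \ref{prop:largeminimisersiv} exhibits arbitrarily large minimisers with non-straight interface achieving exactly the same perimeter, which is why the reduction must be by energy-preserving surgery rather than by a strict inequality. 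This same machinery is what your sketch of (vi) silently relies on (bounding $l_2,h_1,h_3$ by constants and tracking how many points each regularisation step moves), so the fluctuation bound inherits the gap. Even within Class $\mathcal{I}$, eliminating the one-step interface ($l_2=1\to l_2=0$) needs the counting identity $(l_1-l_3)h=r_3+r_4-r_1-r_2$ exploiting $N_A=N_B$, which your outline does not supply.
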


Theorem \ref{thm:main} is proved in subsequent steps along the paper, by carefully characterising the
geometry of optimal pairs $(A,B)$. In fact, our analysis reveals
additional geometrical details, so that the statements in the coming
sections are
often more precise and more general
in terms of conditions on the parameters $N_A$, $N_B$, and
$\beta$ with respect to Theorem~\ref{thm:main}. We prefer to postpone these details   in order not
to overburden the introduction.  

The connectedness of
optimal pairs $(A,B)$ is discussed in Section \ref{sec:algo} and Theorem \ref{thm:main}.i is
proved in Theorem \ref{thm:connected} and  Proposition  
\ref{cor:rows}. The separation property of Theorem \ref{thm:main}.ii follows from
Proposition~\ref{prop:algorithmdecreasesenergy} and
 Proposition  
\ref{cor:nomissingrows}-\ref{cor:allononeside}. The
geometry of the interface between $A$ and $B$, namely Theorem~\ref{thm:main}.iii,
is described by Corollary \ref{cor:interface}.

 In  Section \ref{sec:info} we present  a collection of examples,  illustrating 
the variety of optimal geometries. In particular, we show that optimal
pairs may be not unique and, in some specific
parameter range, present quite distinguished shapes. We then
classify different admissible pairs in Section \ref{sec:classi} by
introducing five 
distinct classes of configurations. 

The first of  these  classes, called Class $\mathcal I$ and corresponding
to Figure \ref{fig:aspectratio}, is indeed the reference one and is
studied in  detail in  
Section \ref{sec:reg1}.  In   Proposition
\ref{prop:classIregularisationstep2} we prove  the existence of optimal
pairs in Class $\mathcal I$, among which  there is  the explicit one of Theorem
\ref{thm:main}.v. The minimal
perimeter in Theorem~\ref{thm:main}.iv is then computed by referring to this
specific class in Theorem~\ref{thm:classIexact}. The remaining classes are studied in Section \ref{sec:reg2}.  We show that some of the classes cannot be optimal in the case $N_A = N_B$, and  that  the other ones can be modified  to a configuration in Class $\mathcal{I}$ by an explicit regularisation procedure. We also  observe  that for arbitrarily large $N$   solutions   may appear  which are not in Class $\mathcal{I}$, see Proposition~\ref{prop:largeminimisersiv}.  

Although optimal pairs $(A,B)$ are not unique,  by carefully inspecting
our constructions, we are able  to prove
 that,  in   some specific parameter regime,  two optimal pairs differ
by at most  $C_\beta N^{1/2}$ or $C_\beta N^{3/4}$  points,
respectively  depending on the irrationality or rationality of
$\beta$ and  up to isometries. This is studied  in Section~\ref{sec:law}, see
Theorem~\ref{thm:nonehalf}  which  proves Theorem
\ref{thm:main}.vi.  If $\beta $ is irrational, an output of our
construction is that the fluctuation bound  $C_\beta N^{1/2}$  is
sharp. In the case of a rational $\beta$, the sharpness of the
fluctuation bound will be proved  
in some 
  future work.   The $N^{1/2}$-scaling  in fluctuations is
specifically related to the presence of an interface between the two
sets $A$ and $B$. In fact, in case of a single set $A$,  optimal
configurations show fluctuations of  order $N^{3/4}$,  see
Subsection \ref{sec:opt} for details.

 Although  the setting of our paper is discrete, our results deliver some
understanding of the  continuous  case, as well.
This results by
considering the so-called {\it thermodynamic
  limit} as $ N  \to \infty$. For all $ V =\{x_1, \dots, x_{ N }\} \subset {\mathbb Z}^2$,
let $ \mu_V  = (\sum_{i=1}^{ N } \delta_{x_i/\sqrt{ N }})/ N $ be the corresponding
empirical measure on the plane and denote by $\mathcal L$ the
two-dimensional Lebesgue measure. We indicate by 
\begin{align}
  \mathcal
A:=\left(-\sqrt{\frac{2-\beta}{2}},0\right)\times \left(0, \sqrt{\frac{2}{2-\beta}}\right) \ \ \text{and} \ \ \mathcal
B:= \left(0, \sqrt{\frac{2-\beta}{2}} \right)\times \left(0,
  \sqrt{\frac{2}{2-\beta}}\right)\label{eq:wulff}
\end{align}
the continuous Wulff shapes, see Figure \ref{fig:aspectratio}. Note
that $\mathcal L(\mathcal A)=\mathcal L(\mathcal B) = 1$. By
combining the explicit construction of  Theorem \ref{thm:main}.v  and the
fluctuation estimate \eqref{eq:fluct} we have the following.

\begin{corollary}[Wulff shapes]\label{cor: wulff}
Let $\beta\leq 1/2$ and $(A_{ N },B_{ N })$ be
solutions of \eqref{eq:dbp} with $N_{A_{ N }} = N_{B_{ N
    }} =  N $, for all $ N \in \mathbb N$. Then, there exist
isometries $T_{ N }$ of ${\mathbb Z}^2$ such that
\begin{align}
\label{eq:meas}
\mu_{T_{ N } A_{ N }} \stackrel{\ast}{\rightharpoonup}  {\mathcal L} \mres {\mathcal A} \ \ \text{and} \ \ \mu_{T_{ N } B_{ N }} \stackrel{\ast}{\rightharpoonup}  {\mathcal
        L} \mres {\mathcal B},
    \end{align}
   as $N \to \infty$,   where the symbol $\stackrel{\ast}{\rightharpoonup}$  indicates the 
    weak-$\ast$ convergence of measures. 
  \end{corollary}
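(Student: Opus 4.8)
The plan is to derive Corollary~\ref{cor: wulff} from two ingredients that are already available: the explicit minimiser $(A',B')$ of Theorem~\ref{thm:main}.v and the fluctuation estimate of Theorem~\ref{thm:main}.vi. Together these reduce the claim to a routine convergence of empirical measures of lattice points filling a rectangle, so no new geometric input is needed; all the substance sits in Theorem~\ref{thm:main}.iv--vi.

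First I would record the asymptotics of the explicit pair $(A'_N,B'_N)$ attached to $N_{A_N}=N_{B_N}=N$. Let $h=h_N$ minimise \eqref{eq: periper} and write $N=h_N\ell_N+r_N$ with $\ell_N\in\mathbb N$ and $0\le r_N<h_N$. By Theorem~\ref{thm:main}.iv we have $|h_N-\sqrt{2N/(2-\beta)}|\le C_\beta N^{1/4}$, and since $\ell_N=(N-r_N)/h_N$ with $r_N\le h_N=O(N^{1/2})$, this gives
\begin{equation*}
\frac{h_N}{\sqrt N}\longrightarrow\sqrt{\frac{2}{2-\beta}},\qquad \frac{\ell_N}{\sqrt N}\longrightarrow\sqrt{\frac{2-\beta}{2}}\qquad\text{as }N\to\infty .
\end{equation*}
Setting $\mathcal A_N:=(-\ell_N/\sqrt N,0)\times(0,h_N/\sqrt N)$ and $\mathcal B_N:=(0,\ell_N/\sqrt N)\times(0,h_N/\sqrt N)$, the rectangles $\mathcal A_N,\mathcal B_N$ converge (in Hausdorff distance of closures) to the Wulff shapes $\mathcal A,\mathcal B$ of \eqref{eq:wulff}, which have Lebesgue-null boundary. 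Moreover, by the definition of $A'_N$, the set $N^{-1/2}A'_N$ coincides, up to a single column of at most $r_N=O(N^{1/2})$ points and up to $O(N^{-1/2})$ lattice-scale boundary adjustments, with $N^{-1/2}\mathbb Z^2\cap\sqrt N\,\mathcal A_N$, and likewise for $B'_N$. Hence, for $\varphi\in C_c(\mathbb R^2)$, the quantity $\int\varphi\,d\mu_{A'_N}=\frac1N\sum_{x\in A'_N}\varphi(x/\sqrt N)$ differs from the Riemann sum $\frac1N\sum_{x\in\mathbb Z^2\cap\sqrt N\mathcal A_N}\varphi(x/\sqrt N)$ by $O(r_N\|\varphi\|_\infty/N)=O(N^{-1/2})$, and that Riemann sum, on a mesh of size $N^{-1/2}$, converges to $\int_{\mathcal A}\varphi\,dx$ because $\mathcal A_N\to\mathcal A$. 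Using $\mathcal L(\mathcal A)=\mathcal L(\mathcal B)=1$, this yields $\mu_{A'_N}\stackrel{\ast}{\rightharpoonup}\mathcal L\mres\mathcal A$ and $\mu_{B'_N}\stackrel{\ast}{\rightharpoonup}\mathcal L\mres\mathcal B$.

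It then remains to transfer this to an arbitrary solution $(A_N,B_N)$ of \eqref{eq:dbp} with $N_{A_N}=N_{B_N}=N$. By Theorem~\ref{thm:main}.vi there is an isometry $S_N$ of $\mathbb Z^2$ with $\#(A_N\triangle S_N(A'_N))+\#(B_N\triangle S_N(B'_N))\le C_\beta N^{3/4}$; I would set $T_N:=S_N^{-1}$, again a lattice isometry, and apply it inside each symmetric difference: since bijections preserve cardinalities of symmetric differences, $\#(T_NA_N\triangle A'_N)+\#(T_NB_N\triangle B'_N)\le C_\beta N^{3/4}$. As $\#A_N=\#A'_N=N$, the signed measure $\mu_{T_NA_N}-\mu_{A'_N}$ is supported on $N^{-1/2}(T_NA_N\triangle A'_N)$ with atoms of mass $N^{-1}$, so for every $\varphi\in C_c(\mathbb R^2)$ one has $|\int\varphi\,d\mu_{T_NA_N}-\int\varphi\,d\mu_{A'_N}|\le\|\varphi\|_\infty N^{-1}\#(T_NA_N\triangle A'_N)\le C_\beta\|\varphi\|_\infty N^{-1/4}\to0$, and similarly for $B$; combined with the previous paragraph this gives \eqref{eq:meas}. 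The only mildly delicate points in the whole argument are the $N$-dependence of the rectangles $\mathcal A_N,\mathcal B_N$ and the leftover column of $r_N$ points, both harmless thanks to $r_N\le h_N=O(N^{1/2})=o(N)$; there is no real obstacle once Theorem~\ref{thm:main}.iv--vi are in hand.
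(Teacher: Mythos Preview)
Your proof is correct and follows exactly the same two-step approach as the paper: first verify weak-$\ast$ convergence for the explicit solution $(A'_N,B'_N)$ of Theorem~\ref{thm:main}.v using the asymptotics of $h_N,\ell_N$ from Theorem~\ref{thm:main}.iv, then transfer this to an arbitrary minimiser via the fluctuation estimate of Theorem~\ref{thm:main}.vi. The paper's proof is just a two-sentence sketch of precisely these steps, so you have simply filled in the details it leaves implicit.
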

  Note that, by taking $\beta=1$  in \eqref{eq:wulff}
  (not covered by the corollary, though)  we have
  that $\mathcal A \cup \mathcal B$ form a single square with side
  $\sqrt{2}$ whereas for $\beta=0$ the Wulff shapes $\mathcal A$ and
  $\mathcal B$ are two squares of side $1$.

Our results also allow to solve the double-bubble problem in the
 continuous  setting of
${\mathbb R}^2$ with respect to a crystalline perimeter notion. More
precisely, for every set $D \subset {\mathbb R}^2$ of {\it finite
  perimeter}  we denote by $\partial^* D$ its {\it reduced boundary} \cite{Ambrosio-Fusco-Pallara,Maggi},
and define the {\it crystalline perimeter} and the {\it crystalline
  length} as
$${\rm Per} (D) = \int_{\partial^* D} \| \nu \|_1 \, {\rm d} \mathcal
H^1, \quad {\rm L} (\gamma) = \int_{\gamma} \| \nu \|_1 \, {\rm d} \mathcal
H^1,$$
where $\nu$ is the
outward pointing  unit  normal to  $\partial^*D$, $\|\nu\|_1 = |\nu_x|+|\nu_y|$,  $ \mathcal
H^1$ is the one-dimensional Hausdorff measure, and $\gamma \subset
\partial^*D$ is measurable.

The  continuous  analogue of
\eqref{eq:dbp} is the {\it crystalline double-bubble problem}
\begin{align}
 & \min\Big\{ {\rm Per} (A) + {\rm Per} (B) - 2\beta \, {\rm L} (\partial^*A
  \cap \partial^*B)\, \colon \ \\
  & \qquad \qquad A,\, B \subset {\mathbb R}^2 \ \text{of
   finite perimeter},    \quad  A \cap B =
  \emptyset, \ \mathcal L(A)=\mathcal L(B)=1\Big\}. 
  \label{eq:dbp2}
\end{align}

By combining Theorem \ref{thm:main}.v and \ref{thm:main}.vi we obtain the following.

\begin{corollary}[Crystalline double bubble]\label{cor: cryst-db} For all $\beta \leq 1/2$,
  the pair $(\mathcal A,\mathcal B)$ is a solution of
  \eqref{eq:dbp2}. The minimal energy is given by   $4
  \sqrt{4-2\beta}$. 
\end{corollary}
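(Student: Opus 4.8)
The plan is to establish matching bounds. Writing $E(A,B):={\rm Per}(A)+{\rm Per}(B)-2\beta\,{\rm L}(\partial^*A\cap\partial^*B)$ for the energy in \eqref{eq:dbp2}, I first compute $E(\mathcal A,\mathcal B)$ and then show $E(A,B)\ge 4\sqrt{4-2\beta}$ for every admissible pair. The \emph{upper bound} is a direct computation: the sets $\mathcal A,\mathcal B$ in \eqref{eq:wulff} are axis-aligned open rectangles of side lengths $a:=\sqrt{(2-\beta)/2}$ and $b:=\sqrt{2/(2-\beta)}$, with area $ab=1$, sharing the edge $\{0\}\times(0,b)$; on the boundary of an axis-aligned rectangle the outer normal is one of $\pm(1,0),\pm(0,1)$, so $\|\nu\|_1\equiv1$ there, whence ${\rm Per}(\mathcal A)={\rm Per}(\mathcal B)=2a+2b$ and ${\rm L}(\partial^*\mathcal A\cap\partial^*\mathcal B)=b$. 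Therefore $E(\mathcal A,\mathcal B)=4a+(4-2\beta)b$; using $4-2\beta=2(2-\beta)$ one checks $(4-2\beta)b=2\sqrt{2(2-\beta)}=4a$, so $E(\mathcal A,\mathcal B)=4\sqrt{2(2-\beta)}=4\sqrt{4-2\beta}$. Since $\mathcal L(\mathcal A)=\mathcal L(\mathcal B)=1$ and $\mathcal A\cap\mathcal B=\emptyset$, the pair is admissible, and it is a minimiser with the claimed energy once the lower bound is in place.

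For the \emph{lower bound} I would transfer the problem to the lattice via Theorem~\ref{thm:main}.iv. The key identity, valid both in the continuum and on ${\mathbb Z}^2$, follows from $\partial^*(A\cup B)=(\partial^*A\cup\partial^*B)\setminus(\partial^*A\cap\partial^*B)$ up to $\mathcal H^1$-null sets (a consequence of $\mathcal L(A\cap B)=0$):
\[
E(A,B)=(1-\beta)\big({\rm Per}(A)+{\rm Per}(B)\big)+\beta\,{\rm Per}(A\cup B),\qquad P(C,D)=(1-\beta)\big(Q(C,C^c)+Q(D,D^c)\big)+\beta\,Q(C\cup D,(C\cup D)^c).
\]
Assume first that $A$ and $B$ are finite unions of axis-aligned rectangles with all corners in $\tfrac1m{\mathbb Z}^2$ for some $m\in{\mathbb N}$ (and, as always, $\mathcal L(A)=\mathcal L(B)=1$, $A\cap B=\emptyset$). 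Then $mA$ and $mB$ are polyominoes; let $\hat A,\hat B\subset{\mathbb Z}^2$ be their sets of pixel centres (translated into ${\mathbb Z}^2$). These are disjoint, $\#\hat A=\#\hat B=m^2$, and, since the boundary length of a polyomino equals its number of boundary edges, ${\rm Per}(mA)=Q(\hat A,\hat A^c)$ and likewise for $B$ and $A\cup B$; comparing the two identities above gives $P(\hat A,\hat B)=m\,E(A,B)$. As $(\hat A,\hat B)$ is admissible in \eqref{eq:dbp} with $N_A=N_B=m^2$, Theorem~\ref{thm:main}.iv yields
\[
m\,E(A,B)=P(\hat A,\hat B)\ \ge\ \min_{h\in{\mathbb N}}\big(4\lceil m^2/h\rceil+2h(2-\beta)\big)\ \ge\ \min_{t>0}\Big(\tfrac{4m^2}{t}+2(2-\beta)t\Big)=4\sqrt{2(2-\beta)}\,m,
\]
so $E(A,B)\ge 4\sqrt{4-2\beta}$ for all such polyrectangular competitors. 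Finally, for a general admissible $(A,B)$ and any $\varepsilon>0$ one approximates it by a polyrectangular admissible pair $(A',B')$ with $E(A',B')\le E(A,B)+\varepsilon$ and lets $\varepsilon\to0$.

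The only real difficulty is this last density step: for every admissible $(A,B)$ one must produce polyrectangular admissible pairs of unit area whose energies converge down to $E(A,B)$ — equivalently, the density of grid-aligned polyrectangular partitions in the partition energy $(A,B)\mapsto(1-\beta)({\rm Per}A+{\rm Per}B)+\beta{\rm Per}(A\cup B)$. Rewriting $E$ as above is what makes this feasible: the interaction term ${\rm L}(\partial^*A\cap\partial^*B)$, which is not by itself continuous under $L^1$-convergence, is thereby traded for a nonnegative combination of $L^1$-lower semicontinuous perimeters, so that one only needs a recovery sequence — obtained by pixelating the labelled partition $(A,B,(A\cup B)^c)$ at a fine grid. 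That this works uses that the crystalline anisotropy $\|\cdot\|_1$ is aligned with ${\mathbb Z}^2$ and that the triple-junction set $\partial^*A\cap\partial^*B\cap\partial^*((A\cup B)^c)$ is always $\mathcal H^1$-null (three reduced boundaries cannot agree on a set of positive length, as two of the three disjoint phases would then have full density on the same side); it is by now a standard fact about Caccioppoli partitions. The remaining points — reducing to bounded competitors, adjusting areas to be exactly $1$ at $o(1)$ cost while keeping the phases disjoint, and diagonalising the approximation — are routine.
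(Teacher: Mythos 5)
Your proof is correct and follows essentially the same route as the paper: the upper bound is the same direct computation for $(\mathcal A,\mathcal B)$, and the lower bound is obtained by discretising competitors, invoking the exact formula of Theorem~\ref{thm:main}.iv, and relaxing the integer minimisation to $\min_{t>0}\big(4N/t+2(2-\beta)t\big)=4\sqrt{2(2-\beta)}\sqrt{N}$. The only organisational difference lies in the approximation step: the paper first replaces $(A,B)$ by a polygonal pair using the density theorem of Braides--Conti--Garroni and then discretises that pair on the lattice $\tfrac{1}{\sqrt N}\mathbb Z^2$, exploiting that staircase paths have the same $\ell^1$-length as the polygonal segments they approximate; you instead aim directly at grid-aligned polyrectangular competitors, using the rewriting $E=(1-\beta)(\operatorname{Per}A+\operatorname{Per}B)+\beta\operatorname{Per}(A\cup B)$ (the continuum analogue of the second line of \eqref{eq:dbp3}) so that the non-continuous interface term is absorbed into nonnegative multiples of perimeters. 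That rewriting is a genuinely useful observation, but note that your ``only real difficulty'' --- density in energy of grid-aligned partitions --- is precisely the point at which the paper leans on the cited polygonal density result before converting to staircases, so your argument does not actually bypass that input; it merely repackages it. As long as you supply (or cite) that density statement for Caccioppoli partitions with the crystalline energy, together with the routine area adjustment you mention, the proof is complete.
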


For the reference choice $\beta=1/2$, the solution of the crystalline
double-bubble problem~\eqref{eq:dbp2} is depicted in Figure
\ref{fig:aspectratio},  see also \cite{Duncan,Morgan98}. 
  Corollaries \ref{cor: wulff}
and \ref{cor: cryst-db} are proved in
Section \ref{sec:wulff}. 

 In the recent \cite{Duncan}, the
difference in energy between any properly rescaled optimal discrete
configuration and the Wulff shape is estimated. In case $N_A=N_B$
and $\beta\leq 1/2$ such an estimate can be recovered from the exact expressions in Theorem
\ref{thm:main}.iv and of Corollary \ref{cor: cryst-db}. Note however that  the
analysis in \cite{Duncan} covers the case
$N_A\not =N_B$ as well, although for $\beta=1/2$ only.

\section{Equivalent formulations of the double-bubble problem}\label{sec:equiv}

\subsection{Optimal particle configurations}\label{sec:opt} The double-bubble problem
\eqref{eq:dbp} can be equivalently recasted in terms of ground states
of  configurations of particles of two different types. 
Let $A =\{ x_1,\dots, x_{N_A} \}$ and  $B=\{ x_{N_A+1}, \dots,
x_{N_A+N_B}\}$ indicate the mutually distinct positions of particles
of two different particle
species and assume that $A,\,B \subset {\mathbb Z}^2$, which in turn
restricts the model to the description of zero-temperature situations.  
To the particle configuration $(A,B)$ we associate the {\it configurational energy} 
\begin{align}\label{eq: basic eneg}
E(A,B) = \frac12 \sum_{i, j=1}^{N_A+ N_B} V_{ \rm sticky}(x_i, x_j),
\end{align}
where  
\begin{equation*}
V_{ \rm sticky}(x_i, x_j) = \threepartdef{-1}{|x_i -  x_j|=
  1  \mbox{ and } x_i,x_j \in A \mbox{ or } x_i,x_j \in
  B,}{-\beta}{|x_i -  x_j| = 1  \mbox{ and }  x_i \in A, x_j \in B \mbox{ or } x_i \in B, x_j \in A,}{0}{|x_i -  x_j| \neq 1.}
\end{equation*}
The interaction density $V_{\rm sticky}(x_i, x_j) $ corresponds to the so-called
{\it sticky} or {\it Heitmann-Radin-type} potential 
\cite{Heitmann} and models the binding energy of
the two particles $x_i$ and $ x_j $. In particular, 
  only first-neighbor interactions contribute  to 
the energy, and {\it intraspecific} (namely, of type $A-A$ or $B-B$)
and {\it interspecific} (type $A-B$) interactions are 
quantified differently, with interspecific interactions being weaker as $\beta <1$.

The relation between the minimisation of $E$ and the double-bubble
problem \eqref{eq:dbp} is revealed by the equality
\begin{equation}
  E(A,B)+2N_A + 2N_B =
\frac12P(A,B).\label{eq:eq}
\end{equation}
This follows by analysing the contribution to $E$ and $P$ of each
point. In fact, one could decompose
$$E(A,B) = \sum_{i=1}^{N_A+N_B} e(x_i), \quad P(A,B) =\sum_{i=1}^{N_A+N_B}
p(x_i),  $$
where the single-point contribution to energy and perimeter is
quantified via
\begin{align*}
  e(x) = -\frac12 \# \{\text{same-species neighbors of $x$}\} - 
  \frac{\beta}{2}  \# \{\text{other-species neighbors of $x$}\}\\
 p(x) =  4 - \# \{\text{same-species neighbors of $x$}\} - 
  \beta  \# \{\text{other-species neighbors of $x$}\}.
 \end{align*}
The latter entail \eqref{eq:eq}, which in turn ensures that ground states of  $E$ and minimisers of
$P$ coincide, for all given sizes $N_A$ and $N_B$ of the sets $A$ and $B$.

The geometry of ground states of $E$ results from the competition
between intraspecific and interspecific interaction. 
In the extremal case $\beta=1$, intra- and interspecific
interaction are indistinguishable, and one can consider the whole
system $(A,B)$ as a single species. The minimisation of $E$ is then
the classical {\it edge-isoperimetric problem} \cite{Bezrukov,Harper}, 
namely the minimisation of $C  \mapsto  Q(C,C^c)$  under
prescribed size $\#C$. 
Ground states are isoperimetric sets, the ground-state energy is
known,    the possible distance between
two ground states scales as  $N^{3/4}$ where $N = \# C$,   and one could even directly prove
crystallization,  i.e., the periodicity of ground states,  under some stronger assumptions on the interaction
potentials~\cite{Mainini}. 

In the other extremal case $\beta=0$, no interspecific interaction is
accounted for, and  both  phases $A$ and $B$ are independent  isoperimetric
sets.  In particular, if $N_A$ and $N_B$ are
perfect squares (or for
$N_A, \, N_B \to \infty$ and up to rescaling), the phases $A$ and $B$
are squares.

 In the  intermediate case $\beta\in(0,1)$, which is hence the interesting
one, intraspecific and
interspecific interaction compete and neither $A$ or $B$ nor $A \cup B$
end up being  isoperimetric
sets.  The presence of interspecific interactions adds some level
of rigidity. This is revealed by the fact, which we prove, that the distance between different ground
states scales like $N^{1/2}$, in contrast with the purely
edge-isoperimetric case, where fluctuations are of order
$N^{3/4}$ \cite{Mainini}, see also
\cite{Cicalese,Davoli,Mainini2,Schmidt}.

Although we do not directly deal with crystallization here, for the
points $A$ and $B$ are {\it assumed} to be subset of the lattice
$\mathbb Z^2$, let us
mention that  a few rigorous crystallization  results   in
multispecies systems are available. At first, existence of quasiperiodic ground states in a
specific multicomponent two-dimensional system has been shown by
Radin \cite{Radin86}. One dimensional crystallization of {\it
  alternating} configurations of two-species has been investigated by  B\'etermin, Kn\"upfer, and Nolte
\cite{Betermin}, see also \cite{periodic} for some related
crystallization and noncyrstallization results. In the
two-dimensional, sticky interaction case, two crystallization results in
hexagonal and square geometries are  given  in \cite{kreutz,
  kreutz2}. Here, however, interspecific interactions favor  the
onset of alternating  phases.  

\subsection{Finite Ising model} The
double-bubble problem \eqref{eq:dbp} can also be equivalently seen as the ground-state problem for a {\it finite} Ising model with ferromagnetic
interactions. In particular, given $C=A \cup B\subset {\mathbb Z}^2$
one describes the state of the system by $u \colon C \to \pm 1$,
distinguishing the $+1$ and the $-1$ phase. The Ising-type energy of the
system is then given by
$$ F  (C,u) = -\frac{1-\beta}{ 4}\sum_{\substack{x,y \in C \\ |x-y|=1}} u(x)\,
u(y) - \frac{1+\beta}{ 4} \sum_{\substack{x,y \in C \\ |x-y|=1}}  
|u(x)\,
u(y))|. 
$$ 
The first term above is the classical ferromagnetic interaction
contribution, while the second sum gives  the total number of
interactions, irrespective of the phase. This second term is required  since in our model 
same-phase and different-phase interactions are both assumed to give negative contributions to the energy.  

Under the above provisions, minimisers of the problem
\begin{align*}
  &\min\Big\{ F  (C,u) \, \colon \, C \subset {\mathbb Z}^2, \ u \colon C \to \pm 1, \
  \\
  &\qquad \qquad \# \{x \in C \, \colon \, u(x)=1\}  = N_A, \  \# \{x
    \in C \, \colon \, u(x)=-1\}  = N_B \Big\}
\end{align*}
corresponds to solutions $(A,B)$ of the double-bubble problem
\eqref{eq:dbp}, under the
equivalence $A\equiv \{x \in C \, \colon \, u(x)=1\}$ and  $B\equiv \{x \in
C \, \colon \, u(x)=-1\}$. In fact, each pair of first neighbors  contributes  $-1$
to $ F $ if  it belongs  to the same phase and $-\beta$ if  it belongs  to
different phases, namely,  
$$ F  (C,u) = E(A,B).$$

The literature on the Ising model is vast and the reader is referred to \cite{Cerf,McCoy} for a
comprehensive collection of results. Ising models are usually
investigated from the point of view of their thermodynamic limit $\#C
\to \infty$ and at  positive  temperature. In particular, models are
usually formulated on the whole lattice or on a large box with constant
boundary states. Correspondingly, the analysis of Wulff shapes is concerned  with the study of a droplet of one phase in a sea of the other
one \cite{Cerf2}. 

Our setting is much 
different, for our system is finite and boundary effects
matter. To the best of our knowledge, we contribute here
the first characterisation of ferromagnetic Ising ground states, where
the location $C$ of the  
system is also unknown and results from minimisation.

Alternatively to the finite two-state setting above, one could
equivalently formulate the minimisation problem in the whole ${\mathbb
  Z}^2$ by allowing a third state, to be interpreted as interaction-neutral. In
particular, we could equivalently consider the minimisation problem 
\begin{align*}
  &\min\Big\{ F  ({\mathbb Z}^2,v) \, \colon \, v \colon {\mathbb
    Z}^2\to\{-1,0,1\}, \\
  &\qquad \qquad \# \{x \in {\mathbb Z}^2 \, \colon \, v(x)=1\}  = N_A, \  \#
    \{x \in {\mathbb Z}^2 \, \colon \, v(x)=-1\}  = N_B \Big\}.
    \end{align*}
The equivalence is of course given by setting $u=v$ on $C:=\{x\in {\mathbb
  Z}^2\, \colon \, v(x)\not = 0 \}$.

\subsection{Finite Heisenberg model} The three-state formulation of
the previous subsection can be easily reconciled within the 
frame of the classical Heisenberg model \cite{Tasaki}. In particular, we shall define
the vector-valued state function $s\colon M \to \{s_{-1}, s_0, s_1\}$ where
the box $M$ is given as $M:=[0,m]^2 \cap {\mathbb Z}^2$ for $m$
large. We choose the
three possible spins as
$$ s_0 =(-1,0), \ \ s_1 =\left(\beta, \sqrt{1-\beta^2}\right), \ \
s_{-1}=\left(\beta, -\sqrt{1-\beta^2}\right).$$ 
The energy of the system is defined as 
$$H(s) =  -\sum_{\substack{x,y \in M \\ |x-y|=1}} s(x)\cdot s(y).$$
For all $s\colon M \to \{s_{-1}, s_0, s_1\}$, let $A:=\{x \in M \, \colon \,
                s(x)=s_1\} $ and $B:=\{x \in M \, \colon \,
                s(x)=s_{-1}\}$. We are interested in the minimisation problem
\begin{align*}&\min\left\{H(s) \, \colon  \,  s\colon M \to \{s_{-1}, s_0, s_1\},  \ \#A=N_A , \ \#B=N_B \right\}.
\end{align*}
By letting $m$ be very large compared with $N_A$ and $N_B$, 
we can with no loss of generality  assume that  $s=s_0$ close to the boundary $\partial M$. 

Let us now show that the latter minimisation problem is indeed
equivalent to the double-bubble problem \eqref{eq:dbp}. To this aim,
we start by noting  that  the total number of first-neighbor interactions in
$M$ is $2m^2+2m$.  First-neighbor  interactions between identical states 
contribute  $-1$  to the energy, $s_0 - s_{1}$ and $s_0 - s_{-1}$
interactions contribute $-s_1\cdot s_0 = -s_{-1}\cdot s_0 = 
\beta$, and $s_1-   s_{-1}$ interactions contribute $-s_1\cdot  s_{-1}  = 
1-2{\beta^2} $. We hence have that 
\begin{align*}
  H(s) + (2m^2 +2m)   &=  (\beta+1) \left( Q (A,
  A^c\setminus B) + Q (B,
                      B^c\setminus A) \right)  +   (2-2\beta^2) Q(A,B) \\
  & = \left(\beta+1 \right) \left( Q (A,
  A^c\setminus B) + Q (B,
                      B^c\setminus A) +(2-2\beta)Q(A,B) \right)
    \nonumber\\
  &=
    \left(\beta+1\right) P(A,B),
\end{align*}
so that minimising $H$ is actually equivalent to solving \eqref{eq:dbp}.

\subsection{Minimum  balanced-separator  problem} One can rephrase the
double-bubble problem \eqref{eq:dbp}  as a minimum  balanced-separator  problem on an unknown
graph  as well.  Indeed, as interspecific contributions
are energetically less favored with respect to 
intraspecific ones, given the common occupancy ${\mathcal V}  =A \cup B$ of the two
phases, one is asked to part ${\mathcal V} $ into two regions $A$ and $B$ with given size in
such a way that the interface between $A$ and $B$ is minimal. This corresponds to a
minimum  balanced-separator  problem on the {\it unit-disk graph}  corresponding to 
${\mathcal V}$,   i.e.,  finding a disjunct
partition ${\mathcal V} = A \cup B$  solving  
$$\min \{ Q(A,B) \, : \, \#A =
N_A, \ \#B = N_B\}.$$
This is indeed a classical problem, with relevant applications in operations
research and computer science \cite{Nagamochi}.

Here, we generalize the above minimum  balanced-separator  problem by letting the underlying
graph also vary and by simultaneously optimising its perimeter.  In
particular, we consider 
$$\min \{P(A,B) \, \colon \, V=A \cup B, \ A \cap B =\emptyset, \ \#A =
N_A, \ \#B = N_B\},$$
where $({\mathcal V}, {\mathcal E})$  is again the unit graph related to $A \cup B\subset
{\mathbb Z}^2$.

Also in this setting, the competition between minimisation of the
interface and of the perimeter is evident. Recall  $P(A,B) =
Q(A,A^c \setminus B) + Q(B,B^c \setminus A) + (2-2\beta)  Q(A,B)$.
On
the one hand, a graph with few edges between $A$ and $B$ would 
 give  a short cut $Q(A,B)$, while necessarily having large
 $Q(A,A^c \setminus B) + Q(B,B^c \setminus A)$. On the other hand, a
 graph  with small   $Q(A,A^c \setminus B) + Q(B,B^c \setminus A)$ has $A \cup
 B$ close to be a square, and  for $N_A = N_B$  all possible cuts partitioning it in two
 are approximately as long as its side.

\section{Notation}

Let us collect here some notation, to be used throughout the
paper. 
For each pair  of    disjoint sets $A,B \subset \mathbb{Z}^2$ 
we  call the elements of $A$ and $B$ the {\it $A$-points} and {\it
  $B$-points}, respectively. We let $N_A = \# A$ and $N_B = \# B$. For
any point $p \in A \cup B$, we denote its first and second coordinate
by $p = (p_x, p_y)$. We say that two points are {\it connected by an
  edge} if their distance is equal to one. (Equivalently, we sometimes
use the words {\it bond} or {\it connection} in place of {\it edge}.)
We say that a set $S \subset A \cup B$ is {\it connected} if it is
connected as a graph with edges described above,  or   equivalently 
if the corresponding unit-disk graph is connected.   

 For the sake of definiteness, from here on, our notation is
adapted to the 
setting of Subsection~\ref{sec:opt}. In particular, we  say that a
configuration is {\it minimal} (or {\it optimal}) if it
minimises the energy $E$  given in \eqref{eq: basic eneg}  in the class of configurations with the same
number of $A$- and $B$-points.  Recall once more that minimisers
of $E$ and solutions of the double-bubble problem \eqref{eq:dbp}
coincide.

Since the number of points is finite, any configuration lies in a
bounded  square.  Suppose that a configuration $(A,B)$ has $N_{\rm
  row}$ rows (i.e., there are $N_{\rm row}$ rows in $\mathbb{Z}^2$
with at least one point from $A \cup B$). For $k=1,...,N_{\rm row}$,
denote by $R_k$ the $k$-th row (counting from the top). In a similar
fashion, $N_{\rm col}$ denotes the number of columns, and $C_k$
indicates the $k$-th column (counting from the left). To simplify the
notation, given a finite set $X \subset \mathbb{Z}^2$, we denote
$X^{\rm row}_k = X \cap R_k$ and $X_k^{\rm col} = X \cap C_k$. We will
typically apply this to the sets $A$, $B$, their union or some of
their subsets. Moreover, denote by $n_k^{\rm row}$ the number of
$A$-points in the row $R_k$ and by $m^{\rm row}_k$ the number of
$B$-points in the row $R_k$. In a similar fashion, $n_k^{\rm col}$ and
$m_k^{\rm col}$ denote the number of $A$- and  $B$-points in column
$C_k$, respectively. In the following, we will frequently modify
configurations. Not to overburden the notation, when we use the
notation  $n^{\rm row}_k$ and $m_k^{\rm row}$ (and similarly for
columns) we always refer to the configuration in the same sentence,
unless  otherwise  specified.

For two points $p,q \in A \cup B$, we say that $p$ {\it lies to the
  left} (respectively {\it right}) of $q$ if $p_y = q_y$ and $p_x <
q_x$ (respectively $p_x > q_x$). In other words, they are in the same
row, and the first coordinate of $p$ is smaller (respectively larger)
than the first coordinate of $q$. We say that $p$ lies directly to the
left (respectively right) of $q$ if additionally $p$ and $q$ are
connected by an edge. Similarly, we say that $p$ {\it lies above}
(respectively {\it below}) $q$ if $p_x = q_x$ and $p_y > q_y$
(respectively $p_y < q_y$). Again, we say that $p$  lies {\it directly} above (respectively below) $q$ if additionally these two points are connected by an edge.

 We   will also say that the set $A^{\rm
  row}_k$ {\it lies to the left} (respectively {\it right}) of $B^{\rm
  row}_k$ if for every $p \in A^{\rm row}_k$ and $q \in B^{\rm row}_k$
the point $p$ lies to the left (respectively right) of $q$. (Note that
by definition $A^{\rm row}_k$ and $B^{\rm row}_k$ are in the same
row.) We also say that $A^{\rm row}_k$ lies {\it directly} to the left
of $B^{\rm row}_k$ if additionally there is a connection between one
of the points in $A^{\rm row}_k$ and one of the points in $B^{\rm
  row}_k$. An analogous  notion  is used for columns.

Furthermore, we say that a number of points from different rows are
{\it aligned} if their first coordinates are equal. We also say that
two sets are {\it aligned to the right} (or {\it left}) if their rightmost (leftmost) points are aligned. The same  notion  is also used for columns.

Finally, given a finite set $X \subset \mathbb{Z}^2$, we denote by $X + (a,b)$ the set consisting of all points of $X$ shifted by the vector $(a,b) \in \mathbb{Z}^2$.

\section{Connectedness,  separation, and interface}\label{sec:algo}

In this section, we  introduce a procedure in order to modify  an
arbitrary configuration $(A,B)$ into  another  configuration $(\hat{A},
\hat{B})$ with  specific additional properties, without increasing
the energy.   In particular,  this will prove   that for a
minimal configuration the sets $A$, $B$,   and $A \cup B$   are connected.

\subsection{Description of the  procedure}

The goal of this subsection is to present  a procedure allowing to
modify a configuration, making it  more regular in the following sense: not only the sets $A$ and $B$ are connected, but also for any $k = 1,...,N_{\rm row}$ and any $l = 1,...,N_{\rm col}$ the sets $A^{\rm row}_k$, $B^{\rm row}_k$, $(A \cup B)_k^{\rm row}$,  $A^{\rm col}_l$, $B^{\rm col}_l$, and $(A \cup B)_l^{\rm col}$  are connected. We start with the following preliminary result.

\begin{proposition}
Let $(A,B)$ be a configuration in the sense described above. If there are any empty rows (or columns) between any two rows (or columns) in $(A,B)$, then there exists a configuration $(\hat{A},\hat{B})$ with strictly smaller energy.
\end{proposition}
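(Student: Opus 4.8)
The plan is to show that the presence of an empty row (or column) strictly between two nonempty rows (or columns) always allows us to close the gap and save energy. Suppose, without loss of generality, that there is an empty row; let $R_j$ be the topmost empty row that has at least one nonempty row both above and below it. Split the configuration into the part $X^+$ lying strictly above $R_j$ and the part $X^-$ lying on or below $R_j$. My operation will be to translate the entire upper part $X^+$ downward by the vector $(0,-1)$, so that the empty row $R_j$ is removed; call the new configuration $(\hat A,\hat B)$. This is clearly still an admissible configuration with the same numbers of $A$- and $B$-points, since we merely applied a rigid translation to a subset of the points and the target row was empty (so no two points can collide).

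Next I would compare the energies. Write $E(A,B) = \frac12\sum V_{\rm sticky}$. The only interactions that change under the translation are those between a point in $X^+$ and a point in $X^-$; all interactions internal to $X^+$ are preserved (rigid translation) and likewise all interactions internal to $X^-$. Before the move, a point $p\in X^+$ in row $R_{j-1}$ and a point $q\in X^-$ in row $R_{j+1}$ are at vertical distance $2$, hence at Euclidean distance $\ge 2 > 1$, so they contribute $0$ to $E(A,B)$; points of $X^+$ in higher rows are even farther from $X^-$. Therefore the cross-interactions contribute exactly $0$ to $E(A,B)$. After the move, the topmost row of the translated $X^+$ occupies the old empty row $R_j$, which is directly adjacent to $R_{j+1}$, so at least one new edge appears between $X^+$ and $X^-$ — and I need to guarantee that at least one such edge really is created. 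This is where the key observation enters: since the original configuration had a nonempty row above $R_j$ and a nonempty row below $R_j$, after collapsing the gap there will be some column $C_l$ occupied both just above and just below the former gap line, producing a vertical edge. One clean way to force this: choose $R_j$ to be the empty row immediately below the \emph{bottommost} nonempty row of $X^+$; then the translated $X^+$ has its lowest row sitting in $R_j$, directly above $R_{j+1}\subset X^-$, and since $R_{j+1}$ is nonempty, pick any point $q\in R_{j+1}$; I must still ensure some point of the translated $X^+$ sits directly above $q$.

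That last point is the main obstacle, and I would handle it by choosing the gap-removing translation more carefully, or by an additional horizontal shift. Concretely: after translating $X^+$ down by $(0,-1)$, if no vertical edge between $X^+$ and $X^-$ has formed, then the lowest row of the translated $X^+$ and the highest row of $X^-$ occupy disjoint sets of columns; then I can additionally translate $X^+$ horizontally (by $(\pm1,0)$ repeatedly, or in one step by a suitable integer vector) until a column is shared, at which point at least one vertical edge is created while, crucially, no previously-present edge is destroyed (there were none between $X^+$ and $X^-$ to begin with, and internal edges of each part are untouched by any rigid translation). Hence the new configuration has at least one more edge of weight at least $\beta>0$, so $E(\hat A,\hat B)\le E(A,B)-\beta < E(A,B)$, i.e. $P(\hat A,\hat B) < P(A,B)$ by the identity \eqref{eq:eq}. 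The column case is identical after exchanging the roles of rows and columns. I would present the argument for rows in detail and simply remark that the column case follows by the obvious symmetry (reflection across the diagonal, which is an isometry of $\mathbb Z^2$ preserving $E$).
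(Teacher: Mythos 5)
Your overall strategy --- split the configuration at an empty row, rigidly translate one piece to close the gap (noting that interactions internal to each piece are preserved and that there were no cross-interactions to destroy), and then, if no new bond appears, shift one piece horizontally until a column is shared --- is exactly the paper's argument. However, one step fails as written. You translate $X^+$ down by only $(0,-1)$ and then assert that its lowest row lands directly above a nonempty row $R_{j+1}$ of $X^-$. Nothing in your choice of $R_j$ (the topmost empty row with nonempty rows on both sides) guarantees that $R_{j+1}$ is nonempty: if the gap consists of $l\ge 2$ consecutive empty rows, then after your single downward shift there is still a gap of $l-1\ge 1$ empty rows, no vertical edge can form, and your horizontal-shift fallback is powerless, because every point of the translated $X^+$ remains at Euclidean distance at least $2$ from every point of $X^-$. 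In that case the energy is unchanged, not strictly decreased, and the proof of strictness collapses.

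The fix is precisely the paper's move: translate by $(0,-l)$, where $l$ is the number of consecutive empty rows, so that the two pieces become vertically adjacent in a single step; the rest of your argument (cross-interactions were zero beforehand, internal interactions are invariant under rigid translation, and after an optional horizontal shift at least one new bond of weight at least $\beta>0$ is created) then goes through verbatim. Alternatively, you could iterate your one-row removal, observing that each intermediate step is energy-non-increasing and that the final step, which makes the pieces adjacent, is strictly decreasing. Either repair is one line, but as stated the claim of a strict drop is not justified when the gap is wider than one row.
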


\begin{proof}
Without restriction we present the argument for rows. Suppose that between rows $R_k$ and $R_{k+1}$ for some $k \in \{ 1,...,N_{\rm row} - 1 \}$ there are $l$ empty rows. Then, we can reduce  the  energy in the following way: denote by $(A',B')$ the configuration consisting of the top $k$ rows and by $(A'',B'')$ the configuration consisting of the bottom $N_{\rm row}-k$ rows. Then, we remove the empty rows, i.e., replace $(A'',B'')$ with $(A'',B'') + (0,l)$. Clearly, this does not increase the energy of the configuration $(A,B)$. If after this shift there is at least one connection between $A^{\rm row}_k \cup B^{\rm row}_k$ and $A^{\rm row}_{k+1} \cup B^{\rm row}_{k+1}$, the energy even decreases by at least $\beta$. Otherwise, if after this shift there are no connections between $A^{\rm row}_k \cup B^{\rm row}_k$ and $A^{\rm row}_{k+1} \cup B^{\rm row}_{k+1}$, we shift the configuration $(A',B')$ horizontally to make at least one connection. Again, the energy is decreased by at least $\beta$.
\end{proof}

Hence, in studying minimal configurations, we may assume that there
are no empty rows and columns. Now, we are ready to describe  a
modification procedure making  the configuration more regular. Notice that we may write the energy in the following way:
\begin{equation*}
E(A,B) = \sum_{k=1}^{N_{\rm row}} E^{\rm row}_k(A,B) + \sum_{k=1}^{N_{\rm row} -1} E_k^{\rm  inter }(A,B).      
\end{equation*}
Here, $E^{\rm row}_k(A,B)$ is the part of the energy given by interactions in the row $R_k$, namely 
\begin{equation}\label{eq: row energy}
E^{\rm row}_k(A,B) = \frac{1}{2} \sum_{ x_i,x_j  \in A^{\rm row}_k \cup B^{\rm row}_k} V_{ \rm sticky}(x_i, x_j),
\end{equation}
and $E_k^{\rm  inter }(A,B)$ is the part of the energy given by interactions between rows $R_k$ and $R_{k+1}$, namely
 \begin{equation*}
E_k^{\rm  inter }(A,B) = \sum_{ x_i  \in A^{\rm row}_k \cup B^{\rm row}_k, \,  x_j  \in A^{\rm row}_{k+1} \cup B^{\rm row}_{k+1}} V_{ \rm sticky }(x_i, x_j).
\end{equation*}
Now, let us see that we may bound $E^{\rm row}_k$ and $E_k^{\rm  inter }$ by expressions depending on $n^{\rm row}_k$ and $m^{\rm row}_k$. First, we estimate $E^{\rm row}_k$.

\begin{lemma}\label{lem:horizontal}
We have
\begin{equation*}
E^{\rm row}_k(A,B)  \geq \begin{cases} - (n^{\rm row}_k + m^{\rm row}_k) +2- \beta  & \text{if $n^{\rm row}_k > 0$, $m^{\rm row}_k > 0$,} \\
  -  (n^{\rm row}_k + m^{\rm row}_k) +1 & \text{else.}
  \end{cases} 
\end{equation*}
Moreover, this inequality is an equality if and only if the sets $A^{\rm row}_k$, $B^{\rm row}_k$, and $A^{\rm row}_k \cup B^{\rm row}_k$ are connected.
\end{lemma}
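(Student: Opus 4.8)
The plan is to compute $E^{\rm row}_k(A,B)$ exactly as a function of $n^{\rm row}_k$, $m^{\rm row}_k$ and a few combinatorial counts, and then to minimise the resulting expression. Writing $n = n^{\rm row}_k$ and $m = m^{\rm row}_k$, I would first observe that $E^{\rm row}_k(A,B) = -a - b - \beta c$, where $a$, $b$, $c$ denote the numbers of intra-row $A$--$A$, $B$--$B$, and $A$--$B$ bonds in $R_k$. Identifying $R_k$ with $\mathbb{Z}$, I would view $A^{\rm row}_k$, $B^{\rm row}_k$ and $(A \cup B)^{\rm row}_k$ as finite subsets of $\mathbb{Z}$ and let $p$, $q$, $r$ be their numbers of connected components (maximal runs of consecutive integers), with the convention that $\emptyset$ has $0$ components. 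Since a finite subset of $\mathbb{Z}$ with $N$ elements and $j$ components has exactly $N - j$ edges, one gets $a = n - p$, $b = m - q$, and $a + b + c = (n+m) - r$, whence $c = p + q - r$; this count is $\geq 0$ because every component of $(A \cup B)^{\rm row}_k$ meets $A^{\rm row}_k$ or $B^{\rm row}_k$ and thus contains at least one component of one of them. Substituting, one obtains the key identity
\[
E^{\rm row}_k(A,B) = -(n+m) + (1-\beta)(p+q) + \beta r.
\]

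With this identity in hand, the bound follows by elementary optimisation, using that $1-\beta > 0$ and $\beta > 0$ since $\beta \in (0,1)$. If $n > 0$ and $m > 0$, then $p \geq 1$, $q \geq 1$, and $r \geq 1$, so $(1-\beta)(p+q) + \beta r \geq 2(1-\beta) + \beta = 2-\beta$, giving the first case. If $m = 0$ (the case $n = 0$ being symmetric), then $(A \cup B)^{\rm row}_k = A^{\rm row}_k$, so $q = 0$ and $r = p \geq 1$, hence $(1-\beta)(p+q) + \beta r = p \geq 1$, giving the second case. In both cases $R_k$ being non-empty guarantees that the relevant component count is at least $1$.

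For the equality statement it then suffices to read off from the identity that equality holds precisely when $p$, $q$, $r$ attain their minimal admissible values: $p = q = r = 1$ in the first case, and $p = 1$ (equivalently $r = 1$) in the second case, and these conditions amount exactly to the connectedness of $A^{\rm row}_k$, $B^{\rm row}_k$ and $A^{\rm row}_k \cup B^{\rm row}_k$ (the empty ones among them being trivially connected); the converse implication is immediate by plugging the corresponding values back into the identity. I do not expect a genuine obstacle here: the only points requiring a little care are the bookkeeping identity $c = p + q - r$ together with its nonnegativity, and the treatment of the degenerate cases $n = 0$ or $m = 0$, where one also uses the standing assumption that the rows $R_1, \dots, R_{N_{\rm row}}$ are non-empty so that $n + m \geq 1$.
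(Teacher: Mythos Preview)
Your proof is correct and uses essentially the same idea as the paper: both rest on the fact that a finite subset of $\mathbb{Z}$ with $N$ points has at most $N-1$ nearest-neighbour bonds, with equality precisely when it is connected. The paper argues this directly and more verbally (bounding the total number of bonds, then the intraspecific ones, and using $\beta<1$), whereas you package the same counting via the component numbers $p,q,r$ and the clean identity $E^{\rm row}_k = -(n+m) + (1-\beta)(p+q) + \beta r$, which makes the equality analysis slightly more transparent; but there is no substantive difference in approach.
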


 This result is illustrated  in  Figure \ref{fig:singlerow}; assuming that both $A_k^{\rm row}$ and $B_k^{\rm row}$ consist of three points, we present three possible configurations. The configuration on top is optimal and is exactly of the form given in the statement of the lemma, while the other two configurations do not have the optimal energy. 

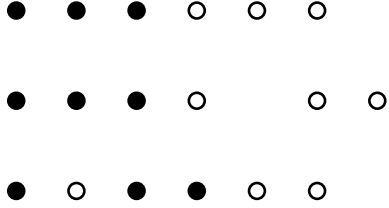
\begin{figure}[h!]

\begin{tikzpicture}[line cap=round,line join=round,>=triangle 45,x=0.8cm,y=0.6cm]
\clip(-3.5,-4.5) rectangle (3.5,0.5);
\begin{scriptsize}
\draw[line width=1pt,color=black] (0.,0.) circle (3pt);
\draw[line width=1pt,fill=black] (-3.,0.) circle (3pt);
\draw[line width=1pt,fill=black] (-2.,0.) circle (3pt);
\draw[line width=1pt,color=black] (1.,0.) circle (3pt);
\draw[line width=1pt,fill=black] (-1.,0.) circle (3pt);
\draw[line width=1pt,color=black] (3.,-2.) circle (3pt);
\draw[line width=1pt,color=black] (0.,-2.) circle (3pt);
\draw[line width=1pt,fill=black] (-1.,-2.) circle (3pt);
\draw[line width=1pt,fill=black] (-3.,-2.) circle (3pt);
\draw[line width=1pt,color=black] (2.,0.) circle (3pt);
\draw[line width=1pt,color=black] (2.,-2.) circle (3pt);
\draw[line width=1pt,fill=black] (-2.,-2.) circle (3pt);
\draw[line width=1pt,fill=black] (-3.,-4.) circle (3pt);
\draw[line width=1pt,fill=black] (-1.,-4.) circle (3pt);
\draw[line width=1pt,fill=black] (0.,-4.) circle (3pt);
\draw[line width=1pt,color=black] (-2.,-4.) circle (3pt);
\draw[line width=1pt,color=black] (1.,-4.) circle (3pt);
\draw[line width=1pt,color=black] (2.,-4.) circle (3pt);
\end{scriptsize}
\end{tikzpicture}

\caption{Different configurations inside a single row}
\label{fig:singlerow}
\end{figure}


\begin{proof}
We consider two cases. In the first case, we suppose that $m^{\rm row}_k = 0$ (a similar argument works if $n^{\rm row}_k = 0$): then, the desired inequality takes the form $E^{\rm row}_k(A,B) \geq -n^{\rm row}_k + 1$. Since $A^{\rm row}_k$ is a subset of a single row, $n^{\rm row}_k - 1$ is the maximum number of connections between points in $A^{\rm row}_k$ and it is achieved only if $A^{\rm row}_k$ is connected.

In the second case, we have $n^{\rm row}_k > 0$ and $m^{\rm row}_k > 0$. Since $A^{\rm row}_k \cup B^{\rm row}_k$ is a subset of a single row, the maximum number of connections (regardless of their type) is $n^{\rm row}_k + m^{\rm row}_k - 1$. It is achieved only if $(A \cup B)_k^{\rm row}$ is connected. Among these, at most $n^{\rm row}_k - 1$ are connections between points in $A^{\rm row}_k$ and at most $m^{\rm row}_k - 1$ are connections between points in $B^{\rm row}_k$. These numbers are achieved if and only if $A^{\rm row}_k$ and $B^{\rm row}_k$ are connected. Each of these connections contributes $-1$ to the energy and there can be at most $n^{\rm row}_k + m^{\rm row}_k - 2$ of them.  The remaining connections are between $A^{\rm row}_k$ and $B^{\rm row}_k$ contributing $-\beta$ to the energy. The fact that $\beta < 1$ yields the statement. 
\end{proof}

Now, we make a similar computation for $E_k^{\rm  inter }$.

\begin{lemma}\label{lem:vertical}
We have
\begin{align*}
E_k^{\rm  inter }(A,B)  \geq &-(1-\beta)\big(\min\lbrace n^{\rm row}_k,
                         n^{\rm row}_{k+1}\rbrace +\min\lbrace m^{\rm
                         row}_k, m^{\rm row}_{k+1}\rbrace \big) \\
  &-   \beta \min\lbrace n^{\rm row}_k + m^{\rm row}_k, n^{\rm row}_{k+1} + m^{\rm row}_{k+1}\rbrace. 
\end{align*}
Moreover,  equality is achieved if and only if the following conditions hold: \\
(1) There are $\min\lbrace n^{\rm row}_k,n^{\rm row}_{k+1}\rbrace$  points in $A^{\rm row}_k$ directly above points in $A^{\rm row}_{k+1}$; \\
(2) There are $\min\lbrace m^{\rm row}_k,m^{\rm row}_{k+1}\rbrace$ points in $B^{\rm row}_k$ directly above points in $B^{\rm row}_{k+1}$; \\
(3) Supposing that $n^{\rm row}_k + m^{\rm row}_k \geq n^{\rm row}_{k+1} + m^{\rm row}_{k+1}$, there is a point in $A^{\rm row}_k \cup B^{\rm row}_k$ directly above every point in $A^{\rm row}_{k+1} \cup B^{\rm row}_{k+1}$. Otherwise, if $n^{\rm row}_k + m^{\rm row}_k < n^{\rm row}_{k+1} + m^{\rm row}_{k+1}$, there is a point in $A^{\rm row}_{k+1} \cup B^{\rm row}_{k+1}$ directly below every point in $A^{\rm row}_k \cup B^{\rm row}_k$.
\end{lemma}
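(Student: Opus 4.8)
The plan is to bound $E_k^{\rm inter}(A,B)$ from below by carefully counting the number of vertical-type interactions that can occur between two consecutive rows, splitting them according to whether they are intraspecific ($A$--$A$ or $B$--$B$) or interspecific ($A$--$B$). Recall that each interaction between rows $R_k$ and $R_{k+1}$ is a bond between a point $p = (p_x,p_y)$ in row $R_k$ and the point directly below it, $(p_x, p_y-1)$, in row $R_{k+1}$; thus the total number of bonds is exactly the number of columns $x$ that contain a point in \emph{both} rows, and this number cannot exceed $\min\{n^{\rm row}_k+m^{\rm row}_k,\, n^{\rm row}_{k+1}+m^{\rm row}_{k+1}\}$. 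Each such bond contributes either $-1$ (if both endpoints are $A$-points or both are $B$-points) or $-\beta$ (if one is an $A$-point and one is a $B$-point). Writing $a$ for the number of $A$--$A$ vertical bonds, $b$ for the number of $B$--$B$ vertical bonds, and $c$ for the number of mixed vertical bonds, we have
\begin{equation*}
E_k^{\rm inter}(A,B) = -a - b - \beta c,
\end{equation*}
and the three quantities satisfy the constraints $a \le \min\{n^{\rm row}_k, n^{\rm row}_{k+1}\}$, $b \le \min\{m^{\rm row}_k, m^{\rm row}_{k+1}\}$, and $a+b+c \le \min\{n^{\rm row}_k+m^{\rm row}_k,\, n^{\rm row}_{k+1}+m^{\rm row}_{k+1}\}$.

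First I would justify the constraint on $a$: an $A$--$A$ vertical bond uses up one $A$-point in $R_k$ and one in $R_{k+1}$, and distinct bonds use distinct points in each row (a point has at most one neighbor directly below it), so $a$ is at most the smaller of the two counts $n^{\rm row}_k$, $n^{\rm row}_{k+1}$; the same reasoning gives the bound on $b$, and the bound on $a+b+c$ follows since the total number of vertical bonds is at most the number of columns meeting both rows, which is at most the smaller of the two row-sizes. Next I would convert the energy expression into the claimed lower bound. Since $\beta \in (0,1)$ we have $-1 \le -\beta < 0$, so to make $E_k^{\rm inter} = -a-b-\beta c$ as small as possible we want $a,b,c$ large, with priority to $a$ and $b$ (weight $1$) over $c$ (weight $\beta$). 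Writing $-a-b-\beta c = -(1-\beta)(a+b) - \beta(a+b+c)$ and using $a+b \le \min\{n^{\rm row}_k,n^{\rm row}_{k+1}\} + \min\{m^{\rm row}_k,m^{\rm row}_{k+1}\}$ together with $a+b+c \le \min\{n^{\rm row}_k+m^{\rm row}_k, n^{\rm row}_{k+1}+m^{\rm row}_{k+1}\}$ (both coefficients $1-\beta$ and $\beta$ being nonnegative) gives exactly
\begin{equation*}
E_k^{\rm inter}(A,B) \ge -(1-\beta)\big(\min\{n^{\rm row}_k,n^{\rm row}_{k+1}\} + \min\{m^{\rm row}_k,m^{\rm row}_{k+1}\}\big) - \beta\min\{n^{\rm row}_k+m^{\rm row}_k, n^{\rm row}_{k+1}+m^{\rm row}_{k+1}\}.
\end{equation*}

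For the equality characterisation I would trace back through the two inequalities used. Equality in the decomposition forces $a+b$ to attain its maximum, i.e. $a = \min\{n^{\rm row}_k,n^{\rm row}_{k+1}\}$ and $b = \min\{m^{\rm row}_k,m^{\rm row}_{k+1}\}$ — these are conditions (1) and (2) — and simultaneously forces $a+b+c$ to attain $\min\{n^{\rm row}_k+m^{\rm row}_k, n^{\rm row}_{k+1}+m^{\rm row}_{k+1}\}$, meaning every column that meets the smaller of the two rows also meets the larger, which is precisely condition (3) (with the case distinction on which row is larger). Conversely, if (1), (2), (3) all hold then $a,b,c$ attain these extremal values and the energy equals the stated bound. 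The main thing to be careful about here is consistency: when $n^{\rm row}_k+m^{\rm row}_k = n^{\rm row}_{k+1}+m^{\rm row}_{k+1}$ one must check that conditions (1)--(2) do not over-constrain things so that the maximal $a+b+c$ is still simultaneously achievable — this is where one verifies that the two bounds are compatible, i.e. that $\min\{n_k,n_{k+1}\}+\min\{m_k,m_{k+1}\} \le \min\{n_k+m_k,n_{k+1}+m_{k+1}\}$ always holds, which is elementary but should be stated. I expect this bookkeeping of the equality cases, rather than the inequality itself, to be the only delicate point.
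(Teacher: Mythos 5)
Your proposal is correct and follows essentially the same route as the paper: both decompose the inter-row energy as $-a-b-\beta c$ with $a\le\min\{n^{\rm row}_k,n^{\rm row}_{k+1}\}$, $b\le\min\{m^{\rm row}_k,m^{\rm row}_{k+1}\}$, $a+b+c\le\min\{n^{\rm row}_k+m^{\rm row}_k,n^{\rm row}_{k+1}+m^{\rm row}_{k+1}\}$, and then regroup the weights as $(1-\beta)(a+b)+\beta(a+b+c)$ to read off the bound and the equality conditions. Your explicit remark that $\min\{n_k,n_{k+1}\}+\min\{m_k,m_{k+1}\}\le\min\{n_k+m_k,n_{k+1}+m_{k+1}\}$ (so the two bounds are compatible) is a small but welcome addition that the paper leaves implicit.
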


 This result is illustrated  in  Figure \ref{fig:doublerow}. We present three configurations consisting of two rows; each of them has the form prescribed by Lemma \ref{lem:horizontal} inside both rows, but the alignment of the two rows is different. Only the top configuration is optimal. 

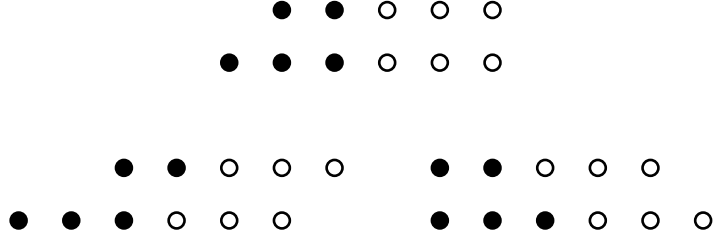
\begin{figure}[h!]

\begin{tikzpicture}[line cap=round,line join=round,>=triangle 45,x=0.7cm,y=0.7cm]
\clip(-7.3,-4.5) rectangle (6.3,0.5);
\begin{scriptsize}
\draw[line width=1pt,color=black] (0.,0.) circle (3pt);
\draw[line width=1pt,fill=black] (-2.,0.) circle (3pt);
\draw[line width=1pt,color=black] (1.,0.) circle (3pt);
\draw[line width=1pt,fill=black] (-1.,0.) circle (3pt);
\draw[line width=1pt,color=black] (2.,-1.) circle (3pt);
\draw[line width=1pt,color=black] (0.,-1.) circle (3pt);
\draw[line width=1pt,fill=black] (-1.,-1.) circle (3pt);
\draw[line width=1pt,color=black] (2.,0.) circle (3pt);
\draw[line width=1pt,color=black] (1.,-1.) circle (3pt);
\draw[line width=1pt,fill=black] (-2.,-1.) circle (3pt);
\draw[line width=1pt,fill=black] (1.,-3.) circle (3pt);
\draw[line width=1pt,fill=black] (-3.,-1.) circle (3pt);
\draw[line width=1pt,fill=black] (2.,-3.) circle (3pt);
\draw[line width=1pt,color=black] (5.,-3.) circle (3pt);
\draw[line width=1pt,color=black] (4.,-3.) circle (3pt);
\draw[line width=1pt,color=black] (3.,-3.) circle (3pt);
\draw[line width=1pt,fill=black] (1.,-4.) circle (3pt);
\draw[line width=1pt,fill=black] (2.,-4.) circle (3pt);
\draw[line width=1pt,fill=black] (3.,-4.) circle (3pt);
\draw[line width=1pt,color=black] (4.,-4.) circle (3pt);
\draw[line width=1pt,color=black] (5.,-4.) circle (3pt);
\draw[line width=1pt,color=black] (6.,-4.) circle (3pt);
\draw[line width=1pt,fill=black] (-5.,-3.) circle (3pt);
\draw[line width=1pt,fill=black] (-4.,-3.) circle (3pt);
\draw[line width=1pt,color=black] (-3.,-3.) circle (3pt);
\draw[line width=1pt,color=black] (-2.,-3.) circle (3pt);
\draw[line width=1pt,color=black] (-1.,-3.) circle (3pt);
\draw[line width=1pt,color=black] (-2.,-4.) circle (3pt);
\draw[line width=1pt,color=black] (-3.,-4.) circle (3pt);
\draw[line width=1pt,color=black] (-4.,-4.) circle (3pt);
\draw[line width=1pt,fill=black] (-5.,-4.) circle (3pt);
\draw[line width=1pt,fill=black] (-6.,-4.) circle (3pt);
\draw[line width=1pt,fill=black] (-7.,-4.) circle (3pt);
\end{scriptsize}
\end{tikzpicture}

\caption{Different alignments of adjacent rows}
\label{fig:doublerow}
\end{figure}

\begin{proof}
First, as there are   $n^{\rm row}_k + m^{\rm row}_k$ points in $A^{\rm row}_k \cup B^{\rm row}_k$ and $n^{\rm row}_{k+1} + m^{\rm row}_{k+1}$ points in $A^{\rm row}_{k+1} \cup B^{\rm row}_{k+1}$,  there are at most $\min\lbrace n^{\rm row}_k + m^{\rm row}_k, n^{\rm row}_{k+1} + m^{\rm row}_{k+1}\rbrace$ connections between points in $A^{\rm row}_k \cup B^{\rm row}_k$ and $A^{\rm row}_{k+1} \cup B^{\rm row}_{k+1}$, regardless of their type. Among these, we denote the number of connections between points in $A^{\rm row}_k$ and $A^{\rm row}_{k+1}$ by $\tilde{n}_k$ and the number of  connections between points in $B^{\rm row}_k$ and $B^{\rm row}_{k+1}$ by $\tilde{m}_k$. We have $\tilde{n}_k \le \min\lbrace n^{\rm row}_k, n^{\rm row}_{k+1}\rbrace$ and $\tilde{m}_k \le \min\lbrace m^{\rm row}_k, m^{\rm row}_{k+1}\rbrace$ with equality if this many points in $A^{\rm row}_{k+1}$ are placed directly under points in $A^{\rm row}_k$ (and similarly for $B^{\rm row}_k$ and $B^{\rm row}_{k+1}$). Each of these connections contributes $-1$ to the energy, i.e., a total contribution of $-\tilde{n}_k - \tilde{m}_k$. Then, there are at most  $\min\lbrace n^{\rm row}_k + m^{\rm row}_k, n^{\rm row}_{k+1} + m^{\rm row}_{k+1}\rbrace - (\tilde{n}_k + \tilde{m}_k)$ possible connections which need to be either connections between points in $A^{\rm row}_k$ and $B^{\rm row}_{k+1}$ or between points in $B^{\rm row}_k$ and $A^{\rm row}_{k+1}$. Either way, each of these connections contributes $-\beta$ to the energy. In conclusion, we obtain the desired inequality, with equality only if $\tilde{n}_k = \min\lbrace n^{\rm row}_k, n^{\rm row}_{k+1}\rbrace$, $\tilde{m}_k = \min\lbrace m^{\rm row}_k, m^{\rm row}_{k+1}\rbrace$, and if there are $\min\lbrace n^{\rm row}_k + m^{\rm row}_k, n^{\rm row}_{k+1} + m^{\rm row}_{k+1}\rbrace$ connections between  $A^{\rm row}_k \cup B^{\rm row}_k$ and $A^{\rm row}_{k+1} \cup B^{\rm row}_{k+1}$.
\end{proof}

In light of these estimates, we  describe a simple 
modification procedure making  any configuration more regular. For any configuration $(A,B)$, we construct a configuration $(\hat{A},\hat{B})$ having the same number of $A$- and $B$-points in each row as $(A,B)$ such that the energy is lower or equal and $(\hat{A},\hat{B})$ has some additional structure properties.

\textit{Step 0:} We start with the first row from the top. We let $\hat{A}_1$ be a connected set in a single row consisting of  $n^{\rm row}_1$  atoms and let $\hat{B}_1$ be the connected set in the same row with  $m^{\rm row}_1$   points right of $\hat{A}_1$, in such a way that there is a connection between $\hat{A}_1$ and $\hat{B}_1$. By Lemma~\ref{lem:horizontal}, we have $ E^{\rm row}_1 (\hat{A},\hat{B}) \leq  E^{\rm row}_1 (A,B)$.

\textit{Step $k$} (for $k = 1,..., N_{\rm row} -1$): We suppose that the sets in the previous steps have been constructed in such a way that $\hat{A}_{k}$, $\hat{B}_k$, and $\hat{A}_k \cup \hat{B}_k$ are connected, and $\hat{A}_k$ lies on the left of $\hat{B}_k$. We will now define $\hat{A}_{k+1}$ and $\hat{B}_{k+1}$. To this end, we distinguish four cases.

\textit{Case 1:} $n^{\rm row}_k \leq n^{\rm row}_{k+1}$ and $m^{\rm row}_k \leq m^{\rm row}_{k+1}$. We place $n^{\rm row}_k$ points of $\hat{A}_{k+1}$ directly below $\hat{A}_k$. Then, we put the remaining $n^{\rm row}_{k+1} - n^{\rm row}_k$ points to the left of the previously placed points, so that $\hat{A}_{k+1}$ is connected. Similarly, we place $m^{\rm row}_k$ points from $\hat{B}_{k+1}$ directly below $\hat{B}_k$ and the remaining $m^{\rm row}_{k+1} - m^{\rm row}_k$ points to the right of the previously placed points, so that $\hat{B}_{k+1}$ is connected. By Lemma \ref{lem:horizontal}, we have $E^{\rm row}_{k+1}(\hat{A},\hat{B}) \leq E^{\rm row}_{k+1}(A,B)$, and by Lemma \ref{lem:vertical}, we have $E_k^{\rm  inter }(\hat{A},\hat{B}) \leq E_k^{\rm  inter }(A,B)$.  The situation is presented in Figure \ref{fig:procedure} (the top configuration). 

\begin{figure}[h!]

\begin{tikzpicture}[line cap=round,line join=round,>=triangle 45,x=0.8cm,y=0.8cm]
\clip(-6.5,-4.5) rectangle (5.5,1.5);
\begin{scriptsize}
\draw[line width=1pt,color=black] (0.,0.) circle (3pt);
\draw[line width=1pt,fill=black] (-2.,1.) circle (3pt);
\draw[line width=1pt,color=black] (1.,1.) circle (3pt);
\draw[line width=1pt,fill=black] (-1.,1.) circle (3pt);
\draw[line width=1pt,color=black] (1.,0.) circle (3pt);
\draw[line width=1pt,color=black] (0.,1.) circle (3pt);
\draw[line width=1pt,fill=black] (-3.,0.) circle (3pt);
\draw[line width=1pt,color=black] (3.,0.) circle (3pt);
\draw[line width=1pt,color=black] (2.,0.) circle (3pt);
\draw[line width=1pt,fill=black] (-1.,0.) circle (3pt);
\draw[line width=1pt,fill=black] (1.,-3.) circle (3pt);
\draw[line width=1pt,fill=black] (-2.,0.) circle (3pt);
\draw[line width=1pt,fill=black] (2.,-3.) circle (3pt);
\draw[line width=1pt,color=black] (5.,-3.) circle (3pt);
\draw[line width=1pt,color=black] (4.,-3.) circle (3pt);
\draw[line width=1pt,color=black] (3.,-3.) circle (3pt);
\draw[line width=1pt,fill=black] (0.,-4.) circle (3pt);
\draw[line width=1pt,fill=black] (1.,-4.) circle (3pt);
\draw[line width=1pt,fill=black] (2.,-4.) circle (3pt);
\draw[line width=1pt,color=black] (4.,-4.) circle (3pt);
\draw[line width=1pt,color=black] (5.,-4.) circle (3pt);
\draw[line width=1pt,fill=black] (-6.,-3.) circle (3pt);
\draw[line width=1pt,fill=black] (-5.,-3.) circle (3pt);
\draw[line width=1pt,color=black] (-4.,-3.) circle (3pt);
\draw[line width=1pt,color=black] (-3.,-3.) circle (3pt);
\draw[line width=1pt,color=black] (-2.,-3.) circle (3pt);
\draw[line width=1pt,color=black] (-3.,-4.) circle (3pt);
\draw[line width=1pt,fill=black] (-4.,-4.) circle (3pt);
\draw[line width=1pt,fill=black] (-5.,-4.) circle (3pt);
\draw[line width=1pt,fill=black] (-6.,-4.) circle (3pt);
\draw[line width=1pt,fill=black] (3.,-4.) circle (3pt);
\end{scriptsize}
\end{tikzpicture}

\caption{Different cases of the regularisation procedure}
\label{fig:procedure}
\end{figure}
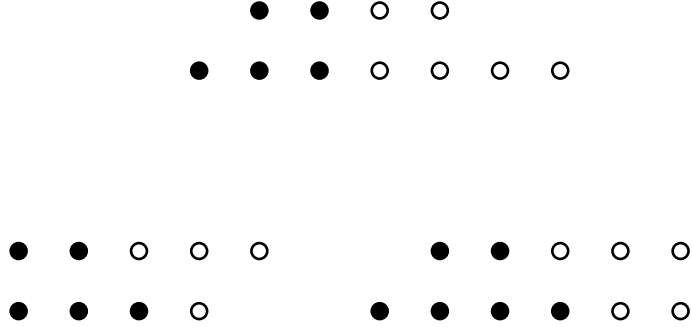

\textit{Case 2:} $n^{\rm row}_k > n^{\rm row}_{k+1}$ and $m^{\rm row}_k > m^{\rm row}_{k+1}$. We place all the points of $\hat{A}_{k+1}$ directly below $\hat{A}_k$, starting from the right. Then, we place all the points of $\hat{B}_{k+1}$ directly below $\hat{B}_k$, starting from the left. In this way, the sets $\hat{A}_{k+1}$, $\hat{B}_{k+1}$ and $\hat{A}_{k+1} \cup \hat{B}_{k+1}$ are connected. Again, by Lemma \ref{lem:horizontal} we have $E^{\rm row}_{k+1}(\hat{A},\hat{B}) \leq E^{\rm row}_{k+1}(A,B)$ and by Lemma \ref{lem:vertical} we have $E_k^{\rm  inter }(\hat{A},\hat{B}) \leq E_k^{\rm  inter }(A,B)$.  The situation (after exchanging the roles of the two rows) is presented in the top configuration in Figure~\ref{fig:procedure}. 

\textit{Case 3:} $n^{\rm row}_k \le  n^{\rm row}_{k+1}$ and $m^{\rm row}_k > m^{\rm row}_{k+1}$. First, we put $n^{\rm row}_k$ points of $\hat{A}_{k+1}$ directly below $\hat{A}_k$. Then, we consider two possibilities:

- If $n^{\rm row}_k + m^{\rm row}_k \geq n^{\rm row}_{k+1} + m^{\rm row}_{k+1}$, we place the remaining $n^{\rm row}_{k+1} - n^{\rm row}_k$ points of $\hat{A}_{k+1}$ under $\hat{B}_k$, starting from the left so that $\hat{A}_{k+1}$ is connected. Then, we place the $m^{\rm row}_{k+1}$ points of $\hat{B}_{k+1}$ to the right of the previously placed points, so that $\hat{B}_{k+1}$ and $\hat{A}_{k+1} \cup \hat{B}_{k+1}$ are connected.  The situation is presented in Figure~\ref{fig:procedure} (the left configuration). 

- If $n^{\rm row}_k + m^{\rm row}_k < n^{\rm row}_{k+1} + m^{\rm row}_{k+1}$, we place the $m^{\rm row}_{k+1}$ points of $\hat{B}_{k+1}$ below points in $\hat{B}_k$, starting from the right, so that $\hat{B}_{k+1}$ is connected. Then, we place $m^{\rm row}_k - m^{\rm row}_{k+1}$ points of $\hat{A}_{k+1}$ between the two sets of previously placed points. Finally, we place the remaining points of $\hat{A}_{k+1}$ to the left of all points placed so far, so that $\hat{A}_{k+1} \cup \hat{B}_{k+1}$ is connected.  The situation is presented in Figure~\ref{fig:procedure} (the right configuration). 

In both cases, by Lemma~\ref{lem:horizontal} we have $E^{\rm row}_{k+1}(\hat{A},\hat{B}) \leq E^{\rm row}_{k+1}(A,B)$ and by Lemma~\ref{lem:vertical} we get $E_k^{\rm  inter }(\hat{A},\hat{B}) \leq E_k^{\rm  inter }(A,B)$.

\textit{Case 4:} $n^{\rm row}_k > n^{\rm row}_{k+1}$ and $m^{\rm row}_k \le m^{\rm row}_{k+1}$. We proceed as in Case 3 with the roles of $A$ and $B$ interchanged, with 'left' and 'right' also interchanged. Again, by Lemma \ref{lem:horizontal} we have $E^{\rm row}_{k+1}(\hat{A},\hat{B}) \leq E^{\rm row}_{k+1}(A,B)$ and by Lemma \ref{lem:vertical} we have $E_k^{\rm  inter }(\hat{A},\hat{B}) \leq E_k^{\rm  inter }(A,B)$.  The situation is presented in Figure \ref{fig:procedure} (the two bottom configuration) after exchanging the roles of the two colors.

\begin{proposition}\label{prop:algorithmdecreasesenergy}
The  procedure  described above modifies a configuration $(A,B)$ into a configuration $(\hat{A},\hat{B})$ with $E(\hat{A},\hat{B}) \leq E(A,B)$. Moreover, if one of the sets $A^{\rm row}_k$, $B^{\rm row}_k$, or $(A \cup B)_k^{\rm row}$, for $k=1,\ldots,N_{\rm row}$, is not connected, or one of the properties {\rm (1)--(3)} in Lemma \ref{lem:vertical} is violated,  then $E(\hat{A},\hat{B}) < E(A,B)$.
\end{proposition}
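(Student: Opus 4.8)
The plan is to compare the two configurations term by term via the additive decomposition $E(A,B)=\sum_{k=1}^{N_{\rm row}}E^{\rm row}_k(A,B)+\sum_{k=1}^{N_{\rm row}-1}E_k^{\rm inter}(A,B)$ (interactions occur only within a row or between two rows adjacent in $\mathbb{Z}^2$, so this decomposition is exhaustive, terms between non-adjacent non-empty rows simply vanishing). The point that makes this work is that the lower bounds supplied by Lemmas~\ref{lem:horizontal} and~\ref{lem:vertical} depend on the configuration only through the row occupation numbers $n^{\rm row}_k,m^{\rm row}_k$, and that by construction $(\hat A,\hat B)$ has exactly the same occupation numbers in every row as $(A,B)$. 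Hence it suffices to prove that $(\hat A,\hat B)$ realises \emph{equality} in each of these bounds; then $E^{\rm row}_k(\hat A,\hat B)$ equals the common lower bound and so $E^{\rm row}_k(\hat A,\hat B)\le E^{\rm row}_k(A,B)$, and likewise $E_k^{\rm inter}(\hat A,\hat B)\le E_k^{\rm inter}(A,B)$, and summing over $k$ yields $E(\hat A,\hat B)\le E(A,B)$.

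To establish equality everywhere, I would argue by induction on the step index $k$, with the invariant that after Step~$k$ the sets $\hat A^{\rm row}_j$, $\hat B^{\rm row}_j$ and $(\hat A\cup\hat B)^{\rm row}_j$ are connected for all $j\le k$, that $\hat A^{\rm row}_j$ lies entirely to the left of $\hat B^{\rm row}_j$, and that the pair of rows $(j,j+1)$ satisfies conditions (1)--(3) of Lemma~\ref{lem:vertical} for all $j\le k-1$. Step~0 gives the base case, as $\hat A_1$ and $\hat B_1$ are laid out as two adjacent connected intervals. For the inductive step I would run through the four cases of the procedure: in each, $\hat A_{k+1}$ and $\hat B_{k+1}$ are built as two connected, adjacent, correctly ordered intervals, which both propagates the invariant and provides the connectedness hypothesis needed to apply Lemma~\ref{lem:horizontal} with equality to row $k+1$; and the \emph{directly below} placements dictated by the construction create exactly $\min\{n^{\rm row}_k,n^{\rm row}_{k+1}\}$ vertically matched $A$-points, $\min\{m^{\rm row}_k,m^{\rm row}_{k+1}\}$ vertically matched $B$-points, and $\min\{n^{\rm row}_k+m^{\rm row}_k,\,n^{\rm row}_{k+1}+m^{\rm row}_{k+1}\}$ vertically matched points in total, which is precisely conditions (1)--(3). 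In Cases~3 and~4 one splits further according to the sign of $(n^{\rm row}_k+m^{\rm row}_k)-(n^{\rm row}_{k+1}+m^{\rm row}_{k+1})$, which fixes which alternative of condition (3) is met. Once the invariant holds for all $k$, Lemma~\ref{lem:horizontal} gives equality in every $E^{\rm row}_k(\hat A,\hat B)$ and Lemma~\ref{lem:vertical} in every $E_k^{\rm inter}(\hat A,\hat B)$, completing the proof of $E(\hat A,\hat B)\le E(A,B)$.

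For the strict inequality, assume first that one of $A^{\rm row}_{k_0}$, $B^{\rm row}_{k_0}$, $(A\cup B)^{\rm row}_{k_0}$ is disconnected. The equality characterisation in Lemma~\ref{lem:horizontal} then forces $E^{\rm row}_{k_0}(A,B)$ to be strictly above its lower bound, hence strictly above $E^{\rm row}_{k_0}(\hat A,\hat B)$; together with $E^{\rm row}_k(A,B)\ge E^{\rm row}_k(\hat A,\hat B)$ and $E_k^{\rm inter}(A,B)\ge E_k^{\rm inter}(\hat A,\hat B)$ for all other $k$, summing the decomposition gives $E(A,B)>E(\hat A,\hat B)$. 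Symmetrically, if conditions (1)--(3) of Lemma~\ref{lem:vertical} fail for some $k_0$, the equality characterisation in that lemma gives $E_{k_0}^{\rm inter}(A,B)>E_{k_0}^{\rm inter}(\hat A,\hat B)$, and the same summation closes the argument.

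I expect the only genuinely delicate part to be the bookkeeping in the inductive step, in particular Cases~3 and~4, where points of one species are inserted between two groups of points of the other species: there one has to confirm that $\hat A_{k+1}\cup\hat B_{k+1}$ is still a single connected interval and that $\hat A_{k+1}$ remains entirely to the left of $\hat B_{k+1}$ (up to the admissible exchange of the roles of $A$ and $B$), which is exactly what lets the invariant close up so that both lemmas apply with equality throughout. Everything else reduces to the additivity of $E$ and to Lemmas~\ref{lem:horizontal} and~\ref{lem:vertical}.
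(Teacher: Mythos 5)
Your proposal is correct and follows essentially the same route as the paper: the paper likewise sums the row and inter-row contributions, uses that the construction preserves the occupation numbers $n^{\rm row}_k,m^{\rm row}_k$ and attains the lower bounds of Lemmas \ref{lem:horizontal} and \ref{lem:vertical} case by case (your inductive invariant is exactly what the case analysis of the procedure verifies), and derives strictness from the equality characterisations in those two lemmas.
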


\begin{proof}
The  construction  ensures that the configuration $(\hat{A},\hat{B})$ has the same number of rows as $(A,B)$. Hence, we compute
\begin{align*}
E(\hat{A},\hat{B}) & = \sum_{k=1}^{N_{\rm row} } E^{\rm row}_k(\hat{A},\hat{B}) + \sum_{k=1}^{N_{\rm row}-1} E_k^{\rm  inter }(\hat{A},\hat{B}) \leq \sum_{k=1}^{N_{\rm row}} E^{\rm row}_k(A,B) + \sum_{k=1}^{N_{\rm row}-1} E_k^{\rm  inter }(A,B) \\ & = E(A,B).
\end{align*}
In view of Lemma \ref{lem:horizontal}, we obtain strict inequality if one of the sets $A^{\rm row}_k$, $B^{\rm row}_k$ or $(A \cup B)_k^{\rm row}$ is not connected. In a similar fashion, we get  strict inequality whenever one of the properties (1)--(3) in Lemma \ref{lem:vertical} does not hold.
\end{proof}

In particular, for optimal configurations $(A,B)$, all sets $A^{\rm
  row}_k$, $B^{\rm row}_k$, and $(A \cup B)_k^{\rm row}$ are
connected. In other words, inside any row we have first all points of
one type and then all points of the other type without any gaps in
between. Moreover, we may make use of this  procedure  (and prove an analogue of  Lemma \ref{lem:horizontal}--Proposition \ref{prop:algorithmdecreasesenergy})  for columns in place of rows. Hence, the sets $A^{\rm col}_k$, $B^{\rm col}_k$, and $(A \cup B)_k^{\rm col}$ are connected. In other words, given an optimal configuration, in each column there are first all points of one type and then all points of the other type without any gaps in between. In particular, as a consequence, we get an important property of any minimising configuration.

\begin{theorem}\label{thm:connected}
Suppose that $(A,B)$ is an optimal configuration. Then $A$ and $B$ are connected.
\end{theorem}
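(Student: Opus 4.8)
The plan is to deduce the statement from the regularity already established: Proposition~\ref{prop:algorithmdecreasesenergy} and its column counterpart (obtained by running the same procedure with rows and columns interchanged). For an optimal pair $(A,B)$ these give: every set $A^{\rm row}_k$, $B^{\rm row}_k$, $(A\cup B)^{\rm row}_k$, $A^{\rm col}_k$, $B^{\rm col}_k$, $(A\cup B)^{\rm col}_k$ is connected; there are no empty rows or columns between occupied ones; and conditions (1)--(3) of Lemma~\ref{lem:vertical}, together with their column analogues, hold for every pair of consecutive rows, respectively columns. It suffices to treat $A$, the argument for $B$ being symmetric.

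First I would record that $A\cup B$ is connected: since there are no empty rows, any two consecutive rows $R_k$, $R_{k+1}$ are non-empty, condition~(3) of Lemma~\ref{lem:vertical} guarantees at least one edge between $(A\cup B)^{\rm row}_k$ and $(A\cup B)^{\rm row}_{k+1}$, and each $(A\cup B)^{\rm row}_k$ is connected, so induction on $N_{\rm row}$ gives the claim. The same type of argument reduces the connectedness of $A$ to the statement that the rows meeting $A$ form a set of consecutive integers: if these rows are $\{k_0,\dots,k_1\}$, then for $k_0\le k<k_1$ both $A^{\rm row}_k$ and $A^{\rm row}_{k+1}$ are non-empty, so condition~(1) of Lemma~\ref{lem:vertical} yields $\min\{n^{\rm row}_k,n^{\rm row}_{k+1}\}\ge 1$ points of $A^{\rm row}_k$ lying directly above points of $A^{\rm row}_{k+1}$, hence at least one $A$--$A$ edge between the two rows; combined with the connectedness of each $A^{\rm row}_k$, induction again gives that $A$ is connected.

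It remains to show that the rows meeting $A$ are consecutive, and this is where I expect the real difficulty to lie. Assume not. Since each $A^{\rm row}_k$ is connected it lies in a single component of $A$, so distinct components of $A$ have disjoint sets of rows; as each component is connected, its set of rows is an interval, and we may list the components $A^{(1)},\dots,A^{(p)}$ with $p\ge 2$ from top to bottom. Applying the same reasoning to columns, distinct components also occupy disjoint sets of columns. Let $A^{(1)}$ occupy rows $R_{p_1},\dots,R_{q_1}$ and $A^{(2)}$ rows $R_{p_2},\dots,R_{q_2}$ with $q_1<p_2$. If $p_2=q_1+1$, then $n^{\rm row}_{q_1}\ge 1$ and $n^{\rm row}_{q_1+1}\ge 1$, so condition~(1) of Lemma~\ref{lem:vertical} produces an $A$--$A$ edge joining $A^{(1)}$ and $A^{(2)}$, a contradiction; hence $p_2\ge q_1+2$ and each row $R_j$ with $q_1<j<p_2$ meets no point of $A$, i.e.\ $R_j\subseteq B$ (and $R_j\neq\emptyset$, as there are no empty rows). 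The analogous dichotomy for columns shows moreover that the column range of $A^{(1)}$ is separated from that of every other component by at least one all-$B$ column.

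The crux, and the main obstacle, is to derive a contradiction from this picture: components of $A$ sitting in pairwise disjoint bounding boxes, each surrounded by full moats of $B$-rows and $B$-columns, all contained in the connected set $A\cup B$. The route I would try first is combinatorial: track, through the all-$B$ rows $R_{q_1+1},\dots,R_{p_2-1}$, the nesting of the column ranges of consecutive rows forced by condition~(3) of Lemma~\ref{lem:vertical} --- the column range of $R_{q_1}$ must overlap the columns of $A^{(1)}$ and that of $R_{p_2}$ the columns of $A^{(2)}$, while these two column sets are disjoint and, by the previous paragraph, not even adjacent --- and show that this forces some column of $A\cup B$ to be disconnected, or some pair of consecutive rows to violate one of conditions~(1)--(3), contradicting the regularity recalled above. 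Should this bookkeeping prove too heavy, the alternative is to argue directly with optimality: detach the block of rows $R_1,\dots,R_{q_1}$ carrying $A^{(1)}$, delete the intervening all-$B$ rows and glue the two remaining blocks so that $A^{(1)}$ gains a new neighbour in $A$; using conditions~(1)--(3) of Lemmas~\ref{lem:horizontal}--\ref{lem:vertical} one checks that no row or inter-row contribution increases while at least one $A$--$A$ bond is created, so the energy strictly decreases, contradicting the minimality of $(A,B)$.
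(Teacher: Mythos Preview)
Your structural analysis is sound up to the point where you isolate two components $A^{(1)}$, $A^{(2)}$ occupying disjoint row intervals and disjoint column intervals, separated by all-$B$ rows and all-$B$ columns. But from there both of your proposed finishes fail. The combinatorial route cannot succeed in principle: the hypotheses you have available (connectedness of every $A^{\rm row}_k$, $B^{\rm row}_k$, $(A\cup B)^{\rm row}_k$, their column analogues, and conditions (1)--(3) of Lemma~\ref{lem:vertical} for rows and columns) do \emph{not} force $A$ to be connected. Take the $5\times 5$ block whose rows, top to bottom, read $AABBB$, $AABBB$, $BBBBB$, $BBBAA$, $BBBAA$. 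One checks directly that all six families of row/column slices are connected and that conditions (1)--(3) hold for every pair of adjacent rows and every pair of adjacent columns, yet $A$ has two components. So no bookkeeping from these hypotheses alone yields a contradiction. Your energy-decrease route is also broken as stated: the ``intervening all-$B$ rows'' carry $B$-points, so deleting them changes $N_B$, and translating the top block downward collides with those same $B$-points.

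What the paper does, and what is genuinely needed, is to \emph{run} the row-based procedure $(A,B)\to(\hat A,\hat B)$ rather than merely cite its consequences. Since $(A,B)$ is optimal, $(\hat A,\hat B)$ has the same energy and is again optimal; but the procedure forces the $\hat A$-pieces above and below the empty-$A$ rows into specific horizontal positions (the proof tracks Cases~2/4 at row $k_1$ and Cases~1/3 at row $k_3-1$), and in every case one finds that some $\hat A^{\rm col}_l$ is disconnected or that the column analogue of condition~(1) is violated between adjacent columns. Applying Proposition~\ref{prop:algorithmdecreasesenergy} \emph{for columns} to $(\hat A,\hat B)$ then strictly lowers the energy, contradicting optimality. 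In the $5\times 5$ example above, the row procedure outputs $AABBB$ in rows $1,2,4,5$ and $BBBBB$ in row $3$, so $\hat A^{\rm col}_1=\{1,2,4,5\}$ is visibly disconnected. The missing ingredient is thus not a sharper reading of the equality conditions but the extra rigidity imposed by the canonical form the procedure produces.
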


\begin{proof}
Suppose by contradiction that $A$ is not connected (we proceed similarly for $B$). First of all, let us notice that for each $k = 1,...,N_{\rm row}$ the set $A^{\rm row}_k$ is connected. Otherwise, by Proposition \ref{prop:algorithmdecreasesenergy} we find that $(A,B)$ was not an optimal configuration.

 Let us first suppose  that $n^{\rm row}_k > 0$ for all $k \in 1,...,N_{\rm row}$ (i.e., $A^{\rm row}_k \neq \emptyset$). Since every $A^{\rm row}_k$ is connected, if $A$ is not connected, it means that there is no connection between $A^{\rm row}_k$ and $A^{\rm row}_{k+1}$ for some choice of $k$. In this case, by Lemma~\ref{lem:vertical} and by Proposition \ref{prop:algorithmdecreasesenergy}  we find that $(A,B)$ was not an optimal configuration. 

Hence, the only remaining possibility that $A$ is not connected is
that there exist $k_1 < k_2 < k_3$ such that $n^{\rm row}_{k_1},
n^{\rm row}_{k_3} > 0$ and $n^{\rm row}_{k_2} = 0$ (i.e., $A^{\rm
  row}_{k_1}, A^{\rm row}_{k_3} \neq \emptyset$ and $A^{\rm row}_{k_2}
= \emptyset$). Without loss of generality, we may require that for
every $k = k_1 + 1,...,k_3 - 1$ the set $A^{\rm row}_{k}$ is
empty. Let us apply the reorganisation $(A,B) \rightarrow
(\hat{A},\hat{B})$ using the  procedure  described above. Clearly, $(\hat{A},\hat{B})$ is still optimal by Proposition \ref{prop:algorithmdecreasesenergy}.  Then, for $k = k_1$ we are either in Case 2 or in Case 4 of the  procedure.  We distinguish these two cases.

In the first one, suppose that for $k = k_1$ Case~2 of the  procedure
applies. Then, the leftmost point of $\hat{B}_{k_1+1}$ lies directly
below the leftmost point of $\hat{B}_{k_1}$. Then, since for every $k
= k_1 + 1,...,k_3 - 1$ the set $A^{\rm row}_{k}$ is empty, Case 1 or 3
of the  procedure  shows that also the leftmost point of $\hat{B}_k$
lies below the leftmost point of $\hat{B}_{k_1+1}$ (hence below the
leftmost point of $\hat{B}_{k_1}$). Now, for $k = k_3 - 1$, when we
place the sets $\hat{A}_{k_3}$ and $\hat{B}_{k_3}$, we either fall
into Case 1 or Case 3 in the description of the  procedure.  In Case
1, the leftmost point of $\hat{B}_{k_3}$ is again placed below the
leftmost point of $\hat{B}_{k_1}$. Then, the rightmost point of
$\hat{A}_{k_3}$ is placed below the rightmost point of
$\hat{A}_{k_1}$.  Now, one reaches a contradiction by following
the same construction of
Proposition~\ref{prop:algorithmdecreasesenergy} by exchanging the role
of rows and columns.  
In Case 3,  we either have that a point of $\hat{A}_{k_3}$ is placed below a point of $\hat{A}_{k_1}$, which as above is a contradiction to Proposition~\ref{prop:algorithmdecreasesenergy}, or  
the leftmost point of $\hat{A}_{k_3}$ is placed below the leftmost point of $\hat{B}_{k_1}$.  In particular, the leftmost point of $\hat{A}_{k_3}$ is placed   one point to the right of the rightmost point of $\hat{A}_{k_1}$.  Then, by  Lemma~\ref{lem:vertical}(1)  and  Proposition~\ref{prop:algorithmdecreasesenergy}  for columns in place of rows  we again see that the energy of $(\hat{A},\hat{B})$  was not minimal, a contradiction.  The situation is presented in the top line of Figure \ref{fig:threerows} in a simplified setting with $k_1 = 1$ and $k_3 = 3$. 

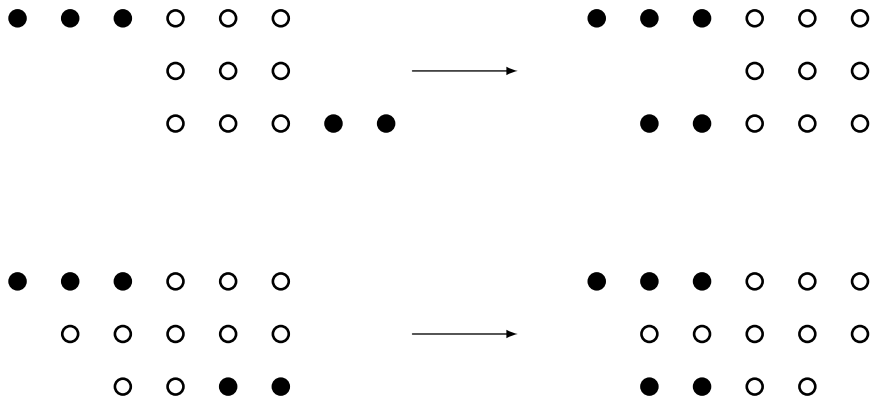
\begin{figure}[h!]

\begin{tikzpicture}[line cap=round,line join=round,>=triangle 45,x=0.7cm,y=0.7cm]
\clip(-8.3,-5.5) rectangle (8.3,2.5);
\draw[-latex,line width=0.5pt] (-0.5,1.) -- (1.5,1.);
\draw[-latex,line width=0.5pt] (-0.5,-4.) -- (1.5,-4.);
\begin{scriptsize}
\draw[line width=1pt,fill=black] (-6.,2.) circle (3pt);
\draw[line width=1pt,color=black] (-4.,2.) circle (3pt);
\draw[line width=1pt,fill=black] (-8.,2.) circle (3pt);
\draw[line width=1pt,color=black] (-3.,1.) circle (3pt);
\draw[line width=1pt,color=black] (-4.,1.) circle (3pt);
\draw[line width=1pt,fill=black] (-1.,0.) circle (3pt);
\draw[line width=1pt,color=black] (-3.,2.) circle (3pt);
\draw[line width=1pt,color=black] (-5.,2.) circle (3pt);
\draw[line width=1pt,fill=black] (-2.,0.) circle (3pt);
\draw[line width=1pt,fill=black] (-3.,-5.) circle (3pt);
\draw[line width=1pt,fill=black] (-4.,-5.) circle (3pt);
\draw[line width=1pt,color=black] (-6.,-5.) circle (3pt);
\draw[line width=1pt,color=black] (-3.,-4.) circle (3pt);
\draw[line width=1pt,color=black] (-7.,-4.) circle (3pt);
\draw[line width=1pt,fill=black] (3.,-3.) circle (3pt);
\draw[line width=1pt,fill=black] (4.,-3.) circle (3pt);
\draw[line width=1pt,fill=black] (5.,-3.) circle (3pt);
\draw[line width=1pt,color=black] (-5.,-5.) circle (3pt);
\draw[line width=1pt,color=black] (6.,-3.) circle (3pt);
\draw[line width=1pt,color=black] (7.,-3.) circle (3pt);
\draw[line width=1pt,fill=black] (-8.,-3.) circle (3pt);
\draw[line width=1pt,fill=black] (-7.,-3.) circle (3pt);
\draw[line width=1pt,color=black] (-5.,-3.) circle (3pt);
\draw[line width=1pt,color=black] (-4.,-3.) circle (3pt);
\draw[line width=1pt,color=black] (-3.,-3.) circle (3pt);
\draw[line width=1pt,color=black] (-4.,-4.) circle (3pt);
\draw[line width=1pt,color=black] (-5.,-4.) circle (3pt);
\draw[line width=1pt,color=black] (-6.,-4.) circle (3pt);
\draw[line width=1pt,fill=black] (5.,-5.) circle (3pt);
\draw[line width=1pt,fill=black] (-6.,-3.) circle (3pt);
\draw[line width=1pt,fill=black] (-7.,2.) circle (3pt);
\draw[line width=1pt,color=black] (-5.,1.) circle (3pt);
\draw[line width=1pt,color=black] (-4.,0.) circle (3pt);
\draw[line width=1pt,color=black] (-5.,0.) circle (3pt);
\draw[line width=1pt,color=black] (7.,2.) circle (3pt);
\draw[line width=1pt,color=black] (7.,0.) circle (3pt);
\draw[line width=1pt,fill=black] (3.,2.) circle (3pt);
\draw[line width=1pt,fill=black] (4.,2.) circle (3pt);
\draw[line width=1pt,fill=black] (5.,2.) circle (3pt);
\draw[line width=1pt,color=black] (8.,2.) circle (3pt);
\draw[line width=1pt,color=black] (6.,1.) circle (3pt);
\draw[line width=1pt,color=black] (7.,1.) circle (3pt);
\draw[line width=1pt,color=black] (8.,1.) circle (3pt);
\draw[line width=1pt,color=black] (6.,0.) circle (3pt);
\draw[line width=1pt,color=black] (-3.,0.) circle (3pt);
\draw[line width=1pt,color=black] (6.,2.) circle (3pt);
\draw[line width=1pt,color=black] (8.,0.) circle (3pt);
\draw[line width=1pt,fill=black] (5.,0.) circle (3pt);
\draw[line width=1pt,fill=black] (4.,0.) circle (3pt);
\draw[line width=1pt,color=black] (8.,-4.) circle (3pt);
\draw[line width=1pt,color=black] (6.,-4.) circle (3pt);
\draw[line width=1pt,color=black] (7.,-4.) circle (3pt);
\draw[line width=1pt,color=black] (8.,-3.) circle (3pt);
\draw[line width=1pt,color=black] (5.,-4.) circle (3pt);
\draw[line width=1pt,color=black] (4.,-4.) circle (3pt);
\draw[line width=1pt,color=black] (7.,-5.) circle (3pt);
\draw[line width=1pt,color=black] (6.,-5.) circle (3pt);
\draw[line width=1pt,fill=black] (4.,-5.) circle (3pt);
\end{scriptsize}
\end{tikzpicture}

\caption{Different cases of the regularisation procedure}
\label{fig:threerows}
\end{figure}

In the second case, we have that for $k = k_1$ Case 4 of the algorithm applies. Then, the leftmost point of $\hat{B}_{k_1+1}$ does not lie directly below the leftmost point of $\hat{B}_{k_1}$, but it lies to its left (but no further than the leftmost point of $\hat{A}_{k_1}$). Again, for every $k = k_1 + 1,...,k_3 - 1$ the leftmost point of $\hat{B}_k$ lies below the leftmost point of $\hat{B}_{k_1+1}$. Again, when we place the sets $\hat{A}_{k_3}$ and $\hat{B}_{k_3}$, Case~1 or Case~3 of the  procedure  applies. In Case 1, the leftmost point of $\hat{B}_{k_3}$ is again placed below the leftmost point of $\hat{B}_{k_1+1}$. Hence, the rightmost point of $\hat{A}_{k_3}$ is placed either below a point in $\hat{A}_{k_1}$ or, in view of the definition of  $\hat{B}_{k_1+1}$, one point to the left from the leftmost point of $\hat{A}_{k_1}$.   As before, by Lemma~\ref{lem:vertical}(1) and  Proposition \ref{prop:algorithmdecreasesenergy}   for  columns in place of rows, we see that the energy of $(\hat{A},\hat{B})$ was not minimal,  a contradiction. In Case 3,  a point  of $\hat{A}_{k_3}$ is placed below the leftmost point of  $\hat{B}_{k_3-1}$. This shows that the leftmost point of $\hat{A}_{k_3}$ is placed either below a point in $\hat{A}_{k_1}$ or one point to the right from the rightmost point of $\hat{A}_{k_1}$.  The situation is presented in the bottom line of Figure \ref{fig:threerows} in a simplified setting with $k_1 = 1$ and $k_3 = 3$.  As before, we obtain a contradiction to the minimality of $(\hat{A},\hat{B})$, and the proof is concluded. 
\end{proof}

A careful inspection of the proofs of Proposition
\ref{prop:algorithmdecreasesenergy} and Theorem \ref{thm:connected}
provides  some more information about the structure of any minimising
configuration, collected in the following   statements. 

\begin{proposition}\label{cor:rows}
Let $(A,B)$ be an optimal configuration. Then, for any row $R_k$, the sets $A^{\rm row}_k$, $B^{\rm row}_k$ and $(A \cup B)_k^{\rm row}$ are connected. The same claim holds for columns. \qed
\end{proposition}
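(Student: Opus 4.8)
The plan is to read off the statement as an immediate consequence of Proposition~\ref{prop:algorithmdecreasesenergy}. Let $(A,B)$ be optimal. By the preliminary result at the beginning of Section~\ref{sec:algo} we may assume there are no empty rows or columns between occupied ones, so the regularisation procedure applies and yields a configuration $(\hat A,\hat B)$ — having the same number of $A$- and of $B$-points in each row — with $E(\hat A,\hat B)\le E(A,B)$. Optimality of $(A,B)$ then forces $E(\hat A,\hat B)=E(A,B)$. On the other hand, Proposition~\ref{prop:algorithmdecreasesenergy} states that this inequality is \emph{strict} whenever one of the sets $A^{\rm row}_k$, $B^{\rm row}_k$, $(A\cup B)^{\rm row}_k$ is disconnected (or one of the alignment conditions (1)--(3) of Lemma~\ref{lem:vertical} fails). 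Hence none of these sets can be disconnected, which is the claim for rows.

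For columns, the plan is to run the \emph{same} argument, starting again from the original $(A,B)$ and using the version of the procedure in which the roles of rows and columns are interchanged. The point to record is that the whole apparatus of Section~\ref{sec:algo} — the splitting of $E$ into row- and inter-row contributions, Lemma~\ref{lem:horizontal}, Lemma~\ref{lem:vertical}, the four-case construction, and Proposition~\ref{prop:algorithmdecreasesenergy} — is literally invariant under the lattice isometry $(x,y)\mapsto(y,x)$, which swaps rows and columns and preserves the energy $E$. Thus the transposed procedure produces from $(A,B)$ a configuration of no larger energy, and the transposed form of Proposition~\ref{prop:algorithmdecreasesenergy} forces $A^{\rm col}_k$, $B^{\rm col}_k$, $(A\cup B)^{\rm col}_k$ to be connected for every $k$.

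I do not expect a genuine obstacle: the statement is a bookkeeping corollary of the work already carried out in this section. The only two things to be careful about are (i) that the row and column conclusions are each derived from the \emph{original} optimal $(A,B)$, so that there is no need to compose the two procedures and hence no risk of one undoing the structure produced by the other; and (ii) that one genuinely has a verbatim transpose of Lemmas~\ref{lem:horizontal}--\ref{lem:vertical} and of the procedure — which is clear, but should be stated explicitly — so that the strict-inequality clause that powers the contradiction with optimality is available in the column direction as well.
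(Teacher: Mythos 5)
Your proposal is correct and is exactly the argument the paper intends: the statement is recorded as an immediate consequence of Proposition~\ref{prop:algorithmdecreasesenergy} (optimality rules out the strict-decrease case, hence all row sets are connected), with the column claim obtained by running the transposed procedure on the original optimal configuration, just as the paper indicates in the paragraph following that proposition. Your two cautionary remarks (deriving both conclusions from the original $(A,B)$, and the verbatim row--column symmetry of Lemmas~\ref{lem:horizontal}--\ref{lem:vertical}) match the paper's own brief justification.
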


\begin{proposition}\label{cor:nomissingrows}
Let $(A,B)$ be an optimal configuration. If for some $1 \leq k_1 < k_2 \leq N_{\rm row}$ we have $A^{\rm row}_{k_1}, A^{\rm row}_{k_2} \neq \emptyset$, then also $A^{\rm row}_k \neq \emptyset$ for all $k_1 \leq k \leq k_2$. The same claim holds for columns and the set $B$. \qed
\end{proposition}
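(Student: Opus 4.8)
The plan is to argue by contradiction using the regularisation procedure of Subsection~4.1 together with Proposition~\ref{prop:algorithmdecreasesenergy}, following closely the structure of the proof of Theorem~\ref{thm:connected}. Suppose $(A,B)$ is optimal and that there exist $k_1 < k_2$ with $A^{\rm row}_{k_1}, A^{\rm row}_{k_2} \neq \emptyset$ but $A^{\rm row}_k = \emptyset$ for some $k_1 < k < k_2$. After shrinking the interval we may assume $A^{\rm row}_k = \emptyset$ for all $k_1 < k < k_2$, i.e.\ $k_2 = k_3$ in the notation of Theorem~\ref{thm:connected} with $A$ nonempty on rows $k_1$ and $k_3$ and empty strictly in between. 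Applying the regularisation $(A,B) \to (\hat A,\hat B)$ preserves optimality by Proposition~\ref{prop:algorithmdecreasesenergy}, and preserves the row counts $n^{\rm row}_k$, so it suffices to derive a contradiction for $(\hat A,\hat B)$.

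First I would record what the procedure does across the block of $A$-empty rows. For $k_1 \le k \le k_3-1$ the step involves only moving/placing $\hat B$-points (since $n^{\rm row}_k$ or $n^{\rm row}_{k+1}$ vanishes for the relevant indices), so the leftmost point of $\hat B_k$ stays vertically aligned (or moves monotonically in a controlled way) as $k$ runs through the block — exactly the alignment bookkeeping already done in Theorem~\ref{thm:connected}. The key point is that at step $k_1$ we are in Case~2 or Case~4, and at step $k_3-1$ we are in Case~1 or Case~3; tracing the four resulting subcases shows that either some point of $\hat A_{k_3}$ ends up directly below a point of $\hat A_{k_1}$, or the leftmost point of $\hat A_{k_3}$ lands exactly one column to the left of the leftmost point of $\hat A_{k_1}$ (equivalently one column to the right of its rightmost point). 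These are precisely the two alternatives isolated in the proof of Theorem~\ref{thm:connected}.

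In the first alternative, the column through that aligned pair of $A$-points contains $A$-points in rows $k_1$ and $k_3$ but none in the rows strictly between (since those rows have no $A$-points at all), so that column's $A$-set is disconnected; by the column analogue of Proposition~\ref{cor:rows} this contradicts optimality. In the second alternative, one invokes the column analogue of Lemma~\ref{lem:vertical}(1)–Proposition~\ref{prop:algorithmdecreasesenergy}: the $A$-point of row $k_3$ sitting one column beside the $A$-block of row $k_1$ forces a column-interface configuration that is strictly improvable, again contradicting minimality. The claim for columns, and for $B$ in place of $A$, follows by the symmetry already used throughout (exchanging rows with columns, and $A$ with $B$). The main obstacle is purely bookkeeping: one must carefully verify that the block of $A$-empty rows propagates the alignment of the $\hat B$-sets correctly through all the intermediate Case~1/Case~3 steps, so that the positions of $\hat A_{k_3}$ relative to $\hat A_{k_1}$ are pinned down to exactly the two alternatives above — but this is a direct rerun of the case analysis in Theorem~\ref{thm:connected} rather than anything new. \qed
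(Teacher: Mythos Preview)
Your proposal is correct and follows essentially the same route as the paper: Proposition~\ref{cor:nomissingrows} is stated in the paper with a bare \qed because its proof is exactly the ``missing-row'' case already worked out inside the proof of Theorem~\ref{thm:connected}, and your argument is a faithful rerun of that case analysis. The only cosmetic difference is that in the first alternative you cite the column version of Proposition~\ref{cor:rows} (disconnected column $A$-set), while the paper phrases the same contradiction by invoking Proposition~\ref{prop:algorithmdecreasesenergy} for columns directly; these are equivalent since the former is an immediate consequence of the latter.
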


\begin{proposition}\label{cor:allononeside}
Let $(A,B)$ be an optimal configuration. Suppose that there exists a row $R_{k_0}$ such that $A^{\rm row}_{k_0}, B^{\rm row}_{k_0} \neq \emptyset$ and $A^{\rm row}_{k_0}$ lies to the left of $B^{\rm row}_{k_0}$. Then, for every row $R_k$ either $A^{\rm row}_k$ lies to the left of $B^{\rm row}_k$ or one of these sets is empty. The same claim holds for columns  and if we interchange the roles of $A$ and $B$. \qed
\end{proposition}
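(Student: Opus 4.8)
The plan is to argue by contradiction using the structural facts already in hand. By Proposition~\ref{cor:rows}, in every row $R_k$ the sets $A^{\rm row}_k$ and $B^{\rm row}_k$ are each connected, hence occupy intervals of columns; since they are disjoint and lie in the same row, whenever both are nonempty --- call such a row \emph{mixed} --- exactly one of the two alternatives holds: $A^{\rm row}_k$ lies entirely to the left of $B^{\rm row}_k$ (type~$L$), or entirely to the right (type~$R$). By Proposition~\ref{cor:nomissingrows} applied to $A$ and to $B$, the rows meeting $A$ form an interval of indices and likewise for $B$; consequently the mixed rows form a (possibly empty) block of consecutive indices. If the claim failed there would exist both a type~$L$ and a type~$R$ mixed row, and since the mixed rows are consecutive, there would be two \emph{adjacent} mixed rows $R_k,R_{k+1}$ of opposite type.

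Next I would extract a contradiction from Lemma~\ref{lem:vertical}. Write $I_A^k, I_B^k$ for the column-intervals occupied by $A^{\rm row}_k, B^{\rm row}_k$, and suppose $R_k$ is type~$L$ and $R_{k+1}$ is type~$R$ (the reverse orientation being symmetric). Since $(A,B)$ is optimal, Proposition~\ref{prop:algorithmdecreasesenergy} forces all the equality conditions (1)--(3) of Lemma~\ref{lem:vertical} to hold for the pair $R_k,R_{k+1}$; otherwise the regularisation procedure applied to $(A,B)$ would yield a strictly smaller energy. As both rows are mixed, $n^{\rm row}_k, n^{\rm row}_{k+1} \ge 1$ and $m^{\rm row}_k, m^{\rm row}_{k+1} \ge 1$, so condition~(1) gives $I_A^k \cap I_A^{k+1} \ne \emptyset$ and condition~(2) gives $I_B^k \cap I_B^{k+1} \ne \emptyset$.

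The conclusion is then immediate: choosing $c \in I_A^k \cap I_A^{k+1}$ and $c' \in I_B^k \cap I_B^{k+1}$, type~$L$ of $R_k$ gives $c \le \max I_A^k < \min I_B^k \le c'$, hence $c < c'$, while type~$R$ of $R_{k+1}$ gives $c' \le \max I_B^{k+1} < \min I_A^{k+1} \le c$, hence $c' < c$ --- a contradiction. Therefore no two adjacent mixed rows have opposite type, so all mixed rows share one type; since $R_{k_0}$ is mixed of type~$L$ by hypothesis, every mixed row is of type~$L$, i.e.\ has $A^{\rm row}_k$ to the left of $B^{\rm row}_k$, while every non-mixed row has one of $A^{\rm row}_k, B^{\rm row}_k$ empty. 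The statement for columns follows verbatim from the column analogues of Lemma~\ref{lem:vertical} and Proposition~\ref{prop:algorithmdecreasesenergy}, and the $A \leftrightarrow B$ interchange is immediate by symmetry of the hypotheses.

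The only point requiring care is the use of Lemma~\ref{lem:vertical}: one must apply its equality conditions to the \emph{original} configuration (not to the regularised one), use that mixed rows contain at least one point of each species so that the relevant minima in conditions~(1)--(2) are positive, and dispose of the degenerate cases in which the block of mixed rows is empty or a singleton, where the assertion is vacuous. Everything else is elementary interval arithmetic once the rows have been normalised via Proposition~\ref{cor:rows}.
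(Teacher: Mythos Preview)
Your argument is correct and is precisely the kind of ``careful inspection'' the paper alludes to: in the paper this proposition is stated with a bare \qed\ as an immediate consequence of Proposition~\ref{prop:algorithmdecreasesenergy} (via the equality conditions of Lemma~\ref{lem:vertical}) together with Propositions~\ref{cor:rows}--\ref{cor:nomissingrows}, and you have spelled out exactly that deduction. There is nothing substantively different between your route and the paper's intended one; you simply make the contradiction explicit.
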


 We observe that  Theorem \ref{thm:connected} and  Proposition
\ref{cor:rows} imply Theorem \ref{thm:main}.i and that Theorem \ref{thm:main}.ii follows from
Proposition~\ref{prop:algorithmdecreasesenergy} and
 Propositions  
\ref{cor:nomissingrows}--\ref{cor:allononeside}. Corollary \ref{cor:interface} implies   Theorem~\ref{thm:main}.iii and  will be crucial for our later considerations.  To this end, we introduce the following definition.

\begin{definition}
The interface $I_{AB}$ (between $A$ and $B$) is the set of midpoints of edges connecting a point in $A$ with a point in $B$. We say that there is an edge between two points $p,q \in I_{AB}$ if $|p - q| \in \lbrace 1/\sqrt{2}, 1\rbrace$ and the line segment  between $p$ and $q$   does not intersect any point in $\mathbb{Z}^2$. We say that the interface is connected if it is connected as a graph.
\end{definition}

In other words, a point $p \in \mathbb{R}^2$ lies in the interface $I_{AB}$ between $A$ and $B$ if there exist points $p_1 \in A$ and $p_2 \in B$ such that $|p - p_1| = |p - p_2| = 1/2$. Necessarily, the interface is a subset of the lattice $\{ (k + \frac{1}{2},l)\colon k,l \in \mathbb{Z} \} \cup \{ (k, l + \frac{1}{2})\colon k,l \in \mathbb{Z} \}$.  An example is presented in Figure \ref{fig:interface}. 

\begin{figure}[h]

\begin{tikzpicture}[line cap=round,line join=round,>=triangle 45,x=0.8cm,y=0.8cm]
\clip(1.5,1.7) rectangle (11.5,8.3);
\draw[line width=0.5pt] (3.5,3.)-- (4.,3.5);
\draw[line width=0.5pt] (4.,3.5)-- (5.,3.5);
\draw[line width=0.5pt] (5.,3.5)-- (5.5,4.);
\draw[line width=0.5pt] (5.5,4.)-- (5.5,5.);
\draw[line width=0.5pt] (5.5,5.)-- (6.,5.5);
\draw[line width=0.5pt] (6.,5.5)-- (7.,5.5);
\draw[line width=0.5pt] (7.,5.5)-- (7.5,6.);
\draw[line width=0.5pt] (7.5,6.)-- (8.,6.5);
\draw[line width=0.5pt] (8.,6.5)-- (9.,6.5);
\begin{scriptsize}
\draw[line width=1pt,color=black] (5.,7.) circle (2.5pt);
\draw[line width=1pt,color=black] (6.,6.) circle (2.5pt);
\draw[line width=1pt,color=black] (5.,5.) circle (2.5pt);
\draw[line width=1pt,color=black] (4.,6.) circle (2.5pt);
\draw[line width=1pt,color=black] (4.,5.) circle (2.5pt);
\draw[line width=1pt,fill=black] (7.,3.) circle (2.5pt);
\draw[line width=1pt,fill=black] (6.,5.) circle (2.5pt);
\draw[line width=1pt,fill=black] (7.,5.) circle (2.5pt);
\draw[line width=1pt,fill=black] (9.,5.) circle (2.5pt);
\draw[line width=1pt,fill=black] (8.,6.) circle (2.5pt);
\draw[line width=1pt,color=black] (3.,5.) circle (2.5pt);
\draw[line width=1pt,color=black] (5.,8.) circle (2.5pt);
\draw[line width=1pt,color=black] (4.,7.) circle (2.5pt);
\draw[line width=1pt,color=black] (2.,3.) circle (2.5pt);
\draw[line width=1pt,color=black] (2.,4.) circle (2.5pt);
\draw[line width=1pt,fill=black] (6.,3.) circle (2.5pt);
\draw[line width=1pt,fill=black] (4.,2.) circle (2.5pt);
\draw[line width=1pt,fill=black] (5.,3.) circle (2.5pt);
\draw[line width=1pt,fill=black] (8.,4.) circle (2.5pt);
\draw[line width=1pt,fill=black] (8.,5.) circle (2.5pt);
\draw[line width=1pt,color=black] (3.,6.) circle (2.5pt);
\draw[line width=1pt,color=black] (7.,8.) circle (2.5pt);
\draw[line width=1pt,color=black] (9.,7.) circle (2.5pt);
\draw[line width=1pt,color=black] (7.,6.) circle (2.5pt);
\draw[line width=1pt,color=black] (8.,7.) circle (2.5pt);
\draw[line width=1pt,fill=black] (11.,6.) circle (2.5pt);
\draw[line width=1pt,fill=black] (11.,5.) circle (2.5pt);
\draw[line width=1pt,fill=black] (6.,2.) circle (2.5pt);
\draw[line width=1pt,fill=black] (9.,3.) circle (2.5pt);
\draw[line width=1pt,fill=black] (10.,4.) circle (2.5pt);
\draw[line width=1pt,fill=black] (10.,6.) circle (2.5pt);
\draw[line width=1pt,fill=black] (9.,6.) circle (2.5pt);
\draw[line width=1pt,fill=black] (9.,4.) circle (2.5pt);
\draw[line width=1pt,fill=black] (10.,5.) circle (2.5pt);
\draw[line width=1pt,fill=black] (8.,3.) circle (2.5pt);
\draw[line width=1pt,color=black] (7.,7.) circle (2.5pt);
\draw[line width=1pt,color=black] (5.,4.) circle (2.5pt);
\draw[line width=1pt,color=black] (6.,7.) circle (2.5pt);
\draw[line width=1pt,color=black] (6.,8.) circle (2.5pt);
\draw[line width=1pt,color=black] (4.,4.) circle (2.5pt);
\draw[line width=1pt,color=black] (3.,4.) circle (2.5pt);
\draw[line width=1pt,color=black] (3.,3.) circle (2.5pt);
\draw[line width=1pt,fill=black] (5.,2.) circle (2.5pt);
\draw[line width=1pt,fill=black] (6.,4.) circle (2.5pt);
\draw[line width=1pt,fill=black] (7.,4.) circle (2.5pt);
\draw[line width=1pt,fill=black] (4.,3.) circle (2.5pt);
\draw[line width=1pt,color=black] (5.,6.) circle (2.5pt);
\draw [color=black] (3.5,3.)-- ++(-3pt,-3pt) -- ++(7.0pt,7.0pt) ++(-7.0pt,0) -- ++(7.0pt,-7.0pt);
\draw [color=black] (4.,3.5)-- ++(-3pt,-3pt) -- ++(7.0pt,7.0pt) ++(-7.0pt,0) -- ++(7.0pt,-7.0pt);
\draw [color=black] (5.,3.5)-- ++(-3pt,-3pt) -- ++(7.0pt,7.0pt) ++(-7.0pt,0) -- ++(7.0pt,-7.0pt);
\draw [color=black] (5.5,4.)-- ++(-3pt,-3pt) -- ++(7.0pt,7.0pt) ++(-7.0pt,0) -- ++(7.0pt,-7.0pt);
\draw [color=black] (5.5,5.)-- ++(-3pt,-3pt) -- ++(7.0pt,7.0pt) ++(-7.0pt,0) -- ++(7.0pt,-7.0pt);
\draw [color=black] (6.,5.5)-- ++(-3pt,-3pt) -- ++(7.0pt,7.0pt) ++(-7.0pt,0) -- ++(7.0pt,-7.0pt);
\draw [color=black] (7.,5.5)-- ++(-3pt,-3pt) -- ++(7.0pt,7.0pt) ++(-7.0pt,0) -- ++(7.0pt,-7.0pt);
\draw [color=black] (7.5,6.)-- ++(-3pt,-3pt) -- ++(7.0pt,7.0pt) ++(-7.0pt,0) -- ++(7.0pt,-7.0pt);
\draw [color=black] (8.,6.5)-- ++(-3pt,-3pt) -- ++(7.0pt,7.0pt) ++(-7.0pt,0) -- ++(7.0pt,-7.0pt);
\draw [color=black] (9.,6.5)-- ++(-3pt,-3pt) -- ++(7.0pt,7.0pt) ++(-7.0pt,0) -- ++(7.0pt,-7.0pt);
\end{scriptsize}
\end{tikzpicture}

\caption{Definition of the interface}
\label{fig:interface}
\end{figure}
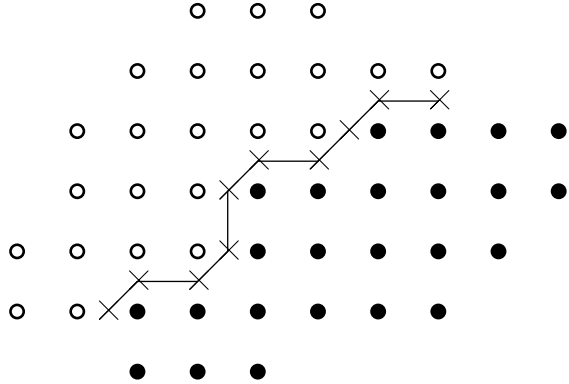

Notice that  Proposition  
\ref{cor:rows} implies that there is at most one point in $I_{AB}$ which is a midpoint of an edge between a point in $A^{\rm row}_k$ and a point in $B^{\rm row}_k$. Similarly, there is at most one point in $I_{AB}$ which is a midpoint of an edge between a point in $A^{\rm col}_k$ and a point in $B^{\rm col}_k$. Hence, we get the following result.

\begin{corollary}\label{cor:interface}
For any optimal configuration $(A,B)$, the interface $I_{AB}$ is connected. Moreover, it is monotone: up to reflections, it goes only upwards and to the right, i.e., given $p,q \in I_{AB}$, if $p_1 > q_1$, then $p_2 \ge q_2$.  \qed
\end{corollary}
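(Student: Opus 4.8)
The plan is to deduce Corollary~\ref{cor:interface} from the structural results already established, in particular Proposition~\ref{cor:rows} (connectedness inside each row and column), Proposition~\ref{cor:nomissingrows} (no missing rows/columns for either phase), and Proposition~\ref{cor:allononeside} (consistent left/right ordering). First I would recall the elementary observation, already stated after the definition of $I_{AB}$: since in each row $R_k$ the set $A^{\rm row}_k$ is connected and $B^{\rm row}_k$ is connected and they lie one directly to the left of the other (whenever both are nonempty), there is at most one $A$--$B$ edge inside $R_k$, hence at most one midpoint of $I_{AB}$ of the form $(x+\tfrac12, k)$. By the analogous statement for columns (Proposition~\ref{cor:rows}), each column $C_l$ contributes at most one midpoint of the form $(l, y+\tfrac12)$. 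So $I_{AB}$ consists of a collection of ``horizontal'' midpoints indexed by rows and ``vertical'' midpoints indexed by columns.

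Next I would set up the monotonicity. After a reflection we may assume, using Proposition~\ref{cor:allononeside}, that in every row $A^{\rm row}_k$ lies to the left of $B^{\rm row}_k$, and by the column analogue we may also assume that in every column one of the two phases lies below the other in a fixed orientation; the combination of the two fixes a single ``staircase'' orientation. The key step is then to show that the $x$-coordinate of the horizontal interface midpoint in row $R_k$ is nondecreasing (or nonincreasing, depending on orientation) as $k$ increases, and similarly for the vertical midpoints as the column index varies. This I would prove by contradiction: if the interface were to move, say, left and then right again as we descend through the rows, then using Proposition~\ref{cor:nomissingrows} and the fact (from Theorem~\ref{thm:connected} and Proposition~\ref{cor:rows}, applied to columns) that $A^{\rm col}_l$ and $B^{\rm col}_l$ are each connected, one gets a column in which the $A$--$B$ ordering is inconsistent with the fixed orientation, or in which one of the sets fails to be connected as a column --- contradicting optimality via Proposition~\ref{prop:algorithmdecreasesenergy}.

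Having monotonicity of the horizontal midpoints across rows and of the vertical midpoints across columns, connectedness of $I_{AB}$ as a graph follows by a short combinatorial argument: consecutive horizontal midpoints in rows $R_k$ and $R_{k+1}$ are either aligned (distance $1$, so joined by an edge), or differ by one unit in the $x$-direction, in which case the vertical midpoint sitting in the intervening column bridges them at distance $1/\sqrt2$ on each side. One has to check that when the interface ``jumps'' horizontally by more than one unit between two consecutive rows, there are correspondingly many vertical midpoints filling the gap --- this is exactly where one invokes again that inside the relevant row the $A$-part and $B$-part are each connected, so the jump in the horizontal midpoint between adjacent rows is at most one; combined with the column-wise statement one sees the horizontal and vertical midpoints interleave into a single path. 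The monotonicity statement $p_1 > q_1 \Rightarrow p_2 \ge q_2$ is then immediate: a strictly larger $x$-coordinate means we are further along the staircase, hence at least as high.

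The main obstacle I expect is the bookkeeping in the monotonicity step: one must carefully translate ``the horizontal interface midpoint moves left then right as we go down'' into a concrete violation of one of Propositions~\ref{cor:rows}, \ref{cor:nomissingrows}, or \ref{cor:allononeside} --- and in particular identify the right column to look at and argue that in that column the two phases are stacked in the wrong order or disconnected. A secondary subtlety is matching reflections: the three propositions each come ``up to exchanging $A$ and $B$'' and ``for rows or columns'', so one must pin down a single global orientation consistent with both the row data and the column data before asserting monotonicity; this is routine but needs to be said explicitly. Everything else is the short graph-theoretic interleaving argument, which is essentially the picture in Figure~\ref{fig:interface}.
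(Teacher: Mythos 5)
Your route is essentially the paper's: the paper offers no detailed proof of Corollary~\ref{cor:interface} beyond the remark that Proposition~\ref{cor:rows} yields at most one interface midpoint per row and per column, and your elaboration via Propositions~\ref{cor:rows}--\ref{cor:allononeside} (fix a global orientation, derive monotonicity of the transition position from the column version of Proposition~\ref{cor:allononeside}, then interleave horizontal and vertical midpoints) is exactly the intended argument. One sub-claim in your write-up is false as stated, though: the horizontal midpoint \emph{can} jump by more than one unit between adjacent rows --- in Figure~\ref{fig:interface} it jumps by two, bridged by two vertical midpoints --- and row-connectedness alone does not bound the jump by one; nothing available at this stage of the paper rules out, say, adjacent full rows split as $3{+}5$ and $5{+}3$, which satisfy all the equality conditions of Lemma~\ref{lem:vertical} and produce a jump of two. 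The correct bridging mechanism is the first one you mention: if the transition sits at $x_k+\tfrac12$ in row $R_k$ and at $x_{k+1}+\tfrac12$ in row $R_{k+1}$ with $x_{k+1}=x_k+j$, then each intermediate column $x_k+1,\dots,x_k+j$ contains a $B$-point of $R_k$ directly adjacent to an $A$-point of $R_{k+1}$ --- this follows from the alignment conditions (1)--(3) of Lemma~\ref{lem:vertical}, enforced for optimal configurations by Proposition~\ref{prop:algorithmdecreasesenergy}, together with the consistent ordering of Proposition~\ref{cor:allononeside} --- and the resulting $j$ vertical midpoints link the two horizontal ones by edges of length $1/\sqrt2$ and $1$. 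With that correction the argument is complete and coincides with what the paper leaves to the reader.
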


We will use this result to study the minimal configurations in the
following way: we will identify all possible shapes of the interface,
collected in different classes. Analysing the different classes in
detail, we will show that there always exists an optimal configuration
in the most natural class (called Class~$\mathcal{I}$). For this
class, we are able to  directly compute the minimal energy,
explicitly exhibit a minimiser,  and provide a sharp estimate of the possible mismatch of ground
states in terms of their size, see \eqref{eq:fluct}.   

Let us also note that the introduction of $I_{AB}$ enables us to write a convenient formula for the energy associated to an optimal configuration $(A,B)$. Namely, denote by $E_A$ the energy inside $A$, i.e.,  minus the number of bonds between $A$-points. In a similar fashion, we define $E_B$.   Eventually, by $E_{AB} := - \# I_{AB} \beta$ we denote the interfacial energy, i.e., minus the number of bonds between $A$- and $B$-points weighted by the coefficient $\beta$. Then,
\begin{equation}\label{eq:formulafortheenergy}
E(A,B) = E_A + E_B + E_{AB}.
\end{equation}
This simple formula has a very important consequence. Namely, if we separate the sets $A$ and $B$ and reattach them in a different way (i.e., apply an isometry to one or both sets), then $E_A$ and $E_B$ do not change, but $E_{AB}$ possibly might. Therefore, if a configuration is optimal, it has the longest possible interface with respect to this operation. We will use variants of this argument on multiple occasions in Section \ref{sec:regularisation}.

\section{A collection of  examples}\label{sec:info}

In this short section, we consider a few examples of minimisers that
will serve as a motivation for the discussion about possible shapes of
the interface in the next section. By Theorem~\ref{thm:connected}, for
any optimal configuration, both sets $A$ and $B$ are connected. The
properties of an optimal configuration are further restricted by
 Propositions  
\ref{cor:rows}--\ref{cor:interface}.  For different
choices of $N_A, N_B > 0$ and $\beta \in (0,1)$, we provide here a
complete account of optimal configurations. Note that the limited
number of points involved allows a direct exhaustive analysis. Even
though some optimal configuration is   irregular, the main effort in this paper will be to prove that actually for $N_A = N_B$ and $\beta \le  1/2$ one may find an optimal configuration which is very regular, in the sense that they roughly consist of two rectangles as given in Theorem \ref{thm:main}.v.

The first example consists of only three points: we have $N_A = 2$, $N_B = 1$, for any $\beta \in (0,1)$. Even then, the minimiser may fail to be unique: up to isometries, we have two minimisers, both presented in Figure \ref{fig:threepointexample}. 

\begin{figure}[h!]
\centering

\begin{tikzpicture}[line cap=round,line join=round,>=triangle 45,x=1.0cm,y=1.0cm]
\clip(-1,-0.5) rectangle (8,1.5);
\begin{scriptsize}
\draw[line width=1pt,color=black] (0.,0.) circle (3pt);
\draw[line width=1pt,fill=black] (1.,0.) circle (3pt);
\draw[line width=1pt,color=black] (0.,1.) circle (3pt);
\draw[line width=1pt,color=black] (5.,0.) circle (3pt);
\draw[line width=1pt,color=black] (6.,0.) circle (3pt);
\draw[line width=1pt,fill=black] (7.,0.) circle (3pt);
\end{scriptsize}
\end{tikzpicture}
\caption{Minimisers for $N_A = 2, N_B = 1$}
\label{fig:threepointexample}
\end{figure}
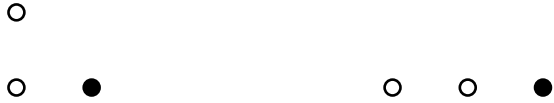


The second example consists of six points: we have $N_A = N_B = 3$ for any $\beta \in (0,1)$. The numbers of $A$- and $B$-points are equal. The minimiser may fail to be unique: up to isometries, we have two minimisers, both presented in Figure \ref{fig:sixpointexample}. Note that the interface is not necessarily straight. However, there is a minimiser which has a straight interface. 

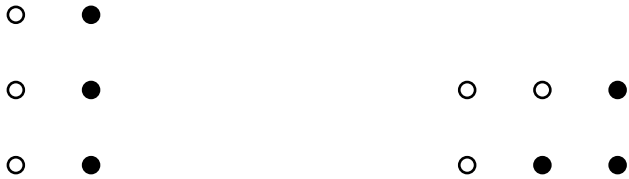
\begin{figure}[h!]
\begin{tikzpicture}[line cap=round,line join=round,>=triangle 45,x=1.0cm,y=1.0cm]
\clip(-1,-0.5) rectangle (9,2.5);
\begin{scriptsize}
\draw[line width=1pt,color=black] (0.,0.) circle (3pt);
\draw[line width=1pt,fill=black] (1.,0.) circle (3pt);
\draw[line width=1pt,fill=black] (1.,1.) circle (3pt);
\draw[line width=1pt,color=black] (0.,1.) circle (3pt);
\draw[line width=1pt,color=black] (0.,2.) circle (3pt);
\draw[line width=1pt,fill=black] (1.,2.) circle (3pt);
\draw[line width=1pt,color=black] (6.,0.) circle (3pt);
\draw[line width=1pt,color=black] (6.,1.) circle (3pt);
\draw[line width=1pt,fill=black] (7.,0.) circle (3pt);
\draw[line width=1pt,color=black] (7.,1.) circle (3pt);
\draw[line width=1pt,fill=black] (8.,1.) circle (3pt);
\draw[line width=1pt,fill=black] (8.,0.) circle (3pt);
\end{scriptsize}
\end{tikzpicture}
\caption{Minimisers for $N_A = 3, N_B = 3$}
\label{fig:sixpointexample}
\end{figure}


The third example consists of eight points: we have $N_A = N_B = 4$ for any $\beta \in (0,1)$. In this case, the minimiser is unique. Up to isometries, the only solution is presented in Figure \ref{fig:fourpointexample}. Note that the interface is straight and both rectangles are ``full''. This situation is very special, and in a generic case we do not expect uniqueness. 

\begin{figure}[h!]

\begin{tikzpicture}[line cap=round,line join=round,>=triangle 45,x=1.0cm,y=1.0cm]
\clip(-2,-0.5) rectangle (3,1.5);
\begin{scriptsize}
\draw[line width=1pt,color=black] (0.,0.) circle (3pt);
\draw[line width=1pt,fill=black] (1.,0.) circle (3pt);
\draw[line width=1pt,color=black] (0.,1.) circle (3pt);
\draw[line width=1pt,color=black] (-1.,0.) circle (3pt);
\draw[line width=1pt,fill=black] (2.,0.) circle (3pt);
\draw[line width=1pt,color=black] (-1.,1.) circle (3pt);
\draw[line width=1pt,fill=black] (1.,1.) circle (3pt);
\draw[line width=1pt,fill=black] (2.,1.) circle (3pt);
\end{scriptsize}
\end{tikzpicture}

\caption{Unique minimiser for $N_A = 4, N_B = 4$}
\label{fig:fourpointexample}
\end{figure}
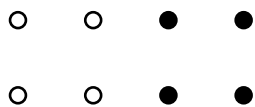

The fourth example consists of seven points: we have $N_A = 3$, $N_B = 4$, for any $\beta \in (0,1)$. Up to isometries, we have three minimisers, presented in Figure \ref{fig:sevenpointexample}. As in the second example of Figure~\ref{fig:sixpointexample}, in the configuration on the right the interface is ``L-shaped''. 

\begin{figure}[h!]

\begin{tikzpicture}[line cap=round,line join=round,>=triangle 45,x=0.9cm,y=0.9cm]
\clip(-1,-0.5) rectangle (14,2.5);
\begin{scriptsize}
\draw[line width=1pt,color=black] (0.,0.) circle (3pt);
\draw[line width=1pt,color=black] (0.,1.) circle (3pt);
\draw[line width=1pt,color=black] (0.,2.) circle (3pt);
\draw[line width=1pt,fill=black] (1.,1.) circle (3pt);
\draw[line width=1pt,fill=black] (1.,0.) circle (3pt);
\draw[line width=1pt,fill=black] (2.,0.) circle (3pt);
\draw[line width=1pt,fill=black] (2.,1.) circle (3pt);
\draw[line width=1pt,color=black] (5.,0.) circle (3pt);
\draw[line width=1pt,color=black] (6.,0.) circle (3pt);
\draw[line width=1pt,color=black] (6.,1.) circle (3pt);
\draw[line width=1pt,fill=black] (7.,1.) circle (3pt);
\draw[line width=1pt,fill=black] (7.,0.) circle (3pt);
\draw[line width=1pt,fill=black] (8.,0.) circle (3pt);
\draw[line width=1pt,fill=black] (8.,1.) circle (3pt);
\draw[line width=1pt,color=black] (11.,1.) circle (3pt);
\draw[line width=1pt,color=black] (11.,2.) circle (3pt);
\draw[line width=1pt,color=black] (12.,2.) circle (3pt);
\draw[line width=1pt,fill=black] (12.,1.) circle (3pt);
\draw[line width=1pt,fill=black] (12.,0.) circle (3pt);
\draw[line width=1pt,fill=black] (13.,1.) circle (3pt);
\draw[line width=1pt,fill=black] (13.,0.) circle (3pt);
\end{scriptsize}
\end{tikzpicture}

\caption{Minimisers for $N_A = 3, N_B = 4$}
\label{fig:sevenpointexample}
\end{figure}
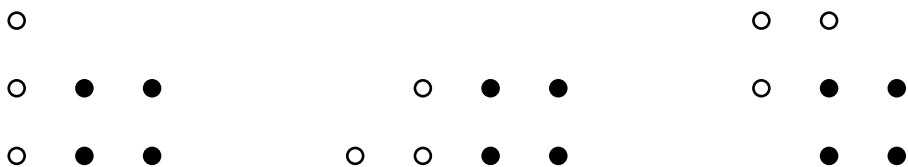

The fifth example consists of ten points: we have $N_A = N_B = 5$ for any $\beta \in (0,1)$. Up to isometries, we have five possible minimisers, presented in Figure \ref{fig:fivepoints}. Notice that the heights of the two types may differ and that the interface may fail to be straight. Furthermore, the two configurations on the left differ even though the interface is straight.

\begin{figure}[h!]
    \centering
    
\begin{tikzpicture}[line cap=round,line join=round,>=triangle 45,x=0.8cm,y=0.8cm]
\clip(3.7,-1) rectangle (19.3,7.5);
\begin{scriptsize}
\draw[line width=1pt,color=black] (5.,7.) circle (3pt);
\draw[line width=1pt,color=black] (5.,6.) circle (3pt);
\draw[line width=1pt,color=black] (5.,5.) circle (3pt);
\draw[line width=1pt,color=black] (4.,6.) circle (3pt);
\draw[line width=1pt,color=black] (4.,5.) circle (3pt);
\draw[line width=1pt,fill=black] (6.,6.) circle (3pt);
\draw[line width=1pt,fill=black] (6.,5.) circle (3pt);
\draw[line width=1pt,fill=black] (7.,5.) circle (3pt);
\draw[line width=1pt,fill=black] (7.,6.) circle (3pt);
\draw[line width=1pt,fill=black] (6.,7.) circle (3pt);
\draw[line width=1pt,color=black] (10.,7.) circle (3pt);
\draw[line width=1pt,color=black] (11.,7.) circle (3pt);
\draw[line width=1pt,color=black] (12.,7.) circle (3pt);
\draw[line width=1pt,color=black] (11.,6.) circle (3pt);
\draw[line width=1pt,color=black] (10.,6.) circle (3pt);
\draw[line width=1pt,fill=black] (12.,6.) circle (3pt);
\draw[line width=1pt,fill=black] (12.,5.) circle (3pt);
\draw[line width=1pt,fill=black] (11.,5.) circle (3pt);
\draw[line width=1pt,fill=black] (13.,6.) circle (3pt);
\draw[line width=1pt,fill=black] (13.,5.) circle (3pt);
\draw[line width=1pt,color=black] (15.,3.) circle (3pt);
\draw[line width=1pt,color=black] (17.,4.) circle (3pt);
\draw[line width=1pt,color=black] (16.,3.) circle (3pt);
\draw[line width=1pt,color=black] (15.,4.) circle (3pt);
\draw[line width=1pt,color=black] (16.,4.) circle (3pt);
\draw[line width=1pt,fill=black] (19.,4.) circle (3pt);
\draw[line width=1pt,fill=black] (17.,3.) circle (3pt);
\draw[line width=1pt,fill=black] (18.,3.) circle (3pt);
\draw[line width=1pt,fill=black] (19.,3.) circle (3pt);
\draw[line width=1pt,fill=black] (18.,4.) circle (3pt);
\draw[line width=1pt,fill=black] (12.,0.) circle (3pt);
\draw[line width=1pt,fill=black] (13.,1.) circle (3pt);
\draw[line width=1pt,fill=black] (13.,2.) circle (3pt);
\draw[line width=1pt,fill=black] (13.,0.) circle (3pt);
\draw[line width=1pt,fill=black] (12.,1.) circle (3pt);
\draw[line width=1pt,color=black] (11.,1.) circle (3pt);
\draw[line width=1pt,color=black] (12.,2.) circle (3pt);
\draw[line width=1pt,color=black] (10.,1.) circle (3pt);
\draw[line width=1pt,color=black] (11.,2.) circle (3pt);
\draw[line width=1pt,color=black] (5.,2.) circle (3pt);
\draw[line width=1pt,color=black] (5.,1.) circle (3pt);
\draw[line width=1pt,color=black] (5.,0.) circle (3pt);
\draw[line width=1pt,color=black] (4.,0.) circle (3pt);
\draw[line width=1pt,color=black] (4.,1.) circle (3pt);
\draw[line width=1pt,fill=black] (6.,0.) circle (3pt);
\draw[line width=1pt,fill=black] (6.,1.) circle (3pt);
\draw[line width=1pt,fill=black] (6.,2.) circle (3pt);
\draw[line width=1pt,fill=black] (7.,2.) circle (3pt);
\draw[line width=1pt,fill=black] (7.,1.) circle (3pt);
\draw[line width=1pt,color=black] (10.,2.) circle (3pt);
\end{scriptsize}
\end{tikzpicture}    

\caption{Minimisers for $N_A = 5, N_B = 5$}
\label{fig:fivepoints}
\end{figure}
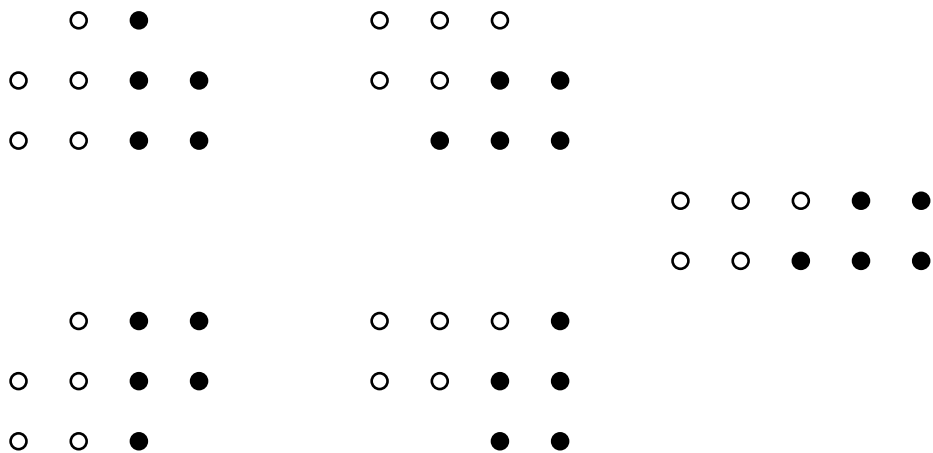

The final example consists of sixteen points: we have $N_A = 12$ and $N_B = 4$. Then, the situation may differ with $\beta$. For $\beta \in (1/2,1)$, up to isometries we have two possible minimisers (with energy $-20-4\beta$), presented in Figure \ref{fig:sixteenpointexample}. In one case, we have a straight interface, while in the other it is L-shaped. 

\begin{figure}[h!]

\begin{tikzpicture}[line cap=round,line join=round,>=triangle 45,x=1.0cm,y=1.0cm]
\clip(-1,-0.5) rectangle (11,3.5);
\begin{scriptsize}
\draw[line width=1pt,color=black] (0.,0.) circle (3pt);
\draw[line width=1pt,color=black] (0.,1.) circle (3pt);
\draw[line width=1pt,color=black] (0.,2.) circle (3pt);
\draw[line width=1pt,color=black] (0.,3.) circle (3pt);
\draw[line width=1pt,color=black] (1.,3.) circle (3pt);
\draw[line width=1pt,color=black] (2.,3.) circle (3pt);
\draw[line width=1pt,color=black] (3.,3.) circle (3pt);
\draw[line width=1pt,color=black] (3.,2.) circle (3pt);
\draw[line width=1pt,fill=black] (3.,1.) circle (3pt);
\draw[line width=1pt,fill=black] (3.,0.) circle (3pt);
\draw[line width=1pt,fill=black] (2.,0.) circle (3pt);
\draw[line width=1pt,color=black] (1.,0.) circle (3pt);
\draw[line width=1pt,color=black] (1.,1.) circle (3pt);
\draw[line width=1pt,color=black] (1.,2.) circle (3pt);
\draw[line width=1pt,color=black] (2.,2.) circle (3pt);
\draw[line width=1pt,fill=black] (2.,1.) circle (3pt);
\draw[line width=1pt,color=black] (7.,3.) circle (3pt);
\draw[line width=1pt,color=black] (7.,2.) circle (3pt);
\draw[line width=1pt,color=black] (7.,1.) circle (3pt);
\draw[line width=1pt,color=black] (7.,0.) circle (3pt);
\draw[line width=1pt,color=black] (8.,0.) circle (3pt);
\draw[line width=1pt,color=black] (8.,1.) circle (3pt);
\draw[line width=1pt,color=black] (8.,2.) circle (3pt);
\draw[line width=1pt,color=black] (8.,3.) circle (3pt);
\draw[line width=1pt,color=black] (9.,3.) circle (3pt);
\draw[line width=1pt,color=black] (9.,2.) circle (3pt);
\draw[line width=1pt,color=black] (9.,1.) circle (3pt);
\draw[line width=1pt,color=black] (9.,0.) circle (3pt);
\draw[line width=1pt,fill=black] (10.,0.) circle (3pt);
\draw[line width=1pt,fill=black] (10.,1.) circle (3pt);
\draw[line width=1pt,fill=black] (10.,2.) circle (3pt);
\draw[line width=1pt,fill=black] (10.,3.) circle (3pt);
\end{scriptsize}
\end{tikzpicture}

\caption{Minimisers for $N_A = 12, N_B = 4$,  large $\beta$}
\label{fig:sixteenpointexample}
\end{figure}
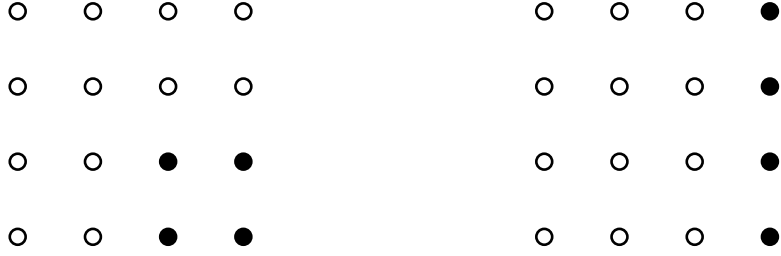

For $\beta \in (0,1/2)$, up to isometries, we have three possible minimisers (with energy $-21-2\beta$), presented in Figure \ref{fig:sixteenpointexamplebigalpha}. Here, the structure of sets $A$ and $B$ is fixed, but we may attach them in a few different ways.

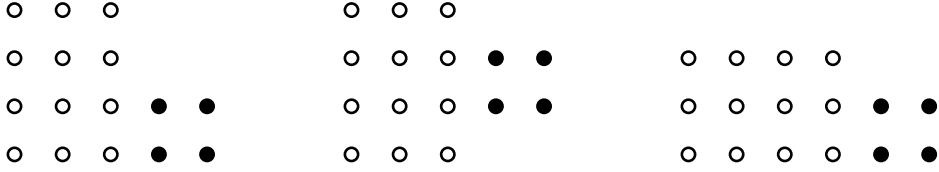
\begin{figure}[h!]

\begin{tikzpicture}[line cap=round,line join=round,>=triangle 45,x=0.64cm,y=0.64cm]
\clip(-0.3,-0.5) rectangle (19.3,3.5);
\begin{scriptsize}
\draw[line width=1pt,color=black] (0.,0.) circle (2.5pt);
\draw[line width=1pt,color=black] (0.,1.) circle (2.5pt);
\draw[line width=1pt,color=black] (0.,2.) circle (2.5pt);
\draw[line width=1pt,color=black] (0.,3.) circle (2.5pt);
\draw[line width=1pt,color=black] (1.,3.) circle (2.5pt);
\draw[line width=1pt,color=black] (2.,3.) circle (2.5pt);
\draw[line width=1pt,fill=black] (4.,0.) circle (2.5pt);
\draw[line width=1pt,fill=black] (4.,1.) circle (2.5pt);
\draw[line width=1pt,fill=black] (3.,1.) circle (2.5pt);
\draw[line width=1pt,fill=black] (3.,0.) circle (2.5pt);
\draw[line width=1pt,color=black] (2.,0.) circle (2.5pt);
\draw[line width=1pt,color=black] (1.,0.) circle (2.5pt);
\draw[line width=1pt,color=black] (1.,1.) circle (2.5pt);
\draw[line width=1pt,color=black] (1.,2.) circle (2.5pt);
\draw[line width=1pt,color=black] (2.,2.) circle (2.5pt);
\draw[line width=1pt,color=black] (2.,1.) circle (2.5pt);
\draw[line width=1pt,color=black] (7.,3.) circle (2.5pt);
\draw[line width=1pt,color=black] (7.,2.) circle (2.5pt);
\draw[line width=1pt,color=black] (7.,1.) circle (2.5pt);
\draw[line width=1pt,color=black] (7.,0.) circle (2.5pt);
\draw[line width=1pt,color=black] (8.,0.) circle (2.5pt);
\draw[line width=1pt,color=black] (8.,1.) circle (2.5pt);
\draw[line width=1pt,color=black] (8.,2.) circle (2.5pt);
\draw[line width=1pt,color=black] (8.,3.) circle (2.5pt);
\draw[line width=1pt,color=black] (9.,3.) circle (2.5pt);
\draw[line width=1pt,color=black] (9.,2.) circle (2.5pt);
\draw[line width=1pt,color=black] (9.,1.) circle (2.5pt);
\draw[line width=1pt,color=black] (9.,0.) circle (2.5pt);
\draw[line width=1pt,fill=black] (11.,1.) circle (2.5pt);
\draw[line width=1pt,fill=black] (10.,1.) circle (2.5pt);
\draw[line width=1pt,fill=black] (10.,2.) circle (2.5pt);
\draw[line width=1pt,fill=black] (11.,2.) circle (2.5pt);
\draw[line width=1pt,color=black] (14.,0.) circle (2.5pt);
\draw[line width=1pt,color=black] (14.,1.) circle (2.5pt);
\draw[line width=1pt,color=black] (14.,2.) circle (2.5pt);
\draw[line width=1pt,color=black] (15.,2.) circle (2.5pt);
\draw[line width=1pt,color=black] (15.,1.) circle (2.5pt);
\draw[line width=1pt,color=black] (15.,0.) circle (2.5pt);
\draw[line width=1pt,color=black] (16.,0.) circle (2.5pt);
\draw[line width=1pt,color=black] (16.,1.) circle (2.5pt);
\draw[line width=1pt,color=black] (16.,2.) circle (2.5pt);
\draw[line width=1pt,color=black] (17.,2.) circle (2.5pt);
\draw[line width=1pt,color=black] (17.,1.) circle (2.5pt);
\draw[line width=1pt,color=black] (17.,0.) circle (2.5pt);
\draw[line width=1pt,fill=black] (18.,1.) circle (2.5pt);
\draw[line width=1pt,fill=black] (18.,0.) circle (2.5pt);
\draw[line width=1pt,fill=black] (19.,1.) circle (2.5pt);
\draw[line width=1pt,fill=black] (19.,0.) circle (2.5pt);
\end{scriptsize}
\end{tikzpicture}

\caption{Minimisers for $N_A = 12, N_B = 4$,  small  $\beta$}
\label{fig:sixteenpointexamplebigalpha}
\end{figure}

For $\beta  = 1/2$,  all  configurations presented in Figures \ref{fig:sixteenpointexample} and \ref{fig:sixteenpointexamplebigalpha} are minimal.

\section{Classification of admissible configurations}\label{sec:classi}

For simplicity, we will call the configurations which satisfy the
statement of Theorem \ref{thm:connected} and  of  the corollaries
below it {\it admissible}. In particular, these results show that
optimal configurations are admissible. In this section, we collect
admissible configurations in different classes. These classes will be
analysed in more detail in the subsequent sections. The starting point
is the observation that by   Proposition  
\ref{cor:nomissingrows} we have that   there cannot be a row $R_{k_0}$ such that $n^{\rm row}_k > 0$ above and below this row (for some $k > k_0$ and some other $k < k_0$), while $n^{\rm row}_{k_0} = 0$. The same result holds for columns. Therefore, we may cluster the minimisers into several classes which are easier to handle and are described using this property.

Let us start from the top and suppose that $ n^{\rm row}_1  > 0$ (otherwise, we exchange the roles of the two types). Denote by $R_{k_0}$ the last row such that $n^{\rm row}_{k_0} > 0$. Then, we have the two possibilities 
\begin{equation}\label{eq:k0=Nr}
{\rm (i)} \ \ k_0 = N_{\rm row} \quad \quad \quad \text{and} \quad \quad \quad  {\rm (ii)} \ \ k_0 < N_{\rm row}. 
\end{equation}
In case (i), we  distinguish three  possibilities, depending on whether $ B_1^{\rm row} $
and $B^{\rm row}_{N_{\rm row}}$ are empty or not: if $ m^{\rm row}_1,
m^{\rm row}_{N_{\rm row}} > 0$, then each row contains points from
both types.  This case corresponds to class~$\mathcal{I}$.  If
$ m^{\rm row}_1$ or $m^{\rm row}_{N_{\rm row}}$ equals zero, then the
$B$-part of the  configuration has a smaller height.  This
corresponds to either class~$\mathcal{II}$ or~$\mathcal{III}$. 
In case (ii),  we have  $n^{\rm row}_{N_{\rm row}}=0$ and $m^{\rm row}_{N_{\rm row}}>0$. We distinguish two possibilities: if the last column $ B_{N_{\rm row}}^{\rm col}$ is not empty, i.e.\   $m^{\rm col}_{N_{\rm col}} >0$,   the configuration is in class~$\mathcal{IV}$. The case $m^{\rm col}_{N_{\rm col}} =0$ instead corresponds to class~$\mathcal{V}$.

%

By performing the same analysis for columns, and recalling the
corollaries after Theorem \ref{thm:connected}, we end up with a number
of possibilities which we list below, where without restriction we
assume that $ n_1^{\rm col} >0$. This list is complete up to isometries and
changing roles of the types. For the sake of the presentation, by
applying Corollary~\ref{cor:interface} we can without restriction
(possibly up to isometry and changing the roles of the types) assume
that the interface is going upwards and to the right. We divide  all admissible configurations 
into five main \textit{classes}, the first three being quite regular
and the last two a bit more difficult to handle. In this section, we
list all classes and introduce appropriate notation for each of
them. In the next section  we advance  a regularisation procedure for all
configurations.  This has the aim of proving that   for $N_A = N_B$
and $\beta \le 1/2$ all minimal configurations  belong to  Class
$\mathcal{I}$, $\mathcal{IV}$,  or  $ \mathcal{V}$,  as
well as checking some fine geometrical properties of such minimisers.

\subsection{Class $\mathcal{I}$}

The first possibility is the reference case: we say that an admissible
configuration $(A,B)$ belongs to Class $\mathcal{I}$ if for each $k =
1,...,N_{\rm row}$ we have $n^{\rm row}_k > 0$ and $m^{\rm row}_k >
0$. In other words, \eqref{eq:k0=Nr}(i) holds with  $m^{\rm row}_1, m^{\rm
  row}_{N_{\rm row}} > 0$.  The situation is presented in Figure
\ref{fig:ClassIbefore}. Examples of optimal configurations in Class
$\mathcal{I}$ can be found in Figure \ref{fig:threepointexample} (on
the right), in Figure \ref{fig:sixpointexample} (both), in Figure
\ref{fig:fourpointexample}, in Figure \ref{fig:sevenpointexample} (in
the middle), in Figure \ref{fig:fivepoints} (all but the two middle
ones), and  in Figure \ref{fig:sixteenpointexample} (on the
right). The abundance of examples in Class $\mathcal{I}$ is in some
sense expected. Indeed, we will prove that for many choices of $N_A$,
$N_B$, and  $\beta$  existence of an optimal configuration in Class $\mathcal{I}$ is guaranteed.

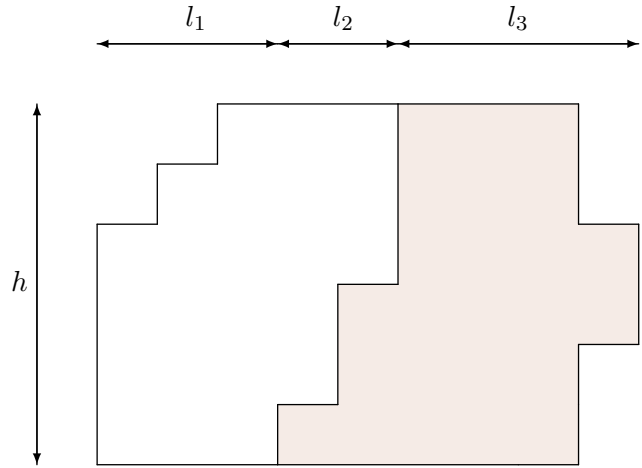
\begin{figure}[h]

\definecolor{zzttqq}{rgb}{0.6,0.2,0.}
\begin{tikzpicture}[line cap=round,line join=round,>=triangle 45,x=0.8cm,y=0.8cm]
\clip(-1.6,-0.1) rectangle (9.9,7.89);
\fill[line width=2.pt,color=zzttqq,fill=zzttqq,fill opacity=0.1] (5.,6.) -- (8.,6.) -- (8.,4.) -- (9.,4.) -- (9.,2.) -- (8.,2.) -- (8.,0.) -- (3.,0.) -- (3.,1.) -- (4.,1.) -- (4.,3.) -- (5.,3.) -- cycle;
\draw[line width=0.5pt] (0.,0.)-- (0.,4.);
\draw[line width=0.5pt] (0.,4.)-- (1.,4.);
\draw[line width=0.5pt] (1.,4.)-- (1.,5.);
\draw[line width=0.5pt] (1.,5.)-- (2.,5.);
\draw[line width=0.5pt] (2.,5.)-- (2.,6.);
\draw[line width=0.5pt] (2.,6.)-- (5.,6.);
\draw[line width=0.5pt] (5.,6.)-- (5.,3.);
\draw[line width=0.5pt] (5.,3.)-- (4.,3.);
\draw[line width=0.5pt] (4.,3.)-- (4.,1.);
\draw[line width=0.5pt] (4.,1.)-- (3.,1.);
\draw[line width=0.5pt] (3.,1.)-- (3.,0.);
\draw[line width=0.5pt] (3.,0.)-- (0.,0.);
\draw[line width=0.5pt] (5.,6.)-- (8.,6.);
\draw[line width=0.5pt] (8.,6.)-- (8.,4.);
\draw[line width=0.5pt] (8.,4.)-- (9.,4.);
\draw[line width=0.5pt] (9.,4.)-- (9.,2.);
\draw[line width=0.5pt] (8.,2.)-- (9.,2.);
\draw[line width=0.5pt] (8.,2.)-- (8.,1.);
\draw[line width=0.5pt] (8.,1.)-- (8.,0.);
\draw[line width=0.5pt] (8.,0.)-- (7.,0.);
\draw[line width=0.5pt] (7.,0.)-- (3.,0.);
\draw[-latex,line width=0.5pt] (0.,7.) -- (3.,7.);
\draw[-latex,line width=0.5pt] (3.,7.) -- (0.,7.);
\draw[-latex,line width=0.5pt] (3.,7.) -- (5.,7.);
\draw[-latex,line width=0.5pt] (5.,7.) -- (3.,7.);
\draw[-latex,line width=0.5pt] (5.,7.) -- (9.,7.);
\draw[-latex,line width=0.5pt] (9.,7.) -- (5.,7.);
\draw[-latex,line width=0.5pt] (-1.,6.) -- (-1.,0.);
\draw[-latex,line width=0.5pt] (-1.,0.) -- (-1.,6.);
\draw (1.3,7.8) node[anchor=north west] {$l_1$};
\draw (3.75,7.8) node[anchor=north west] {$l_2$};
\draw (6.65,7.8) node[anchor=north west] {$l_3$};
\draw (-1.6,3.4) node[anchor=north west] {$h$};
\end{tikzpicture}

\caption{Class $\mathcal{I}$}
\label{fig:ClassIbefore}
\end{figure}

Let us introduce the following notation. Let $h$ denote the number of rows (which in this case corresponds to the number of rows of both $A$ and $B$). Let $l_1$ denote the number of columns such that $A^{\rm col}_k \neq \emptyset$ and $B^{\rm col}_k = \emptyset$. Let $l_2$ denote the number of columns such that $A^{\rm col}_k \neq \emptyset$ and $B^{\rm col}_k \neq \emptyset$. Finally, let $l_3$ denote the number of columns such that $A^{\rm col}_k = \emptyset$ and $B^{\rm col}_k \neq \emptyset$. This notation is also presented in Figure \ref{fig:ClassIbefore}. Then,  in view of \eqref{eq:eq},   the energy  \eqref{eq: basic eneg}  may be expressed as  
\begin{equation}\label{eq:formulaforenergyclassI}
E(A,B) = - 2 (N_A + N_B) + (l_1 + l_2 + l_3) + h +   (1-\beta) (l_2 + h).
\end{equation}
In particular, the energy splits into the \textit{bulk energy} $-
2 (N_A + N_B)$ and, up to a factor $1/2$, into the \textit{lattice
  perimeter} introduced in  \eqref{eq:dbp3}. Clearly, only the latter
is relevant for identifying optimal configurations.  For convenience,
we will frequently refer to it as the surface energy. 

In the next section, we will simplify the structure of configurations in Class $\mathcal{I}$, without increasing the energy, in order to compute the minimal energy in this class. After such regularisation, it will turn out that we have two possibilities: either $l_2 = 0$ or $l_2 = 1$, i.e., either the interface is a straight line or it has one horizontal jump, see Proposition \ref{prop:classIregularisationstep1}.

\subsection{Class $\mathcal{II}$}
We say that an admissible configuration $(A,B)$ belongs to Class~$\mathcal{II}$ if there  exists  a column $C_{k_0}$ such that for all $k \leq k_0$ we have $n^{\rm col}_k > 0$ and $m^{\rm col}_k = 0$, for all $k > k_0$ we have $n^{\rm col}_k = 0$ and $m^{\rm col}_k > 0$, and $(A,B)$  does not lie  in Class~$\mathcal{I}$. In other words, the interface is a straight vertical line, and there exists at least one row which contains only one type (as otherwise $(A,B) \in \mathcal{I}$). Examples of optimal configurations in this class can be found in Figure \ref{fig:threepointexample} (on the left), in Figure \ref{fig:sevenpointexample} (on the left), and in Figure \ref{fig:sixteenpointexamplebigalpha} (all of them). Notice that in all these examples we have $N_A \neq N_B$. Indeed, in Section \ref{sec:regularisation} we will show that, if $N_A$ and $N_B$ are equal, such a configuration cannot be optimal.

A priori, this set of configurations may arise from both cases in \eqref{eq:k0=Nr}.  Up to changing the roles the two types, however, we may assume that we are in situation \eqref{eq:k0=Nr}(i), as we can see in the following simple observation.

\begin{lemma}\label{lem:classIIregularisation}
Fix $N_A, N_B > 0$ and $ \beta  \in (0,1)$. Suppose that $(A,B) \in \mathcal{II}$ is a minimal configuration. Then, there exists a minimal configuration $(\hat{A},\hat{B}) \in \mathcal{II}$ such that the last rows align, i.e., $n^{\rm row}_{N_{\rm row}} > 0$ and $m^{\rm row}_{N_{\rm row}} > 0$.
\end{lemma}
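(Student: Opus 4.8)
The plan is to take a minimal configuration $(A,B)\in\mathcal{II}$ and argue that the $B$-part can be shifted vertically so that its bottom row aligns with the bottom row of the $A$-part, without changing the energy. Since $(A,B)\in\mathcal{II}$, the interface is a straight vertical line separating $A$ (occupying columns $1,\dots,k_0$) from $B$ (occupying columns $k_0+1,\dots,N_{\rm col}$), and by the corollaries after Theorem~\ref{thm:connected} each set is a ``staircase-convex'' region: by Proposition~\ref{cor:rows} the sets $A^{\rm row}_k$, $B^{\rm row}_k$ are connected, and by Proposition~\ref{cor:nomissingrows} the rows occupied by $A$ (resp.\ by $B$) form a contiguous block. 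First I would use the energy splitting \eqref{eq:formulafortheenergy}, namely $E(A,B)=E_A+E_B+E_{AB}$ with $E_{AB}=-\beta\#I_{AB}$: a rigid vertical translation of the $B$-block leaves $E_A$ and $E_B$ unchanged, so the only thing that can change is $\#I_{AB}$, i.e.\ the number of horizontal $A$–$B$ bonds crossing the vertical line between columns $k_0$ and $k_0+1$.

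Next I would make precise which vertical shift to apply. Let the $A$-block occupy rows (counting from the top) $1,\dots,N_{\rm row}$ — here I am using that, up to exchanging the roles of $A$ and $B$, we are in case \eqref{eq:k0=Nr}(i), so the $A$-rows run all the way to the bottom row $N_{\rm row}$. The $B$-block occupies some contiguous set of rows; denote its bottom row by $R_{j}$. If $j=N_{\rm row}$ we are done. Otherwise $j<N_{\rm row}$, and I would translate the whole $B$-block downward by $N_{\rm row}-j$ rows to obtain $(\hat A,\hat B)$ with $\hat A=A$. I then claim $\#I_{\hat A\hat B}\ge \#I_{AB}$, so that $E(\hat A,\hat B)\le E(A,B)$, and since $(A,B)$ is minimal, equality holds and $(\hat A,\hat B)$ is also minimal and lies in $\mathcal{II}$ with aligned bottom rows. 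To see the interface does not get shorter: the number of crossing horizontal bonds equals the number of rows in which both $A^{\rm row}_k$ and $B^{\rm row}_k$ are non-empty (each such row contributes exactly one crossing bond, since $A^{\rm row}_k$ ends at column $k_0$ and $B^{\rm row}_k$ starts at column $k_0+1$ by Proposition~\ref{cor:allononeside} and connectedness). Both $A$-rows and $B$-rows form contiguous blocks; the $A$-block has its lowest row at $N_{\rm row}$, and after the shift the $B$-block also has its lowest row at $N_{\rm row}$. Two contiguous intervals of integers sharing a common right endpoint (here ``bottom'' row index $N_{\rm row}$) overlap in exactly $\min$ of their lengths of consecutive rows, which is the largest possible overlap among all vertical translates of the $B$-interval. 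Hence the number of common rows — and thus $\#I_{\hat A\hat B}$ — is maximised by this alignment, giving $\#I_{\hat A\hat B}\ge\#I_{AB}$.

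Finally I would check the edge cases to confirm $(\hat A,\hat B)\in\mathcal{II}$ and has the asserted property: $\hat A=A$ still has $n^{\rm row}_{N_{\rm row}}>0$; and after shifting, $\hat B$ has a point in row $N_{\rm row}$, so $m^{\rm row}_{N_{\rm row}}>0$, which also shows $(\hat A,\hat B)\notin\mathcal{I}$ only needs the existence of some row with a single type, and this is inherited from $(A,B)$ since $N_A,N_B$ and the shapes of the two blocks are unchanged (if $(A,B)$ had a row occupied by only one type, so does $(\hat A,\hat B)$, the $A$-block being identical and the $B$-block merely translated). The vertical-line interface is likewise preserved. I expect the main (very mild) obstacle to be bookkeeping about which set plays the role of the ``tall'' block reaching row $N_{\rm row}$: one must invoke the reduction to case \eqref{eq:k0=Nr}(i) ``up to changing the roles of the two types,'' and check that after possibly swapping $A\leftrightarrow B$ the statement to be proved is symmetric in the two types (which it is, as it only asserts $n^{\rm row}_{N_{\rm row}}>0$ and $m^{\rm row}_{N_{\rm row}}>0$). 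Everything else is a one-line consequence of \eqref{eq:formulafortheenergy} and the contiguity of the occupied rows.
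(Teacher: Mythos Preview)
Your approach is the same as the paper's: shift $B$ vertically so its bottom row aligns with that of $A$, and note (via \eqref{eq:formulafortheenergy}) that this can only lengthen the straight vertical interface. Your overlap-maximisation argument for $E(\hat A,\hat B)\le E(A,B)$ is correct and in fact more explicit than the paper's one-line assertion.

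There is, however, a gap in your verification that $(\hat A,\hat B)\in\mathcal{II}$. You assume that $A$ occupies \emph{all} rows $1,\dots,N_{\rm row}$, justifying this by ``up to exchanging the roles of $A$ and $B$, we are in case~\eqref{eq:k0=Nr}(i)''. But case~(i) only yields $n^{\rm row}_{N_{\rm row}}>0$; to also get $n^{\rm row}_1>0$ you are implicitly using the standing convention from the start of Section~\ref{sec:classi}, and that convention need not survive the swap you invoke. Concretely, if originally $A$ occupies rows $[1,k_0]$ with $k_0<N_{\rm row}$ and $B$ occupies rows $[b_1,N_{\rm row}]$ with $b_1>1$, then neither type occupies all rows and no single swap fixes this. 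In that situation your inheritance claim (``a single-type row persists because only $B$ is translated'') can fail: if $A$ and $B$ happen to have the same number of rows, aligning their bottom rows makes every non-empty row contain both types, i.e.\ $(\hat A,\hat B)\in\mathcal{I}$. The paper closes this with a one-line contradiction argument: if the shift produced a configuration in Class~$\mathcal{I}$, then the interface strictly grew (the original row-overlap was not maximal since $(A,B)\in\mathcal{II}$ had at least one single-type row), so the energy strictly dropped, contradicting minimality of $(A,B)$. Inserting this observation in place of your inheritance argument completes the proof.
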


\begin{proof}
Without loss of generality, suppose that $n^{\rm row}_{N_{\rm row}} > 0$ and that $r_0 < N_{\rm row}$ is the biggest number such that $m^{\rm row}_{r_0} > 0$. Notice that, since the interface is a straight line, we may move the set $B$ by the vector $(0,r_0 - N_{\rm row})$ so that the last two rows align and this procedure does not increase the energy. The resulting configuration $(\hat{A},\hat{B})$ also lies in Class $\mathcal{II}$: if after this procedure we had also $n^{\rm row}_{1} > 0$ and $m^{\rm row}_{1} > 0$, i.e.,  $(\hat{A},\hat{B})$ lies in Class $\mathcal{I}$, then we would have added at least one bond. This induces a drop in the energy, a contradiction to the fact that $(A,B)$ is a minimal configuration.
\end{proof}

After applying this regularisation argument, we introduce the following notation. Up to reflection along the (straight) interface and interchanging the roles of the types, we may assume that $A$ is on the left-hand side and that it has more nonempty rows than $B$. Then, let $h_1$ denote the number of rows such that $A^{\rm row}_k \neq \emptyset$ and $B^{\rm row}_k = \emptyset$, and let $h_2$ be the number of rows such that $A^{\rm row}_k \neq \emptyset$ and $B^{\rm row}_k \neq \emptyset$. Moreover, let $l_1$ denote the number of columns such that $A^{\rm col}_k \neq \emptyset$ and $l_3$ denote the number of columns such that $B^{\rm col}_k \neq \emptyset$ (the notation $l_2$ is omitted on purpose to simplify some later regularisation arguments). Then, arguing as in the justification of formula \eqref{eq:formulaforenergyclassI},  see also \eqref{eq:eq},   the energy  \eqref{eq: basic eneg}  may be expressed as  
\begin{equation}\label{eq: energy,class2}
E(A,B) = - 2(N_A + N_B) + (l_1 + l_3) + (h_1 + h_2) + 
(1-\beta)  h_2.
\end{equation}
 The situation is presented in Figure \ref{fig:classIInotation}.

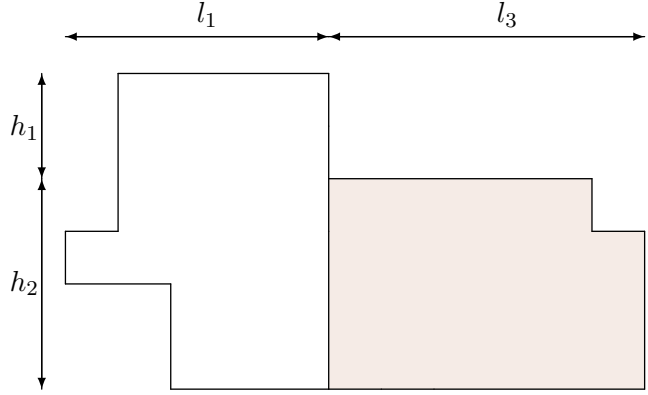
\begin{figure}[h]

\definecolor{zzttqq}{rgb}{0.6,0.2,0.}
\begin{tikzpicture}[line cap=round,line join=round,>=triangle 45,x=0.7cm,y=0.7cm]
\clip(-1.2,-3.5) rectangle (11.2,4.8);
\fill[line width=2.pt,color=zzttqq,fill=zzttqq,fill opacity=0.1] (5.,1.) -- (10.,1.) -- (10.,0.) -- (11.,0.) -- (11.,-3.) -- (5.,-3.) -- cycle;
\draw[line width=0.5pt] (5.,-3.)-- (2.,-3.);
\draw[line width=0.5pt] (2.,-1.)-- (2.,-3.);
\draw[line width=0.5pt] (2.,-1.)-- (0.,-1.);
\draw[line width=0.5pt] (0.,-1.)-- (0.,0.);
\draw[line width=0.5pt] (1.,0.)-- (0.,0.);
\draw[line width=0.5pt] (1.,0.)-- (1.,3.);
\draw[line width=0.5pt] (1.,3.)-- (5.,3.);
\draw[line width=0.5pt] (5.,0.)-- (5.,1.);
\draw[line width=0.5pt] (5.,2.)-- (5.,1.);
\draw[line width=0.5pt] (5.,0.)-- (5.,-3.);
\draw[line width=0.5pt] (5.,1.)-- (10.,1.);
\draw[line width=0.5pt] (10.,1.)-- (10.,0.);
\draw[line width=0.5pt] (10.,0.)-- (11.,0.);
\draw[line width=0.5pt] (11.,0.)-- (11.,-3.);
\draw[line width=0.5pt] (11.,-3.)-- (7.,-3.);
\draw[line width=0.5pt] (6.,-3.)-- (5.,-3.);
\draw[-latex,line width=0.5pt] (-0.45,3.) -- (-0.45,1.);
\draw[-latex,line width=0.5pt] (-0.45,1.) -- (-0.45,3.);
\draw[-latex,line width=0.5pt] (-0.45,1.) -- (-0.45,-3.);
\draw[-latex,line width=0.5pt] (-0.45,-3.) -- (-0.45,1.);
\draw (2.3,4.55) node[anchor=north west] {$l_1$};
\draw (-1.25,2.4) node[anchor=north west] {$h_1$};
\draw (8.,4.55) node[anchor=north west] {$l_3$};
\draw (-1.25,-0.55) node[anchor=north west] {$h_2$};
\draw[line width=0.5pt] (5.,2.)-- (5.,3.);
\draw[-latex,line width=0.5pt] (0.,3.7) -- (5.,3.7);
\draw[-latex,line width=0.5pt] (5.,3.7) -- (0.,3.7);
\draw[-latex,line width=0.5pt] (5.,3.7) -- (11.,3.7);
\draw[-latex,line width=0.5pt] (11.,3.7) -- (5.,3.7);
\draw[line width=0.5pt] (7.,-3.)-- (6.,-3.);
\end{tikzpicture}

\caption{Class $\mathcal{II}$}
\label{fig:classIInotation}
\end{figure}

\subsection{Class $\mathcal{III}$}

We say that an admissible configuration $(A,B)$ belongs to Class~$\mathcal{III}$ if for each $k = 1,...,N_{\rm row}$ we have $n^{\rm row}_k > 0$ and for each $l = 1,...,N_{\rm col}$ we have $n^{\rm col}_l > 0$. In other words, each row and each column of $(A,B)$ contains at least one $A$-point (or equivalently, for every $B$-point there is a $A$-point above it and another one to its left). An example of an optimal configuration in this class can be found in Figure~\ref{fig:sixteenpointexample}. Note that in this example the ratio $N_A / N_B$ is far away from $1$. Indeed, in Section \ref{sec:regularisation} we will show that for $N_A = N_B$ configurations in this class cannot be optimal.

Counting from the left, let $l_1$ denote the number of columns such that $A^{\rm col}_k \neq \emptyset$ and $B^{\rm col}_k = \emptyset$, let $l_2$ denote the number of columns such that $A^{\rm col}_k \neq \emptyset$ and $B^{\rm col}_k \neq \emptyset$, and let $l_3$ be the number of columns such that $A^{\rm col}_k \neq \emptyset$ and $B^{\rm col}_k = \emptyset$. Similarly, counting from the top, denote by $h_1$ the number of rows such that $A^{\rm row}_k \neq \emptyset$ and $B^{\rm row}_k = \emptyset$, let $h_2$ be the number of rows such that $A^{\rm row}_k \neq \emptyset$ and $B^{\rm row}_k \neq \emptyset$, and finally let $h_3$ be the number of rows such that $A^{\rm row}_k \neq \emptyset$ and $B^{\rm row}_k = \emptyset$. Similarly to previous classes, the energy may be expressed as 
\begin{equation}\label{eq: nerg3}
E(A,B) = - 2(N_A + N_B) + (l_1 + l_2 + l_3) + (h_1 + h_2 + h_3) +
 (1-\beta) (l_2 + h_2).
\end{equation}
 The situation is presented in Figure \ref{fig:classIII}. 

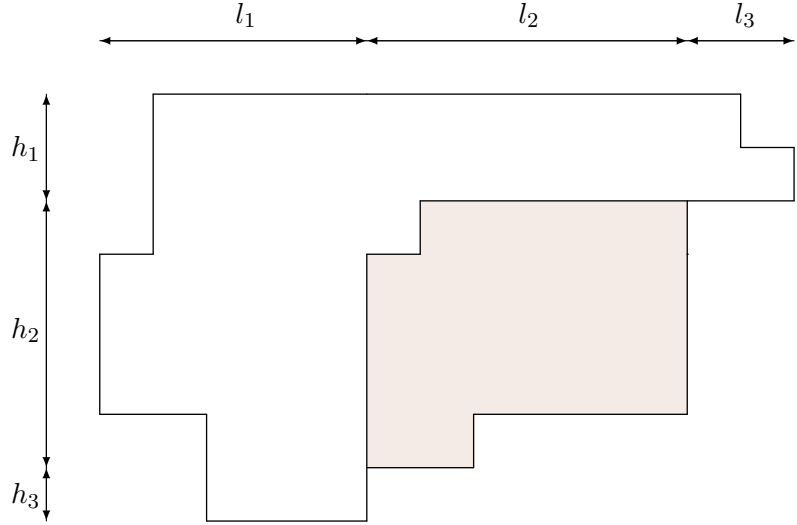
\begin{figure}[h]

\definecolor{zzttqq}{rgb}{0.6,0.2,0.}
\begin{tikzpicture}[line cap=round,line join=round,>=triangle 45,x=0.71cm,y=0.71cm]
\clip(-1.9,-5.1) rectangle (13.1,4.9);
\fill[line width=0.5pt,color=zzttqq,fill=zzttqq,fill opacity=0.1] (6.,1.) -- (11.,1.) -- (11.,-3.) -- (7.,-3.) -- (7.,-4.) -- (5.,-4.) -- (5.,0.) -- (6.,0.) -- cycle;
\draw[line width=0.5pt] (5.,-5.)-- (2.,-5.);
\draw[line width=0.5pt] (2.,-3.)-- (2.,-5.);
\draw[line width=0.5pt] (2.,-3.)-- (0.,-3.);
\draw[line width=0.5pt] (0.,-3.)-- (0.,0.);
\draw[line width=0.5pt] (1.,0.)-- (0.,0.);
\draw[line width=0.5pt] (1.,0.)-- (1.,3.);
\draw[line width=0.5pt] (1.,3.)-- (5.,3.);
\draw[line width=0.5pt] (5.,3.)-- (12.,3.);
\draw[line width=0.5pt] (12.,2.)-- (12.,3.);
\draw[line width=0.5pt] (12.,2.)-- (13.,2.);
\draw[line width=0.5pt] (13.,2.)-- (13.,1.);
\draw[line width=0.5pt] (13.,1.)-- (6.,1.);
\draw[line width=0.5pt] (6.,1.)-- (6.,0.);
\draw[line width=0.5pt] (6.,0.)-- (5.,0.);
\draw[line width=0.5pt] (5.,0.)-- (5.,-5.);
\draw[line width=0.5pt] (11.,1.)-- (11.,0.);
\draw[line width=0.5pt] (11.,0.)-- (11.,0.);
\draw[line width=0.5pt] (11.,0.)-- (11.,-3.);
\draw[line width=0.5pt] (11.,-3.)-- (7.,-3.);
\draw[line width=0.5pt] (7.,-3.)-- (7.,-4.);
\draw[line width=0.5pt] (7.,-4.)-- (5.,-4.);
\draw[-latex,thin] (0.,4.) -- (5.,4.);
\draw[-latex,thin] (5.,4.) -- (0.,4.);
\draw[-latex,thin] (5.,4.) -- (11.,4.);
\draw[-latex,thin] (11.,4.) -- (5.,4.);
\draw[-latex,thin] (11.,4.) -- (13.,4.);
\draw[-latex,thin] (13.,4.) -- (11.,4.);
\draw[-latex,thin] (-1.,3.) -- (-1.,1.);
\draw[-latex,thin] (-1.,1.) -- (-1.,3.);
\draw[-latex,thin] (-1.,1.) -- (-1.,-4.);
\draw[-latex,thin] (-1.,-4.) -- (-1.,1.);
\draw[-latex,thin] (-1.,-4.) -- (-1.,-5.);
\draw[-latex,thin] (-1.,-5.) -- (-1.,-4.);
\draw (2.35,4.9) node[anchor=north west] {$l_1$};
\draw (7.65,4.9) node[anchor=north west] {$l_2$};
\draw (-1.85,2.4) node[anchor=north west] {$h_1$};
\draw (11.7,4.9) node[anchor=north west] {$l_3$};
\draw (-1.85,-1.) node[anchor=north west] {$h_2$};
\draw (-1.85,-4.1) node[anchor=north west] {$h_3$};
\end{tikzpicture}

\caption{Class $\mathcal{III}$}
\label{fig:classIII}
\end{figure}

\subsection{Class $\mathcal{IV}$}
We say that an admissible configuration $(A,B)$ belongs to
Class~$\mathcal{IV}$ if there exist \linebreak $l_1, l_2, h_1, h_2 > 0$ such that $N_{\rm row} + N_{\rm col} - (l_1+l_2+h_1+h_2)>0$ and the following conditions hold: for each $k = 1,...,l_1$ we have $n^{\rm col}_k > 0$ and $m^{\rm col}_k = 0$. For each $k = l_1+1,...,l_1 + l_2$ we have $n^{\rm col}_k > 0$ and $m^{\rm col}_k > 0$. Finally, for all $k = l_1+l_2+1,...,N_{\rm row}$ (this may possibly be empty) we have $n^{\rm col}_k = 0$ and $m^{\rm col}_k > 0$. Similarly, for each $l = 1,...,h_1$ we have $n^{\rm row}_l > 0$ and $m^{\rm row}_l = 0$. For each $l = h_1+1,...,h_1+h_2$ we have $n^{\rm row}_l > 0$ and $m^{\rm row}_l > 0$. Finally, for all $l = h_1+h_2+1,...,N_{\rm col}$ (this may possibly be empty) we have $n^{\rm row}_l = 0$ and $m^{\rm row}_l > 0$. Setting $l_3 = N_{\rm col} - l_1 - l_2$ and $h_3 = N_{\rm row} - h_1 - h_2$ we observe $l_3>0$ or $h_3>0$, i.e., the configuration does not lie in Class $\mathcal{III}$. The energy may be expressed as 
\begin{equation}\label{eq:classIVformula}
E(A,B) = - 2(N_A + N_B) + (l_1 + l_2 + l_3) + (h_1 + h_2 + h_3) +
 (1-\beta)  (l_2 + h_2).
\end{equation} 
The situation is presented in Figure \ref{fig:ClassIVbefore}. Examples of optimal configurations in this class can be found in Figure~\ref{fig:sevenpointexample} (on the right) and in Figure \ref{fig:fivepoints} (both in the middle). 

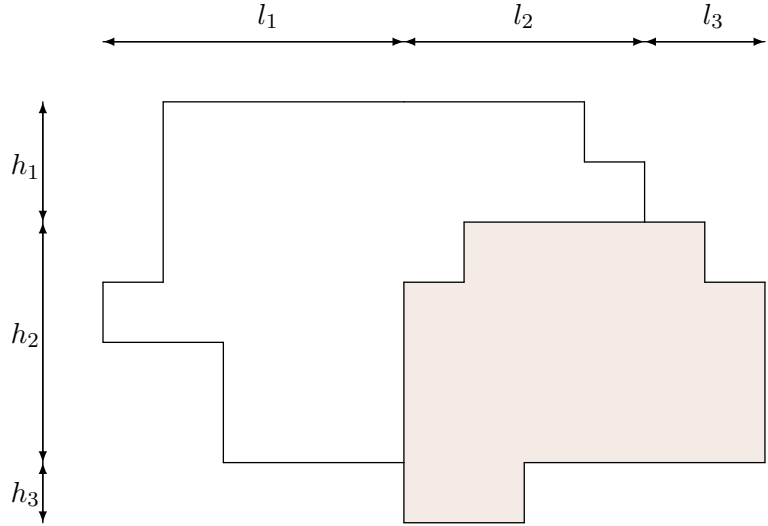
\begin{figure}[h]

\definecolor{zzttqq}{rgb}{0.6,0.2,0.}
\begin{tikzpicture}[line cap=round,line join=round,>=triangle 45,x=0.8cm,y=0.8cm]
\clip(-1.8,-4.05) rectangle (11.2,4.7);
\fill[line width=2.pt,color=zzttqq,fill=zzttqq,fill opacity=0.1] (6.,1.) -- (10.,1.) -- (10.,0.) -- (11.,0.) -- (11.,-3.) -- (7.,-3.) -- (7.,-4.) -- (5.,-4.) -- (5.,0.) -- (6.,0.) -- cycle;
\draw[line width=0.5pt] (5.,-3.)-- (2.,-3.);
\draw[line width=0.5pt] (2.,-1.)-- (2.,-3.);
\draw[line width=0.5pt] (2.,-1.)-- (0.,-1.);
\draw[line width=0.5pt] (0.,-1.)-- (0.,0.);
\draw[line width=0.5pt] (1.,0.)-- (0.,0.);
\draw[line width=0.5pt] (1.,0.)-- (1.,3.);
\draw[line width=0.5pt] (1.,3.)-- (5.,3.);
\draw[line width=0.5pt] (5.,3.)-- (8.,3.);
\draw[line width=0.5pt] (8.,2.)-- (8.,3.);
\draw[line width=0.5pt] (8.,2.)-- (9.,2.);
\draw[line width=0.5pt] (9.,2.)-- (9.,1.);
\draw[line width=0.5pt] (9.,1.)-- (6.,1.);
\draw[line width=0.5pt] (6.,1.)-- (6.,0.);
\draw[line width=0.5pt] (6.,0.)-- (5.,0.);
\draw[line width=0.5pt] (5.,0.)-- (5.,-3.);
\draw[line width=0.5pt] (9.,1.)-- (10.,1.);
\draw[line width=0.5pt] (10.,1.)-- (10.,0.);
\draw[line width=0.5pt] (10.,0.)-- (11.,0.);
\draw[line width=0.5pt] (11.,0.)-- (11.,-3.);
\draw[line width=0.5pt] (11.,-3.)-- (7.,-3.);
\draw[line width=0.5pt] (7.,-3.)-- (7.,-4.);
\draw[line width=0.5pt] (7.,-4.)-- (5.,-4.);
\draw[line width=0.5pt] (5.,-4.)-- (5.,-3.);
\draw[-latex,line width=0.5pt] (0.,4.) -- (5.,4.);
\draw[-latex,line width=0.5pt] (5.,4.) -- (0.,4.);
\draw[-latex,line width=0.5pt] (5.,4.) -- (9.,4.);
\draw[-latex,line width=0.5pt] (9.,4.) -- (5.,4.);
\draw[-latex,line width=0.5pt] (9.,4.) -- (11.,4.);
\draw[-latex,line width=0.5pt] (11.,4.) -- (9.,4.);
\draw[-latex,line width=0.5pt] (-1.,3.) -- (-1.,1.);
\draw[-latex,line width=0.5pt] (-1.,1.) -- (-1.,3.);
\draw[-latex,line width=0.5pt] (-1.,1.) -- (-1.,-3.);
\draw[-latex,line width=0.5pt] (-1.,-3.) -- (-1.,1.);
\draw[-latex,line width=0.5pt] (-1.,-3.) -- (-1.,-4.);
\draw[-latex,line width=0.5pt] (-1.,-4.) -- (-1.,-3.);
\draw (2.4,4.8) node[anchor=north west] {$l_1$};
\draw (6.65,4.8) node[anchor=north west] {$l_2$};
\draw (-1.7,2.3) node[anchor=north west] {$h_1$};
\draw (9.8,4.8) node[anchor=north west] {$l_3$};
\draw (-1.7,-0.5) node[anchor=north west] {$h_2$};
\draw (-1.7,-3.15) node[anchor=north west] {$h_3$};
\end{tikzpicture}

\caption{Class $\mathcal{IV}$}
\label{fig:ClassIVbefore}
\end{figure}

\subsection{Class $\mathcal{V}$}

We say that an admissible configuration $(A,B)$ belongs to Class~$\mathcal{V}$ if there exist $l_1, l_2, l_3, h_1, h_2, h_3 > 0$ such that $l_1 + l_2 + l_3 = N_{\rm col}$, $h_1+h_2+h_3 = N_{\rm row}$ and the following conditions hold: for each $k = 1,...,l_1$ we have $n^{\rm col}_k > 0$ and $m^{\rm col}_k = 0$. For each $k = l_1+1,...,l_1 + l_2$ we have $n^{\rm col}_k > 0$ and $m^{\rm col}_k > 0$. Finally, for all $k = l_1+l_2+1,...,N_{\rm row}$  we have $n^{\rm col}_k > 0$ and $m^{\rm col}_k = 0$. On the other hand, for each $l = 1,...,h_1$ we have $n^{\rm row}_l > 0$ and $m^{\rm row}_l = 0$. For each $l = h_1+1,...,h_1+h_2$ we have $n^{\rm row}_l > 0$ and $m^{\rm row}_l > 0$. Finally, for all $l = h_1+h_2+1,...,N_{\rm col}$ we have $n^{\rm row}_l = 0$ and $m^{\rm row}_l > 0$. The energy may be expressed as 
\begin{equation*}
E(A,B) = - 2(N_A + N_B) + (l_1 + l_2 + l_3) + (h_1 + h_2 + h_3) +  (1-\beta) (l_2 + h_2).
\end{equation*} 
The situation is presented in Figure \ref{fig:classVbefore}.


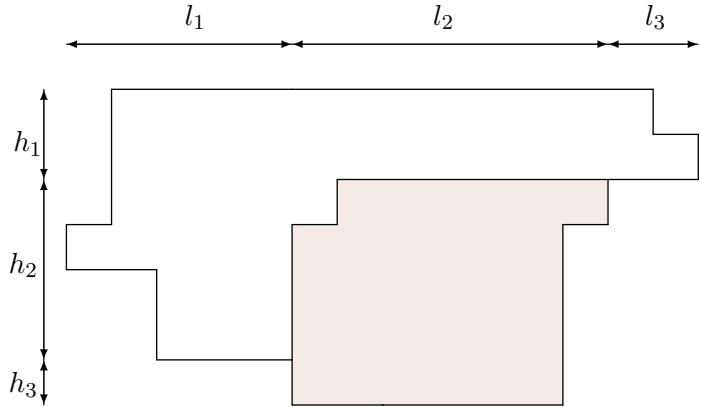
\begin{figure}[h]

\definecolor{zzttqq}{rgb}{0.6,0.2,0.}
\begin{tikzpicture}[line cap=round,line join=round,>=triangle 45,x=0.6cm,y=0.6cm]
\clip(-1.5,-4.9) rectangle (14.1,5.2);
\fill[line width=0.5pt,color=zzttqq,fill=zzttqq,fill opacity=0.1] (6.,1.) -- (12.,1.) -- (12.,0.) -- (11.,0.) -- (11.,-4.) -- (5.,-4.) -- (5.,0.) -- (6.,0.) -- cycle;
\draw[line width=0.5pt] (5.,-3.)-- (2.,-3.);
\draw[line width=0.5pt] (2.,-1.)-- (2.,-3.);
\draw[line width=0.5pt] (2.,-1.)-- (0.,-1.);
\draw[line width=0.5pt] (0.,-1.)-- (0.,0.);
\draw[line width=0.5pt] (1.,0.)-- (0.,0.);
\draw[line width=0.5pt] (1.,0.)-- (1.,3.);
\draw[line width=0.5pt] (1.,3.)-- (5.,3.);
\draw[line width=0.5pt] (5.,3.)-- (13.,3.);
\draw[line width=0.5pt] (13.,2.)-- (13.,3.);
\draw[line width=0.5pt] (13.,2.)-- (14.,2.);
\draw[line width=0.5pt] (14.,2.)-- (14.,1.);
\draw[line width=0.5pt] (14.,1.)-- (6.,1.);
\draw[line width=0.5pt] (6.,1.)-- (6.,0.);
\draw[line width=0.5pt] (6.,0.)-- (5.,0.);
\draw[line width=0.5pt] (5.,0.)-- (5.,-3.);
\draw[line width=0.5pt] (12.,1.)-- (12.,0.);
\draw[line width=0.5pt] (12.,0.)-- (11.,0.);
\draw[line width=0.5pt] (11.,0.)-- (11.,-4.);
\draw[line width=0.5pt] (11.,-4.)-- (7.,-4.);
\draw[line width=0.5pt] (7.,-4.)-- (7.,-4.);
\draw[line width=0.5pt] (7.,-4.)-- (5.,-4.);
\draw[line width=0.5pt] (5.,-4.)-- (5.,-3.);
\draw[-latex,thin] (0.,4.) -- (5.,4.);
\draw[-latex,thin] (5.,4.) -- (0.,4.);
\draw[-latex,thin] (5.,4.) -- (12.,4.);
\draw[-latex,thin] (12.,4.) -- (5.,4.);
\draw[-latex,thin] (12.,4.) -- (14.,4.);
\draw[-latex,thin] (14.,4.) -- (12.,4.);
\draw[-latex,thin] (-0.5,3.) -- (-0.5,1.);
\draw[-latex,thin] (-0.5,1.) -- (-0.5,3.);
\draw[-latex,thin] (-0.5,1.) -- (-0.5,-3.);
\draw[-latex,thin] (-0.5,-3.) -- (-0.5,1.);
\draw[-latex,thin] (-0.5,-3.) -- (-0.5,-4.);
\draw[-latex,thin] (-0.5,-4.) -- (-0.5,-3.);
\draw (2.4,5.1) node[anchor=north west] {$l_1$};
\draw (7.9,5.1) node[anchor=north west] {$l_2$};
\draw (-1.4,2.3) node[anchor=north west] {$h_1$};
\draw (12.6,5.1) node[anchor=north west] {$l_3$};
\draw (-1.5,-0.4) node[anchor=north west] {$h_2$};
\draw (-1.5,-3.05) node[anchor=north west] {$h_3$};
\end{tikzpicture}

\caption{Class $\mathcal{V}$}
\label{fig:classVbefore}
\end{figure}

We close this section with the observation that the five classes cover all possible cases up to isometries, reflections,  and changing roles of the types.

\section{Analysis of Class $\mathcal{I}$}\label{sec:reg1}

\subsection{Regularisation inside Class $\mathcal{I}$}

The goal of this section is to make the configuration  in Class~$\mathcal{I}$  more regular without increasing the energy. This regularisation will facilitate the computation of the minimal energy. We keep the notation as in the previous section, and begin with the following observation.

\begin{proposition}\label{prop:classIregularisationstep1}
Fix $N_A, N_B > 0$ and $ \beta  \in (0,1)$. Suppose that $(A,B) \in \mathcal{I}$ is an optimal configuration. Then, we either have $l_2 = 0$ or $l_2 = 1$.
\end{proposition}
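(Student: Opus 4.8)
The plan is to argue by contradiction. Suppose $(A,B)\in\mathcal{I}$ is optimal with $l_2\ge 2$; I will exhibit a configuration in $\mathcal{I}$ with the same $N_A$ and $N_B$ but strictly smaller energy, which is absurd. By Corollary~\ref{cor:interface} and Proposition~\ref{cor:allononeside}, after an isometry I may assume that in every row $A$ lies to the left of $B$ and that the interface is monotone, going upwards to the right. Let $h$ denote the number of rows. By \eqref{eq:formulaforenergyclassI}, among configurations in $\mathcal{I}$ with this value of $h$ (and the given $N_A,N_B$), minimising $E$ is the same as minimising $g(A,B):=N_{\rm col}+(1-\beta)\,l_2$, because the remaining terms in \eqref{eq:formulaforenergyclassI} depend only on $N_A,N_B,h$. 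So it suffices to build a competitor with the same $h$ and strictly smaller $g$.

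Write $N_A=hq_A+r_A$ and $N_B=hq_B+r_B$ with $0\le r_A,r_B<h$, and set $W_0:=\lceil N_A/h\rceil+\lceil N_B/h\rceil$. I record two elementary facts. First, a pigeonhole argument on the $h$ rows shows $N_{\rm col}\ge\lceil (N_A+N_B)/h\rceil$ for any configuration with $h$ rows, since some row contains at least $\lceil (N_A+N_B)/h\rceil$ points, lying in that many distinct columns. Second, $\lceil (N_A+N_B)/h\rceil\ge W_0-1$ always, with equality precisely when $r_A,r_B>0$ and $r_A+r_B\le h$ (a short computation with $\lceil\cdot\rceil$). Note also that $q_A,q_B\ge 1$, because in Class~$\mathcal{I}$ each row contains a point of each species, so $N_A,N_B\ge h$.

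I will use two explicit competitors, both in $\mathcal{I}$ with $h$ rows and the correct cardinalities. (i) A \emph{double-rectangle} configuration with $l_2=0$: place side by side a $q_A\times h$ block together with an extra column of $r_A$ cells on its far left (omitted if $r_A=0$), and a $q_B\times h$ block with an extra column of $r_B$ cells on its far right; this has $N_{\rm col}=W_0$. (ii) When $r_A,r_B>0$ and $r_A+r_B\le h$, an \emph{interlocked} configuration with $l_2=1$ and $N_{\rm col}=W_0-1$: columns $1,\dots,q_A$ are full and belong to $A$; the separating column $q_A+1$ is \emph{completely} filled, with its top $r_A$ cells assigned to $A$ and its remaining $h-r_A$ cells to $B$; columns $q_A+2,\dots,q_A+q_B$ are full and belong to $B$; and the last column $q_A+q_B+1$ carries the $e:=h-r_A-r_B$ unused cells, so that it contains $r_A+r_B$ $B$-cells flush with the top. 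One checks directly that both configurations are admissible: rows and columns are intervals, every row meets both species, and the interface is monotone — the crucial point in (ii) being that placing the interface inside a single, fully occupied column is exactly what prevents a gap in the rows where the last column is empty.

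It remains to compare $g$. If $r_A+r_B>h$, or $r_A=0$, or $r_B=0$, then by the above $N_{\rm col}(A,B)\ge W_0$, so $g(A,B)\ge W_0+2(1-\beta)>W_0$, the latter being the value of $g$ for competitor (i); this is a strict contradiction since $\beta<1$. If instead $r_A,r_B>0$ and $r_A+r_B\le h$, then $N_{\rm col}(A,B)\ge W_0-1$, hence $g(A,B)\ge (W_0-1)+2(1-\beta)=W_0+1-2\beta$, while competitor (ii) has $g=(W_0-1)+(1-\beta)=W_0-\beta$; and $W_0+1-2\beta>W_0-\beta$ because $\beta<1$, again a contradiction. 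In either case $(A,B)$ is not optimal, so $l_2\in\{0,1\}$. The bookkeeping in the last two paragraphs is routine; the step that needs genuine care, and which I expect to be the main technical point, is the construction of the interlocked competitor (ii) together with the verification of its admissibility, in particular the connectedness of all rows and columns.
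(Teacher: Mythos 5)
Your argument is correct, but it follows a genuinely different route from the paper. The paper proves the statement by a three-step local surgery: it pads the configuration to a full $(l_1+l_2+l_3)\times h$ rectangle (which leaves the surface part of the energy unchanged), swaps the $A$-points of the rightmost column of the mixed block with the $B$-points of its leftmost column so that at least one mixed column disappears (a strict drop of $(1-\beta)$ or $2(1-\beta)$ in view of \eqref{eq:formulaforenergyclassI}), and then strips the padding column by column without increasing the energy. You instead argue globally: at fixed $h$, $N_A$, $N_B$ you reduce the comparison to the quantity $N_{\rm col}+(1-\beta)l_2$, lower-bound $N_{\rm col}$ by pigeonhole through $\lceil (N_A+N_B)/h\rceil$, and beat any configuration with $l_2\ge 2$ by one of two explicit competitors (the $l_2=0$ rectangle pair, or the $l_2=1$ interlocked configuration with one fewer column in the case $r_A,r_B>0$, $r_A+r_B\le h$). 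Your case analysis on the remainders and the admissibility check of the interlocked competitor are sound, and as a by-product you essentially pre-compute the minimal value of $l_1+l_2+l_3+(1-\beta)l_2$ at fixed $h$, which anticipates the computation in Theorem \ref{thm:classIexact}. What you lose relative to the paper's surgery is locality: the paper's construction modifies the given minimiser only near the mixed block and is reused verbatim afterwards (it justifies the standing assumption that all columns except the outermost ones are full, and it feeds into the fluctuation estimates of Section \ref{sec:law}), whereas your competitors are built from scratch and carry no information about how far the original minimiser is from them.
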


Both cases can happen: take $N_A = N_B = 3$ and $ \beta  \in (0,1)$. Then, there are two optimal configurations, one with $l_2 = 0$ and the other one with $l_2 = 1$, see Figure \ref{fig:sixpointexample}.

\begin{proof}
The idea of the proof is the following: we  suppose by contradiction that $l_2 \geq 2$. We add more points to the configuration $(A,B)$, so that it becomes a full rectangle, keeping track of the change of the energy in the process. Then, we exchange a number of points, making the interface shorter and causing a drop in the energy. Finally, we remove the added points, again keeping track of the energy. This yields strictly smaller total energy, a contradiction.  The argument is presented in Figure \ref{fig:regularisationClassI}. 

To be exact, let us modify the configuration $(A,B)$ as follows. We add $N'_A$ $A$-points on the left and $N'_B$ $B$-points on the right such that  that $(A,B)$ becomes a full rectangle with sides $l_1 + l_2 + l_3$ and $h$. Notice that in this way we do not alter the surface energy. Meanwhile, the bulk energy changes by $- 2 (N_A' + N_B')$. Now, look at the rectangle in the middle with sides $l_2$ and $h$. If we exchange $A$-points from its rightmost column and $B$-points from its leftmost column (as many as we can), we will make one column (or two) full of points of one type. Hence, in the formula for the  energy, see \eqref{eq:formulaforenergyclassI},  we replace $l_2$ by $l_2 - 1$ (respectively $l_2 - 2$), and $l_1 + l_3$ by $l_1 + l_3 + 1$ (respectively $l_1 + l_3 + 2$). This causes a drop in the surface energy by  $ (1-\beta)$ or $2(1-\beta)$. 

Finally, we take care of the added points. We remove $N_A'$ $A$-points, starting from the leftmost column, going from top to bottom. In the process, the surface energy decreases or remains the same (since $l_1$ may decrease or remain the same). Similarly, we remove $N_B'$ $B$-points, starting from the rightmost column and going from top to bottom. 

In this way, we have obtained a configuration $(\hat{A},\hat{B})$ with
the same number of $A$- and $B$-points as $(A,B)$, but with energy
lower at least by $   (1-\beta) $. After this operation, we possibly end up with a shape of the interface   different  from the one in Class $\mathcal{I}$, but this does not matter since we only wanted to show that $(A,B)$ was not optimal. Hence, if $(A,B)$ is an optimal configuration, then $l_2 = 0$ or $l_2 = 1$.
\end{proof}

\begin{figure}[h]

\definecolor{zzttqq}{rgb}{0.6,0.2,0.}
\begin{tikzpicture}[line cap=round,line join=round,>=triangle 45,x=0.5cm,y=0.5cm]
\clip(-1.4,-9.5) rectangle (23.5,7.1);
\fill[line width=0.5pt,color=zzttqq,fill=zzttqq,fill opacity=0.1] (5.,6.) -- (8.,6.) -- (8.,4.) -- (9.,4.) -- (9.,2.) -- (8.,2.) -- (8.,0.) -- (3.,0.) -- (3.,1.) -- (4.,1.) -- (4.,3.) -- (5.,3.) -- cycle;
\fill[line width=0.5pt,color=zzttqq,fill=zzttqq,fill opacity=0.1] (19.,6.) -- (23.,6.) -- (23.,0.) -- (17.,0.) -- (17.,1.) -- (18.,1.) -- (18.,3.) -- (19.,3.) -- cycle;
\fill[line width=0.5pt,color=zzttqq,fill=zzttqq,fill opacity=0.1] (5.,-3.) -- (9.,-3.) -- (9.,-9.) -- (4.,-9.) -- (4.,-5.) -- (5.,-5.) -- cycle;
\fill[line width=0.5pt,color=zzttqq,fill=zzttqq,fill opacity=0.1] (19.,-3.) -- (22.,-3.) -- (22.,-7.) -- (23.,-7.) -- (23.,-9.) -- (18.,-9.) -- (18.,-5.) -- (19.,-5.) -- cycle;
\draw[line width=0.5pt] (0.,0.)-- (0.,4.);
\draw[line width=0.5pt] (0.,4.)-- (1.,4.);
\draw[line width=0.5pt] (1.,4.)-- (1.,5.);
\draw[line width=0.5pt] (1.,5.)-- (2.,5.);
\draw[line width=0.5pt] (2.,5.)-- (2.,6.);
\draw[line width=0.5pt] (2.,6.)-- (5.,6.);
\draw[line width=0.5pt] (5.,6.)-- (5.,3.);
\draw[line width=0.5pt] (5.,3.)-- (4.,3.);
\draw[line width=0.5pt] (4.,3.)-- (4.,1.);
\draw[line width=0.5pt] (4.,1.)-- (3.,1.);
\draw[line width=0.5pt] (3.,1.)-- (3.,0.);
\draw[line width=0.5pt] (3.,0.)-- (0.,0.);
\draw[line width=0.5pt] (5.,6.)-- (8.,6.);
\draw[line width=0.5pt] (8.,6.)-- (8.,4.);
\draw[line width=0.5pt] (8.,4.)-- (9.,4.);
\draw[line width=0.5pt] (9.,4.)-- (9.,2.);
\draw[line width=0.5pt] (8.,2.)-- (9.,2.);
\draw[line width=0.5pt] (8.,2.)-- (8.,1.);
\draw[line width=0.5pt] (8.,1.)-- (8.,0.);
\draw[line width=0.5pt] (8.,0.)-- (7.,0.);
\draw[line width=0.5pt] (7.,0.)-- (3.,0.);
\draw[line width=0.5pt] (14.,6.)-- (19.,6.);
\draw[line width=0.5pt] (19.,6.)-- (19.,3.);
\draw[line width=0.5pt] (19.,3.)-- (18.,3.);
\draw[line width=0.5pt] (18.,3.)-- (18.,1.);
\draw[line width=0.5pt] (18.,1.)-- (17.,1.);
\draw[line width=0.5pt] (17.,1.)-- (17.,0.);
\draw[line width=0.5pt] (17.,0.)-- (14.,0.);
\draw[line width=0.5pt] (14.,0.)-- (14.,6.);
\draw[line width=0.5pt] (19.,6.)-- (23.,6.);
\draw[line width=0.5pt] (23.,6.)-- (23.,0.);
\draw[line width=0.5pt] (23.,0.)-- (17.,0.);
\draw[line width=0.5pt] (0.,-3.)-- (5.,-3.);
\draw[line width=0.5pt] (5.,-3.)-- (5.,-5.);
\draw[line width=0.5pt] (5.,-5.)-- (4.,-5.);
\draw[line width=0.5pt] (4.,-5.)-- (4.,-9.);
\draw[line width=0.5pt] (4.,-9.)-- (0.,-9.);
\draw[line width=0.5pt] (0.,-9.)-- (0.,-3.);
\draw[line width=0.5pt] (5.,-3.)-- (9.,-3.);
\draw[line width=0.5pt] (9.,-3.)-- (9.,-9.);
\draw[line width=0.5pt] (9.,-9.)-- (4.,-9.);
\draw[line width=0.5pt] (15.,-3.)-- (15.,-6.);
\draw[line width=0.5pt] (14.,-6.)-- (15.,-6.);
\draw[line width=0.5pt] (14.,-6.)-- (14.,-9.);
\draw[line width=0.5pt] (14.,-9.)-- (18.,-9.);
\draw[line width=0.5pt] (18.,-9.)-- (18.,-5.);
\draw[line width=0.5pt] (18.,-5.)-- (19.,-5.);
\draw[line width=0.5pt] (19.,-5.)-- (19.,-3.);
\draw[line width=0.5pt] (19.,-3.)-- (15.,-3.);
\draw[line width=0.5pt] (19.,-3.)-- (22.,-3.);
\draw[line width=0.5pt] (22.,-3.)-- (22.,-7.);
\draw[line width=0.5pt] (22.,-7.)-- (23.,-7.);
\draw[line width=0.5pt] (23.,-7.)-- (23.,-9.);
\draw[line width=0.5pt] (23.,-9.)-- (18.,-9.);
\draw[-latex,line width=0.5pt] (10.,3.) -- (13.,3.);
\draw[-latex,line width=0.5pt] (13.,-0.5) -- (10.,-2.5);
\draw[-latex,line width=0.5pt] (10.,-6.) -- (13.,-6.);
\end{tikzpicture}

\caption{Regularisation of Class $\mathcal{I}$}
\label{fig:regularisationClassI}
\end{figure}
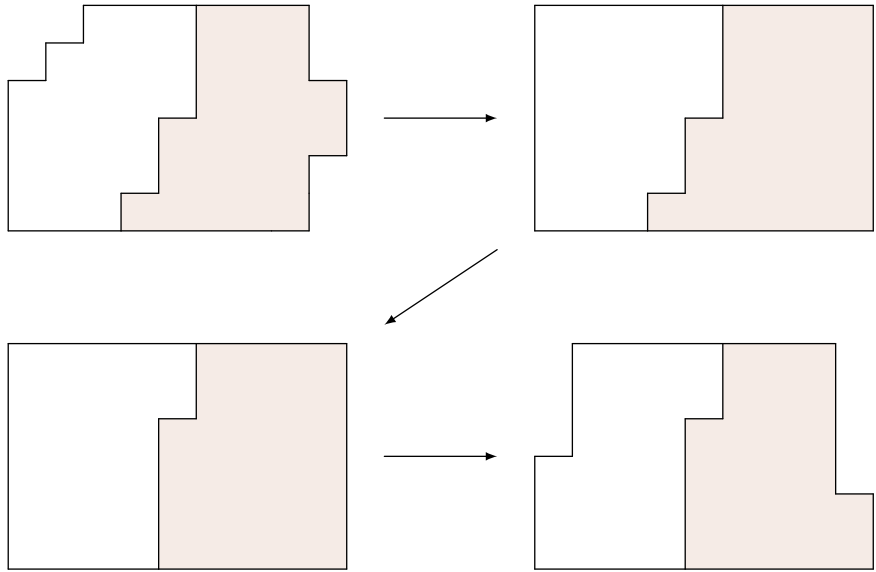

By performing the modification described in the proof, we get that we may assume that the configuration is as compact as possible: given $h$, the values of $l_1$ and $l_3$ are as small as possible, and all the columns except for the leftmost and rightmost ones are full (i.e., have $h$ points).  This is a property that we will use several times in the sequel. 

 We provide an exact formula for the minimal energy in Theorem
\ref{thm:classIexact}. This requires fixing  $N_A = N_B$, which will
be assumed throughout. Note however that some of the intermediate lemmas below
may be adapted for the case $N_A \not
= N_B$, as well.   Let us first prove that we may assume that $l_2 = 0$. To this end, let us first state the following technical lemma.

\begin{lemma}\label{lem:classIstructurelemma}
  Fix  $N := N_A = N_B>0$  and $\beta \in (0,1)$ . Suppose that $(A,B) \in \mathcal{I}$ is an optimal configuration such that $l_1 = l_3$ and $l_2 = 1$. Then, we have $l_1 = l_3 \geq h/2$.
\end{lemma}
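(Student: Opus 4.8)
I would argue by contradiction: from a putative optimal $(A,B)\in\mathcal I$ with $l_1=l_3$, $l_2=1$ and $l_1<h/2$ I will build a configuration with the same cardinalities but strictly smaller energy. Write $L:=l_1+l_2+l_3=2l_1+1$ for the number of columns of $(A,B)$. Since $(A,B)$ occupies $h$ rows and $L$ columns it hits at most $Lh$ lattice sites, so $2N\le Lh$. A few reductions are in order: $l_1=0$ cannot occur in $\mathcal I$ when $l_2=1$ (a single column cannot carry two types in each of its rows), so $l_1\ge1$ and $L\ge3$; the assumption $l_2=1$ forces $h\ge2$; and, as every row of a Class $\mathcal I$ configuration contains a point of each type, $N\ge h$. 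Finally, since $L=2l_1+1$ is odd, $l_1<h/2$ is equivalent to $L\le h$, which already forces $h\ge3$ because $L\ge3$.

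By \eqref{eq:formulaforenergyclassI} we have $E(A,B)=-2(N_A+N_B)+S$ with
\begin{equation*}
S := (l_1+l_2+l_3)+h+(1-\beta)(l_2+h) = L+(1-\beta)+(2-\beta)h .
\end{equation*}
The bulk term $-2(N_A+N_B)=-4N$ is the same for every competitor with $N$ points of each type, so it suffices to produce such a competitor $(\hat A,\hat B)$ whose corresponding quantity $\hat S$ satisfies $\hat S<S$. For $(\hat A,\hat B)$ I would take the compact Class $\mathcal I$ configuration with a straight vertical interface ($\hat l_2=0$), with $\hat h:=h-1$ rows and $\hat l_1=\hat l_3:=\lceil N/(h-1)\rceil$ columns of pure $A$, resp.\ pure $B$, the unique partial column of each block placed at the extreme left, resp.\ right. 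Since $N\ge h>h-1$ we get $\hat l_1\ge2$, hence every row meets column $2$ (full, of $A$) and column $\hat L-1$ (full, of $B$); so $(\hat A,\hat B)$ is an admissible Class $\mathcal I$ configuration with $N$ points of each type, and again by \eqref{eq:formulaforenergyclassI},
\begin{equation*}
\hat S = \hat L+(2-\beta)(h-1), \qquad \hat L:=\hat l_1+\hat l_3=2\hat l_1 .
\end{equation*}

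The crux is the bound $\hat L\le L+1$. If $L\le h-1$, then $(L+1)(h-1)=Lh+(h-1-L)\ge Lh\ge2N$. If instead $L=h$, then $2N\le h^{2}$, but $h=L=2l_1+1$ is odd, so $h^{2}$ is odd while $2N$ is even, whence $2N\le h^{2}-1=(L+1)(h-1)$. In either case $N/(h-1)\le(L+1)/2\in\mathbb N$, so $\hat l_1\le(L+1)/2$ and therefore $\hat L\le L+1$. Substituting,
\begin{equation*}
\hat S-S \le (L+1)+(2-\beta)(h-1)-L-(1-\beta)-(2-\beta)h = 2\beta-2 < 0 ,
\end{equation*}
since $\beta<1$. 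Thus $E(\hat A,\hat B)<E(A,B)$, contradicting optimality, and hence $l_1=l_3\ge h/2$.

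The inequality $\hat S<S$ is comfortably strict — the height in $\hat S$ has dropped to $h-1$ while $\hat L$ exceeds $L$ by at most one — so the only place any subtlety enters is the counting estimate $2N\le(L+1)(h-1)$, and within it the parity remark $2N\ne h^{2}$ that closes the borderline case $L=h$. The remaining verifications (admissibility of the competitor: no empty rows, both types in every row, correct cardinalities, which is why $N\ge h$ and hence $\hat l_1\ge2$ were recorded; and the absence of degenerate small cases, since $L\le h$ already gives $h\ge3$) are routine bookkeeping.
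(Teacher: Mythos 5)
Your proof is correct and follows essentially the same route as the paper: assume $l_1<h/2$, repack the $2N$ points into a configuration of height $h-1$ and width at most $L+1$, and justify the repacking by the area bound $(L+1)(h-1)\ge 2N$, with the borderline case handled by the parity observation that $2N$ is even while $L\,h$ is odd there. The only cosmetic differences are that you organise the cases as $L\le h-1$ versus $L=h$ rather than $h$ even versus odd, and your competitor has a straight interface ($\hat l_2=0$) instead of the paper's column-filling with $l_2\le 1$, which changes the computed energy drop from $1-\beta$ to $2-2\beta$ but not the conclusion.
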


\begin{proof}
Without restriction we assume that $(A,B)$ has the form described
before the statement of the lemma, see also the last picture in Figure
\ref{fig:regularisationClassI}. Let  $k = \lceil h/2 \rceil$. Suppose by contradiction that the statement does not hold, i.e., $l_1 = l_3 < k$ (in particular, $k \geq 2$). 

 Consider two cases: first, assume that $h$ is even, so that $h = 2k$. Then, the whole configuration fits into a rectangle with height $2k$ and width $2l_1 + 1$, where $l_1 \leq k-1$. Let us rearrange all the points so that the resulting configuration lies in a rectangle with height $2k-1$ and width $2l_1+2$.   We place the points by filling the columns from left to right, first with $A$-points and then with $B$-points, so that the resulting configuration lies in Class $\mathcal{I}$ and has $l_2 \leq 1$.  In fact,   all points may be placed in this rectangle since the assumption $l_1 \leq k-1$ implies
$$ (2k-1)(2l_1+2) \geq 2k(2l_1+1).$$ 
 But then the new configuration has strictly smaller energy since $h$ decreased by $1$, $l_2 \leq 1$, and $l_1+l_3$ grew by at most $1$. Hence, the original configuration was not optimal, a contradiction.

In the second case, $h$ is odd, so that $h = 2k-1$. Then, the whole configuration fits into a rectangle with height $2k -1$ and width $2l_1 + 1$, where $l_1 \leq k-1$. Let us again rearrange all the points using the procedure from the previous paragraph, so that the resulting configuration lies in a rectangle with height $2k-2$ and width $2l_1+2$ and satisfies $l_2 \leq 1$.  Indeed, if $l_1 \leq k-2$, all  points may be placed in this rectangle since  in this case we have 
\begin{align}\label{inequili}
 (2k-2)(2l_1+2) \geq (2k-1)(2l_1+1).
 \end{align}
  On the other hand, if  $l_1 = k-1$, we have
$$(2k-2)(2l_1+2) =   (2k-1)(2l_1+1)  - 1,$$
so the  inequality \eqref{inequili}  is not satisfied. In this case,
however,  $(2k-1)(2l_1+1)$ is  odd. Thus,  since the total number of
points $ 2 N $ is even, it is not possible that the entire rectangle  with height $2k$ and width $2l_1 + 1$  was full in the original configuration. Therefore, we can   still  place  all the points in the  rectangle  with height $2k-2$ and width $2l_1+2$.  As before, the new configuration has strictly smaller energy since $h$ decreased by $1$, $l_2 \leq 1$, and $l_1+l_3$ grew by at most $1$: a contradiction.
\end{proof}

%
%

Now, we proceed to prove the main result for Class $\mathcal{I}$, namely that for the purpose of the computation of the minimal energy we may assume that $l_2 = 0$.

\begin{proposition}\label{prop:classIregularisationstep2}
 Fix $N := N_A = N_B>0$ and  $ \beta  \in (0,1)$. Then, if $(A,B) \in \mathcal{I}$ is an optimal configuration, then there exists an optimal configuration $(\hat{A},\hat{B}) \in \mathcal{I}$ with $l_2 = 0$.
\end{proposition}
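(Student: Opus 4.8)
The plan is to reduce, via Proposition~\ref{prop:classIregularisationstep1}, to the case $l_2 = 1$ (if $l_2 = 0$ there is nothing to prove) and then to produce an optimal configuration with $l_2 = 0$ by comparing surface energies through the formula \eqref{eq:formulaforenergyclassI}. The comparison configurations will be the ``straight interface'' ones, in which $A$ and $B$ are each packed as compactly as possible into columns of a common height $h'$ and placed side by side: such a configuration always lies in Class~$\mathcal{I}$, and its surface energy is at most $2\lceil N/h'\rceil + (2-\beta)h'$, with equality whenever $h' \le N$.

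First I would normalise $(A,B)$: by the compactification observation following Proposition~\ref{prop:classIregularisationstep1} I may assume all but the two extreme columns are full and that, for the given height $h$, the number $l_1 + l_2 + l_3$ of columns is minimal (if this already yields $l_2 = 0$, we are done). Since every row of a Class~$\mathcal{I}$ configuration meets both $A$ and $B$, the set $A$ fills all $h$ rows, so $N \ge h$; writing $N = qh + s$ with $q = \lfloor N/h\rfloor \ge 1$ and $0 \le s < h$, this also forces $l_1, l_3 \ge 1$, hence the mixed column is full and carries some number $a \in \{1,\dots,h-1\}$ of $A$-points, with $l_1 = \lceil (N-a)/h\rceil$ and $l_3 = \lceil (N+a)/h\rceil - 1$. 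A short arithmetic computation shows that $\min_{a}(l_1+l_3)$ equals $2q$ if $s \le h/2$ and $2q+1$ if $s > h/2$; as $(A,B)$ attains this minimum by optimality, its surface energy is $(l_1+l_3) + (2-\beta)(h+1)$, and comparing with the surface energy $2\lceil N/h\rceil + (2-\beta)h$ of the straight configuration of height $h$ one sees that $s = 0$ and $s > h/2$ each make the latter strictly smaller, contradicting optimality. Thus $1 \le s \le h/2$, and then attaining $l_1 + l_3 = 2q$ forces $l_1 = l_3 = q$.

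Then I would apply Lemma~\ref{lem:classIstructurelemma}: with $l_1 = l_3$ and $l_2 = 1$ it gives $q \ge h/2$, hence $s \le h/2 \le q$ and therefore $N = qh + s \le q(h+1)$, i.e.\ $\lceil N/(h+1)\rceil \le q$. Consequently the straight configuration $(\hat{A},\hat{B})$ of height $h+1$ (which lies in Class~$\mathcal{I}$, has $l_2 = 0$, and is of the two-rectangle type of Theorem~\ref{thm:main}.v) has surface energy at most $2q + (2-\beta)(h+1)$, no larger than that of $(A,B)$. Since $(A,B)$ is optimal, equality must hold, so $(\hat{A},\hat{B})$ is the desired optimal Class~$\mathcal{I}$ configuration with $l_2 = 0$.

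The bulk of the effort, and the main obstacle, is the exact evaluation of $\min_a(l_1+l_3)$ together with checking that the straight comparison configurations genuinely lie in Class~$\mathcal{I}$ --- the borderline instances $N = h$ and $N = h+1$ (that is, $q = 1$ with $s \in \{0,1\}$) should be inspected directly. The decisive ingredient is Lemma~\ref{lem:classIstructurelemma}: it rules out precisely the elongated configurations for which height $h+1$ would fail, since without the bound $q \ge h/2$ one could have $s > q$, forcing $\lceil N/(h+1)\rceil = q+1$ and making $(\hat{A},\hat{B})$ strictly worse than $(A,B)$.
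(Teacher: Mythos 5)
Your proposal is correct, and it reaches the same destination as the paper's proof by a recognisably parallel but differently executed route. Shared skeleton: reduce to $l_2=1$ via Proposition~\ref{prop:classIregularisationstep1}, normalise to compact columns, do a counting step on the split of the mixed column, invoke Lemma~\ref{lem:classIstructurelemma} in the hard case, and end with an equal-energy configuration of height $h+1$ and $l_2=0$. The execution differs: the paper works with the four remainders $r_1,\dots,r_4$, shows $(l_1-l_3)h=r_3+r_4-r_1-r_2$ forces $l_1=l_3$ or $l_1=l_3+1$, and then performs explicit point rearrangements --- in the easy cases it absorbs the mixed column into the extreme columns (contradicting optimality), and in the remaining case ($l_1=l_3$, $r_1+r_2>h$) it rebalances the mixed column to $\lfloor h/2\rfloor$ and $\lceil h/2\rceil$ points and lifts it on top as a new row, which is where Lemma~\ref{lem:classIstructurelemma} ($l_1=l_3\ge h/2$) guarantees the lifted points fit. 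You instead compute $\min_a(l_1+l_3)$ arithmetically (your case split $s=0$, $s>h/2$, $1\le s\le h/2$ in terms of $N=qh+s$ is a correct reparametrisation of the paper's cases) and compare surface energies against canonical straight configurations of heights $h$ and $h+1$, using Lemma~\ref{lem:classIstructurelemma} at the same juncture to get $s\le h/2\le q$ and hence $\lceil N/(h+1)\rceil\le q$. Your version buys a cleaner, purely arithmetic argument and lands directly on the explicit two-rectangle minimiser of Theorem~\ref{thm:main}.v; the paper's version buys a local modification of the \emph{given} optimal configuration, which is what its later fluctuation analysis in Section~\ref{sec:law} actually tracks. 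All the steps you flag as delicate (that the straight competitors lie in Class~$\mathcal{I}$, i.e.\ $h\le N$ resp.\ $h+1\le N$, and that optimality forces $(A,B)$ to attain $\min_a(l_1+l_3)$ since varying $a$ produces admissible competitors) do check out.
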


\begin{proof}
If $(A,B) \in \mathcal{I}$ is such that $l_2 = 0$, there is nothing  to prove. Suppose to the contrary that $l_2 > 0$. Then, by Proposition \ref{prop:classIregularisationstep1} we have that $l_2 = 1$. We introduce the following notation: again, $l_1$ is the number of columns with only $A$-points and $l_3$ is the number of columns with only $B$-points. We can assume that all columns except for the leftmost and rightmost ones are full, cf.\ last picture in Figure \ref{fig:regularisationClassI}. By $r_1 \in \{ 1,...,h \}$ we denote the number of  $A$-points in the leftmost column, and $r_4 \in \{ 1,...,h \}$ is the number of $B$-points in the rightmost column. By $r_2,r_3 \in \{ 1,...,h-1 \}$ we denote the numbers of $A$- and $B$-points, respectively, in the single column which contains points of both types.

Since $N_A = N_B$, we compute the number of points of each type and we get
\begin{equation*}
(l_1 - 1) h + r_1 + r_2 = (l_3 - 1) h + r_3 + r_4,
\end{equation*}
so
\begin{equation}\label{eq: numbering}
(l_1 - l_3) h = r_3 + r_4 - r_1 - r_2.
\end{equation}
Due to the range of $r_1,\ldots,r_4$, the left-hand side can take only values between $-2h+3$ and $2h-3$, so it needs to take values in the set $\{ -h,0,h\}$. Hence, up to exchanging the roles of the two types, we either have $l_1 = l_3$ or $l_1 = l_3 + 1$.

First, suppose that $l_1 = l_3 + 1$. Then, by \eqref{eq: numbering} we have $r_1 + r_2 + h = r_3 + r_4$. In particular, $r_1 + r_2 < h$ as $r_3+r_4 \le 2h-1$. Hence, we may move the $r_2$ $A$-points from the single column with both types to the leftmost column, and replace them by $r_2$ $B$-points from the rightmost column. In this way, the double-type column disappeared altogether. This process strictly decreases  the energy  \eqref{eq:formulaforenergyclassI}  since $l_1$ stays the same, $l_2$ decreases by $1$, and $l_3$ increases by $1$ or stays the same. This is a contradiction.
 
Now, suppose that $l_1 = l_3$. Then, by \eqref{eq: numbering} we have $r_1 + r_2 = r_3 + r_4$. If $r_1 + r_2 \leq h$, we proceed as in the previous paragraph. Suppose otherwise, i.e., $r_1 + r_2 = r_3+r_4  > h$. Without restriction we can suppose that $r_3 \ge r_2$. Let $k \in \mathbb{N}$ such that $k = \lceil h/2 \rceil$. Notice that we may modify the configuration so that $r_2 = \lfloor h/2 \rfloor$ and $r_3 = k$.  Indeed, otherwise  we move $\lfloor h/2 \rfloor - r_2$ ($=r_3 - k$) $B$-points from the double-type column to the rightmost column and move $\lfloor h/2 \rfloor - r_2$ $A$-points from the leftmost column to the double-type column, so that both types have $\lfloor h/2 \rfloor$ and $k$ points, respectively, in the double-type column. In this way, since 
$$r_4 + \lfloor h/2 \rfloor - r_2 = (r_1+r_2-r_3) + \lfloor h/2 \rfloor - r_2 = r_1 + \lfloor h/2 \rfloor -r_3 \le h,$$
where we used $r_1 \le h$ and $r_3 \ge \lfloor h/2 \rfloor$, we did not add any additional column on the right. Thus, the total energy did not   increase.   

%

As $l_1=l_3$ and $l_2 = 1$, by Lemma \ref{lem:classIstructurelemma} we have that $l_1 = l_3 \geq k$. Now, remove all the points in the double-type column and place them directly above the first row, $ \lfloor h/2 \rfloor  $ $A$-points directly above the $l_1$ $A$-points (starting from the right) and $k$ $B$-points directly above the $l_3$ $B$-points (starting from the left). Finally, we merge the two connected components of the resulting configuration by moving the connected component on the left by $(1,0)$. In this way, $h$ increased by 1, $l_2$ decreased by 1, and $l_1$ and $l_3$ remain unchanged, so  that  the energy remains the same, see  \eqref{eq:formulaforenergyclassI}.  Hence, the resulting configuration $(\hat{A},\hat{B})$ is minimal, lies in Class $\mathcal{I}$, and satisfies $l_2 = 0$. This concludes the proof.
\end{proof}


\subsection{Exact calculation for Class $\mathcal{I}$}

The regularisation procedure presented in the previous subsection
enables us to compute directly  the minimal energy for configurations
in Class $\mathcal{I}$ for any $ \beta  \in (0,1)$. In this subsection, we suppose that $N_A =
N_B$ and denote the common value by  $N$.   Later, in Section
\ref{sec:regularisation} we will show that there exists always a
minimiser in Class~$\mathcal{I}$ which induces that the energy
computed below coincides with the minimal energy.  

\begin{theorem}\label{thm:classIexact}
Fix $ N  := N_A = N_B>0$ and  $ \beta  \in (0,1)$. Suppose that a minimal configuration $(A,B)$ is in Class $\mathcal{I}$. Then, its energy is equal to  
$$-4N + \min_{h \in \mathbb{N}} \big(2\llceil N/h\rrceil + h( 2 - \beta )\big), $$
where all minimisers $h$ satisfy $|h- \sqrt{2  N /(2-\beta)}| \le C_\beta N^{1/4}$ for some  constant $C_\beta$      only  depending on $\beta$. For $\beta \in \mathbb{R} \setminus \mathbb{Q}$, there exists a unique minimiser.  
\end{theorem}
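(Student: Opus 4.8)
The plan is to collapse the problem to the one-variable discrete minimisation $h\mapsto 2\lceil N/h\rceil+(2-\beta)h$ and then compare this with its continuous relaxation.

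\emph{Reduction and energy formula.} Starting from a minimal $(A,B)\in\mathcal I$, I would use Proposition~\ref{prop:classIregularisationstep2} together with the compactness reduction recorded after the proof of Proposition~\ref{prop:classIregularisationstep1} to replace it, without changing the (minimal) energy, by a minimal $(\hat A,\hat B)\in\mathcal I$ with $l_2=0$ in which every column but the leftmost and the rightmost is full. Writing $h$ for its common height, the set $\hat A$ occupies $l_1$ columns, $l_1-1$ of them carrying $h$ points, so $(l_1-1)h<N\le l_1h$, i.e.\ $l_1=\lceil N/h\rceil$, and likewise $l_3=\lceil N/h\rceil$. Substituting $l_2=0$ into \eqref{eq:formulaforenergyclassI} gives $E(\hat A,\hat B)=-4N+2\lceil N/h\rceil+(2-\beta)h$. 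Conversely, for each $h'\in\mathbb N$ one builds two aligned rectangles of height $h'$ with $\lceil N/h'\rceil$ columns each, glued along a full column; this is an admissible configuration in Class~$\mathcal I$ with $N_A=N_B=N$, $l_2=0$, and energy $-4N+2\lceil N/h'\rceil+(2-\beta)h'$. By minimality of $(A,B)$ its energy does not exceed any of these values, and since it equals the one associated with $h$, we conclude $E(A,B)=-4N+\min_{h'\in\mathbb N}\big(2\lceil N/h'\rceil+(2-\beta)h'\big)$ and that $h$ is a minimiser.

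\emph{Location of the minimisers.} Here I would compare $f(h):=2\lceil N/h\rceil+(2-\beta)h$ with $g(x):=\frac{2N}{x}+(2-\beta)x$ on $(0,\infty)$. The latter is strictly convex with unique minimiser $x^\star=\sqrt{2N/(2-\beta)}$, and using $(2-\beta)(x^\star)^2=2N$ one checks the identity $g(x)-g(x^\star)=\frac{(2-\beta)(x-x^\star)^2}{x}$. Since $\frac{2N}{h}\le 2\lceil N/h\rceil<\frac{2N}{h}+2$, any minimiser $h$ of $f$ satisfies, with $h_0\in\{\lfloor x^\star\rfloor,\lceil x^\star\rceil\}$,
\[
g(x^\star)\le g(h)\le f(h)\le f(h_0)< g(h_0)+2=g(x^\star)+\frac{(2-\beta)(h_0-x^\star)^2}{h_0}+2 .
\]
For $N$ larger than a $\beta$-dependent threshold one has $x^\star\ge2$, hence $h_0\ge x^\star/2$, hence $\frac{(2-\beta)(h_0-x^\star)^2}{h_0}\le\frac{2(2-\beta)}{x^\star}\le1$, so that $\frac{(2-\beta)(h-x^\star)^2}{h}\le3$. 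If $h\le 2x^\star$ this gives $(h-x^\star)^2\le\frac{3h}{2-\beta}\le\frac{6x^\star}{2-\beta}$, i.e.\ $|h-x^\star|\le C_\beta N^{1/4}$; if instead $h>2x^\star$, then $h<2(h-x^\star)$ and the same bound forces $h-x^\star<\frac{6}{2-\beta}$, contradicting $h-x^\star>x^\star=\sqrt{2N/(2-\beta)}$ once $N$ is large, so this case does not occur. The finitely many remaining small values of $N$ are absorbed into $C_\beta$.

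\emph{Uniqueness for irrational $\beta$ and main obstacle.} If $h_1\ne h_2$ both minimised $f$, then $(2-\beta)(h_1-h_2)=2\big(\lceil N/h_2\rceil-\lceil N/h_1\rceil\big)$ would be a nonzero even integer, forcing $2-\beta\in\mathbb Q$ and contradicting $\beta\notin\mathbb Q$; hence uniqueness. The one genuinely delicate point is the $N^{1/4}$ estimate, which hinges on the convexity identity for $g$ and on $2\lceil N/h\rceil$ overshooting $2N/h$ by only $O(1)$, together with the small case analysis ruling out minimisers far above $x^\star$; the reduction step is where Propositions~\ref{prop:classIregularisationstep1}–\ref{prop:classIregularisationstep2} do the real work, and the uniqueness claim is then immediate.
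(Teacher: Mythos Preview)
Your proof is correct and follows essentially the same approach as the paper: reduce to $l_2=0$ via Proposition~\ref{prop:classIregularisationstep2}, express the energy as $-4N+2\lceil N/h\rceil+(2-\beta)h$, and compare with the continuous relaxation $g(x)=2N/x+(2-\beta)x$. The only cosmetic differences are that the paper bounds $E(h_*)\le \bar E(\bar h)+4$ and then solves the quadratic $h\bar E(h)=h(\bar E(\bar h)+4)$ to get the admissible interval explicitly, whereas you use the factorisation $g(x)-g(x^\star)=\frac{(2-\beta)(x-x^\star)^2}{x}$ and a short case split; both arguments yield the same $N^{1/4}$ window, and your uniqueness argument (forcing $2-\beta\in\mathbb Q$) is equivalent to the paper's remark that $E(h_1)-E(h_2)\notin\mathbb Q$ for $h_1\neq h_2$.
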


\begin{proof}
By Proposition \ref{prop:classIregularisationstep2}, for the purpose of the computation of the minimal energy, we may assume that $l_2 = 0$. Hence, we also have $l_1 = l_3$, and denote the common value by $\ell$. Notice that we may minimise the energy under the constraint
$${ h,\,\ell \in \mathbb{N}, \quad   N  =h \ell +  r  \quad \text{with}\
r\in \mathbb{N}, \ 0\leq r\leq h-1.}$$
This constraint is natural since for fixed $h$, the length $\ell$ is minimal whenever all the columns except for the leftmost and rightmost ones are full (i.e., have $h$ points).  We also refer to the configuration given in Theorem~\ref{thm:main}.v.  Under these assumptions, we may rewrite the energy \eqref{eq:formulaforenergyclassI} as
\begin{equation*}
E(A,B) = -4  N  + 2 (\ell+\min\lbrace r,1 \rbrace)  + h( 2- \beta ). 
\end{equation*}
In particular, one can express  the energy solely in terms of $h\in \mathbb{N}$ as 
\begin{equation}\label{eq:E}
E(h):=-4  N + 2\llceil \frac{N}{h}\rrceil + h( 2 - \beta ).
\end{equation}
It is clear that the minimum of $E$ over $\mathbb{N}$ is unique in
case that $\beta \in \mathbb{R} \setminus \mathbb{Q}$ as $E(h_1) -
E(h_2) \notin \mathbb{Q}$ for all  $h_1 ,\,h_2\in {\mathbb N}$
with  $h_1 \neq h_2$. 

It remains to check that minimisers $h$ satisfy $|h- \sqrt{2  N
  /(2-\beta)}| \le C_\beta N^{1/4}$ for some  constant $C_\beta$.  The
function $\bar{E}(h) =  -4  N  + 2 N /h + h ( 2 - \beta ) $ is
strictly convex and attains its minimum  at   $\bar{h}:=\sqrt{2  N/( 2 - \beta )}$ with $\bar{E}(\bar{h}) = - 4N  +  2\sqrt{2  N( 2 - \beta )}$. For $h_* = \lceil\bar{h} \rceil$ we get for $\beta \in (0,1)$
\begin{align}\label{eq: LLLLL}
E(h_*) &= -4  N + 2\llceil \frac{N}{h_*}\rrceil + h_*( 2 - \beta ) \le -4  N + 2  \frac{N}{h_*} + 2 + h_*( 2 - \beta ) \notag \\ 
&  \le  -4  N + 2  \frac{N}{\bar{h}}  + \bar{h}( 2 - \beta ) + 4 = \bar{E}(\bar{h}) + 4.
\end{align}
 Let us now determine those $h\in {\mathbb N}$ such that the
inequality $\bar{E}(h) \le \bar{E}(\bar{h}) + 4$ holds. 
By determining the roots of the quadratic equation $h\bar{E}(h) = h(\bar{E}(\bar{h}) + 4)$, one can check that $\bar{E}(h) \le \bar{E}(\bar{h}) + 4$ is equivalent to
$$h \in I_{N,\beta} :=  \frac{2}{2-\beta} + \sqrt{2N/(2-\beta)}  + \frac{2}{2-\beta} \Big[ - \sqrt{ 1 +\sqrt{2N(2-\beta)}}, \sqrt{ 1 +\sqrt{2N(2-\beta)}}    \Big].  $$
Note that $h \notin I_{N,\beta}$  cannot be a minimiser of $E$ since then by \eqref{eq: LLLLL} we have $E(h) \ge \bar{E}(h) > \bar{E}(\bar{h}) + 4 \ge E(h_*)$. Clearly, the definition of $I_{N,\beta}$ implies $|h- \sqrt{2  N /(2-\beta)}| \le C_\beta N^{1/4}$ for all $h \in I_{N,\beta}$ for some $C_\beta$ sufficiently large. This concludes the proof. 
 \end{proof}

 We close this section with the observation that,  once we have guaranteed the existence of a  minimiser in Class~$\mathcal{I}$ (see Theorem \ref{thm:classIexistence} below), Theorem \ref{thm:main}.iv follows from Theorem \ref{thm:classIexact} and \eqref{eq:eq}. The construction of the configuration in the previous proof  also yields the explicit solution in Theorem \ref{thm:main}.v.

\section{Analysis and regularisation of other classes}\label{sec:regularisation}\label{sec:reg2}

In this section, we show how to regularise configurations related to
classes $\mathcal{II}$--$\mathcal{V}$. Our main goal is to show that
for $N_A = N_B$, it is not possible that  a   minimiser lies in Class~$\mathcal{II}$ or Class~$\mathcal{III}$. While it is possible that a
minimiser lies in Class~$\mathcal{IV}$,  see Proposition \ref{prop:largeminimisersiv} below,   we will show that under the
constraint $ \beta \leq 1/2$ we can modify  an  optimal configuration
so that it lies in Class~$\mathcal{I}$.


\subsection{Class $\mathcal{II}$}

Since the definition of Class $\mathcal{II}$ already involved a very regular interface, namely a straight line, the situation here is much simpler with respect to Class~$\mathcal{I}$. In fact, the whole analysis of the problem boils down to the following simple result.

\begin{proposition}\label{prop:classIIregularisation}
 Fix $N:= N_A = N_B>0$  and $ \beta  \in (0,1)$. If $(A,B)$ is an optimal configuration, then $(A,B) \notin \mathcal{II}$.
\end{proposition}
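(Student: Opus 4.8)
The plan is to show that every configuration in Class~$\mathcal{II}$ has strictly larger energy than an explicit configuration in Class~$\mathcal{I}$, which contradicts optimality. Write $F(h):=2\llceil N/h\rrceil+(2-\beta)h$ for the Class~$\mathcal{I}$ surface-energy functional of Theorem~\ref{thm:classIexact}; by \eqref{eq:eq} we have $E(A,B)+4N=\tfrac12P(A,B)$, so it suffices to compare the surface energies $\tfrac12P$.

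Assume $(A,B)\in\mathcal{II}$ is optimal. First I would normalise: by Lemma~\ref{lem:classIIregularisation} there is an optimal configuration in $\mathcal{II}$ whose last rows align, so (after a reflection and possibly interchanging the two types) we may assume $A$ is on the left with $H_A:=h_1+h_2$ nonempty rows, $B$ is on the right with $H_B:=h_2$ nonempty rows, and $h_1\ge1$ since $(A,B)\notin\mathcal{I}$. Since $A$ occupies only columns to the left of the straight interface and $B$ only columns to its right, every bond between $A$ and $B$ is horizontal, and by Proposition~\ref{cor:rows} there is exactly one such bond in each of the $h_2$ shared rows; hence $Q(A,B)=\#I_{AB}=h_2$. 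By \eqref{eq: energy,class2} the surface energy of $(A,B)$ equals $(l_1+l_3)+H_A+(1-\beta)H_B$.

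The heart of the matter is a lower bound coming from bounding boxes. For an admissible set the number of boundary edges is at least twice the perimeter of its bounding box: each of its nonempty columns contributes two boundary edges in the vertical direction and each of its nonempty rows two in the horizontal direction, and these are pairwise distinct. Applied to $A$ and $B$, and using $l_1\ge\llceil N/H_A\rrceil$ and $l_3\ge\llceil N/H_B\rrceil$ (each set has $N$ points in that many columns of bounded height), the identity $\tfrac12P(A,B)=\tfrac12Q(A,A^c)+\tfrac12Q(B,B^c)-\beta\,Q(A,B)$ gives
\begin{equation*}
\tfrac12P(A,B)\ \ge\ \Big(\llceil N/H_A\rrceil+H_A\Big)+\Big(\llceil N/H_B\rrceil+(1-\beta)H_B\Big).
\end{equation*}
Now comes the key computation: since $\tfrac12F(h)=\llceil N/h\rrceil+\tfrac{2-\beta}{2}h$ and $1-\tfrac{2-\beta}{2}=\tfrac\beta2$ while $(1-\beta)-\tfrac{2-\beta}{2}=-\tfrac\beta2$, the right-hand side equals exactly $\tfrac12\big(F(H_A)+F(H_B)\big)+\tfrac\beta2(H_A-H_B)$. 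As $H_A-H_B=h_1\ge1$ and $F(H_A),F(H_B)\ge\min_{h\in\mathbb{N}}F(h)$, we obtain $\tfrac12P(A,B)\ge\min_{h\in\mathbb{N}}F(h)+\tfrac\beta2>\min_{h\in\mathbb{N}}F(h)$.

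On the other hand, the Class~$\mathcal{I}$ configuration with $l_2=0$ and height $h$ a minimiser of $F$ constructed in the proof of Theorem~\ref{thm:classIexact} (this is the pair $(A',B')$ of Theorem~\ref{thm:main}.v) is an admissible competitor with $N_A=N_B=N$ and surface energy equal to $\min_{h\in\mathbb{N}}F(h)$. Comparing, it has strictly smaller energy than $(A,B)$, contradicting optimality; hence no optimal configuration lies in $\mathcal{II}$. Note that the argument uses only $\beta\in(0,1)$. The step that I expect to require the most care is the normalisation — one must verify that after Lemma~\ref{lem:classIIregularisation} the interface has length exactly $h_2$ and that \eqref{eq: energy,class2} genuinely applies to the normalised configuration — whereas the remaining estimate is the elementary convexity computation above.
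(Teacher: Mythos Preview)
Your argument is correct and takes a genuinely different route from the paper. The paper's proof is a short reflection argument: writing the surface energy as $E_A+E_B-\beta h_2$ with $E_A=l_1+h_1+h_2$ and $E_B=l_3+h_2$, it replaces the ``heavier'' of $A,B$ by the mirror image of the lighter one across the straight interface, observing that either $E_A+E_B$ strictly drops, or $E_A=E_B$ but the interface grows from $h_2$ to $h_1+h_2$. In either case the energy strictly decreases. Your approach instead bounds the surface energy from below by $\tfrac12\big(F(H_A)+F(H_B)\big)+\tfrac\beta2(H_A-H_B)\ge\min_hF(h)+\tfrac\beta2$ and compares directly with the explicit Class~$\mathcal{I}$ competitor of Theorem~\ref{thm:classIexact}.

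What each approach buys: the paper's reflection is fully self-contained (it does not rely on Section~\ref{sec:reg1}) and the same reflection idea is recycled later in the Class~$\mathcal{IV}$ analysis, so it is a natural local move in the overall architecture. Your argument has the advantage of being quantitative---it gives an explicit gap of at least $\beta/2$ in the surface energy---and ties the exclusion of Class~$\mathcal{II}$ directly to the already-computed Class~$\mathcal{I}$ minimum; since Theorem~\ref{thm:classIexact} holds for all $\beta\in(0,1)$, there is no circularity or added hypothesis. One small wording issue: ``twice the perimeter of its bounding box'' should read ``the perimeter of its bounding box''---the bound you actually use (and justify correctly) is $Q(A,A^c)\ge 2(l_1+H_A)$, not $4(l_1+H_A)$. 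The normalisation you flag as delicate is indeed routine: after Lemma~\ref{lem:classIIregularisation} the configuration is exactly of the form in Figure~\ref{fig:classIInotation}, so \eqref{eq: energy,class2} applies verbatim and $Q(A,B)=h_2$.
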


\begin{proof}
Suppose otherwise. Then, recalling \eqref{eq: energy,class2}, notice that we may rewrite the energy as   
\begin{equation*}
E(A,B) = -4  N  + E_A + E_B  -  \beta  h_2,    
\end{equation*} 
where $E_A = l_1 + h_1 + h_2$ and $E_B = l_3 + h_2$ are the energy between the void and $A$ and $B$, respectively, and the last term corresponds to the interface energy. 

Suppose first that $E_A > E_B$. Then, we modify the configuration as follows: set $\hat{B} = B$ and let $\hat{A}$ be the symmetric image of $B$ under the reflection along the interface. In this way, we obtain 
\begin{equation*}
E(\hat{A},\hat{B}) = -4  N + 2E_B   -  \beta  h_2 < -4  N  + E_A + E_B   -  \beta  h_2 = E(A,B),
\end{equation*} 
a contradiction to minimality of $(A,B)$. Now, we suppose  $E_A \leq E_B$ instead. We modify the configuration as follows: set $\hat{A} = A$ and let $\hat{B}$ be the symmetric image of $A$ under the reflection along the interface. In this way,
the part of the energy corresponding to the shape of $A$ stays the
same, the part corresponding to $B$ drops or stays the same, and the
length $h_2$ of the interface increases at least by $1$. Hence, the
total energy decreases, so $(A,B)$ was not a minimal configuration.

 Let us remark that Proposition
\ref{prop:classIIregularisation} hoes not hold if $N_A \not = N_B$, a
counterexample  being  provided by Figure \ref{fig:sevenpointexample}. 
\end{proof}

\subsection{Class $\mathcal{III}$}

Using again the notation introduced in the previous section, our first
goal is to show that we can modify an admissible configuration in
Class~$\mathcal{III}$ such that we remain in Class~$\mathcal{III}$ and
$l_3 = h_3 = 0$ without increasing the energy. Then, we will prove
that such a configuration cannot be optimal if $N_A = N_B$.

\begin{proposition}\label{prop:classIIIregularisationstep1}
Fix $N_A, N_B > 0$ and $ \beta  \in (0,1)$.  Suppose that $(A,B) \in \mathcal{III}$ is a minimal configuration and $l_3 > 0$ (respectively $h_3 > 0$). Then, there exists a minimal configuration $(\hat{A},\hat{B}) \in \mathcal{III}$ with $l_3 = 0$ (respectively $h_3 = 0$). 
\end{proposition}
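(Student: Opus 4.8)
The plan is to carry out a symmetric reflection argument analogous to the one used in the proof of Proposition~\ref{prop:classIIregularisation}, but now applied to the outermost columns (respectively rows) of a Class~$\mathcal{III}$ configuration. Recall from \eqref{eq: nerg3} that the surface energy of a configuration in Class~$\mathcal{III}$ is $(l_1+l_2+l_3)+(h_1+h_2+h_3)+(1-\beta)(l_2+h_2)$, and that in this class every row and every column contains at least one $A$-point. In particular the $l_3$ rightmost columns and the $h_3$ bottommost rows are occupied solely by $A$-points, so they form a ``tail'' of $A$ attached to the bulk. The key structural observation is the one recorded after \eqref{eq:formulafortheenergy}: if we detach $A$ and $B$ and reattach them via an isometry, $E_A$ and $E_B$ are unchanged and only $E_{AB}=-\beta\#I_{AB}$ can vary; hence an optimal configuration maximises the length of the interface among all such reattachments. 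I will use this in the opposite direction — modifying only $A$ while keeping the interface length fixed.

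First I would treat the case $l_3>0$ (the case $h_3>0$ being identical after exchanging the roles of rows and columns). Using Proposition~\ref{prop:classIregularisationstep1}-type regularisation inside the class — which we may invoke since it does not increase the energy — I would assume the configuration is as compact as possible, so that $A$ consists of a connected ``L-shaped'' (or rectangular) region and $B$ sits in the rectangle $[l_1+1,l_1+l_2]\times$(rows $h_1+1,\dots,h_1+h_2$) adjacent to it. The interface $I_{AB}$, which by Corollary~\ref{cor:interface} is connected and monotone, separates $A$ from $B$; its length equals $l_2+h_2$ in the regularised picture. Now I keep $B$ fixed and reflect: replace $A$ by a new set $\hat A$ obtained by taking the reflection of $B$ across the interface and then adjoining the ``excess'' $N_A-N_B$ points (if any) as additional full columns on the left and full rows on top — i.e.\ exactly where they contribute $1$ each to the perimeter and nothing to the interface. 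This produces $(\hat A,\hat B)\in\mathcal{III}$ with $\hat l_3=\hat h_3=0$; by construction $\hat l_2=l_2$, $\hat h_2=h_2$, so the interface term is unchanged, while $\hat l_1+\hat h_1$ equals the perimeter contribution of $\hat A$ minus the interface, which is at most $l_1+l_3+h_1+h_3$ since folding $B$'s boundary back cannot create more exposed boundary than $A$ originally had. Hence $E(\hat A,\hat B)\le E(A,B)$, and since $(A,B)$ is minimal, $(\hat A,\hat B)$ is minimal as well.

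The point I expect to require the most care is verifying that after the reflection-and-reattachment the resulting configuration genuinely lies in Class~$\mathcal{III}$ (every row and column still contains an $A$-point) and that the bookkeeping of exposed boundary is correct — in particular that reflecting $B$ across a monotone staircase interface yields a set whose perimeter, when combined with the added full columns/rows, does not exceed the original $l_1+l_2+l_3+h_1+h_2+h_3$. This is where one must use that the interface is monotone (Corollary~\ref{cor:interface}), so that the reflected copy of $B$ fits inside the bounding box of the old $A$ without overhang; a short case distinction according to whether $N_A\ge N_B$ or $N_A<N_B$ handles the placement of the surplus points. If instead $(A,B)$ is not already in the compact normal form, one first applies the Class~$\mathcal{III}$ regularisation (the analogue of Proposition~\ref{prop:classIregularisationstep1}) to reduce to it, which does not increase the energy and does not leave the class, and then runs the above argument. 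Iterating once more on the other coordinate disposes of $h_3$ as well, giving the claimed $(\hat A,\hat B)\in\mathcal{III}$ with $l_3=h_3=0$.
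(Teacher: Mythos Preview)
Your reflection idea has a genuine gap: the observation after \eqref{eq:formulafortheenergy} that you invoke says that if one applies an \emph{isometry} to $A$ or $B$, then $E_A$ and $E_B$ are unchanged. But ``reflection across the interface'' is an isometry of $\mathbb{Z}^2$ only when the interface is a straight line, as in Class~$\mathcal{II}$. In Class~$\mathcal{III}$ the interface is a monotone staircase (Corollary~\ref{cor:interface}), and there is no isometry of the lattice that fixes such a staircase pointwise while swapping its two sides. Consequently your ``reflected $B$'' is not an isometric copy of $B$, so you lose control of its internal energy, and the claim $\hat l_2=l_2$, $\hat h_2=h_2$ (i.e.\ that the new interface has the same length) has no justification. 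Even if you replace the vague reflection by an honest isometry---say a rotation by $\pi$ around a point of the interface---the rotated $B$ will in general fail to be adjacent to $B$ along the full $l_2+h_2$ interface, so $E_{AB}$ changes.

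There are two further circularity issues. First, the ``Proposition~\ref{prop:classIregularisationstep1}-type regularisation'' you appeal to at the outset is not available for Class~$\mathcal{III}$ before the present proposition is proved; in the paper the analogous compactification (Lemma~\ref{lem:classIIIregularisationstep2}) is established \emph{after} Proposition~\ref{prop:classIIIregularisationstep1}, using it as input. Second, the placement of the ``excess $N_A-N_B$ points'' is left entirely unspecified, and the asserted perimeter inequality $\hat l_1+\hat h_1\le l_1+l_3+h_1+h_3$ is not argued. The paper avoids all of this with a much more elementary move: it simply translates each of the top $h_1$ rows (the only rows that can reach into the last $l_3$ columns) to the left until none of them extends past the rightmost $B$-point. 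This is a per-row shift, so $E_k^{\rm row}$ is unchanged, while a direct check of $E_k^{\rm inter}$ shows no bonds are lost; the resulting configuration stays in Class~$\mathcal{III}$ and has $l_3=0$.
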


\begin{proof}
Assume that $l_3 > 0$ (the proof in the case $h_3 > 0$ is analogous). Our  construction is presented in Figure~\ref{fig:classIIIb}. We will modify the top $h_1$ rows of the configuration $(A,B)$ in the following way:  for every $1 \leq k \leq N_{\rm row}$, denote by $x_k$ the first coordinate in the rightmost point of $(A \cup B)_k^{\rm row}$. Then, for $k \leq h_1$, we set $\hat{A}_k := A^{\rm row}_k +  (\min\{x_{h_1+1} - x_k,0\},0)$, i.e., each row which has points further to the right than the rightmost point of $B_{h_1 + 1}$ is translated to the left, in such a way that its rightmost  point  aligns with the rightmost point of $B_{h_1 + 1}$. As we made no modifications inside rows,  $E^{\rm row}_k(\hat{A},\hat{B}) = E^{\rm row}_k(A,B)$ for all $k = 1,\ldots,N_{\rm row}$,  see \eqref{eq: row energy}.  Regarding $E_k^{\rm  inter }$, observe that for $k \geq h_1 + 1$ nothing changed in the configuration, so $E_k^{\rm  inter }(\hat{A},\hat{B}) = E_k^{\rm  inter }(A,B)$.  On the other hand, for $k < h_1$, we either left two adjacent rows intact (so the number of connections between them stayed the same); moved both of them to the left so that their rightmost points align (so the number of connections between them stayed the same or increased); or moved only one of them to the left, but because the rightmost point of the other one has first coordinate smaller  or equal to   the first coordinate of $B_{h_1+1}$, this shift did not destroy any bonds and possibly created new ones. In every case, all these connections are of type $A$-$A$, so we have $E_k^{\rm  inter }(\hat{A},\hat{B}) \leq E_k^{\rm  inter }(A,B)$.  Finally, for $k = h_1$, we did not change the number of $A$-$B$ connections and  possibly added some $A$-$A$ connections.  Thus,  $E_k^{\rm  inter }(\hat{A},\hat{B}) \leq E_k^{\rm  inter }(A,B)$.  
 Note that  after this  procedure all columns $A_l^{\rm col}$ for $l \leq l_1$ are still connected, as otherwise this  would  contradict Theorem \ref{thm:connected} and the  minimality of the original configuration. Hence, the resulting configuration lies in Class $\mathcal{III}$. 
\end{proof}

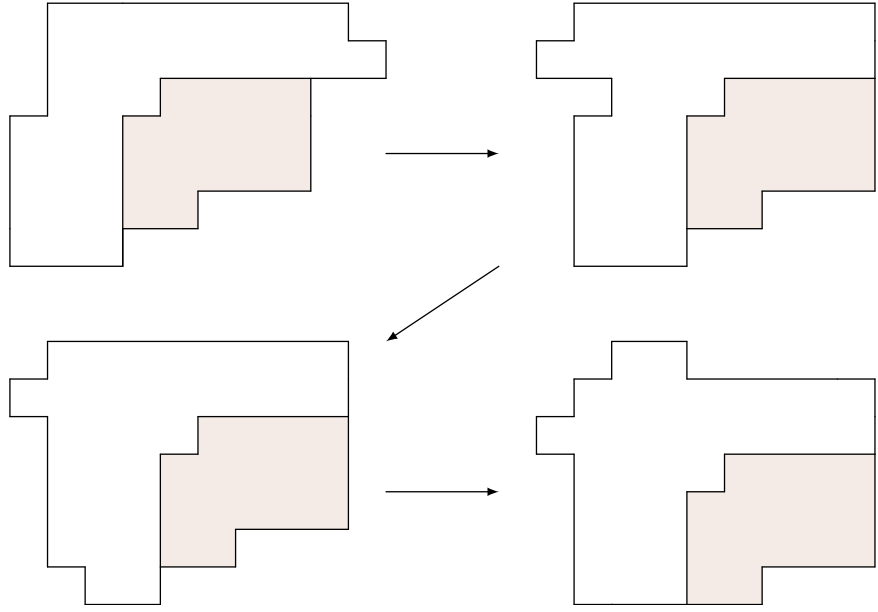
\begin{figure}[h]

\definecolor{zzttqq}{rgb}{0.6,0.2,0.}
\begin{tikzpicture}[line cap=round,line join=round,>=triangle 45,x=0.5cm,y=0.5cm]
\clip(1.2,-13.) rectangle (25.6,3.2);
\fill[line width=0.5pt,color=zzttqq,fill=zzttqq,fill opacity=0.1] (6.,1.) -- (10.,1.) -- (10.,-2.) -- (7.,-2.) -- (7.,-3.) -- (5.,-3.) -- (5.,0.) -- (6.,0.) -- cycle;
\fill[line width=0.5pt,color=zzttqq,fill=zzttqq,fill opacity=0.1] (21.,1.) -- (25.,1.) -- (25.,-2.) -- (22.,-2.) -- (22.,-3.) -- (20.,-3.) -- (20.,0.) -- (21.,0.) -- cycle;
\fill[line width=0.5pt,color=zzttqq,fill=zzttqq,fill opacity=0.1] (7.,-8.) -- (11.,-8.) -- (11.,-11.) -- (8.,-11.) -- (8.,-12.) -- (6.,-12.) -- (6.,-9.) -- (7.,-9.) -- cycle;
\fill[line width=0.5pt,color=zzttqq,fill=zzttqq,fill opacity=0.1] (21.,-9.) -- (25.,-9.) -- (25.,-12.) -- (22.,-12.) -- (22.,-13.) -- (20.,-13.) -- (20.,-10.) -- (21.,-10.) -- cycle;
\draw[line width=0.5pt] (5.,-4.)-- (2.,-4.);
\draw[line width=0.5pt] (2.,-3.)-- (2.,-4.);
\draw[line width=0.5pt] (3.,0.)-- (3.,3.);
\draw[line width=0.5pt] (3.,3.)-- (5.,3.);
\draw[line width=0.5pt] (5.,3.)-- (11.,3.);
\draw[line width=0.5pt] (11.,2.)-- (11.,3.);
\draw[line width=0.5pt] (11.,2.)-- (12.,2.);
\draw[line width=0.5pt] (12.,2.)-- (12.,1.);
\draw[line width=0.5pt] (12.,1.)-- (6.,1.);
\draw[line width=0.5pt] (6.,1.)-- (6.,0.);
\draw[line width=0.5pt] (6.,0.)-- (5.,0.);
\draw[line width=0.5pt] (5.,0.)-- (5.,-4.);
\draw[line width=0.5pt] (10.,1.)-- (10.,0.);
\draw[line width=0.5pt] (10.,0.)-- (10.,-2.);
\draw[line width=0.5pt] (10.,-2.)-- (7.,-2.);
\draw[line width=0.5pt] (7.,-2.)-- (7.,-3.);
\draw[line width=0.5pt] (7.,-3.)-- (5.,-3.);
\draw[line width=0.5pt] (5.,-3.)-- (5.,-4.);
\draw[line width=0.5pt] (3.,0.)-- (2.,0.);
\draw[line width=0.5pt] (2.,0.)-- (2.,-3.);
\draw[line width=0.5pt] (25.,1.)-- (25.,-2.);
\draw[line width=0.5pt] (25.,-2.)-- (22.,-2.);
\draw[line width=0.5pt] (22.,-2.)-- (22.,-3.);
\draw[line width=0.5pt] (22.,-3.)-- (20.,-3.);
\draw[line width=0.5pt] (20.,-3.)-- (20.,0.);
\draw[line width=0.5pt] (20.,0.)-- (21.,0.);
\draw[line width=0.5pt] (21.,0.)-- (21.,1.);
\draw[line width=0.5pt] (21.,1.)-- (25.,1.);
\draw[line width=0.5pt] (25.,1.)-- (25.,2.);
\draw[line width=0.5pt] (25.,3.)-- (17.,3.);
\draw[line width=0.5pt] (16.,1.)-- (18.,1.);
\draw[line width=0.5pt] (18.,1.)-- (18.,0.);
\draw[line width=0.5pt] (18.,0.)-- (17.,0.);
\draw[line width=0.5pt] (17.,0.)-- (17.,-4.);
\draw[line width=0.5pt] (17.,-4.)-- (20.,-4.);
\draw[line width=0.5pt] (20.,-4.)-- (20.,-3.);
\draw[line width=0.5pt] (11.,-8.)-- (11.,-11.);
\draw[line width=0.5pt] (11.,-11.)-- (8.,-11.);
\draw[line width=0.5pt] (8.,-11.)-- (8.,-12.);
\draw[line width=0.5pt] (8.,-12.)-- (6.,-12.);
\draw[line width=0.5pt] (6.,-12.)-- (6.,-9.);
\draw[line width=0.5pt] (6.,-9.)-- (7.,-9.);
\draw[line width=0.5pt] (7.,-9.)-- (7.,-8.);
\draw[line width=0.5pt] (7.,-8.)-- (11.,-8.);
\draw[line width=0.5pt] (11.,-8.)-- (11.,-7.);
\draw[line width=0.5pt] (11.,-6.)-- (3.,-6.);
\draw[line width=0.5pt] (2.,-8.)-- (3.,-8.);
\draw[line width=0.5pt] (3.,-8.)-- (3.,-12.);
\draw[line width=0.5pt] (3.,-12.)-- (4.,-12.);
\draw[line width=0.5pt] (4.,-12.)-- (4.,-13.);
\draw[line width=0.5pt] (4.,-13.)-- (6.,-13.);
\draw[line width=0.5pt] (6.,-13.)-- (6.,-12.);
\draw[line width=0.5pt] (25.,-9.)-- (25.,-12.);
\draw[line width=0.5pt] (25.,-12.)-- (22.,-12.);
\draw[line width=0.5pt] (22.,-12.)-- (22.,-13.);
\draw[line width=0.5pt] (22.,-13.)-- (20.,-13.);
\draw[line width=0.5pt] (20.,-13.)-- (20.,-10.);
\draw[line width=0.5pt] (20.,-10.)-- (21.,-10.);
\draw[line width=0.5pt] (21.,-10.)-- (21.,-9.);
\draw[line width=0.5pt] (21.,-9.)-- (25.,-9.);
\draw[line width=0.5pt] (25.,-9.)-- (25.,-8.);
\draw[line width=0.5pt] (24.,-7.)-- (20.,-7.);
\draw[line width=0.5pt] (16.,-8.)-- (16.,-9.);
\draw[line width=0.5pt] (16.,-9.)-- (17.,-9.);
\draw[line width=0.5pt] (17.,-9.)-- (17.,-13.);
\draw[line width=0.5pt] (17.,-13.)-- (18.,-13.);
\draw[line width=0.5pt] (18.,-13.)-- (18.,-13.);
\draw[line width=0.5pt] (18.,-13.)-- (20.,-13.);
\draw[-latex,line width=0.5pt] (12.,-1.) -- (15.,-1.);
\draw[-latex,line width=0.5pt] (15.,-4.) -- (12.,-6.);
\draw[-latex,line width=0.5pt] (12.,-10.) -- (15.,-10.);
\draw[line width=0.5pt] (17.,3.)-- (17.,2.);
\draw[line width=0.5pt] (17.,2.)-- (16.,2.);
\draw[line width=0.5pt] (16.,2.)-- (16.,1.);
\draw[line width=0.5pt] (25.,3.)-- (25.,2.);
\draw[line width=0.5pt] (3.,-6.)-- (3.,-7.);
\draw[line width=0.5pt] (3.,-7.)-- (2.,-7.);
\draw[line width=0.5pt] (2.,-7.)-- (2.,-8.);
\draw[line width=0.5pt] (11.,-7.)-- (11.,-6.);
\draw[line width=0.5pt] (24.,-7.)-- (25.,-7.);
\draw[line width=0.5pt] (25.,-8.)-- (25.,-7.);
\draw[line width=0.5pt] (16.,-8.)-- (17.,-8.);
\draw[line width=0.5pt] (17.,-8.)-- (17.,-7.);
\draw[line width=0.5pt] (17.,-7.)-- (18.,-7.);
\draw[line width=0.5pt] (18.,-7.)-- (18.,-6.);
\draw[line width=0.5pt] (18.,-6.)-- (20.,-6.);
\draw[line width=0.5pt] (20.,-6.)-- (20.,-7.);
\end{tikzpicture}

\caption{Regularisation of Class $\mathcal{III}$: part one}
\label{fig:classIIIb}
\end{figure}

In order to facilitate the proof that configurations in Class~$\mathcal{III}$ cannot be optimal, we further modify the configuration without increasing the energy.

\begin{lemma}\label{lem:classIIIregularisationstep2}
Fix $N_A, N_B > 0$ and $ \beta  \in (0,1)$. Suppose that $(A,B) \in \mathcal{III}$ is a minimal configuration. Then, there exists a minimal configuration $(\hat{A},\hat{B}) \in \mathcal{III}$ such that for every $k = 1,...,N_{\rm row}$ the rightmost point of $(\hat{A} \cup \hat{B})_k^{\rm row}$ has the same first coordinate and for every $k = 1,...,N_{\rm col}$ the lowest point of $(\hat{A} \cup \hat{B})_k^{\rm col}$ has the same second coordinate.  
\end{lemma}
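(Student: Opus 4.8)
The plan is to recast the lemma as a construction problem by exploiting the energy decomposition \eqref{eq:formulafortheenergy} together with the formula \eqref{eq: nerg3}. First I would apply Proposition \ref{prop:classIIIregularisationstep1} twice -- once to arrange $l_3=0$, once to arrange $h_3=0$ -- replacing $(A,B)$ by a minimal configuration still in Class $\mathcal{III}$; this is harmless for the statement to be proved. From Theorem \ref{thm:connected}, Propositions \ref{cor:rows}--\ref{cor:allononeside}, Corollary \ref{cor:interface} and the definition of Class $\mathcal{III}$ one reads off the structure: $A\cup B$, $A$ and $B$ are connected polyominoes all of whose rows and columns are intervals of lattice points (they are \emph{HV-convex}); in each row the $A$-points precede the $B$-points and in each column the $A$-points precede the $B$-points; $B$ is supported on the rows $h_1+1,\dots,h_1+h_2$ and columns $l_1+1,\dots,l_1+l_2$, and $A$ meets each of the $h_1+h_2$ rows and each of the $l_1+l_2$ columns. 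Since an HV-convex polyomino with $n$ cells occupying $r$ rows and $c$ columns has exactly $2n-r-c$ internal bonds, $E_A$ and $E_B$ are functions of $(N_A,h_1+h_2,l_1+l_2)$ and $(N_B,h_2,l_2)$ alone; moreover there is precisely one $A$-$B$ bond in each $B$-column and one in each $B$-row, so $\#I_{AB}=l_2+h_2$ and $E_{AB}=-\beta(l_2+h_2)$. Hence by \eqref{eq:formulafortheenergy} the energy of \emph{any} configuration in Class $\mathcal{III}$ with $l_3=h_3=0$ depends only on $N_A,N_B,l_1,l_2,h_1,h_2$ -- in agreement with \eqref{eq: nerg3}. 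It therefore suffices to exhibit one such configuration, with the same six parameters, whose right and bottom boundaries are straight.

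Such a configuration I would build from its column data. Place all columns flush against a common bottom line, so that straightness of the bottom boundary is automatic; straightness of the right boundary then amounts to the column heights being non-decreasing from left to right, with the rightmost column of height $h_1+h_2$. Concretely: choose a non-decreasing sequence $1\le b_{l_1+1}\le\dots\le b_{l_1+l_2}=h_2$ with $\sum_x b_x=N_B$ (possible because the original $B$, being HV-convex with $h_2$ rows and $l_2$ columns, forces $h_2+l_2-1\le N_B\le h_2 l_2$), and then a non-decreasing sequence $d_1\le\dots\le d_{l_1+l_2}=h_1+h_2$ that dominates the $b_x$ in the shifted sense needed for connectedness ($d_{l_1}\ge h_2+1$, $d_x\ge b_{x+1}+1$ for $l_1\le x<l_1+l_2$, $d_x\ge b_x+1$ for $x>l_1$) and has $\sum_x d_x=N_A+N_B$. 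Let $\hat B$ consist of the lowest $b_x$ cells of columns $x=l_1+1,\dots,l_1+l_2$ and let $\hat A$ be everything else. A routine check using only monotonicity of $(b_x)$ and $(d_x)$ shows that $\hat A\cup\hat B$ is HV-convex, connected, with straight bottom and right boundary and occupying all $h_1+h_2$ rows and all $l_1+l_2$ columns; that $\hat B$ is a connected HV-convex corner block meeting all $h_2$ of its rows and $l_2$ of its columns; that $\hat A$ is connected, HV-convex, meets all rows and columns, with $A$-points preceding $B$-points in every row and column; and that the interface is monotone. Thus $(\hat A,\hat B)\in\mathcal{III}$ with the prescribed parameters, so by the first paragraph it is minimal, and it is of the required form.

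The one genuinely delicate point -- and the step I expect to be the main obstacle -- is the joint feasibility: that sequences $(b_x)$ and $(d_x)$ obeying all the above inequalities and the two prescribed sums actually exist for the given $N_A$ and $N_B$. The binding inequalities are $h_2+l_2-1\le N_B\le h_2 l_2$, $N_A+N_B\le(h_1+h_2)(l_1+l_2)$, and a lower bound of the shape $(h_1+h_2)+(l_1+l_2)+h_2+l_2-2\le N_A+N_B$ dictated by connectedness of $\hat A$ and $\hat B$ (together with compatibility of the thinnest such $\hat A$ with the chosen $\hat B$); each of these holds because the original configuration, after the reduction $l_3=h_3=0$, is itself a configuration in Class $\mathcal{III}$ with exactly those six parameters and hence witnesses the bound. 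Carrying out this counting bookkeeping carefully is the heart of the argument; everything else is routine verification, after which minimality of $(\hat A,\hat B)$ is inherited from the energy formula. Alternatively one can avoid the explicit construction and straighten $(A,B)$ by a finite sequence of single-cell relocations, each moving an exposed corner cell of the top or left boundary of $A$ into a notch of the bottom or right boundary: such a move preserves the row and column counts and the parameters $l_2,h_2$, does not increase the energy -- a cell is transferred from a lattice site with at most two occupied neighbours to one with at least two -- and strictly increases $\sum_{\text{cells}}(x+y)$, which bounds the number of moves and forces termination at a straightened configuration.
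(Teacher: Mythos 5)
Your reduction is sound: by \eqref{eq: nerg3} (equivalently, the bond count $2n-r-c$ for connected HV-convex polyominoes together with $\# I_{AB}=l_2+h_2$), any admissible Class $\mathcal{III}$ configuration with $l_3=h_3=0$ and the same $N_A,N_B,l_1,l_2,h_1,h_2$ has the same energy, so it suffices to exhibit one with straight right and bottom boundaries. But exhibiting it \emph{is} the content of the lemma, and this is precisely the step you have not carried out. Feasibility of your profile construction does not follow from the three scalar inequalities being ``witnessed'' by the original configuration: the original column profiles need not be monotone, and after redistributing cells you must simultaneously satisfy both sum constraints, the domination $d_x\ge b_x+1$, $\max_x d_x=h_1+h_2$, $\max_x b_x=h_2$, the requirement that every row of the new configuration contain an $A$-point, \emph{and} connectedness of $\hat A$. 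Your constraint $d_{l_1}\ge h_2+1$ is not always consistent with the sum constraint (for $l_1=h_1=1$ and $l_2=h_2$ large with $N_A+N_B$ near its minimal value, monotonicity forces $\sum_x d_x\ge (h_2+1)(l_2+1)$, far exceeding $N_A+N_B$), and nothing in your argument excludes such parameters for minimal configurations; on the other hand, dropping that constraint can disconnect $\hat A$ (a single bottom $A$-cell in column $l_1$ with a $B$-cell to its right and nothing above it). So the ``routine check using only monotonicity'' is false as stated, and the bookkeeping you yourself flag as ``the heart of the argument'' is missing. The fallback relocation argument has its own flaw: energy non-increase cannot be read off from neighbour counts alone, since a cell moved next to $B$-cells trades $-1$ bonds for $-\beta$ bonds; moreover, a move can empty a row or column (changing $N_{\rm row}$ or $N_{\rm col}$, hence the surface term), and ``no remaining notches'' does not force a straight boundary, since a monotone staircase has no notches.

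For comparison, the paper's proof sidesteps all of this: it fills $(A,B)$ to its bounding $(l_1+l_2)\times(h_1+h_2)$ rectangle (straight boundaries for free, surface energy unchanged, bulk energy exactly $-2$ per added point), then removes the added points from the top of the leftmost column while recoloring topmost $B$-points to $A$ to restore the cardinalities; minimality of the original configuration rules out removals that would delete a whole column or touch the interface, so the result remains in Class $\mathcal{III}$, is minimal, and is straightened by construction. To salvage your route you must either carry out the profile feasibility argument in full or replace the explicit construction by such a fill-and-trim procedure.
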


\begin{proof}
By the previous proposition, we may assume that $l_3 = h_3 = 0$. We will use a version of the technique used for Class~$\mathcal{I}$, and refer to Figure \ref{fig:regularisationClassIII} for an illustration of the construction. Note that if we add $N_A'$ $A$-points on the top and on the left and $N_B'$ $B$-points on in the bottom right corner, so that the configuration $(A,B)$ becomes a full rectangle with sides $l_1 + l_2$ and $h_1 + h_2$, we do not alter the surface energy, but the bulk energy changes by $- 2  (N_A' + N_B')$.

 Having fixed $N_B' > 0$,  let us remove the topmost $A$-point  in the leftmost column and change the type of the topmost $B$-point in the leftmost column to $A$. In this way, we removed a $B$-point, without increasing the   energy \eqref{eq: nerg3}.   We repeat this procedure until we removed $N_B'$ $B$-points. Then, we remove $N_A'$ $A$-points, starting from the top of the leftmost column. Again, this cannot increase the energy. Moreover, the resulting configuration lies in Class $\mathcal{III}$ because if in this last step we removed a whole column or a point which lies next to the interface, we would decrease the energy. Hence, the resulting configuration is also minimal and satisfies the desired property. \end{proof}

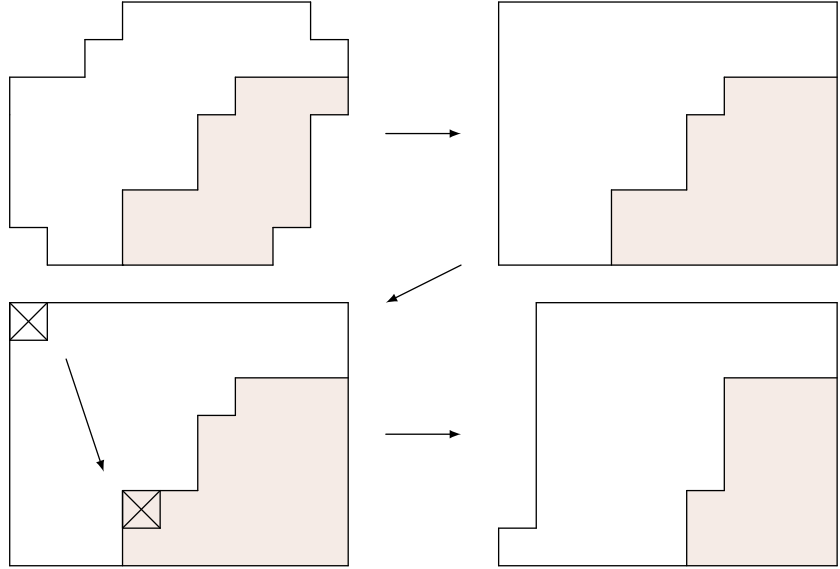
\begin{figure}[h]

\definecolor{zzttqq}{rgb}{0.6,0.2,0.}
\begin{tikzpicture}[line cap=round,line join=round,>=triangle 45,x=0.5cm,y=0.5cm]
\clip(1.2,-12.2) rectangle (24.6,3.3);
\fill[line width=0.5pt,color=zzttqq,fill=zzttqq,fill opacity=0.1] (8.,1.) -- (11.,1.) -- (11.,0.) -- (10.,0.) -- (10.,-3.) -- (9.,-3.) -- (9.,-4.) -- (5.,-4.) -- (5.,-2.) -- (7.,-2.) -- (7.,0.) -- (8.,0.) -- cycle;
\fill[line width=0.5pt,color=zzttqq,fill=zzttqq,fill opacity=0.1] (21.,1.) -- (24.,1.) -- (24.,-4.) -- (18.,-4.) -- (18.,-2.) -- (20.,-2.) -- (20.,0.) -- (21.,0.) -- cycle;
\fill[line width=0.5pt,color=zzttqq,fill=zzttqq,fill opacity=0.1] (8.,-7.) -- (11.,-7.) -- (11.,-12.) -- (5.,-12.) -- (5.,-10.) -- (7.,-10.) -- (7.,-8.) -- (8.,-8.) -- cycle;
\fill[line width=0.5pt,color=zzttqq,fill=zzttqq,fill opacity=0.1] (21.,-7.) -- (24.,-7.) -- (24.,-12.) -- (20.,-12.) -- (20.,-10.) -- (21.,-10.) -- cycle;
\draw[line width=0.5pt] (5.,-4.)-- (3.,-4.);
\draw[line width=0.5pt] (3.,-3.)-- (3.,-4.);
\draw[line width=0.5pt] (2.,0.)-- (2.,1.);
\draw[line width=0.5pt] (11.,2.)-- (11.,1.);
\draw[line width=0.5pt] (11.,1.)-- (8.,1.);
\draw[line width=0.5pt] (8.,1.)-- (8.,0.);
\draw[line width=0.5pt] (9.,-3.)-- (9.,-4.);
\draw[line width=0.5pt] (9.,-4.)-- (5.,-4.);
\draw[line width=0.5pt] (5.,-4.)-- (5.,-4.);
\draw[line width=0.5pt] (5.,3.)-- (10.,3.);
\draw[line width=0.5pt] (10.,3.)-- (10.,2.);
\draw[line width=0.5pt] (10.,2.)-- (11.,2.);
\draw[line width=0.5pt] (5.,3.)-- (5.,2.);
\draw[line width=0.5pt] (5.,2.)-- (4.,2.);
\draw[line width=0.5pt] (4.,2.)-- (4.,1.);
\draw[line width=0.5pt] (4.,1.)-- (2.,1.);
\draw[line width=0.5pt] (8.,0.)-- (7.,0.);
\draw[line width=0.5pt] (7.,0.)-- (7.,-2.);
\draw[line width=0.5pt] (7.,-2.)-- (5.,-2.);
\draw[line width=0.5pt] (5.,-2.)-- (5.,-4.);
\draw[line width=0.5pt] (9.,-3.)-- (10.,-3.);
\draw[line width=0.5pt] (10.,-3.)-- (10.,0.);
\draw[line width=0.5pt] (10.,0.)-- (11.,0.);
\draw[line width=0.5pt] (11.,0.)-- (11.,1.);
\draw[line width=0.5pt] (2.,0.)-- (2.,-3.);
\draw[line width=0.5pt] (2.,-3.)-- (3.,-3.);
\draw[line width=0.5pt] (15.,3.)-- (24.,3.);
\draw[line width=0.5pt] (24.,3.)-- (24.,-4.);
\draw[line width=0.5pt] (24.,-4.)-- (15.,-4.);
\draw[line width=0.5pt] (15.,-4.)-- (15.,3.);
\draw[line width=0.5pt] (18.,-4.)-- (18.,-2.);
\draw[line width=0.5pt] (18.,-2.)-- (20.,-2.);
\draw[line width=0.5pt] (20.,-2.)-- (20.,0.);
\draw[line width=0.5pt] (20.,0.)-- (21.,0.);
\draw[line width=0.5pt] (21.,0.)-- (21.,1.);
\draw[line width=0.5pt] (21.,1.)-- (24.,1.);
\draw[line width=0.5pt] (2.,-5.)-- (11.,-5.);
\draw[line width=0.5pt] (11.,-5.)-- (11.,-12.);
\draw[line width=0.5pt] (11.,-12.)-- (2.,-12.);
\draw[line width=0.5pt] (2.,-12.)-- (2.,-5.);
\draw[line width=0.5pt] (5.,-12.)-- (5.,-10.);
\draw[line width=0.5pt] (5.,-10.)-- (7.,-10.);
\draw[line width=0.5pt] (7.,-10.)-- (7.,-8.);
\draw[line width=0.5pt] (7.,-8.)-- (8.,-8.);
\draw[line width=0.5pt] (8.,-8.)-- (8.,-7.);
\draw[line width=0.5pt] (8.,-7.)-- (11.,-7.);
\draw[-latex,line width=0.5pt] (12.,-0.5) -- (14.,-0.5);
\draw[-latex,line width=0.5pt] (14.,-4.) -- (12.,-5.);
\draw[line width=0.5pt] (2.,-5.)-- (2.,-6.);
\draw[line width=0.5pt] (2.,-6.)-- (3.,-6.);
\draw[line width=0.5pt] (3.,-6.)-- (3.,-5.);
\draw[line width=0.5pt] (5.,-10.)-- (5.,-11.);
\draw[line width=0.5pt] (5.,-11.)-- (6.,-11.);
\draw[line width=0.5pt] (6.,-11.)-- (6.,-10.);
\draw[line width=0.5pt] (2.,-6.)-- (3.,-5.);
\draw[line width=0.5pt] (2.,-5.)-- (3.,-6.);
\draw[line width=0.5pt] (5.,-10.)-- (6.,-11.);
\draw[line width=0.5pt] (5.,-11.)-- (6.,-10.);
\draw[-latex,line width=0.5pt] (3.5,-6.5) -- (4.5,-9.5);
\draw[line width=0.5pt] (15.,-12.)-- (15.,-11.);
\draw[line width=0.5pt] (15.,-11.)-- (16.,-11.);
\draw[line width=0.5pt] (16.,-11.)-- (16.,-5.);
\draw[line width=0.5pt] (16.,-5.)-- (24.,-5.);
\draw[line width=0.5pt] (24.,-5.)-- (24.,-12.);
\draw[line width=0.5pt] (24.,-12.)-- (15.,-12.);
\draw[line width=0.5pt] (20.,-12.)-- (20.,-10.);
\draw[line width=0.5pt] (20.,-10.)-- (21.,-10.);
\draw[line width=0.5pt] (21.,-10.)-- (21.,-7.);
\draw[line width=0.5pt] (21.,-7.)-- (24.,-7.);
\draw[-latex,line width=0.5pt] (12.,-8.5) -- (14.,-8.5);
\end{tikzpicture}

\caption{Regularisation of Class $\mathcal{III}$: part two}
\label{fig:regularisationClassIII}
\end{figure}

These regularisation results imply that in the case when the numbers of points in the two types are equal, then the minimising configuration cannot lie in Class $\mathcal{III}$.

\begin{proposition}\label{prop:classIIIexcluded}
 Fix $ N_A = N_B>0$  and $ \beta  \in (0,1)$. Then, if $(A,B)$ is a minimal configuration, $(A,B) \notin \mathcal{III}$.
\end{proposition}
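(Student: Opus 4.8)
The plan is to assume for contradiction that $(A,B)\in\mathcal{III}$ is optimal with $N:=N_A=N_B$, first normalise its geometry, and then exhibit an admissible competitor of strictly smaller energy. By Proposition~\ref{prop:classIIIregularisationstep1} and Lemma~\ref{lem:classIIIregularisationstep2} I may assume $l_3=h_3=0$ and that the configuration is aligned to the bottom and to the right; together with Propositions~\ref{cor:rows}--\ref{cor:allononeside} this pins down the geometry completely. Namely, $B$ is the full $l_2\times h_2$ rectangle in the bottom--right corner (so $N=N_B=l_2h_2$), $A$ is $x$-- and $y$--convex, occupies every one of the $l_1+l_2$ columns and $h_1+h_2$ rows of the bounding box with $l_1,h_1\ge1$, and the $A$--points lying in the $l_2$ columns occupied by $B$ are confined to the top $h_1$ rows. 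Counting $A$--points column by column then yields $N=N_A\le l_1(h_1+h_2)+l_2h_1$, which together with $N=l_2h_2$ rearranges to the packing inequality
\begin{equation*}
(l_1+l_2)(h_1+h_2)\ \ge\ 2N .
\end{equation*}
This is the only point at which the hypothesis $N_A=N_B$ is used, and it is what makes the $\mathcal{III}$--geometry wasteful.

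Write $S:=E(A,B)+2(N_A+N_B)$ for the surface energy; by \eqref{eq: nerg3} with $l_3=h_3=0$ we have $S=(l_1+l_2)+(h_1+h_2)+(1-\beta)(l_2+h_2)$. I would compare $(A,B)$ with two admissible configurations having $N$ points of each type. The first, the \emph{re-cut} competitor, exists because $(l_1+l_2)(h_1+h_2)\ge 2N$: pack the $2N$ points into a box of width $W:=l_1+l_2$ and height $H:=h_1+h_2$ and split it into two halves by an almost straight cut; a direct estimate of the perimeters occurring in \eqref{eq:formulafortheenergy} shows its surface energy is at most $\max(W,H)+(2-\beta)\min(W,H)+(1-\beta)$. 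The second, the \emph{twin-rectangle} competitor, consists of two congruent copies of the $l_2\times h_2$ rectangle glued along their longer side and has surface energy exactly $2\min(l_2,h_2)+(2-\beta)\max(l_2,h_2)$.

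The final step is a dichotomy. Suppose first that $l_2\ge h_1+2$ or $h_2\ge l_1+2$. Using $\min(W,H)\le H=h_1+h_2$ in the first subcase (and $\min(W,H)\le W$ in the second) one gets $(l_2+h_2)-\min(W,H)\ge 2$, whence
\begin{equation*}
S-\Big(\max(W,H)+(2-\beta)\min(W,H)+(1-\beta)\Big)=(1-\beta)\big((l_2+h_2)-\min(W,H)-1\big)\ \ge\ 1-\beta\ >\ 0 ,
\end{equation*}
so the re-cut competitor strictly beats $(A,B)$. In the complementary case $l_2\le h_1+1$ and $h_2\le l_1+1$, assume after a possible reflection $h_2\le l_2$, so that $\min(l_2,h_2)=h_2\le l_1+1$; then
\begin{equation*}
S-\Big(2\min(l_2,h_2)+(2-\beta)\max(l_2,h_2)\Big)=l_1+h_1-\beta h_2\ >\ l_1+h_1-(l_1+1)=h_1-1\ \ge\ 0 ,
\end{equation*}
using $\beta h_2<h_2\le l_1+1$ and $h_1\ge1$, so the twin-rectangle competitor strictly beats $(A,B)$. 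In either case the optimality of $(A,B)$ is contradicted, proving $(A,B)\notin\mathcal{III}$.

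The conceptual content is light, so I expect the main effort to be bookkeeping: rigorously deducing the post-regularisation geometry (in particular that $B$ fills its corner rectangle and that the $A$--points above $B$ cannot drop below row $h_1$), checking that the two competitors are admissible, and controlling the small perimeter losses caused by $2N$ not dividing evenly into the re-cut box, so that the dichotomy above is genuinely exhaustive.
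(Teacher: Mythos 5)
Your route differs from the paper's in a genuine way. The paper also starts from Proposition~\ref{prop:classIIIregularisationstep1} and Lemma~\ref{lem:classIIIregularisationstep2}, but then compares with a single Class~$\mathcal{I}$ competitor in the same bounding box to obtain $l_2\le h_1+1$, and finishes with a counting argument (showing $N_A>N_B$ unless $h_1,l_1$ are very small) followed by an exhaustive check of the finitely many remaining small configurations. Your dichotomy with two competitors (re-cut box versus twin rectangles) replaces both the counting step and the finite case analysis by two short estimates; your Case~1 is essentially the paper's comparison step carried out in both cut directions at once, and your Case~2 is new relative to the paper. I checked that the re-cut bound $\max(W,H)+(2-\beta)\min(W,H)+(1-\beta)$ is attainable by a Class~$\mathcal{I}$ arrangement of $2N$ points with at most one step in the interface, and that both algebraic identities for $S$ minus the competitor energies are correct, so the skeleton works.

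There is, however, one genuine flaw: the assertion that the regularisation pins the geometry down so that $B$ is the full $l_2\times h_2$ rectangle, hence $N=l_2h_2$, with the $A$-points of the mixed columns confined to the top $h_1$ rows. Lemma~\ref{lem:classIIIregularisationstep2} only aligns $A\cup B$ to the right and bottom and equalises the column lengths; the interface between $A$ and $B$ inside the middle $l_2\times h_2$ block may still be a monotone staircase, since the energy \eqref{eq: nerg3} depends only on $l_2$ and $h_2$ and not on the shape of that staircase. In general one only gets $N=N_B\le l_2h_2$, which is all the paper ever uses. This does not affect your packing inequality (trivially true because $A\cup B$ fits in the box), but it breaks the twin-rectangle competitor of Case~2 as written: two full $l_2\times h_2$ rectangles need not have cardinality $N$ each. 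The repair stays inside your framework: take two quasi-rectangles of $N$ points on $l_2$ columns each (height $\lceil N/l_2\rceil\le h_2$ because $N\le l_2h_2$, and $N\ge l_2$ guarantees a full row to glue along); their surface energy is $2\lceil N/l_2\rceil+(2-\beta)l_2\le 2h_2+(2-\beta)l_2$, and your computation $S-S_{\mathrm{twin}}\ge l_1+h_1-\beta h_2>h_1-1\ge 0$ goes through with inequalities in place of the stated equalities. With that correction, and the bookkeeping you already flagged, the argument is sound.
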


\begin{proof}
Suppose otherwise and let $(A,B) \in \mathcal{III}$ be a minimal configuration. Apply the regularisation procedure described in Proposition \ref{prop:classIIIregularisationstep1} and Lemma \ref{lem:classIIIregularisationstep2}. After these operations, $(A,B)$ lies in a rectangle $R$ with sides $h_1 + h_2$ and $l_1 + l_2$. Then, the length of the interface equals $l_2 + h_2$. Without loss of generality $h_1 + h_2 \leq l_1 + l_2$ (otherwise, this is true after applying a symmetry with respect to the line  $\mathbb{R}(-1,1)$).  Then, we compare $(A,B)$ with a configuration $(\hat{A},\hat{B}) \in \mathcal{I}$ which fits into the rectangle $R$, with $A$-points on the left and  $B$-points on the right such that the length of the interface is either $h_1 + h_2$ or $h_1 + h_2 + 1$,  depending on whether $l_2 =0$ or $l_2 = 1$.  Hence, by minimality of $(A,B)$, we have $l_2 + h_2 \leq h_1 + h_2 + 1$, i.e.,
\begin{align}\label{eq: l2h1}
l_2 \leq h_1 + 1.
\end{align}
This gives a contradiction with the assumption $N_A = N_B$.  To see
this, first recall that the configuration is {\it full}, in the sense that the construction in Lemma \ref{lem:classIIIregularisationstep2} ensures that all the columns except for the leftmost one have the same number of points. Therefore, we may  first estimate from above the number of $B$-points by 
\begin{equation*}
N_B \leq l_2 h_2 \leq h_1 h_2 + h_2
\end{equation*}
 and  the number of $A$-points from below by 
\begin{align*}
N_A & \geq h_1 l_2 + h_1 (l_1 - 1) + h_2 (l_1 - 1) = h_1 (l_1 + l_2) - h_1 + h_2 (l_1 - 1)\\
&\geq h_1 (h_1 + h_2) - h_1 + h_2 (l_1 - 1) = h_1 h_2 + h_1 (h_1 - 1) + h_2 (l_1 - 1),
\end{align*}
where we used the assumption that $h_1 + h_2 \leq l_1 + l_2$. Hence, whenever $h_1, l_1 \geq 2$ or $l_1 \geq 3$, we have $N_A > N_B$, which would contradict the assumption $N_A = N_B$. Moreover, we get that necessarily $h_1 \leq h_2$.

Finally, we have to take into consideration the case when $l_1 = 1$
(with $h_1$ arbitrary) or when $h_1 = 1$ and $l_1 = 2$. In the first
case, by \eqref{eq: l2h1} we have $h_1 + h_2 \leq l_2 + 1 \leq h_1 +
2$, so $h_2 \leq 2$. But then $h_1 \leq h_2 \leq 2$, and thus $l_2
\leq h_1 + 1 \leq 3$.  This leaves us with a finite (and small) number
of configurations to consider separately and it may be checked that
none of them  is  optimal.  In the second case, again by \eqref{eq:
  l2h1} we have $l_2 \leq h_1 + 1 = 2$. Furthermore, $l_1 + l_2 \geq
h_1 + h_2$, so $h_1 + h_2 \leq 4$, and hence $h_2 \leq 3$. Again, we
end up with a small number of configurations.  A direct exhaustive
analysis guarantees that   none of them is optimal. 
\end{proof}

\subsection{Class $\mathcal{IV}$, part one}\label{sec:classIVpartone}

The situation in Class~$\mathcal{IV}$ is not as clear-cut as in
Classes $\mathcal{II}$ and $\mathcal{III}$: whereas configurations in
Classes $\mathcal{II}$ and $\mathcal{III}$ are never optimal, the
problem is that, even for $N_A = N_B$ and $ \beta  = 1/2$, an optimal configuration may actually lie in Class~$\mathcal{IV}$, see Figure~\ref{fig:fivepoints}. Hence, the goal in this subsection is a bit different: we will prove that even though minimal configurations in Class $\mathcal{IV}$ may exist, there also exists an optimal configuration in Class $\mathcal{I}$. Moreover, the reasoning will also provide some further properties of optimal configurations in Class $\mathcal{IV}$.  In particular, a careful inspection of the forthcoming constructions will show a fluctuation estimate for minimisers in Class~$\mathcal{IV}$, see Section~\ref{sec:law} below. 

This goal is achieved as follows: in the first part, we regularise our
configuration such that $h_3 = 0$ and $h_1\le l_1$. This is achieved
in Proposition \ref{prop:classIVregularisationstep3}, with the key
part of the reasoning proved in Proposition
\ref{prop:classIVregularisationstep2}. These arguments are valid for
any $ \beta   \in (0,1)$. Then, in the second part, under the
restriction $ \beta \leq  1/2$, we regularise a configuration with $h_3 = 0$ and $h_1\le l_1$ to obtain a configuration in Class $\mathcal{I}$. This is achieved in Propositions  \ref{prop:regularisationofclassIVpart4}--\ref{prop:wemayrequireclassI}.  We break the reasoning into smaller pieces in order to highlight different techniques and different assumptions required at each point.


\begin{lemma}\label{lem:classIVregularisationstep1}
Fix $N_A, N_B > 0$ and $ \beta  \in (0,1)$. Suppose that $(A,B) \in \mathcal{IV}$ is a minimal configuration. Then, there exists  a minimal configuration  $(\hat{A},\hat{B}) \in \mathcal{I} \cup \mathcal{IV}$ such that  $l_2 \leq h_2$, $\min \lbrace h_1, h_1+h_2 -l_1-l_2\rbrace \le 0$, and $\min \lbrace h_3, h_2+h_3 -l_2-l_3\rbrace \le 0$. 
\end{lemma}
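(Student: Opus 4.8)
\emph{Proof plan.} The plan is to first normalise by a reflection so that $l_2\le h_2$, and then to ``compress'' the bounding boxes of $A$ and of $B$ one coordinate direction at a time by an internal rearrangement. The two ingredients I would use are: the splitting $E=E_A+E_B+E_{AB}$ of \eqref{eq:formulafortheenergy}, together with the elementary fact that, for a set $X$ all of whose rows and columns are connected (Proposition~\ref{cor:rows}), one has $E_X=-(2\#X-r_X-c_X)$, where $r_X$ and $c_X$ denote the numbers of nonempty rows and columns of $X$ --- indeed each row contributes (its length $-1$) horizontal bonds and each column (its length $-1$) vertical ones; and the principle, recorded right after \eqref{eq:formulafortheenergy}, that a minimal configuration realises the longest possible interface among all reattachments of the (unchanged) shapes of $A$ and $B$.

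First I would apply a reflection interchanging the roles of rows and columns: it is an isometry of $\mathbb{Z}^2$, so it preserves minimality, and it exchanges $(l_1,l_2,l_3)$ with $(h_1,h_2,h_3)$ while mapping Class~$\mathcal{IV}$ onto itself; after applying it if necessary we have $l_2\le h_2$. If the configuration has left Class~$\mathcal{IV}$ it lies in Class~$\mathcal{I}$, where $h_1=h_3=0$ and the last two conditions are vacuous, so assume $(A,B)\in\mathcal{IV}$. Next, suppose that $\min\{h_1,h_1+h_2-l_1-l_2\}\le0$ fails, i.e.\ $h_1\ge1$ and $r_A:=h_1+h_2>l_1+l_2=:c_A$. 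Since $A$ meets all $r_A$ of its rows and all $c_A$ of its columns, $\#A\le r_Ac_A$, and $r_A>c_A$ gives $(r_A-1)(c_A+1)\ge r_Ac_A\ge\#A$; so I would repack the cells of $A$ into a set $\hat A$ with connected rows and columns fitting in an $(r_A-1)\times(c_A+1)$ box, which by the bond identity satisfies $E_{\hat A}\le E_A$ (with equality exactly when $\hat A$ genuinely uses $c_A+1$ columns). Then I would detach $A$, put $\hat A$ in its place, and reattach $B$ in a controlled way --- along an $L$-shaped interface when possible, along a straight one when the configuration degenerates --- so that the new interface has length at least $l_2+h_2$. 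Since $E(\hat A,\hat B)\le E_A+E_B+E_{AB}=E(A,B)$ and $(A,B)$ is minimal, equality holds throughout, so $(\hat A,\hat B)$ is again minimal while $A$ has one row fewer and one column more; it stays in $\mathcal{I}\cup\mathcal{IV}$, leaving $\mathcal{IV}$ only when $h_1$ hits $0$, landing then in $\mathcal{I}$ if also $h_3=0$ and otherwise in Class~$\mathcal{IV}$ after swapping the roles of the two types. As $h_1+h_2$ strictly decreases this terminates, and the symmetric argument applied to $B$ (whose box has $h_2+h_3$ rows and $l_2+l_3$ columns) yields $\min\{h_3,h_2+h_3-l_2-l_3\}\le0$.

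The hard part will be the two points I glossed over. The first is showing that the repacking of $A$ can always be chosen so that $\hat A$ admits a reattachment of $B$ with interface length at least $l_2+h_2$ while remaining in $\mathcal{I}\cup\mathcal{IV}$: here one must argue with the actual staircase geometry of admissible sets and, in particular, with the single corner of the $L$-shaped interface (the width $c_{\hat A}$ of the compressed box may be smaller than the old bottom-contact length $l_2$, so one cannot naively ``reuse'' the old contact segments, and a case analysis is needed). The second is that compressing $B$ changes $h_2$ and $l_2$, which also appear in the condition on $A$; so the two compression loops have to be interleaved, or run while monitoring a monotone quantity such as $(h_1+h_2+h_3)+(l_1+l_2+l_3)$ --- controlled by \eqref{eq:classIVformula} --- so that re-establishing the condition for $B$ does not spoil the one already obtained for $A$. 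Keeping the configuration inside $\mathcal{I}\cup\mathcal{IV}$ throughout this alternation, rather than any single estimate, is where most of the effort lies.
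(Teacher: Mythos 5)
Your plan has a genuine gap at exactly the point you flag as ``the hard part'': the claim that after repacking $A$ into an $(r_A-1)\times(c_A+1)$ box one can always reattach $B$ so that the new interface has length at least $l_2+h_2$ while the configuration stays in $\mathcal{I}\cup\mathcal{IV}$. This is not a technicality to be deferred --- in your formulation it \emph{is} the lemma. The repacking destroys the staircase along which $A$ and $B$ meet, and nothing in your argument controls the shape of the right and bottom boundary of $\hat A$ against the (unchanged) left and top boundary of $B$; the bond identity $E_X=-(2\#X-r_X-c_X)$ only controls the internal energy of each phase, not the $L$-shaped contact. Your second worry (that compressing $B$ perturbs $h_2$ and $l_2$ and so may undo the condition already secured for $A$) is likewise a real obstruction to your scheme, not a bookkeeping nuisance.

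Both difficulties disappear if the move is made local instead of global, which is what the paper does. In Class~$\mathcal{IV}$ the first $h_1$ rows contain only $A$-points, so the top row has no $B$-neighbours. If $h_1\ge1$ and $l_1+l_2<h_1+h_2$, the paper removes the entire top row (at most $l_1+l_2\le h_1+h_2-1$ points) and redistributes these points on the left of $A$, one per row among the remaining $h_1+h_2-1$ rows, possibly creating one new column. This leaves $B$, the interface, and hence $l_2,l_3,h_2,h_3$ completely untouched; only $h_1\to h_1-1$ and $l_1\to l_1+1$ (at most), so by \eqref{eq:classIVformula} the energy does not increase, and iterating terminates with $h_1=0$ or $l_1+l_2\ge h_1+h_2$. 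The symmetric move on the bottom $h_3$ rows of $B$ touches only $h_3$ and $l_3$, so the two loops cannot interfere. I would suggest replacing the repack-and-reattach step by this single-row transfer; otherwise you must actually carry out the case analysis on the corner of the $L$-shaped interface that you have only sketched.
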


\begin{proof}
Choose a minimal configuration $(A,B)$ in Class~$\mathcal{IV}$.  Without loss of generality, we may assume that $l_2 \leq h_2$. Otherwise, consider a reflection of the original configuration with respect to the line $\mathbb{R}(-1,1)$. Then, we end up with a configuration of the same type with the roles of $h_i$ and $l_i$ reversed. We suppose that  $h_1 \ge 1$ as otherwise  the second condition in the statement of the  lemma  is satisfied.  We modify the configuration without increasing the energy such that $h_1=0$ or $l_1 + l_2 \ge h_1 + h_2$.  To see this, suppose that  $l_1 + l_2 < h_1 + h_2$. Then,  we remove all the points in the first row, and place them on the  left-hand  side starting from the second row, one in each row, possibly forming one additional column. The assumption guarantees that there was enough space to place all the points. In this way,  $h_1$  decreases by 1 and  $l_1$  increases possibly by 1, so the total energy decreases (in which case $(A,B)$ was not a minimal configuration) or stays the same, cf.\ \eqref{eq:classIVformula}. We repeat this procedure until $h_1 = 0$ or $l_1 + l_2 \geq h_1 + h_2$. 

In a similar fashion, we modify the configuration to obtain $\min \lbrace h_3, h_2+h_3 -l_2-l_3\rbrace \le 0$. Finally, if $h_1=h_3=0$, the configuration is in Class~$\mathcal{I}$. Otherwise, if $h_1 \ge 1$, the configuration is in Class~$\mathcal{IV}$, and if $h_1=0$, $h_3 \ge 1$, after a rotation by $\pi$ and interchanging the roles of the two types we obtain a configuration in Class~$\mathcal{IV}$.
\end{proof}

We continue the regularisation in the following proposition.

\begin{proposition}\label{prop:classIVregularisationstep2} 
Fix $N_A, N_B > 0$ and $ \beta  \in (0,1)$. Suppose that $(A,B) \in \mathcal{IV}$ is a minimal configuration.  Then, there exists a minimal configuration  $(\hat{A},\hat{B}) \in  \mathcal{I} \cup \mathcal{IV}$ which satisfies $l_2 \leq h_2$, $\min \lbrace h_1, h_1+h_2 -l_1-l_2\rbrace \le 0$,  $\min \lbrace h_3, h_2+h_3 -l_2-l_3\rbrace \le 0$, and at least one of the following two properties: 
$$(1) \ \ l_2 = 1, \quad \quad  \quad   (2)  \ \ h_3 = 0.$$
\end{proposition}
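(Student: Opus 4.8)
The idea is to start from the configuration $(\hat A,\hat B)\in\mathcal I\cup\mathcal{IV}$ produced by Lemma~\ref{lem:classIVregularisationstep1}, so that we already have $l_2\le h_2$, $\min\{h_1,h_1+h_2-l_1-l_2\}\le 0$, and $\min\{h_3,h_2+h_3-l_2-l_3\}\le 0$. If the configuration is in Class~$\mathcal I$ we are done, so assume it is in Class~$\mathcal{IV}$, in which case $h_1\ge 1$. I would then argue by contradiction: suppose $l_2\ge 2$ \emph{and} $h_3\ge 1$, and aim to modify the configuration to one of strictly smaller energy, or to one with smaller $l_2$ (or smaller $h_3$) but the same energy, iterating until one of the two properties (1) or (2) holds. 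The relevant energy formula throughout is \eqref{eq:classIVformula}, $E(A,B)=-2(N_A+N_B)+(l_1+l_2+l_3)+(h_1+h_2+h_3)+(1-\beta)(l_2+h_2)$, so the surface energy is $(l_1+l_2+l_3)+(h_1+h_2+h_3)+(1-\beta)(l_2+h_2)$; any move that does not increase $l_1+l_2+l_3+h_1+h_2+h_3$ while strictly decreasing $l_2+h_2$ wins.

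\textbf{Key steps.}
First I would make the geometry explicit: the $A$-part occupies an ``L''-shaped (or staircase) region with a horizontal arm of height $h_1$ spanning columns $1,\dots,l_1+l_2$ (roughly) and a vertical arm of width $l_1$ (using the conditions from the previous lemma to say the arm proportions are controlled), and similarly $B$ has a vertical arm of width $l_3$ and a horizontal arm of height $h_3$ sitting below; the overlap region $l_2\times h_2$ carries the interface, which by Corollary~\ref{cor:interface} is monotone — in the regularised setting it consists of at most one horizontal jump and at most one vertical jump, hence interface length $l_2+h_2$. The core move is the same ``fill-exchange-empty'' trick used in Proposition~\ref{prop:classIregularisationstep1}: temporarily complete the $l_2\times h_2$ overlap block to a full rectangle by borrowing $N_A'$ $A$-points and $N_B'$ $B$-points from elsewhere in the configuration (bulk change $-2(N_A'+N_B')$, no surface change), then swap the rightmost column of $A$-points with the leftmost column of $B$-points inside that block — this turns $l_2$ into $l_2-1$ and $l_1+l_3$ into $l_1+l_3+1$, dropping the surface energy by $(1-\beta)$ — and finally remove the $N_A'$ $A$-points from the leftmost column (top to bottom) and the $N_B'$ $B$-points from the rightmost column, which can only keep $l_1$ and $l_3$ the same or decrease them, hence cannot raise the surface energy. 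Net effect: energy drops by at least $(1-\beta)>0$, contradicting minimality. The case $h_3\ge 1$ can be attacked by the mirror-image move (swap along the vertical direction of the overlap block, reducing $h_2$), and one must check that when $l_2\ge 2$ we can always perform the horizontal swap without ``colliding'' with the $h_3$-arm — this is where the conditions $\min\{h_1,h_1+h_2-l_1-l_2\}\le 0$ and $\min\{h_3,h_2+h_3-l_2-l_3\}\le 0$, plus $l_2\le h_2$, are used to guarantee there is room to redistribute the borrowed points.

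\textbf{Main obstacle.}
The delicate point is bookkeeping the effect of \emph{removing} the borrowed points: after the swap the configuration need not be in Class~$\mathcal{IV}$ any more, and when we strip off the $N_A'$ extra $A$-points from the left and the $N_B'$ extra $B$-points from the right we must be sure that (i) we do not accidentally disconnect $A$, $B$, or the interface — so that the result is still admissible and the formula \eqref{eq:classIVformula} (or that of Class~$\mathcal I$) still applies — and (ii) the values $l_1,l_3,h_1,h_3$ move only in the favourable direction. I would handle (i) exactly as in Lemma~\ref{lem:classIIIregularisationstep2}: removing along an outermost column top-to-bottom preserves connectedness of everything as long as we stop before eating into a column adjacent to the interface, and if we were forced to do so the energy would have already dropped, contradicting minimality. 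For (ii), the monotonicity of the interface (Corollary~\ref{cor:interface}) and the regularity already obtained force the borrowed points to have come from, and return to, the ``outer'' columns, so $l_1$ and $l_3$ can only shrink. A secondary obstacle is the parity/small-cases issue analogous to the one in Lemma~\ref{lem:classIstructurelemma}: when the overlap block is too small to contain two internal columns of $A$ and $B$ to swap, the move degenerates; but $l_2\ge 2$ together with $r_2,r_3\ge 1$ (at least one $A$- and one $B$-point per mixed column) guarantees the swap is well defined, and the finitely many genuinely small exceptional shapes can be dispatched by direct inspection. Iterating the construction — each step strictly decreasing $l_2$ (resp.\ $h_3$) while never increasing the energy — terminates in a configuration satisfying $l_2=1$ or $h_3=0$, which is the claim.
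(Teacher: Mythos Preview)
Your core move --- the column swap inside the $l_2\times h_2$ block borrowed from Proposition~\ref{prop:classIregularisationstep1} --- does not have the effect you claim in Class~$\mathcal{IV}$. In Class~$\mathcal I$ the $l_2$ columns are exactly the columns containing both types, so the swap makes one of them single-type and $l_2$ drops. In Class~$\mathcal{IV}$, however, each of the $l_2$ columns also carries $A$-points in the $l_2{:}h_1$ block above and $B$-points in the $l_2{:}h_3$ block below the overlap region. After making, say, the leftmost column of $l_2{:}h_2$ entirely of type $A$, that column still contains $B$-points in its $l_2{:}h_3$ portion and therefore remains a mixed column: $l_2$ does not decrease, $l_1+l_3$ does not increase, and the alleged $(1-\beta)$ drop in surface energy never materialises. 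In fact there exist optimal Class-$\mathcal{IV}$ configurations with $l_2=2$ and $h_3=1$ (Proposition~\ref{prop:largeminimisersiv}), so no rearrangement preserving $N_A,N_B$ can strictly lower the energy in this situation, and your contradiction strategy cannot succeed.

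The paper's argument is not a contradiction but a construction of an alternative minimiser. After regularising so that the $l_2{:}h_2$ block is full with a single interface step and checking $l_1\ge h_1$, $l_3\ge h_3$, it relocates the $h_1$ $A$-points in the rightmost column of the cap $l_2{:}h_1$ (\emph{above} the overlap block, not inside it) into a single row of the empty rectangle $l_1{:}h_3$. This simultaneously converts that column into a pure $B$-column ($l_2\to l_2-1$, $l_3\to l_3+1$) and converts that row into a mixed row ($h_3\to h_3-1$, $h_2\to h_2+1$). All three quantities $l_1+l_2+l_3$, $h_1+h_2+h_3$, and $l_2+h_2$ are preserved, so the energy \eqref{eq:classIVformula} is \emph{unchanged}; one then iterates until $l_2=1$ or $h_1=0$ or $h_3=0$. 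The idea you are missing is precisely that the caps $l_2{:}h_1$ and $l_2{:}h_3$ must be handled, and relocating them costs exactly the interface length you thought you had saved.
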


For the proof, we introduce the following notation specific for Class $\mathcal{IV}$. With the notation of Figure \ref{fig:ClassIVbefore}, we will refer to the nine rectangles with sides $l_i$ and $h_j$ as $l_i:h_j$. For instance, the rectangle in the middle with sides $l_2$ and $h_2$ will be referred to as rectangle $l_2:h_2$. A priori, some of these rectangles may be not full or even empty, for instance the rectangle $l_3:h_1$.

\begin{proof}
Let $(A,B)  \in \mathcal{IV}$ be a minimal configuration from Lemma \ref{lem:classIVregularisationstep1} which does not satisfy  the desired properties, i.e.,  $l_2 >1$ and $h_1,h_3 >0$ ($l_2=0$ is not possible as it would imply $(A,B)  \in \mathcal{II}$). Then, we first make a similar regularisation as we did for Class~$\mathcal{I}$. We add $N_A'$ $A$-points to the configuration $(A,B)$, so that the interface between $A$ and the void consists of four line segments (of lengths $l_1$, $h_1 + h_2$, $l_1 + l_2$ and $h_1$). This does not increase the surface energy. Then, we remove $N_A'$ $A$-points, column by column, starting from the leftmost column in $(A,B)$. If we removed a whole column, or if we removed a point which lies at the interface, the energy drops, so the original configuration $(A,B)$ was not minimal. Hence, the resulting configuration lies in Class~$\mathcal{IV}$. We proceed in a similar fashion for the $B$-points. In particular, the rectangle $l_2:h_2$ (in the middle) is full.

Now, let us look at the (full) rectangle $l_2:h_2$. It contains exactly $l_2 h_2$ points, $N_A''$ of them of type $A$ and $N_B''$ of them of type $B$. We rearrange them (i.e., remove all the points in $l_2:h_2$ and place them back in $l_2:h_2$) in the following way: we start with the leftmost column and we fill the columns one by one  with $A$-points  until we end up with less than $h_2$ points to place. Then, we place the remaining points in the next column, starting from the top. Similarly, we place the $B$-points starting from the rightmost column and we fill the columns one by one until we end up with less than $h_2$ points. We place the remaining points on the bottom of the next column. In this way, the resulting configuration has an interface with at most one step in $l_2:h_2$, and we did not change the energy. By Lemma  \ref{lem:classIVregularisationstep1} we also have
\begin{equation}\label{eq:tripleassumption}
{\rm (i)} \ \ l_2 \leq h_2, \quad \quad {\rm (ii)} \ \  l_1 \geq h_1, \quad \quad {\rm (iii) } \ \ l_3 \geq h_3.
\end{equation}
Indeed, (i) is clear. If  $h_1 = 0$, (ii) is obvious. Otherwise we have $h_1+h_2 - l_1-l_2 \le 0$ which along with (i) shows (ii). The proof of (iii) is similar.  The procedure described above is presented in Figure \ref{fig:regularisationClassIVpart1}. 

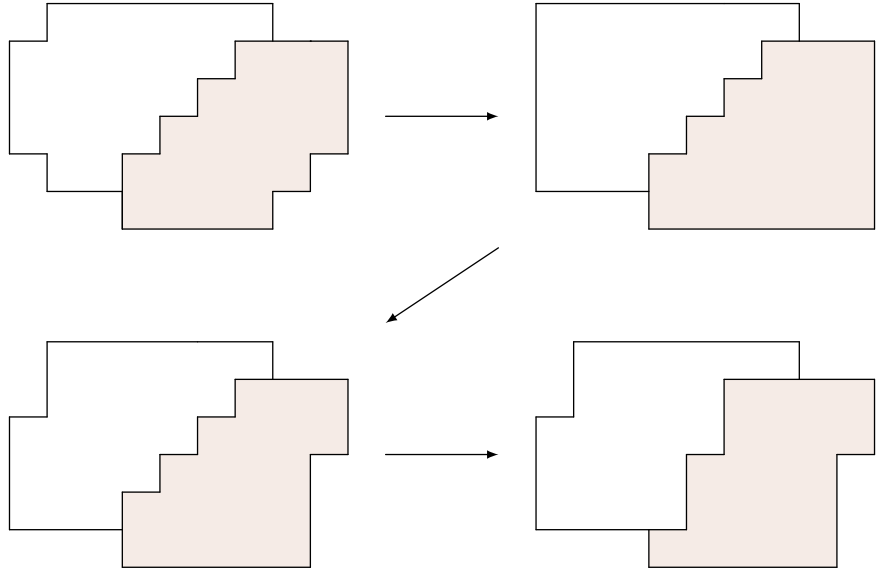
\begin{figure}[h!]

\definecolor{zzttqq}{rgb}{0.6,0.2,0.}
\begin{tikzpicture}[line cap=round,line join=round,>=triangle 45,x=0.5cm,y=0.5cm]
\clip(-1.1,-9.5) rectangle (23.5,6.5);
\fill[line width=2.pt,color=zzttqq,fill=zzttqq,fill opacity=0.1] (23.,-4.) -- (19.,-4.) -- (19.,-6.) -- (18.,-6.) -- (18.,-8.) -- (17.,-8.) -- (17.,-9.) -- (22.,-9.) -- (22.,-6.) -- (23.,-6.) -- cycle;
\fill[line width=2.pt,color=zzttqq,fill=zzttqq,fill opacity=0.1] (9.,5.) -- (6.,5.) -- (6.,4.) -- (5.,4.) -- (5.,3.) -- (4.,3.) -- (4.,2.) -- (3.,2.) -- (3.,0.) -- (7.,0.) -- (7.,1.) -- (8.,1.) -- (8.,2.) -- (9.,2.) -- cycle;
\fill[line width=2.pt,color=zzttqq,fill=zzttqq,fill opacity=0.1] (9.,-4.) -- (6.,-4.) -- (6.,-5.) -- (5.,-5.) -- (5.,-6.) -- (4.,-6.) -- (4.,-7.) -- (3.,-7.) -- (3.,-9.) -- (8.,-9.) -- (8.,-6.) -- (9.,-6.) -- cycle;
\fill[line width=2.pt,color=zzttqq,fill=zzttqq,fill opacity=0.1] (23.,5.) -- (20.,5.) -- (20.,4.) -- (19.,4.) -- (19.,3.) -- (18.,3.) -- (18.,2.) -- (17.,2.) -- (17.,0.) -- (23.,0.) -- cycle;
\draw[line width=0.5pt] (3.,1.)-- (1.,1.);
\draw[line width=0.5pt] (1.,1.)-- (1.,2.);
\draw[line width=0.5pt] (1.,2.)-- (0.,2.);
\draw[line width=0.5pt] (0.,2.)-- (0.,5.);
\draw[line width=0.5pt] (5.,3.)-- (4.,3.);
\draw[line width=0.5pt] (4.,3.)-- (4.,2.);
\draw[line width=0.5pt] (4.,2.)-- (3.,2.);
\draw[line width=0.5pt] (3.,2.)-- (3.,0.);
\draw[line width=0.5pt] (3.,0.)-- (3.,1.);
\draw[line width=0.5pt] (8.,5.)-- (8.,5.);
\draw[line width=0.5pt] (8.,5.)-- (9.,5.);
\draw[line width=0.5pt] (9.,5.)-- (9.,2.);
\draw[line width=0.5pt] (8.,2.)-- (9.,2.);
\draw[line width=0.5pt] (8.,2.)-- (8.,1.);
\draw[line width=0.5pt] (8.,1.)-- (7.,1.);
\draw[line width=0.5pt] (7.,1.)-- (7.,0.);
\draw[line width=0.5pt] (7.,0.)-- (3.,0.);
\draw[line width=0.5pt] (14.,6.)-- (19.,6.);
\draw[line width=0.5pt] (19.,3.)-- (18.,3.);
\draw[line width=0.5pt] (18.,3.)-- (18.,2.);
\draw[line width=0.5pt] (18.,2.)-- (17.,2.);
\draw[line width=0.5pt] (17.,2.)-- (17.,0.);
\draw[line width=0.5pt] (14.,1.)-- (14.,6.);
\draw[line width=0.5pt] (1.,-3.)-- (5.,-3.);
\draw[line width=0.5pt] (5.,-6.)-- (4.,-6.);
\draw[line width=0.5pt] (8.,-9.)-- (3.,-9.);
\draw[line width=0.5pt] (15.,-3.)-- (15.,-5.);
\draw[line width=0.5pt] (14.,-5.)-- (15.,-5.);
\draw[line width=0.5pt] (14.,-5.)-- (14.,-8.);
\draw[line width=0.5pt] (14.,-8.)-- (17.,-8.);
\draw[line width=0.5pt] (17.,-8.)-- (18.,-8.);
\draw[line width=0.5pt] (18.,-8.)-- (18.,-6.);
\draw[line width=0.5pt] (23.,-6.)-- (22.,-6.);
\draw[line width=0.5pt] (22.,-6.)-- (22.,-9.);
\draw[line width=0.5pt] (22.,-9.)-- (17.,-9.);
\draw[line width=0.5pt] (17.,-9.)-- (17.,-8.);
\draw [-latex,line width=0.5pt] (10.,3.) -- (13.,3.);
\draw [-latex,line width=0.5pt] (13.,-0.5) -- (10.,-2.5);
\draw [-latex,line width=0.5pt] (10.,-6.) -- (13.,-6.);
\draw[line width=0.5pt] (0.,5.)-- (1.,5.);
\draw[line width=0.5pt] (1.,5.)-- (1.,6.);
\draw[line width=0.5pt] (1.,6.)-- (7.,6.);
\draw[line width=0.5pt] (19.,6.)-- (21.,6.);
\draw[line width=0.5pt] (21.,6.)-- (21.,5.);
\draw[line width=0.5pt] (21.,5.)-- (23.,5.);
\draw[line width=0.5pt] (23.,5.)-- (23.,0.);
\draw[line width=0.5pt] (23.,0.)-- (17.,0.);
\draw[line width=0.5pt] (14.,1.)-- (17.,1.);
\draw[line width=0.5pt] (4.,-6.)-- (4.,-7.);
\draw[line width=0.5pt] (4.,-7.)-- (3.,-7.);
\draw[line width=0.5pt] (3.,-7.)-- (3.,-8.);
\draw[line width=0.5pt] (3.,-8.)-- (3.,-9.);
\draw[line width=0.5pt] (3.,-8.)-- (0.,-8.);
\draw[line width=0.5pt] (5.,-3.)-- (7.,-3.);
\draw[line width=0.5pt] (7.,-3.)-- (7.,-4.);
\draw[line width=0.5pt] (7.,-4.)-- (9.,-4.);
\draw[line width=0.5pt] (1.,-3.)-- (1.,-5.);
\draw[line width=0.5pt] (1.,-5.)-- (0.,-5.);
\draw[line width=0.5pt] (0.,-5.)-- (0.,-8.);
\draw[line width=0.5pt] (8.,-9.)-- (8.,-6.);
\draw[line width=0.5pt] (8.,-6.)-- (9.,-6.);
\draw[line width=0.5pt] (9.,-6.)-- (9.,-4.);
\draw[line width=0.5pt] (15.,-3.)-- (21.,-3.);
\draw[line width=0.5pt] (21.,-3.)-- (21.,-4.);
\draw[line width=0.5pt] (21.,-4.)-- (19.,-4.);
\draw[line width=0.5pt] (19.,-4.)-- (19.,-6.);
\draw[line width=0.5pt] (19.,-6.)-- (18.,-6.);
\draw[line width=0.5pt] (21.,-4.)-- (23.,-4.);
\draw[line width=0.5pt] (23.,-4.)-- (23.,-6.);
\draw[line width=0.5pt] (7.,6.)-- (7.,5.);
\draw[line width=0.5pt] (8.,5.)-- (6.,5.);
\draw[line width=0.5pt] (6.,5.)-- (6.,4.);
\draw[line width=0.5pt] (6.,4.)-- (5.,4.);
\draw[line width=0.5pt] (5.,4.)-- (5.,3.);
\draw[line width=0.5pt] (7.,-4.)-- (6.,-4.);
\draw[line width=0.5pt] (6.,-4.)-- (6.,-5.);
\draw[line width=0.5pt] (6.,-5.)-- (5.,-5.);
\draw[line width=0.5pt] (5.,-5.)-- (5.,-6.);
\draw[line width=0.5pt] (21.,5.)-- (20.,5.);
\draw[line width=0.5pt] (20.,5.)-- (20.,4.);
\draw[line width=0.5pt] (20.,4.)-- (19.,4.);
\draw[line width=0.5pt] (19.,4.)-- (19.,3.);
\end{tikzpicture}

\caption{Regularisation of Class $\mathcal{IV}$: part one}
\label{fig:regularisationClassIVpart1}
\end{figure}

As $l_2 \ge 2$  and the interface has at most one step,  we observe that at least one of the following cases holds true: (a) The rightmost column  of $l_2:h_2$ consists only of points of type $B$. (b)  The leftmost column  of $l_2:h_2$ consists only of points of type $A$.
Then, we do one of the two following procedures:

(a) We move the $A$-points from the rightmost column of the rectangle $l_2:h_1$ (in the upper right corner) to the rectangle $l_1:h_3$ (in the bottom left corner) and place them in its highest row (starting from the right). Here, we use  \eqref{eq:tripleassumption}(ii) and $h_3 \ge 1$. In this way, we do not increase the surface energy, see \eqref{eq:classIVformula}, since we have $h_2 \rightarrow h_2 + 1$, $l_2 \rightarrow l_2-1$, $h_3 \rightarrow h_3 - 1$, $l_3 \rightarrow l_3 + 1$, and $h_1$ and $l_1$ remain unchanged. Finally, we perform a rearrangement in the new rectangle $l_2:h_2$ as above.  An example of such construction is given by the top arrow in Figure \ref{fig:regularisationClassIVpart2}. 

\begin{figure}[h!]

\definecolor{zzttqq}{rgb}{0.6,0.2,0.}
\begin{tikzpicture}[line cap=round,line join=round,>=triangle 45,x=0.5cm,y=0.5cm]
\clip(-1.1,-9.5) rectangle (23.5,6.5);
\fill[line width=2.pt,color=zzttqq,fill=zzttqq,fill opacity=0.1] (9.,0.5) -- (5.,0.5) -- (5.,-1.5) -- (4.,-1.5) -- (4.,-3.5) -- (3.,-3.5) -- (3.018522391962444,-4.503093145874537) -- (7.998695591890927,-4.503093145874537) -- (8.,-1.5) -- (9.,-1.5) -- cycle;
\fill[line width=2.pt,color=zzttqq,fill=zzttqq,fill opacity=0.1] (23.,5.) -- (19.,5.) -- (19.,3.) -- (18.,3.) -- (18.,1.) -- (17.,1.) -- (17.,0.) -- (22.,0.) -- (22.,3.) -- (23.,3.) -- cycle;
\fill[line width=2.pt,color=zzttqq,fill=zzttqq,fill opacity=0.1] (23.,-4.) -- (22.,-4.) -- (22.,-3.) -- (21.,-3.) -- (21.,-4.) -- (19.,-4.) -- (19.,-6.) -- (18.,-6.) -- (18.,-9.) -- (22.,-9.) -- (22.,-6.) -- (23.,-6.) -- cycle;
\draw[line width=0.5pt] (5.,0.5)-- (7.,0.5);
\draw[line width=0.5pt] (9.,0.5)-- (7.,0.5);
\draw[line width=0.5pt] (9.,0.5)-- (9.,-1.5);
\draw[line width=0.5pt] (9.,-1.5)-- (8.,-1.5);
\draw[line width=0.5pt] (8.,-1.5)-- (8.,-4.5);
\draw[line width=0.5pt] (8.,-4.5)-- (3,-4.5);
\draw[line width=0.5pt] (15.,-3.)-- (15.,-5.);
\draw[line width=0.5pt] (14.,-5.)-- (15.,-5.);
\draw[line width=0.5pt] (14.,-5.)-- (14.,-8.);
\draw[line width=0.5pt] (14.,-8.)-- (18.,-8.);
\draw[line width=0.5pt] (18.,-8.)-- (18.,-8.);
\draw[line width=0.5pt] (18.,-8.)-- (18.,-6.);
\draw[line width=0.5pt] (23.,-6.)-- (22.,-6.);
\draw[line width=0.5pt] (22.,-6.)-- (22.,-9.);
\draw[line width=0.5pt] (22.,-9.)-- (18.,-9.);
\draw[line width=0.5pt] (18.,-9.)-- (18.,-8.);
\draw [-latex,line width=0.5pt] (10.,1.) -- (13.,2.);
\draw [-latex,line width=0.5pt] (10.,-3.) -- (13.,-4.);
\draw[line width=0.5pt] (15.,6.)-- (20.,6.);
\draw[line width=0.5pt] (20.,6.)-- (20.,5.);
\draw[line width=0.5pt] (20.,5.)-- (23.,5.);
\draw[line width=0.5pt] (15.,-3.)-- (21.,-3.);
\draw[line width=0.5pt] (19.,-4.)-- (19.,-6.);
\draw[line width=0.5pt] (19.,-6.)-- (18.,-6.);
\draw[line width=0.5pt] (23.,-4.)-- (23.,-6.);
\draw[line width=0.5pt] (20.,5.)-- (19.,5.);
\draw[line width=0.5pt] (19.,5.)-- (19.,3.);
\draw[line width=0.5pt] (7.,0.5)-- (7.,1.5);
\draw[line width=0.5pt] (7.,1.5)-- (1.,1.5);
\draw[line width=0.5pt] (1.,1.5)-- (1.,-0.5);
\draw[line width=0.5pt] (1.,-0.5)-- (0.,-0.5);
\draw[line width=0.5pt] (0.,-0.5)-- (0.,-3.5);
\draw[line width=0.5pt] (0.,-3.5)-- (3.,-3.5);
\draw[line width=0.5pt] (3.,-3.5)-- (3.,-4.5);
\draw[line width=0.5pt] (3.,-3.5)-- (4.,-3.5);
\draw[line width=0.5pt] (4.,-3.5)-- (4.,-1.5);
\draw[line width=0.5pt] (4.,-1.5)-- (5.,-1.5);
\draw[line width=0.5pt] (5.,-1.5)-- (5.,0.5);
\draw[line width=0.5pt] (23.,5.)-- (23.,3.);
\draw[line width=0.5pt] (23.,3.)-- (22.,3.);
\draw[line width=0.5pt] (22.,3.)-- (22.,0.);
\draw[line width=0.5pt] (22.,0.)-- (17.,0.);
\draw[line width=0.5pt] (18.,1.)-- (18.,3.);
\draw[line width=0.5pt] (18.,3.)-- (19.,3.);
\draw[line width=0.5pt] (15.,6.)-- (15.,4.);
\draw[line width=0.5pt] (15.,4.)-- (14.,4.);
\draw[line width=0.5pt] (14.,4.)-- (14.,1.);
\draw[line width=0.5pt] (14.,1.)-- (16.,1.);
\draw[line width=0.5pt] (16.,1.)-- (16.,0.);
\draw[line width=0.5pt] (16.,0.)-- (17.,0.);
\draw[line width=0.5pt] (17.,0.)-- (17.,1.);
\draw[line width=0.5pt] (17.,1.)-- (18.,1.);
\draw[line width=0.5pt] (19.,-4.)-- (21.,-4.);
\draw[line width=0.5pt] (21.,-4.)-- (21.,-3.);
\draw[line width=0.5pt] (21.,-3.)-- (22.,-3.);
\draw[line width=0.5pt] (22.,-3.)-- (22.,-4.);
\draw[line width=0.5pt] (22.,-4.)-- (23.,-4.);
\draw (10.8,2.6) node[anchor=north west] {(a)};
\draw (10.8,-2.4) node[anchor=north west] {(b)};
\end{tikzpicture}

\caption{Regularisation of Class $\mathcal{IV}$: part two}
\label{fig:regularisationClassIVpart2}
\end{figure}
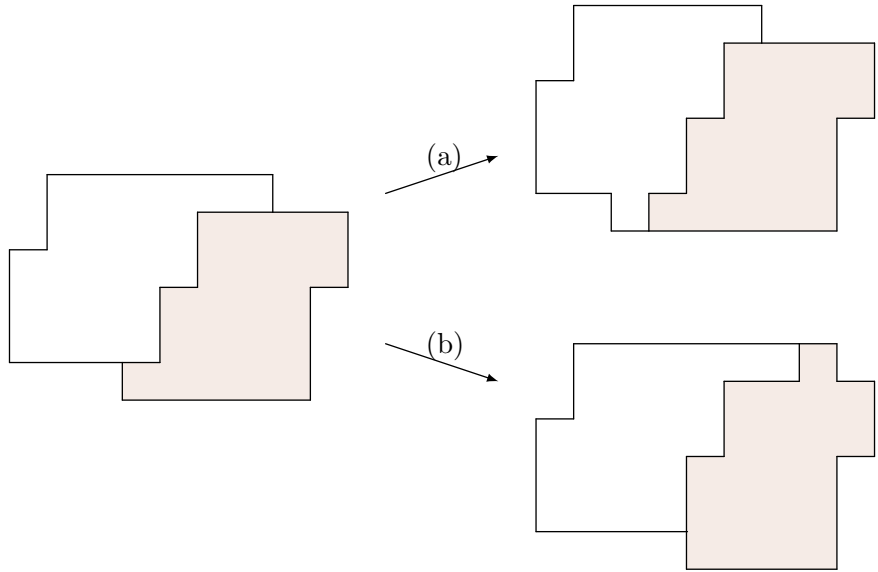

(b) We move the $B$-points from the leftmost column of the rectangle $l_2:h_3$ (in the bottom left corner) to the rectangle $l_3:h_1$ (in the upper right corner) and place them in its lowest row (starting from the left). Here, we use  \eqref{eq:tripleassumption}(iii) and $h_1 \ge 1$. In this way, we do not increase the surface energy since  we have $h_2 \rightarrow h_2 + 1$, $l_2 \rightarrow l_2-1$, $h_1 \rightarrow h_1 - 1$, $l_1 \rightarrow l_1 + 1$, and $h_3$ and $l_3$ remain unchanged. Finally, we perform a rearrangement in the new rectangle $l_2:h_2$ as above. An example of such construction is given by the bottom arrow in Figure \ref{fig:regularisationClassIVpart2}. 

In both cases, after applying the procedure, the condition \eqref{eq:tripleassumption} is still satisfied, so we may repeat it. We repeat it until $l_2 = 1$, $h_1 = 0$, or $h_3 = 0$.  Indeed, this follows after a finite number of steps since in each step $l_2$ decreases. If $l_2 = 1$ or $h_3 = 0$ hold, the proof is concluded. Otherwise, $h_3 = 0$ holds after a rotation by $\pi$ and interchanging the roles of the two types. 
\end{proof}

We now come to the main result of this subsection.

\begin{proposition}\label{prop:classIVregularisationstep3}
Fix  $ N_A = N_B>0$  and $ \beta  \in (0,1)$. Suppose that $(A,B) \in \mathcal{IV}$ is a minimal configuration.  Then, there exists a minimal configuration $({A},{B}) \in  \mathcal{I} \cup \mathcal{IV}$ such that $l_2 \leq h_2$, $h_1 \leq l_1$, and  $h_3 = 0$.  
\end{proposition}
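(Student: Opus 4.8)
The plan is to post-process the configuration produced by Proposition~\ref{prop:classIVregularisationstep2}. That proposition supplies a minimal configuration, which I again denote $(A,B)$, lying in $\mathcal{I}\cup\mathcal{IV}$ and satisfying $l_2\le h_2$, $\min\{h_1,\,h_1+h_2-l_1-l_2\}\le 0$, $\min\{h_3,\,h_2+h_3-l_2-l_3\}\le 0$, together with $l_2=1$ or $h_3=0$. If it already lies in Class~$\mathcal{I}$ there is nothing to prove, so assume $(A,B)\in\mathcal{IV}$ and treat the two alternatives separately.

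\emph{Case $h_3=0$.} Only the inequality $h_1\le l_1$ is still at stake. If $h_1=0$, then $h_1=h_3=0$ and $(A,B)$ is in fact in Class~$\mathcal{I}$, so we are done. If $h_1\ge 1$, then $\min\{h_1,\,h_1+h_2-l_1-l_2\}\le 0$ forces $h_1+h_2\le l_1+l_2$, and combining with $l_2\le h_2$ gives $h_1\le l_1+l_2-h_2\le l_1$. Together with $l_2\le h_2$ and $h_3=0$ this is precisely the assertion.

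\emph{Case $l_2=1$, $h_3\ge 1$.} Here $h_3=0$ must be produced by an energy-non-increasing modification. First note that $h_3\ge 1$ turns $\min\{h_3,\,h_2+h_3-l_2-l_3\}\le 0$ into $h_2+h_3\le l_2+l_3=1+l_3$; since $l_2=1\le h_2$ this yields in particular $h_3\le l_3$ and that the bounding box of $B$ is at least as wide as it is tall. Moreover, arguing exactly as in the previous case (now with $l_2=1$, whence $h_2\ge 1$) one already has $h_1\le l_1$, and $l_2\le h_2$ is automatic. Now detach the $B$-points contained in the bottom $h_3$ rows and repack them, straightened out, as full pure-$B$ columns appended to the right of the existing ones, of the height that $B$ already occupies; the inequality $h_2+h_3\le 1+l_3$ is exactly what makes this repacking fit into a box no taller than before, so that, reading off \eqref{eq:classIVformula}, the surface-sensitive part of the energy does not increase. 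Apart from a borderline sub-case, $l_1,l_2,h_1,h_2$ are unaffected, so $l_2\le h_2$ and $h_1\le l_1$ persist while $h_3$ drops to $0$; any loss of connectedness of $A$ or $B$ would contradict Theorem~\ref{thm:connected} and minimality, so the new configuration is admissible and lies in $\mathcal{I}\cup\mathcal{IV}$. If it happens to be in Class~$\mathcal{IV}$ one finishes, if necessary, with the reflection along $\mathbb{R}(-1,1)$ of Lemma~\ref{lem:classIVregularisationstep1}, exactly as in its proof.

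I expect the main obstacle to be the bookkeeping in the second case: performing the detach-and-repack move in the notation of the nine rectangles $l_i{:}h_j$ introduced before Proposition~\ref{prop:classIVregularisationstep2}, and verifying in every sub-case --- distinguished by which of those rectangles become full or empty and by the rounding in the number of repacked columns --- that neither $l_2+h_2$ (the only $\beta$-weighted term in \eqref{eq:classIVformula}) nor $l_1+l_2+l_3+h_1+h_2+h_3$ grows. The borderline sub-case to watch is the one in which the mixed column $C_{l_1+1}$ gets emptied of its $B$-points, so that $l_2$ drops to $0$: if also $h_1\ge 1$ this would land us in Class~$\mathcal{II}$, which is excluded by Proposition~\ref{prop:classIIregularisation} under $N_A=N_B$, giving a contradiction; this is one of the two places where the hypothesis $N_A=N_B$ enters, the other being the parity argument ruling out that the repacking costs an extra column, in the spirit of Lemma~\ref{lem:classIstructurelemma} and Proposition~\ref{prop:classIregularisationstep2}.
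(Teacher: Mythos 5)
Your first case ($h_3=0$) is correct and coincides with how the paper extracts $h_1\le l_1$ from the conclusion of Proposition~\ref{prop:classIVregularisationstep2} (this is item (ii) of the display \eqref{eq:tripleassumption} in that proof). The gap is in your second case. There the relevant inequality $h_2+h_3\le l_2+l_3=1+l_3$ says that the bounding box of $B$ is at least as \emph{wide} as it is tall, and this works \emph{against} your detach-and-repack move, not for it: the bottom $h_3$ pure-$B$ rows contain on the order of $h_3(1+l_3)\ge h_3(h_2+h_3)>h_3h_2$ points, so repacking them into pure-$B$ columns of height at most $h_2$ (which is forced if $h_3$ is to become $0$) requires strictly more than $h_3$ new columns. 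In the energy \eqref{eq:classIVformula} you then trade a decrease of $h_3$ in the row count against a strictly larger increase of $l_3$ in the column count, so the surface energy goes up and the modified configuration is not minimal. "Fitting into a box no taller than before" is not the issue; the number of added columns is. (A secondary confusion: the reflection along $\mathbb{R}(-1,1)$ you invoke at the end swaps the roles of $h_i$ and $l_i$, so it would destroy the property $h_3=0$ you had just arranged.)

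The paper resolves this case not by a constructive repair but by contradiction: if $l_2=1$ and $h_1,h_3>0$, then $l_1+1\ge h_1+h_2$ and $h_2\ge1$ give $h_1\le l_1$, so the $h_1$ $A$-points of the single mixed column (the rectangle $l_2{:}h_1$) can be moved into the empty rectangle $l_1{:}h_3$ without increasing the energy; the result has a straight vertical interface with $h_1>0$, i.e.\ lies in Class~$\mathcal{II}$, and Proposition~\ref{prop:classIIregularisation} (which is where $N_A=N_B$ enters, and centrally, not merely in a borderline sub-case) shows such a configuration is never optimal. Hence the case $l_2=1$, $h_1,h_3>0$ simply cannot occur for a minimal configuration, and there is nothing to repack. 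To fix your write-up you should replace the repacking step by this contradiction argument (or find a genuinely energy-non-increasing move, which your current one is not).
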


\begin{proof}
Let $(A,B)$ be a configuration from Proposition \ref{prop:classIVregularisationstep2}.  Suppose by contradiction that $(A,B)$ (up to a rotation by $\pi$ and interchanging the roles of the two types) does not have the desired properties. Since \eqref{eq:tripleassumption} holds, we thus get that $l_2=1$ and $h_1, h_3 > 0$. By Proposition~\ref{prop:classIVregularisationstep2} and  $h_1, h_3 > 0$ we also have
\begin{equation}\label{eq:squareboundinthespecialcase}
l_1 + 1 \geq h_1 + h_2 \qquad \mbox{and} \qquad l_3 + 1 \geq h_2 + h_3. 
\end{equation}
As $h_2 \ge 1$, this particularly implies $h_1 \le l_1$. We can thus move the single column $l_2: h_1$ to the empty rectangle $l_1: h_3$ without increasing the energy. Note that $l_1 \ge h_1$ guarantees that there was enough space to place all the points. The resulting configuration has a straight interface with $h_1 >0$, i.e., lies in Class~$\mathcal{II}$. In view of Proposition~\ref{prop:classIIregularisation}, however, this contradicts optimality of the original configuration.
\end{proof}

Hence, for $N_A = N_B$ and any $ \beta  \in (0,1)$, we may
require that $h_3 = 0$ and $h_1 \le l_1$. We continue the analysis in
the next subsection, with an additional requirement on $ \beta$. 

\subsection{Class $\mathcal{IV}$, part two}\label{sec:classIVparttwo}

From now on, we will work with configurations which satisfy the
statement of  Proposition \ref{prop:classIVregularisationstep3}, i.e.,
$h_1 \leq l_1$ and $h_3 = 0$. Our goal is to perform a further
modification such that configurations  lie in Class $\mathcal{I}$. To
this end, we assume without restriction that configurations from
Proposition \ref{prop:classIVregularisationstep3} lie in
Class~$\mathcal{IV}$ and that $ N   := N_A = N_B$. In due course, we will introduce an additional assumption on $ \beta   \in (0,1)$.

As a first step of the regularisation procedure, we again straighten the interface such that it has at most one step.

\begin{lemma}\label{lemma: step lemma}
Fix $ N_A = N_B>0 $ and $ \beta   \in (0,1)$. Suppose that $(A,B) \in \mathcal{IV}$ is an optimal configuration with $h_3 = 0$. Then, there exists a minimal configuration with the same properties and at most one step in the interface.
\end{lemma}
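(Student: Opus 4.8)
The plan is to leave the two sets essentially in place but to rearrange the points inside the rectangle $l_2:h_2$ (in the notation of Figure~\ref{fig:ClassIVbefore}) so that the $B$-part of its columns assumes at most two consecutive heights; this yields an interface with at most one step, while all the parameters $l_1,l_2,l_3,h_1,h_2,h_3$ are preserved, so that by the energy identity \eqref{eq:classIVformula} the energy is unchanged and the new configuration is again minimal.

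First I would run a preliminary compactification of the kind used repeatedly above (see the proofs of Propositions~\ref{prop:classIregularisationstep1} and \ref{prop:classIVregularisationstep2}): add $N_A'$ auxiliary $A$-points and $N_B'$ auxiliary $B$-points so that $A\cup B$ becomes the union of the two rectangles of Figure~\ref{fig:ClassIVbefore}, which does not raise the surface energy, and then remove the same number of points column by column starting from the outside. Since removing a whole column or a point lying on the interface would strictly decrease the energy, minimality forces the removal to stay in the interior; the resulting configuration therefore still lies in Class~$\mathcal{IV}$ with $h_3=0$, keeps all $l_i,h_j$, and is minimal. Hence I may assume that the rectangle $l_2:h_2$ is full; let $N_B''$ denote the number of its $B$-points. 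As $h_3=0$, every $B$-point lies in the rows $h_1+1,\dots,h_1+h_2$, and since each of the $l_2$ middle columns is mixed it contains at least one $B$-point there, so $N_B''\ge l_2$.

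If $N_B''=l_2h_2$, the rectangle $l_2:h_2$ is entirely of type $B$; then the top--middle block of $A$ sits directly on top of it and the interface is already a vertical segment followed by a horizontal one followed by a vertical one, i.e.\ it has at most one step, and there is nothing to do. Otherwise $b:=\lfloor N_B''/l_2\rfloor$ satisfies $1\le b\le h_2-1$; writing $s:=N_B''-b\,l_2\in\{0,\dots,l_2-1\}$, I rearrange the points of $l_2:h_2$ by placing $b$ $B$-points at the bottom of each of its leftmost $l_2-s$ columns, $b+1$ $B$-points at the bottom of each of the remaining $s$ columns, and $A$-points everywhere else in the rectangle. The numbers of $A$- and $B$-points are unchanged; since $1\le b\le b+1\le h_2$, every middle column still contains both types and every row $h_1+1,\dots,h_1+h_2$ stays a connected interval with $A$ on the left; the rows $1,\dots,h_1$ are untouched and $h_3$ remains $0$. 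Thus the modified configuration lies in Class~$\mathcal{IV}$ with the same $l_1,l_2,l_3,h_1,h_2,h_3$; by \eqref{eq:classIVformula} it has the same energy, hence is minimal, and because the $B$-heights of the columns of $l_2:h_2$ take at most the two consecutive values $b$ and $b+1$, its interface has at most one step.

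The step I expect to be most delicate is the bookkeeping in the rearrangement: the $B$-points must be placed so that no middle column and no middle row becomes monochromatic --- otherwise $l_2$ (or $h_2$) would change and, through \eqref{eq:classIVformula}, so would the energy --- and one must also verify that the preliminary compactification really does leave $l_2:h_2$ full without pushing the configuration out of Class~$\mathcal{IV}$ or destroying $h_3=0$. The degenerate situations (for instance $h_2=1$, which forces $N_B''=l_2h_2$, or $l_2:h_2$ monochromatic) are precisely the cases that have to be isolated and checked by hand, as done above for $N_B''=l_2h_2$.
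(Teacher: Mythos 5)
Your overall skeleton matches the paper's: first compactify by adding and then removing points so that, by minimality, the rectangle $l_2:h_2$ must be full, and then rearrange the points of $l_2:h_2$ while preserving all of $l_1,l_2,l_3,h_1,h_2,h_3$ and both cardinalities, so that \eqref{eq:classIVformula} forces the energy to be unchanged. That part of your argument, including the verification that every middle row and column stays mixed and that admissibility is preserved, is correct and is exactly what the paper does.

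The gap is in the rearrangement itself. You level the $B$-points \emph{row-wise}: every one of the $l_2$ middle columns receives $b$ or $b+1$ $B$-points at its bottom. Trace the resulting interface: it is a vertical run of length $b$ (along the left edge of $l_2:h_2$), then a horizontal run of length $l_2-s$, then a unit vertical run, then a horizontal run of length $s$, then a vertical run of length $h_2-b-1$ (along the right edge). For $0<s<l_2$ this curve has \emph{two} horizontal jumps at two different heights, i.e.\ two steps in the sense in which the paper uses the word (compare Proposition~\ref{prop:classIregularisationstep1}, where ``one step'' means a single unit horizontal jog in an otherwise vertical interface), so the stated conclusion is not achieved. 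The paper instead packs \emph{column-wise}: inside $l_2:h_2$ all $A$-points fill columns from the left and all $B$-points fill columns from the right, leaving at most one mixed column; the interface inside $l_2:h_2$ is then vertical with a single unit step. This distinction is not cosmetic: the later results that invoke this lemma (the point count in Lemma~\ref{lem:h1smallerthanh2}, which needs every middle column except the leftmost one to be entirely of type $B$ below $l_2:h_1$, and the removal of the two rightmost layers of $l_2:h_1$ in Proposition~\ref{prop:regularisationofclassIVpart4}) rely on the column-packed normal form and would fail for your row-levelled one, in which every middle column retains $A$-points inside $l_2:h_2$. The repair is immediate — replace your distribution of the $B$-points by the column-by-column filling of Proposition~\ref{prop:classIVregularisationstep2} — and all your other verifications carry over unchanged.
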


\begin{proof}
We proceed similarly to our reasoning in Class $\mathcal{I}$, i.e., as in the proof of Proposition \ref{prop:classIregularisationstep1}. We add points to the configuration such that the rectangles $l_i:h_j$ for $i=1,2,3$ and $j=1,2$, except for $l_3:h_1$ are full. In this way, the surface part of the energy did not change. Then, we remove the same number of $A$- and $B$-points that we added, starting with the leftmost and rightmost column. If we removed a full column, then the energy would drop and the original configuration would not be minimal. Hence, the rectangle $l_2:h_2$ is necessarily full. Let us now reorganise it in the following way: we put all the $A$-points to the left and all the $B$-points to the right, so that the interface between them (inside $l_2:h_2$) is vertical except  for  a single possible step to the right. Its length did not change, so the resulting configuration is optimal.
\end{proof}

\begin{lemma}\label{lem:h1smallerthanh2}
Fix $ N_A = N_B>0 $ and $ \beta   \in (0,1)$. Suppose that $(A,B) \in \mathcal{IV}$ is an optimal configuration such that $h_1 \leq l_1$ and $h_3 = 0$. Then,  $h_1  \le  h_2$. 
\end{lemma}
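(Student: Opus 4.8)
The plan is to derive a contradiction from $h_1\ge h_2+1$ by comparing the given configuration with the Class $\mathcal{I}$ configuration of the same total height $H:=h_1+h_2$, for which the energy is known exactly from \eqref{eq:E}.

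First I would invoke Lemma \ref{lemma: step lemma} (and recall that, by Proposition \ref{prop:classIVregularisationstep3}, we may also assume $l_2\le h_2$) so that the interface has at most one step. Then, up to the single step, $A$ is the full rectangle $(l_1+l_2)\times h_1$ of pure-$A$ rows sitting on top of the full rectangle $l_1\times h_2$ (the left columns of the mixed rows), while $B$ is the rectangle $(l_2+l_3)\times h_2$; counting points gives $N_A=(l_1+l_2)h_1+l_1h_2+\delta$ and $N_B=(l_2+l_3)h_2-\delta$ with $0\le\delta<h_2$ coming from the possible step. Assume now $h_1\ge h_2+1$. From $N_A=N_B$ we get $(l_1+l_2)h_1+l_1h_2+2\delta=(l_2+l_3)h_2$, hence, using $l_1\ge h_1$ and $h_1>h_2$,
\[ l_3 h_2 = l_1h_1+l_1h_2+l_2(h_1-h_2)+2\delta \ge l_1(h_1+h_2) > 2l_1h_2, \]
so $l_3>2l_1$. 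Since also $l_2\le h_2<h_1\le l_1$ we have $l_2<l_1$, and since $N:=N_A<(l_1+l_2)(h_1+h_2)$ we get $\lceil N/H\rceil\le l_1+l_2$.

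Next I would compare with the Class $\mathcal{I}$ configuration of height $H$: two regions of width $\lceil N/H\rceil$ and height $H$ glued along a full side (one extreme column possibly partially filled), whose energy is $E(H)=-4N+2\lceil N/H\rceil+H(2-\beta)$ by \eqref{eq:E}. Using the Class $\mathcal{IV}$ energy formula \eqref{eq:classIVformula} with $h_3=0$ and $h_1+h_2=H$,
\[ E(A,B)-E(H) = (l_1+l_2+l_3)+(1-\beta)(l_2-h_1)-2\lceil N/H\rceil, \]
and then $\lceil N/H\rceil\le l_1+l_2$, $l_3>2l_1$ and $h_1\le l_1$ (with $1-\beta>0$) yield
\[ E(A,B)-E(H) \ge l_3-l_1-\beta l_2-(1-\beta)h_1 > l_1-\beta l_2-(1-\beta)l_1 = \beta(l_1-l_2) > 0, \]
since $l_2<l_1$. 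As $(A,B)$ is optimal, $E(A,B)\le E(H)$, a contradiction; hence $h_1\le h_2$.

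The genuine obstacle is the choice of comparison configuration and making every inequality strict: shaving off a pure-$A$ row and re-inserting the points as a new column is energy-neutral, so the decisive inputs are the volume identity $N_A=N_B$ (which forces $l_3>2l_1$, i.e.\ $B$ is an excessively wide, hence wasteful, rectangle) together with the bound $l_2\le h_2$ inherited from the earlier regularisations — precisely what makes the final chain strictly positive. One should also track the $O(1)$ corrections caused by the single step and by possibly-partial columns; these are harmless because the energy formula \eqref{eq:classIVformula} is exact and the estimate $\lceil N/H\rceil\le l_1+l_2$ only uses $l_2\ge 1$ (which also records that we are genuinely in Class $\mathcal{IV}$).
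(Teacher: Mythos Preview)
Your route is genuinely different from the paper's, and the energy comparison with the Class~$\mathcal{I}$ configuration of height $H=h_1+h_2$ is a nice idea. However, there is a real gap.

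\textbf{The main gap.} You write ``by Proposition~\ref{prop:classIVregularisationstep3}, we may also assume $l_2\le h_2$'', and you use this at the very end to get $l_2<l_1$ and hence $\beta(l_1-l_2)>0$. But Proposition~\ref{prop:classIVregularisationstep3} does not say that \emph{every} optimal Class~$\mathcal{IV}$ configuration with $h_1\le l_1$ and $h_3=0$ satisfies $l_2\le h_2$; it only asserts that \emph{some} optimal configuration with those three properties exists. That configuration may have different values of $h_1,h_2$ from the one you started with, so proving $h_1\le h_2$ for it says nothing about the given $(A,B)$. The step lemma you invoke first does preserve $h_1,h_2$ (it only rearranges points within rows), so that part is fine; the illegitimate move is importing $l_2\le h_2$. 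Without it, your final chain stalls: when $h_1$ is only slightly larger than $h_2$ (so $h_1<(1+\beta)h_2$) and $l_2$ is large, the lower bound $l_3-l_1-\beta l_2-(1-\beta)h_1$ need not be positive.

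\textbf{A secondary issue.} The ``$O(1)$'' you assign to the partial-column corrections is too optimistic. After the step lemma the leftmost column carries $r_1\in\{1,\dots,h_1+h_2\}$ points and the rightmost $r_4\in\{1,\dots,h_2\}$, so the exact identity reads
\[
l_3h_2=l_1(h_1+h_2)+l_2(h_1-h_2)+2\delta+(r_1-h_1-r_4),
\]
and the correction $r_1-h_1-r_4$ ranges over $\{1-h_1-h_2,\dots,h_2-1\}$, i.e.\ is of order $h_1+h_2$, not $O(1)$. This must be tracked through your chain before you can claim $l_3>2l_1$.

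\textbf{What the paper does instead.} The paper avoids both problems by first proving a geometric bound that you do not use: if $l_3>h_2$, one detaches the pure-$B$ rectangle $l_3\times h_2$, rotates it by $\pi/2$, and reattaches it, gaining at least one bond --- contradicting optimality. Hence $l_3\le h_2$. Then the same $N_A=N_B$ counting (carried out \emph{with} the $r_i$'s, and needing only $l_1\ge 2$ and $h_1>h_2$) yields $l_1<l_3$, and the contradiction is simply the chain $h_2<h_1\le l_1<l_3\le h_2$. No assumption on $l_2$ is required.
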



\begin{proof}
Suppose otherwise, i.e.,  $h_1  >  h_2$.  First, we can assume that $l_3 \leq h_2$. Indeed, if not, we can remove the whole rectangle $l_3:h_2$, rotate it by $\pi/2$ and reattach it to the configuration, adding at least one additional bond: a contradiction to minimality of $(A,B)$. Moreover, we can assume that $l_1 \ge 2$ as $l_1 = 1$ implies also $h_1 = 1$, and the inequality $h_1 \leq h_2$ is automatically satisfied. Finally, we can suppose that the interface has at most one step, see Lemma \ref{lemma: step lemma}. The main step of the proof is to show that  $l_1 < l_3$. Indeed, then we obtain the contradiction
\begin{equation*}
 h_2 \leq h_1  \leq l_1 < l_3 \leq h_2.
\end{equation*}
Let us now prove $l_1 < l_3$. To this end, we will calculate the total number of points in two ways. Denote by $r_1$ the number of $A$-points in the leftmost column, by $h_1 + r_2$ the number of $A$-points in the leftmost double-type column, by $r_3$ the number of $B$-points in the leftmost double-type column, and by $r_4$ the number of $B$-points in the rightmost column. Then, we have
\begin{equation}
N_A = (l_1-1)(h_1 + h_2) + l_2 h_1 + r_1 + r_2
\end{equation}
and 
\begin{equation}
N_B = (l_2 + l_3 - 2) h_2 + r_3 + r_4.
\end{equation}
Now, we subtract one of these equations from the other. Since $r_1> 0, r_2 \geq 0$, and $r_3, r_4 \leq h_2$ we get
\begin{align*}
0 & = N_A - N_B = l_1 h_1 + l_1 h_2 + l_2 h_1 - h_1 - h_2 + r_1 + r_2 - l_2 h_2 - l_3 h_2 + 2h_2 - r_3 - r_4 \\
&>  (l_1 - 1) h_1 - h_2 + (l_1 - l_3) h_2 + l_2(h_1 - h_2)  \geq  (l_1 - l_3) h_2,
\end{align*}
where in the last step we used $l_1 \ge 2$ and the assumption (by contradiction) that  $h_1 \geq h_2$.  This shows $l_1 < l_3$ and concludes the proof.
\end{proof}

\begin{proposition}\label{prop:regularisationofclassIVpart4}
Fix $ N_A = N_B>0 $ and $ \beta \leq   1/2$. Suppose that $(A,B) \in \mathcal{IV}$ is an optimal configuration such that $h_1 \leq l_1$ and $h_3 = 0$. Then, there exists an optimal configuration $(\hat{A},\hat{B})$ such that $(\hat{A},\hat{B}) \in \mathcal{IV}$ with $h_3 = 0$ and $l_2 \in \{ 1,2 \}$.
\end{proposition}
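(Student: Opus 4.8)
The plan is to push the interface–shortening technique of Propositions~\ref{prop:classIregularisationstep1} and~\ref{prop:classIVregularisationstep2} one step further, now with the explicit aim of shrinking $l_2$, exploiting the constraint $N_A=N_B$ together with $\beta\le 1/2$ to keep the energy balance under control. First I would record the structure already available: by hypothesis $h_3=0$ and $h_1\le l_1$, by Lemma~\ref{lem:h1smallerthanh2} also $h_1\le h_2$, and by Lemma~\ref{lemma: step lemma} I may assume the interface has at most one step. After the standard filling procedure (add $N_A'$ points of type $A$ and $N_B'$ points of type $B$ so that the rectangles $l_i:h_j$, $j=1,2$, are full, which leaves the bounding box and the interface length $l_2+h_2$, hence the surface part of \eqref{eq:classIVformula}, unchanged and only lowers the bulk energy), the middle rectangle $l_2:h_2$ is full; write $N_A''$ for the number of $A$-points it contains, so that the $j$-th double-type column carries $h_1+a_j$ points of type $A$ with $0\le a_j\le h_2-1$, $\sum_j a_j=N_A''$, and the one-step hypothesis forces the $a_j$ to take at most two consecutive values. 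Assume, towards a contradiction, $l_2\ge 3$.

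The basic move is to re-lay the $l_2h_2$ points of the full middle rectangle with all $A$-points in its leftmost columns; this makes $\lfloor N_A''/h_2\rfloor$ of those columns entirely of type $A$, and since the $h_1$ rows above them contain only $A$-points, these columns merge into the $A$-only block, so that $l_1+l_2+l_3$ and $h_1+h_2$ are preserved while $l_2$ and the interface length both drop by $\lfloor N_A''/h_2\rfloor$. Removing the previously added points column by column from the outside, exactly as in Proposition~\ref{prop:classIVregularisationstep2}, does not raise the surface energy and keeps us in Class~$\mathcal{IV}$ with $h_3=0$. Hence, if $N_A''\ge h_2$, the new configuration has strictly smaller energy, contradicting optimality; so an optimal configuration must satisfy $N_A''<h_2$.

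The delicate case is therefore $N_A''<h_2$, where the $A$-content of the middle rectangle is too thin to free a full column directly. Here I would instead detach the block of $A$-points sitting in the $h_1$ top rows above the rightmost $k\in\{1,2\}$ double-type columns, stack it onto the $l_1$-wide $A$-rectangle, and push the corresponding $B$-points to the right; the change in the surface part of \eqref{eq:classIVformula} has the form $s-(1-\beta)(l_2-k)$, where $s\le\lceil (l_2-k)h_1/l_1\rceil$ is the number of new rows created on the $A$-side, and $\beta\le 1/2$ together with $h_1\le l_1$ makes this non-positive unless $l_1$ is very close to $h_1$. In that remaining regime I would feed the identity $2N_A''=h_2(l_2+l_3-l_1)-h_1(l_1+l_2)$, which follows from $N_A=N_B$, into the bound $N_A''<h_2$ together with $h_1\le\min\{l_1,h_2\}$ and $l_3\le h_2$ (the latter, if needed, obtained by rotating the full block $l_3:h_2$ by $\pi/2$ as in the proof of Lemma~\ref{lem:h1smallerthanh2}); these squeeze $l_1$, $l_3$ and the common value of the $a_j$ into a bounded range, and a parity argument on $l_2h_2$ against the even integer $N_A+N_B$, in the spirit of Lemma~\ref{lem:classIstructurelemma}, either yields $l_2\le 2$ at once or provides an energy-preserving re-layout of the middle rectangle into a box with one more row and one fewer double-type column. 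Iterating the basic and refined moves, $l_2$ decreases until $l_2\in\{1,2\}$: the value $l_2=0$ is excluded because it would place the configuration in Class~$\mathcal{II}$, never optimal for $N_A=N_B$ by Proposition~\ref{prop:classIIregularisation}, while the last step $l_2=2\to l_2=1$ can be obstructed by the point count, just as $l_2=1\to l_2=0$ is allowed but not forced in Class~$\mathcal{I}$. The finitely many residual small configurations (bounded $l_1,l_3,h_1,h_2$) are then dispatched by direct inspection, as in Proposition~\ref{prop:classIIIexcluded}.

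I expect the main obstacle to be precisely the regime in which $h_1\le l_1$ but $l_1$ is close to $h_1$: there the naive rearrangement is energy-increasing, one is forced to extract sharp numerical information from $N_A=N_B$, and the bookkeeping concentrates in carrying the one-step normalisation through the re-layout and in clearing the small exceptional cases.
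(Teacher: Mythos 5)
Your overall strategy is the right one: after the step-lemma normalisation the obstruction to small $l_2$ is the block of $A$-points in the rectangle $l_2:h_1$, and the move is to strip this block off the rightmost two double-type columns and relocate it, trading a gain of $2(1-\beta)\ge 1$ in interface length against a small increase in the $l$- and $h$-sums in \eqref{eq:classIVformula}. Your preliminary observation that an optimal configuration must have $N_A''<h_2$ (so that the rightmost double-type columns carry no $A$-points in the middle rectangle and genuinely convert to $B$-only columns) is correct, and is a point the paper's own proof leaves implicit. However, your choice of where to put the removed $2h_1$ points is the wrong one, and the gap it opens is not repaired. Stacking them as new rows on top of the $l_1$-wide $A$-rectangle costs $s\le\lceil 2h_1/l_1\rceil$ in the term $h_1+h_2+h_3$ (the net change should read $s-(1-\beta)k$, not $s-(1-\beta)(l_2-k)$), and under the standing hypothesis $h_1\le l_1$ one only gets $s\le 2$; whenever $l_1<2h_1$ the net change is $2-2(1-\beta)=2\beta>0$ and the move increases the energy. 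The paper avoids this entirely by relocating the at most $2h_1$ points \emph{vertically}, into a single new column on the left with at most one point per row: since $h_1\le h_2$ by Lemma \ref{lem:h1smallerthanh2}, one has $2h_1\le h_1+h_2$, so only one new column is created, $l_1\to l_1+1$, all $h_i$ are unchanged, and the surface energy changes by at most $1-2(1-\beta)=2\beta-1\le 0$, with no exceptional regime.

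Your fallback for the regime $l_1<2h_1$ is the actual gap. The claim that the identity $2N_A''=h_2(l_2+l_3-l_1)-h_1(l_1+l_2)$ together with $N_A''<h_2$, $h_1\le\min\{l_1,h_2\}$, and $l_3\le h_2$ ``squeezes $l_1$, $l_3$ into a bounded range'' is unproved and false as stated: for instance $h_2=100$, $h_1=l_1=70$, $l_2=100$, $l_3=89$ satisfies all of these constraints (including $h_1+h_2\le l_1+l_2$ and $l_2\le h_2$) with $N_A''=50<h_2$, and the whole family scales with $h_2$, so there is no finite list of residual configurations to inspect. The subsequent ``parity argument'' and ``energy-preserving re-layout into a box with one more row'' are not specified to the point where they could be checked. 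To close the argument, replace the stacking step by the single-column relocation above (or otherwise exhibit a rearrangement whose perimeter cost is at most $1$ in the problematic regime); once that is done, the iteration down to $l_2\in\{1,2\}$ goes through as you describe.
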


\begin{proof}
Suppose that $(A,B)$ satisfies  $l_2 \geq 3$. By Lemma
\ref{lem:h1smallerthanh2} we have $h_1 \leq h_2$.  Then, let us remove
the rightmost two layers in $l_2:h_1$, and place the (at most $2h_1$)
$A$-points on the left of the configuration, at most one point in
every row. Since $h_1 \leq h_2$, there is enough space to place all
the points. In this way,  since the configuration can assumed to have only one step in the interface (see Lemma~\ref{lemma: step lemma}),   $l_1$ increases by at most 1, $l_2$ decreases
by 2, $l_3$ increases by 2, and all $h_i$ stay the same. Hence, by
formula \eqref{eq:classIVformula} we see that the energy stays the
same (for $ \beta  = 1/2$), so the resulting configuration is
optimal, or decreases (for $ \beta <  1/2$), so the original configuration was not optimal.  We repeat this procedure until $l_2 \in \lbrace 1,2 \rbrace$. 
\end{proof}

Hence, in order to prove existence of an optimal configuration in Class $\mathcal{I}$, we have two special cases to consider, depending on the value of $l_2$. We start with the case $l_2 = 1$.

\begin{proposition}\label{prop:regularisationofclassIVpart5}
Fix $ N_A = N_B>0 $ and $ \beta  \in (0,1)$. Suppose that $(A,B) \in \mathcal{IV}$ is an optimal configuration such that $h_3 = 0$ and $l_2 = 1$. Then, $h_1 = 1$. Furthermore, there exists an optimal configuration $(\hat{A},\hat{B}) \in \mathcal{I}$.
\end{proposition}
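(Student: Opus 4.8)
The plan is to put $(A,B)$ into a rigid normal form, read off $h_1=1$ from optimality together with $N_A=N_B$, and then perform one further re-gluing to land in Class $\mathcal I$.

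First I would collect the structure already available. By Proposition~\ref{prop:classIVregularisationstep3} and Lemma~\ref{lem:h1smallerthanh2} I may assume $h_3=0$, $h_1\le l_1$ and $h_1\le h_2$, and by re-running the filling step used in the proof of Lemma~\ref{lemma: step lemma} I may assume the configuration is \emph{full}: columns $1,\dots,l_1$ carry $A$-points in all $h_1+h_2$ rows, the single double column $l_1+1$ is full of height $h_1+h_2$ with its $A$-part on top, and columns $l_1+2,\dots,l_1+1+l_3$ carry $B$-points in all $h_2$ mixed rows. Writing $s\in\{0,\dots,h_2-1\}$ for the number of $A$-points of column $l_1+1$ lying in the mixed rows, this gives the exact identities
$$N_A=l_1(h_1+h_2)+h_1+s,\qquad N_B=l_3\,h_2+(h_2-s),$$
while the interface has length $l_2+h_2=1+h_2$, which together with \eqref{eq:classIVformula} determines the energy.

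Next I would reduce to $h_1=1$. Assume $h_1\ge2$. The idea is a dichotomy. If $l_1+h_1\le h_2+1$, then I would repeatedly \emph{fold the topmost $A$-only row down the left side}: remove that row (width $l_1+1$) and re-insert its $l_1+1$ points as a single new leftmost column occupying $\le h_2$ of the mixed rows; by \eqref{eq:classIVformula} this leaves the energy unchanged ($l_1$ grows by $1$, $h_1$ drops by $1$, and $l_2,l_3,h_2$ are untouched), and it stays in the same setup, so after $h_1-1$ folds we reach $h_1=1$ and the new configuration is still optimal. If instead $l_1$ is large (the fold cannot be completed, equivalently some column is at least $h_2$ wide relative to the available mixed rows), I would invoke the principle recorded after \eqref{eq:formulafortheenergy} — an optimal pair carries the longest interface among all re-gluings of the fixed shapes $A$ and $B$ — and detach the $B$-block, re-attaching a $\pi/2$-rotated copy of it along the exposed vertical strip on the right of column $l_1+1$; keeping $E_A,E_B$ fixed, this produces strictly more $A$–$B$ bonds whenever $B$ is wider than tall, contradicting optimality. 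Organizing this dichotomy cleanly (deciding exactly which regime of $(l_1,h_1,h_2,l_3)$ forces which move, using $h_1\le l_1$, $h_1\le h_2$ and $N_A=N_B$) is the first obstacle.

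Finally, with $h_1=1$ the configuration is an $A$-rectangle carrying one extra $A$-row of width $l_1+1$ on top, the single double column, and a $B$-rectangle, joined along an interface of length $1+h_2$. Here I would execute one more cut-and-reglue of $B$ (again by the longest-interface principle) so that every row meets both colours, i.e.\ so that the result lies in $\mathcal I$; the residual content is to check, by comparing \eqref{eq:classIVformula} for the normal form with \eqref{eq:formulaforenergyclassI} and the explicit minimum of Theorem~\ref{thm:classIexact}, that this can always be done without raising the energy (this is where $\beta\in(0,1)$, the inequality $h_1\le h_2$, and — if necessary — Proposition~\ref{prop:classIIregularisation} to exclude the straight-interface by-products, come in), after which the produced $(\hat A,\hat B)\in\mathcal I$ inherits optimality from $(A,B)$. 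I expect this last step to be the main obstacle: because the $\mathcal{IV}$-interface $1+h_2$ is short, one must verify that a Class $\mathcal I$ configuration of the same two cardinalities and perimeter no larger genuinely exists, while also disposing of the few small residual sub-cases (the single step in the interface, the value of $s$, and small $h_2$ or $l_1$).
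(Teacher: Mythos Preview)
Your approach is considerably more involved than the paper's, and the two ``obstacles'' you flag are genuine: you have not closed the dichotomy in the middle paragraph, and you have not shown how to land in Class~$\mathcal{I}$ once $h_1=1$. The paper bypasses all of this with a single idea you do not use: because $N_A=N_B$, one can \emph{substitute} one set by a reflected copy of the other, not merely re-glue the existing $A$ and $B$ rigidly.

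Concretely, the paper writes $E(A,B)=E_A+E_B-(h_2+1)\beta$ and compares $E_A$ with $E_B$. If $E_B<E_A$, replace $A$ by the mirror image of $B$ attached along a vertical interface of length $h_2$; the energy becomes $2E_B-h_2\beta$, strictly smaller by at least $1-\beta$, contradicting optimality. If $E_A\le E_B$, replace $B$ by the mirror image of $A$ attached along a vertical interface of length $h_1+h_2$; the resulting configuration lies in Class~$\mathcal{I}$ with flat interface and has energy $2E_A-(h_1+h_2)\beta$, which is at most the original, with equality forcing $E_A=E_B$ and $h_1=1$. Both conclusions of the proposition fall out at once.

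Your moves (folding the top $A$-row down the left side, or rotating $B$ by $\pi/2$) preserve $E_A$ and $E_B$ separately and therefore can only exploit the interface length. The substitution trick is strictly stronger: it trades $E_A$ against $E_B$ directly, and it is precisely what makes the hypothesis $N_A=N_B$ do work (the same device drives Propositions~\ref{prop:classIIregularisation} and~\ref{prop:regularisationofclassIVstep6}). Without it, your final paragraph is indeed a real obstacle, because with $h_1=1$ the $\mathcal{IV}$-interface already has length $h_2+1$, the same as the natural Class~$\mathcal{I}$ competitor of height $h_2+1$, so a pure re-gluing of $B$ cannot by itself force the passage to $\mathcal{I}$; you would need to exhibit the Class~$\mathcal{I}$ configuration explicitly and match energies, which is exactly what the reflection-substitution does in one line.
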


\begin{proof}
As in  \eqref{eq:formulafortheenergy}, let us write the energy as
\begin{equation}
E(A,B) = E_A + E_B - (h_2+1)\beta,
\end{equation}
where $E_A$ is  minus the number of bonds between points in $A$ and $E_B$ is minus the number of bonds between points in $B$.

We consider two cases. First, suppose that $E_B < E_A$. We do the following rearrangement of points: we separate $A$ and $B$ and suppose without restriction that  the leftmost column of $B$ is full as otherwise  we can move the points in this column to the right-hand side of $B$, without changing the $E_B$. We replace $A$ by $\hat{A}$, a reflection of $B$ along the vertical axis. Then we reconnect $\hat{A}$ and $B$ along the vertical line segment of length $h_2$. In this way, the resulting configuration has energy
\begin{equation}
E(\hat{A},B) = E_B + E_B - h_2 \beta.
\end{equation}
Hence, as $E_B \le E_A -1$, the energy drops by at least $1-\beta$, so the original configuration was not optimal, a contradiction.

Now, suppose that $E_A \leq E_B$. We do the following: we keep $A$ fixed    (or, as above,  we make $A$ flat on one side without changing $E_A$) and replace $B$ by $\hat{B}$, a reflection of $A$ along the vertical axis. Then, we join $A$ and $\hat{B}$ along the vertical line segment of length $h_1 + h_2$. In this way, the resulting configuration lies in Class $\mathcal{I}$, has a flat interface, and the energy is given by
\begin{equation}
E(A,\hat{B}) = E_A + E_A - (h_1+h_2) \beta.
\end{equation}
Therefore, the only way in which the energy does not decrease is that $E_A = E_B$ and $h_1 = 1$.
\end{proof}

We will employ another variant of the reflection argument to deal with the case $l_2 = 2$. This is formalised in the next proposition.

\begin{proposition}\label{prop:regularisationofclassIVstep6}
Fix $ N_A = N_B>0 $ and $ \beta \leq  1/2$. Suppose that $(A,B) \in \mathcal{IV}$ is an optimal configuration such that $h_3 = 0$ and $l_2 = 2$. Then,  $h_1 \le 2+1/\beta$ and there exists an optimal configuration $(\hat{A},\hat{B}) \in \mathcal{I}$.
\end{proposition}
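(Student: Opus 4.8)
The plan is to adapt the reflection argument from Proposition~\ref{prop:regularisationofclassIVpart5} to the slightly more delicate case $l_2 = 2$, where the interface inside the middle rectangle $l_2:h_2$ may have one step. First I would normalise the configuration using the earlier lemmas: by Lemma~\ref{lemma: step lemma} we may assume the interface has at most one step, and by Lemma~\ref{lem:h1smallerthanh2} we have $h_1 \le h_2$. As in \eqref{eq:formulafortheenergy} write $E(A,B) = E_A + E_B + E_{AB}$, where now $E_{AB} = -(h_2 + s)\beta$ with $s \in \{0,1\}$ recording whether there is a step (so the interface has length $h_2$ or $h_2+1$). The key structural point is that the rectangle $l_2:h_1$ sits on top of the left half of $l_2:h_2$, so $A$ consists of a ``tall'' part of height $h_1+h_2$ on its left together with a part of height $h_2$; I would make both $A$ and $B$ ``flat'' on their outer sides by sliding full columns around without changing $E_A$ or $E_B$, exactly as in the previous proof.

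Next I would run the reflection dichotomy. If $E_A \le E_B$, replace $B$ by the reflection $\hat B$ of $A$ across a vertical axis and rejoin along the longest available vertical segment; since $A$ has a side of height $h_1 + h_2$, the new interface has length $h_1 + h_2$, giving
\begin{equation*}
E(A,\hat B) = E_A + E_A - (h_1 + h_2)\beta \le E_A + E_B - (h_2 + 1)\beta \le E(A,B),
\end{equation*}
using $h_1 \ge 1$, and the resulting configuration lies in Class~$\mathcal{I}$ with a flat interface; equality forces $E_A = E_B$ and $h_1 = 1$, so we are done. The harder branch is $E_B < E_A$: reflecting $A$ onto $B$'s side is not immediately admissible because $B$ only has height $h_2$, so the best guaranteed interface length after gluing $\hat A$ (a reflection of $B$) to $B$ is just $h_2$, and the energy change is $E_B + E_B - h_2\beta$ versus $E_A + E_B - (h_2+s)\beta$, i.e.\ a drop of $E_A - E_B - s\beta \ge 1 - \beta > 0$ when $s \le 1$; this already yields a contradiction with optimality \emph{unless} the reflected-glued configuration fails to be in Class~$\mathcal{I}$ or we have miscounted because $A$'s extra block $l_2:h_1$ cannot be absorbed. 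This is exactly where the bound $h_1 \le 2 + 1/\beta$ must come from: I would argue that if $h_1$ is large, then the block of height $h_1$ sitting above $l_2:h_2$ is ``wasted'' perimeter — more precisely, moving its (at most $2h_1$) points to the flat left side of $B$ after the reflection increases the bulk contribution favourably but costs at most a bounded amount of surface energy, while the $h_1$ vertical bonds it loses from the tall side are recovered only partially; balancing $h_1$ saved bonds against the $\beta$-weighted interface gain pins down $h_1 \le 2 + 1/\beta$.

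Concretely, in the branch $E_B < E_A$ I would compare $(A,B)$ directly with the Class~$\mathcal{I}$ configuration obtained by taking $\hat A = $ reflection of $B$, gluing along a segment of length $h_2$, and then relocating the surplus $A$-points (those that were in $l_2:h_1$, numbering $\le 2h_1$) one per row onto the flat left edge; since $h_1 \le h_2$ there is room, $l_1$ grows by at most $2$, and the net surface change is at most $4 - h_1(1-\beta) - \beta(\text{interface gain})$ up to lower order. Requiring this to be $\ge 0$ for optimality gives the stated estimate on $h_1$; and once $h_1$ is bounded, only finitely many residual shapes remain, which (as in Proposition~\ref{prop:classIIIexcluded}) can be checked by hand to admit a Class~$\mathcal{I}$ competitor of no greater energy. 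The main obstacle I anticipate is the careful bookkeeping in this second branch: one must track simultaneously how $l_1, l_2, l_3$ and the step parameter $s$ change under the relocation, ensure the relocated points genuinely produce $h_1$ new $A$–$A$ bonds on the left (which needs the flatness normalisation and $h_1 \le h_2$), and verify that the final object is admissible in the sense of Section~\ref{sec:classi}; everything else is a direct reprise of the reflection technique already used for $l_2 = 1$.
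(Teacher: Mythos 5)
Your overall strategy (the reflection dichotomy from Proposition~\ref{prop:regularisationofclassIVpart5}) is the same as the paper's, but the bookkeeping contains an error that undermines both branches. With $l_2=2$ the interface consists of $h_2$ horizontal $A$--$B$ bonds plus $l_2=2$ vertical ones, so $E_{AB}=-(h_2+2)\beta$; your ansatz $E_{AB}=-(h_2+s)\beta$ with $s\in\{0,1\}$ undercounts it. With the correct count, the reflection that duplicates $A$ satisfies $E(A,\hat B)-E(A,B)=(E_A-E_B)+(2-h_1)\beta$, so in the case $E_A=E_B$ and $h_1=1$ this reflection \emph{increases} the energy by $\beta$ and does not produce a Class~$\mathcal I$ competitor; your conclusion that ``equality forces $E_A=E_B$ and $h_1=1$, so we are done'' is therefore unjustified. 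This residual case genuinely requires a separate construction: the paper exchanges the rightmost point of the rectangle $l_2:h_1$ with the topmost $B$-point of the column $C_{l_1+1}$, which does not increase the energy and lands in Class~$\mathcal I$. Nothing in your proposal supplies this step.

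The second gap is the bound $h_1\le 2+1/\beta$. In the paper it falls out of playing the two reflections against each other: duplicating $B$ lowers the energy by $(E_A-E_B)-2\beta$, duplicating $A$ lowers it by $(E_B-E_A)+(h_1-2)\beta$, and for every value of $k=E_A-E_B$ at least one of these quantities is strictly positive once $h_1>2+1/\beta$, so optimality forces the bound with no relocation of points at all. Your proposed mechanism --- glue $\hat A$ (a reflection of $B$) along a segment of length $h_2$ and then ``relocate the surplus $A$-points from $l_2:h_1$'' --- is not coherent: after replacing $A$ wholesale by a reflection of $B$ there are no leftover points from $l_2:h_1$, since the cardinalities match automatically because $N_A=N_B$, so the balance of ``$h_1$ saved bonds against the $\beta$-weighted interface gain'' has nothing to attach to. You would need to replace that paragraph by the direct two-reflection comparison (or an equivalent quantitative argument) to obtain the stated estimate on $h_1$; the appeal to a finite check of ``residual shapes'' is then unnecessary.
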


\begin{proof}
  Again, as in  \eqref{eq:formulafortheenergy}, we write the energy as
\begin{equation*}
E(A,B) = E_A + E_B - (h_2 + 2)\beta.
\end{equation*}
We  consider three cases: first, suppose that either $E_B \le  E_A -2$ or $E_B =  E_A -1$ and $h_1 \le 2+1/\beta$. We do the following rearrangement of points: we keep $B$ fixed (up to making one side flat, as in the previous proof) and replace $A$ by $\hat{A}$, a reflection of $B$ along the vertical axis. Then, we join $\hat{A}$ and $B$ along the vertical line segment of length $h_2$. The resulting configuration  lies in Class~$\mathcal{I}$ and  satisfies 
\begin{equation}
E(\hat{A},B) = E_B + E_B - h_2 \beta.
\end{equation}
Hence, the energy drops by  $k -2\beta$, where $k = E_A - E_B \ge 1$. Thus, either the original configuration was not optimal (for $k \ge 2$ or $k=1$ and  $\beta < 1/2$) or the resulting configuration is optimal (for $k=1$ and  $\beta = 1/2$). Moreover, the resulting configuration lies in Class $\mathcal{I}$.

Now, suppose that either $E_B  = E_A - 1$ and $h_1 > 2+1/\beta$ or $E_B = E_A$ and $h_1 \ge 2$ or   $E_A < E_B$. This time, we keep $A$ fixed (up to making one side flat) and replace $B$ by $\hat{B}$, a reflection of $A$ along the vertical axis.  Then, we join $A$ and $\hat{B}$ along the vertical line segment of length $h_1 + h_2$. In this way, the resulting configuration lies in Class $\mathcal{I}$, has a flat interface, and the energy is given by
\begin{equation}
E(A,\hat{B}) = E_A + E_A - (h_1+h_2) \beta.
\end{equation}
Thus, the energy decreases by $k + (h_1-2)\beta$, where $k = E_B-E_A$. In particular, for $k=-1$ and $h_1  >  2+1/\beta$  or $k=0$ and $h_1 \ge 3$ or $k>0$ the energy drops. For $k=0$ and $h_1=2$ it stays the same, so the resulting configuration is optimal and  lies  in Class $\mathcal{I}$.

The only case left to consider is when $E_A = E_B$ and $h_1 = 1$.  We proceed as follows: we exchange the rightmost $A$-point (i.e., the rightmost point of the rectangle $l_2:h_1$) with the top $B$-point from column $C_{l_1+1}$, i.e., the point with two connections to points of type $A$ and two connections to points of type $B$.  If $C_{l_1+1}$ contains only one $B$-point, then the interface became shorter (without changing the overall shape of the configuration) and the energy actually drops. If it contains more then one $B$-point,  this  procedure did not change the energy. Moreover, the resulting configuration is in Class $\mathcal{I}$.  The construction is presented in Figure \ref{fig:exchangingonepoint}. 

\begin{figure}[h]

\definecolor{zzttqq}{rgb}{0.6,0.2,0.}
\begin{tikzpicture}[line cap=round,line join=round,>=triangle 45,x=0.5cm,y=0.5cm]
\clip(0.9,-5.1) rectangle (25.1,3.1);
\fill[line width=0.5pt,color=zzttqq,fill=zzttqq,fill opacity=0.1] (6.,-5.) -- (10.,-5.) -- (10.,-4.) -- (11.,-4.) -- (11.,1.) -- (7.,1.) -- (7.,-1.) -- (6.,-1.) -- cycle;
\fill[line width=0.5pt,color=zzttqq,fill=zzttqq,fill opacity=0.1] (21.,2.) -- (22.,2.) -- (22.,1.) -- (25.,1.) -- (25.,-4.) -- (24.,-4.) -- (24.,-5.) -- (20.,-5.) -- (20.,-2.) -- (21.,-2.) -- cycle;
\draw[line width=0.5pt] (5.,-5.)-- (3.,-5.);
\draw[line width=0.5pt] (8.,2.)-- (8.,1.);
\draw[line width=0.5pt] (10.,-4.)-- (10.,-5.);
\draw[line width=0.5pt] (10.,-5.)-- (6.,-5.);
\draw[line width=0.5pt] (6.,-5.)-- (5.,-5.);
\draw[line width=0.5pt] (8.,2.)-- (8.,2.);
\draw[line width=0.5pt] (5.,3.)-- (5.,3.);
\draw[line width=0.5pt] (3.,-3.)-- (2.,-3.);
\draw[line width=0.5pt] (6.,-2.)-- (6.,-2.);
\draw[line width=0.5pt] (6.,-2.)-- (6.,-5.);
\draw[line width=0.5pt] (10.,-4.)-- (11.,-4.);
\draw[line width=0.5pt] (11.,-4.)-- (11.,0.);
\draw[line width=0.5pt] (11.,0.)-- (11.,1.);
\draw[line width=0.5pt] (2.,-5.)-- (3.,-5.);
\draw[-latex,line width=0.5pt] (13.,-1.) -- (15.,-1.);
\draw[line width=0.5pt] (3.,-3.)-- (3.,2.);
\draw[line width=0.5pt] (3.,2.)-- (8.,2.);
\draw[line width=0.5pt] (7.,1.)-- (8.,1.);
\draw[line width=0.5pt] (7.,1.)-- (7.,0.);
\draw[line width=0.5pt] (8.,1.)-- (11.,1.);
\draw[line width=0.5pt] (7.,0.)-- (7.,-1.);
\draw[line width=0.5pt] (7.,-1.)-- (6.,-1.);
\draw[line width=0.5pt] (6.,-1.)-- (6.,-2.);
\draw[line width=0.5pt] (2.,-3.)-- (2.,-5.);
\draw[line width=0.5pt] (16.,-5.)-- (20.,-5.);
\draw[line width=0.5pt] (20.,-5.)-- (24.,-5.);
\draw[line width=0.5pt] (24.,-5.)-- (24.,-4.);
\draw[line width=0.5pt] (24.,-4.)-- (25.,-4.);
\draw[line width=0.5pt] (16.,-5.)-- (16.,-3.);
\draw[line width=0.5pt] (16.,-3.)-- (17.,-3.);
\draw[line width=0.5pt] (17.,-3.)-- (17.,2.);
\draw[line width=0.5pt] (25.,-4.)-- (25.,1.);
\draw[line width=0.5pt] (20.,-5.)-- (20.,-2.);
\draw[line width=0.5pt] (20.,-2.)-- (21.,-2.);
\draw[line width=0.5pt] (21.,-2.)-- (21.,2.);
\draw[line width=0.5pt] (21.,2.)-- (22.,2.);
\draw[line width=0.5pt] (22.,2.)-- (22.,1.);
\draw[line width=0.5pt] (22.,1.)-- (25.,1.);
\draw[line width=0.5pt] (21.,2.)-- (17.,2.);
\end{tikzpicture}

\caption{Final step of modification into Class $\mathcal{I}$}
\label{fig:exchangingonepoint}
\end{figure}
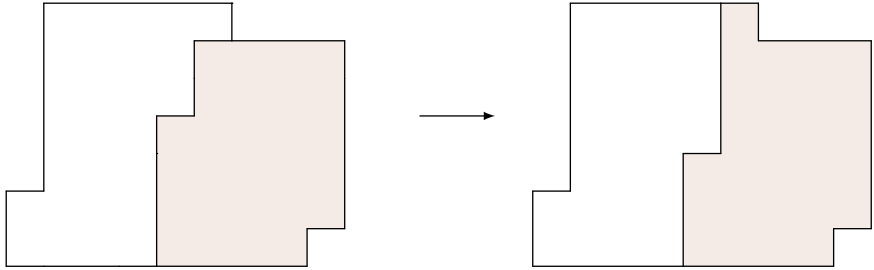

Summarising, we have shown that $h_1 \le 2+1/\beta$ and that there exists an optimal configuration in Class~$\mathcal{I}$.
\end{proof}

We summarise the reasoning from this subsection in the following result.

\begin{proposition}\label{prop:wemayrequireclassI}
Fix $ N_A = N_B>0 $ and $ \beta \leq  1/2$. Suppose that $(A,B) \in \mathcal{IV}$ is an optimal configuration. Then, there exists an optimal configuration $(\hat{A},\hat{B}) \in \mathcal{I}$.
\end{proposition}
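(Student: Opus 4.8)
The proof is an immediate synthesis of the results established earlier in this subsection. The plan is to trace the chain of regularisations: starting from an arbitrary optimal configuration in Class~$\mathcal{IV}$, Proposition~\ref{prop:classIVregularisationstep3} produces an optimal configuration (possibly after a rotation by $\pi$ and swapping the roles of the two types, which does not affect optimality) that still lies in $\mathcal{I} \cup \mathcal{IV}$ and additionally satisfies $l_2 \le h_2$, $h_1 \le l_1$, and $h_3 = 0$. If the resulting configuration already lies in Class~$\mathcal{I}$, we are done. Otherwise it lies in Class~$\mathcal{IV}$ with $h_1 \le l_1$ and $h_3 = 0$, which is exactly the hypothesis required for the second part of the analysis.

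From there, Proposition~\ref{prop:regularisationofclassIVpart4} (which is where the assumption $\beta \le 1/2$ is first used) yields an optimal configuration still in Class~$\mathcal{IV}$ with $h_3 = 0$ and $l_2 \in \{1,2\}$. Now one splits into the two cases according to the value of $l_2$. If $l_2 = 1$, Proposition~\ref{prop:regularisationofclassIVpart5} directly produces an optimal configuration in Class~$\mathcal{I}$. If $l_2 = 2$, Proposition~\ref{prop:regularisationofclassIVstep6} does the same. In either case we obtain an optimal $(\hat A,\hat B)\in\mathcal{I}$, which completes the argument.

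There is essentially no obstacle here: the work has already been done in the preceding propositions, and this statement is purely a bookkeeping summary. The only point requiring a little care is to check that none of the intermediate symmetry operations (rotation by $\pi$, reflection across $\mathbb{R}(-1,1)$, interchanging the two types) destroys either optimality or the eventual membership in Class~$\mathcal{I}$ — but this is clear since Class~$\mathcal{I}$ is itself symmetric under these operations, and the energy is invariant under isometries and under relabelling the types (recall that the only asymmetry, $\beta<1$, affects $A$–$A$, $B$–$B$, and $A$–$B$ bonds in a way that does not distinguish $A$ from $B$).

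\begin{proof}
Let $(A,B) \in \mathcal{IV}$ be an optimal configuration. By Proposition~\ref{prop:classIVregularisationstep3}, up to a rotation by $\pi$ and interchanging the roles of the two types, there exists an optimal configuration $(A',B') \in \mathcal{I} \cup \mathcal{IV}$ with $l_2 \le h_2$, $h_1 \le l_1$, and $h_3 = 0$. If $(A',B') \in \mathcal{I}$, we are done. Otherwise $(A',B') \in \mathcal{IV}$ with $h_1 \le l_1$ and $h_3 = 0$, so Proposition~\ref{prop:regularisationofclassIVpart4} (here we use $\beta \le 1/2$) yields an optimal configuration $(A'',B'') \in \mathcal{IV}$ with $h_3 = 0$ and $l_2 \in \{1,2\}$. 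If $l_2 = 1$, Proposition~\ref{prop:regularisationofclassIVpart5} provides an optimal configuration $(\hat{A},\hat{B}) \in \mathcal{I}$. If $l_2 = 2$, Proposition~\ref{prop:regularisationofclassIVstep6} likewise provides an optimal configuration $(\hat{A},\hat{B}) \in \mathcal{I}$. In all cases we obtain an optimal configuration in Class~$\mathcal{I}$.
\end{proof}
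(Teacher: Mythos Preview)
Your proof is correct and follows essentially the same approach as the paper's own proof: chain Proposition~\ref{prop:classIVregularisationstep3} to obtain $h_1 \le l_1$, $h_3 = 0$, then apply Proposition~\ref{prop:regularisationofclassIVpart4} (using $\beta \le 1/2$) to reduce to $l_2 \in \{1,2\}$, and finish with Propositions~\ref{prop:regularisationofclassIVpart5} and~\ref{prop:regularisationofclassIVstep6} respectively. Your version is slightly more explicit in handling the case where the output of Proposition~\ref{prop:classIVregularisationstep3} already lies in Class~$\mathcal{I}$, but otherwise the arguments coincide.
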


\begin{proof}
By Proposition \ref{prop:classIVregularisationstep3}, there exists an optimal configuration with  $h_1 \leq l_1$ and $h_3 = 0$. Since $ \beta \leq  1/2$, by Proposition \ref{prop:regularisationofclassIVpart4} one may require additionally that $l_2 = 1$ or $l_2 = 2$. In both cases, existence of an optimal configuration in Class $\mathcal{I}$ is guaranteed by Proposition \ref{prop:regularisationofclassIVpart5} and by Proposition \ref{prop:regularisationofclassIVstep6}, respectively.
\end{proof}

\subsection{Class $\mathcal{V}$}

Finally, we show that we can modify optimal configurations in Class~$\mathcal{V}$ to optimal configuration in Class $\mathcal{IV}$. Along with Proposition~\ref{prop:wemayrequireclassI} this shows that there always exists a minimiser in Class~$\mathcal{I}$. This is done in the following proposition which employs a similar technique to the one used for Class $\mathcal{III}$.

\begin{proposition}\label{prop:classVregularisation}
Fix $N_A, N_B > 0$ and $ \beta  \in (0,1)$. Suppose that $(A,B) \in \mathcal{V}$. Then, there exists $(\hat{A},\hat{B}) \in \mathcal{IV}$ with $E(\hat{A},\hat{B}) \leq E(A,B)$. 
\end{proposition}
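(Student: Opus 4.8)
The plan is to adapt the leftward-sliding argument used for Class~$\mathcal{III}$ in Proposition~\ref{prop:classIIIregularisationstep1}. First recall the geometry that admissibility forces on a configuration in Class~$\mathcal{V}$: the top $h_1$ rows consist only of $A$-points; in each of the middle $h_2$ rows there is a connected block of $A$-points immediately followed (Proposition~\ref{cor:rows}) by a connected block of $B$-points, with $A$ on the left and $B$ on the right after the conventions of Section~\ref{sec:classi}; and the bottom $h_3>0$ rows consist only of $B$-points. Moreover, since $A^{\rm row}_k$ is connected for every $k$ (Proposition~\ref{cor:rows}) and every middle row already contains a $B$-block strictly between the two ``$A$-only'' column ranges, the columns $C_{l_1+l_2+1},\dots,C_{N_{\rm col}}$ can meet $A$ only inside the top $h_1$ rows. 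Thus the sole reason $(A,B)$ is not already in Class~$\mathcal{IV}$ is that these top rows ``overhang'' the region below them on the right.

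I would then straighten this overhang exactly as in Proposition~\ref{prop:classIIIregularisationstep1}: writing $x_k$ for the first coordinate of the rightmost point of $(A\cup B)^{\rm row}_k$, replace $A^{\rm row}_k$ by $A^{\rm row}_k+(\min\{x_{h_1+1}-x_k,0\},0)$ for each $k\le h_1$, so that every top row protruding to the right of the rightmost point of $R_{h_1+1}$ is pushed left until its rightmost point aligns with $x_{h_1+1}$. Because the top $h_1$ rows contain only $A$-points, no row energy $E^{\rm row}_k$ changes, and the inter-row energies $E^{\rm inter}_k$ with $k<h_1$ do not increase by the very same case distinction as in Class~$\mathcal{III}$ (two adjacent top rows are either both left untouched, both translated to a common right alignment, or only one translated towards the other, and in none of these cases is an $A$-$A$ bond destroyed). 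For the boundary term $E^{\rm inter}_{h_1}$, the $B$-block of $R_{h_1+1}$ lies in columns $\le x_{h_1+1}$, and after the translation $R_{h_1}$ still covers every column from its new leftmost point up to $\min\{x_{h_1},x_{h_1+1}\}$, which is not to the left of that $B$-block; hence all $A$-$B$ bonds between $R_{h_1}$ and $R_{h_1+1}$ survive, while no $A$-$A$ bond between them is destroyed. The same observation shows $A$ stays connected, so $(\hat A,\hat B)$ is still admissible and $E(\hat A,\hat B)\le E(A,B)$.

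Finally, I would check that $(\hat A,\hat B)\in\mathcal{IV}$. After the translation no top row extends past column $x_{h_1+1}$, so no column to the right of the $B$-block is met by $A$ any more; reading columns from left to right one now sees first the ``$A$-only'' columns (met only through the top rows and/or the left $A$-block), then the ``both'' columns, then possibly some ``$B$-only'' columns, while the rows still split as $h_1$ (only $A$), $h_2$ (both), $h_3$ (only $B$) with $h_1,h_2,h_3>0$. The bottom $h_3>0$ rows contain no $A$-point, so the configuration is not in Class~$\mathcal{III}$; its interface still carries the length-$l_2$ horizontal piece above $B$ together with a vertical piece, so it is not in Class~$\mathcal{II}$; comparing with the definition, it belongs to Class~$\mathcal{IV}$ (with the Class-$\mathcal{IV}$ row parameter $h_3$ equal to the present one). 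I expect the delicate point — the only place the argument can go wrong — to be the bookkeeping for $E^{\rm inter}_{h_1}$ together with connectedness of $A$: one must ensure that the leftward shift of $R_{h_1}$ never crosses the $A$-$B$ interface inside $R_{h_1+1}$, which is precisely why the alignment is taken with respect to $x_{h_1+1}$, the rightmost point of the whole row $R_{h_1+1}$, hence not to the left of its $B$-block.
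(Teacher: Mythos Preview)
Your argument adapts the row-by-row alignment of Proposition~\ref{prop:classIIIregularisationstep1}, whereas the paper translates \emph{all} of the top $h_1$ rows uniformly by $(-1,0)$, one step at a time, until no top row protrudes past the rightmost point of $B^{\rm row}_{h_1+1}$. With the uniform shift both neighbouring top rows always move together, so $E^{\rm inter}_k$ for $k<h_1$ is trivially preserved and only $E^{\rm inter}_{h_1}$ needs to be examined; your route instead imports the three-case analysis from Class~$\mathcal{III}$. Your energy inequality $E(\hat A,\hat B)\le E(A,B)$ is nonetheless correct.

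The gap is in the admissibility claim. You verify only that $A$ remains connected, but $(\hat A,\hat B)\in\mathcal{IV}$ requires full admissibility, in particular that every $A^{\rm col}_j$ be connected, and individual-row alignment can break this. Take $h_1=2$ with $A^{\rm row}_1$ occupying columns $5$--$14$, $A^{\rm row}_2$ columns $5$--$9$, $A^{\rm row}_3$ columns $1$--$5$, and $B^{\rm row}_3=B^{\rm row}_4$ columns $6$--$8$: this configuration is admissible and in Class~$\mathcal V$, and your alignment to $x_{h_1+1}=8$ sends row~$1$ to columns $-1$ through $8$ and row~$2$ to columns $4$ through $8$, so column~$3$ now carries $A$-points in rows~$1$ and~$3$ but not in row~$2$. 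In Proposition~\ref{prop:classIIIregularisationstep1} this issue was handled through the minimality hypothesis (a disconnected column would contradict optimality of the result), but Proposition~\ref{prop:classVregularisation} is stated for arbitrary $(A,B)\in\mathcal V$, not just minimisers, so that escape route is unavailable here. The paper's uniform shift preserves the relative positions of the top rows and hence keeps column connectedness \emph{among them} intact.
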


\begin{proof}
We will modify the top $h_1$ rows of the configuration $(A,B)$ in a similar fashion to the proof of Proposition \ref{prop:classIIIregularisationstep1}. For every $k \leq h_1$, we set $\hat{A}_k := A^{\rm row}_k + (-1,0)$. This translation implies $E^{\rm row}_k(\hat{A},\hat{B}) = E^{\rm row}_k(A,B)$ for all $k = 1,...,N_{\rm row}$. Regarding $E_k^{\rm  inter }$, a change is possible at most for $k = h_1$, where we did not change the number of $A$-$B$ connections and added zero or one $A$-$A$ connections, so $E_k^{\rm  inter }(\hat{A},\hat{B}) \leq E_k^{\rm  inter }(A,B)$. Hence, the total energy did not increase. We repeat this procedure for all rows  wit index $k \le h_1$   until the rightmost point of all $A^{\rm row}_{k}$ with $k \leq h_1$  does not lie right to  the rightmost point of $B^{\rm row}_{h_1+1}$.  We thus get a configuration which lies in Class~$\mathcal{IV}$.  
\end{proof}

\subsection{Conclusion}

Finally, we are in  the  position to state another of the main results, which together with Theorem \ref{thm:classIexact} gives the exact formula for the minimal energy,  see  Theorem \ref{thm:main}.iv. 

\begin{theorem}\label{thm:classIexistence}
Fix $ N_A = N_B>0 $ and $ \beta \leq  1/2$. Then, there exists an optimal configuration $(A,B)$ which lies in Class $\mathcal{I}$ and has a straight interface.
\end{theorem}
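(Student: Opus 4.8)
The plan is to assemble the class‑by‑class reductions of Sections~\ref{sec:classi}--\ref{sec:reg2} into a single case distinction. I would begin with an arbitrary optimal configuration $(A,B)$. By Theorem~\ref{thm:connected} together with Propositions~\ref{cor:rows}--\ref{cor:allononeside}, the pair $(A,B)$ is admissible, so after applying a suitable isometry and reflection and, if needed, interchanging the roles of the two types --- operations that preserve minimality, since $N_A=N_B$ and the energy $E$ is symmetric under relabelling the two species --- the configuration $(A,B)$ lies in one of the Classes $\mathcal{I},\dots,\mathcal{V}$.

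I would then eliminate every class except $\mathcal{I}$. Classes $\mathcal{II}$ and $\mathcal{III}$ contain no optimal configuration when $N_A=N_B$, by Proposition~\ref{prop:classIIregularisation} and Proposition~\ref{prop:classIIIexcluded} respectively. If $(A,B)\in\mathcal{V}$, then Proposition~\ref{prop:classVregularisation} produces a configuration $(\hat A,\hat B)\in\mathcal{IV}$ with $E(\hat A,\hat B)\le E(A,B)$; since $(A,B)$ is optimal, equality holds, so $(\hat A,\hat B)$ is an optimal configuration in Class~$\mathcal{IV}$. Hence we may assume the optimal configuration lies in $\mathcal{I}\cup\mathcal{IV}$. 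If it lies in $\mathcal{IV}$, then, invoking the hypothesis $\beta\le 1/2$, Proposition~\ref{prop:wemayrequireclassI} yields an optimal configuration in Class~$\mathcal{I}$. Either way, an optimal configuration in Class~$\mathcal{I}$ exists.

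It remains to pass to one with a straight interface. Given an optimal configuration in Class~$\mathcal{I}$, Proposition~\ref{prop:classIregularisationstep2} provides an optimal configuration $(A,B)\in\mathcal{I}$ with $l_2=0$; in particular $l_1=l_3=:\ell$, since $N_A=N_B$. Applying the compactification noted after Proposition~\ref{prop:classIregularisationstep1}, so that all columns apart from the outermost ones are full of height $h$, the pair $(A,B)$ coincides, up to an isometry, with the explicit configuration $(A',B')$ of Theorem~\ref{thm:main}.v, where $N=h\ell+r$ with $0\le r<h$. Because the only possibly incomplete columns are the outermost ones, in every row the rightmost $A$-point lies in the column immediately to the left of the leftmost $B$-point, and these two columns are full; hence $I_{AB}$ is a single vertical segment, i.e.\ the interface is straight. (The degenerate cases $\ell\in\{0,1\}$ are checked directly and likewise give a straight vertical interface.) This proves the theorem.

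Since essentially all of the substantive work --- excluding Classes $\mathcal{II}$ and $\mathcal{III}$, and reducing Class~$\mathcal{IV}$ to Class~$\mathcal{I}$ under $\beta\le 1/2$ --- has already been carried out in the cited propositions, no new estimate is needed here. The only point requiring care is organisational: making sure that the successive normalisations ``up to isometry, reflection, and swapping the two types'' are applied consistently, and that each reduction step simultaneously preserves the constraint $N_A=N_B$ and the optimality of the energy, so that the hypotheses of Propositions~\ref{prop:classIIregularisation}, \ref{prop:classIIIexcluded}, \ref{prop:classVregularisation}, \ref{prop:wemayrequireclassI} and \ref{prop:classIregularisationstep2} are genuinely met at each invocation.
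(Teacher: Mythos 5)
Your proposal is correct and follows essentially the same route as the paper: exclude Classes $\mathcal{II}$ and $\mathcal{III}$ via Propositions~\ref{prop:classIIregularisation} and \ref{prop:classIIIexcluded}, reduce Class~$\mathcal{V}$ to $\mathcal{IV}$ via Proposition~\ref{prop:classVregularisation}, reduce Class~$\mathcal{IV}$ to $\mathcal{I}$ via Proposition~\ref{prop:wemayrequireclassI} (using $\beta\le 1/2$), and finally invoke Proposition~\ref{prop:classIregularisationstep2} to obtain $l_2=0$, hence a straight interface. Your added discussion of why $l_2=0$ forces a vertical interface is a harmless elaboration of what the paper leaves implicit.
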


\begin{proof}
Since the number of points is finite, there exists an optimal configuration. By Theorem \ref{thm:connected} and the discussion below it, it lies in one of the five classes. However, it cannot lie in Class $\mathcal{II}$ by Proposition \ref{prop:classIIregularisation}. It also cannot lie in Class $\mathcal{III}$ by Proposition \ref{prop:classIIIexcluded}. If it lies in Class $\mathcal{V}$, then there exists a minimal configuration in Class  $\mathcal{IV}$  by virtue of Proposition \ref{prop:classVregularisation}. If it lies in Class $\mathcal{IV}$, then by Proposition \ref{prop:wemayrequireclassI} there exists a minimal configuration in Class $\mathcal{I}$. Finally, since there is an optimal configuration in Class $\mathcal{I}$, by Proposition \ref{prop:classIregularisationstep2} we may  suppose  that it has a flat interface.
\end{proof}

Let us note that in the above  theorem   we only state that a solution in Class $\mathcal{I}$ exists and that we cannot fully exclude existence of solutions in other classes. In particular, the following result shows that there exist arbitrarily large optimal configurations in Class $\mathcal{IV}$.

\begin{proposition}\label{prop:largeminimisersiv}
Let $\beta \in (0,1/2]\cap {\mathbb Q}$, $r,\, s
  \in {\mathbb N}$ with $r/s=1-\beta/2$, and $k \in {\mathbb N}$. Then, the
  Class-${\mathcal IV}$ configuration $(A,B)$ with 
  \begin{align*}
    A&=\{(x,y)\in {\mathbb Z}^2 \, \colon \, x \in [-kr+1,0], \ y \in
       [1,ks]\}\} \cup (1,ks),\\
B&=\{(x,y)\in {\mathbb Z}^2 \, \colon \, x \in [1,kr], \ y \in
   [0,ks-1]\}\cup (0,0)
  \end{align*}
is optimal. 
  \end{proposition}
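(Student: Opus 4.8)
The plan is to compute the energy $E(A,B)$ of the proposed configuration explicitly and to show that it coincides with the minimal energy provided by Theorem~\ref{thm:classIexact}.

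First I would read off the Class-$\mathcal{IV}$ parameters of $(A,B)$ directly from its definition: the columns $-kr+1,\dots,-1$ carry only $A$-points, the columns $0$ and $1$ carry points of both types, and the columns $2,\dots,kr$ carry only $B$-points, so that $l_1=l_3=kr-1$ and $l_2=2$; in the same way the top row is purely $A$, the intermediate rows are mixed, and the bottom row is purely $B$, so that $h_1=h_3=1$ and $h_2=ks-1$. Moreover $N_A=N_B=k^2rs+1=:N$. Substituting these values into \eqref{eq:classIVformula} — or, equivalently, counting bonds through \eqref{eq:formulafortheenergy}, using that each of $A$ and $B$ is a $kr\times ks$ rectangle with one attached atom and that $\#I_{AB}=ks+1$ — yields
\[ E(A,B)=-4N+2kr+(ks+1)(2-\beta). \]

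Next I would invoke Theorem~\ref{thm:classIexistence}, which guarantees an optimal configuration in Class~$\mathcal{I}$, so that by Theorem~\ref{thm:classIexact} the minimal energy equals $-4N+\min_{h\in\mathbb N}g(h)$, where $g(h):=2\lceil N/h\rceil+h(2-\beta)$. It thus remains to show $\min_{h\in\mathbb N}g(h)=2kr+(ks+1)(2-\beta)$. Here the hypothesis $r/s=1-\beta/2$ enters as the identity $2-\beta=2r/s$: writing $N=kr(ks+1)-(kr-1)$ with $0\le kr-1<ks+1$ (which holds since $r<s$), one finds $\lceil N/(ks+1)\rceil=kr$, so that $g(ks+1)=2kr+(ks+1)(2-\beta)=E(A,B)+4N$. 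This gives the inequality $\min_h g(h)\le E(A,B)+4N$ and pins down $h=ks+1$ as the candidate minimizer.

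The main obstacle is the reverse inequality, namely that no integer $h$ does strictly better than $h=ks+1$. For $h$ far from $\sqrt{2N/(2-\beta)}=ks\sqrt{1+1/(k^2sr)}$ this follows from the convex minorant $g(h)\ge 2N/h+h(2-\beta)$ together with the quadratic-root estimate already carried out in the proof of Theorem~\ref{thm:classIexact}, which confines any potential improvement to a window of integers around $ks$ of width $O(\sqrt{ks})$. The delicate point is to treat the $h$ inside this (growing) window uniformly in $k$: there $g(h)\ge g(ks+1)$ must be obtained from a sufficiently sharp lower bound on $\lceil(k^2rs+1)/h\rceil$, i.e.\ from control of how well $h$ divides $k^2rs+1$, and it is precisely the relation $r/s=1-\beta/2$ that makes the possible saving in the ceiling term fail to outweigh the extra cost $h(2-\beta)$. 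Once this is done, $\min_h g(h)=g(ks+1)=E(A,B)+4N$, and hence $(A,B)$ realizes the minimal energy and is optimal.
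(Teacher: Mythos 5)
Your identification of the Class-$\mathcal{IV}$ parameters ($l_1=l_3=kr-1$, $l_2=2$, $h_1=h_3=1$, $h_2=ks-1$), your energy count $E(A,B)=-4N+2kr+(ks+1)(2-\beta)$, and the evaluation $g(ks+1)=2kr+(ks+1)(2-\beta)$ via $N=kr(ks+1)-(kr-1)$ all agree with what the paper does through \eqref{eq:eq}, \eqref{eq:classIVformula} and Theorem \ref{thm:main}.iv. The genuine gap is exactly the step you flag as ``the main obstacle'' and then defer: you never prove that $g(h)\geq g(ks+1)$ for \emph{every} $h\in\mathbb{N}$. The convex minorant only confines competitors to a window of width of order $N^{1/4}$ around $ks$, and inside that window you merely assert that the relation $r/s=1-\beta/2$ prevents the ceiling term from saving more than the linear term costs. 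For comparison, the paper's proof evaluates the objective only at $h=\lfloor\bar h\rfloor=ks$ and $h=\lceil\bar h\rceil=ks+1$ and takes the smaller value, which tacitly assumes that the discrete minimum of $h\mapsto 2\lceil N/h\rceil+h(2-\beta)$ sits at one of these two integers; your bookkeeping makes visible that this is not automatic.

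Moreover, the missing inequality actually fails for admissible parameters, so this step cannot be repaired as stated. Take $\beta=\tfrac12$, $r=3$, $s=4$ (so $r/s=1-\beta/2$) and $k=2$, hence $N=k^2rs+1=49$ and $ks+1=9$. Then
\begin{equation}
g(9)=2\left\lceil \tfrac{49}{9}\right\rceil+9\cdot\tfrac32=25.5,
\qquad
g(7)=2\left\lceil \tfrac{49}{7}\right\rceil+7\cdot\tfrac32=24.5,
\end{equation}
so $h=7$ strictly beats $h=ks+1$. Equivalently, two adjacent $7\times7$ squares (the configuration of Theorem \ref{thm:main}.v with $h=\ell=7$) have lattice perimeter $49$, while a direct count for the configuration of the proposition gives $4kr+2(ks+1)(2-\beta)=51$. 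Hence for these parameters the stated configuration is not optimal, and the ``delicate point'' you isolated is precisely where both your argument and the paper's break down: closing it requires either ruling out such divisibility coincidences (here $N=49$ has a divisor very close to $\bar h\approx 8.08$) or adding hypotheses on $k$, $r$, $s$.
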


\begin{proof}
Using  \eqref{eq:eq} and  formula \eqref{eq:classIVformula},  one can directly compute 
\begin{equation}
P(A,B) = 4kr + 2 (ks+1)  + 2(1-\beta)(ks+1).\label{eq:to_compare}
\end{equation}
To prove optimality, it hence suffices to check that
$P(A,B)=\min\{P_*,P^*\}$, where  $P_*$ and $P^*$ are defined in 
Theorem \ref{thm:main}.iv for $ N :=  N_A=N_B=k^2rs+1$.   From  $\beta \in (0,1/2]$ we get that $s/r = 2/(2-\beta) \in
(1,4/3]$. This in particular entails that $s>r \geq 2$, which in turn
allows to prove that
\begin{align*}
  \sqrt{\frac{2   N }{2-\beta}}   = \sqrt{\frac{k^2rs + 1}{r/s}} =   \sqrt{{k^2s^2 + s/r}} \in (ks,ks+1).
\end{align*}
In particular, we have checked that 
\begin{align*}
  \left\lfloor\sqrt{\frac{2   N }{2-\beta}}\right\rfloor=ks \quad\text{and}\quad \left\lceil
  \sqrt{\frac{2   N }{2-\beta}} \right\rceil=ks+1.
\end{align*}
One can hence compute 
\begin{align*}
  P_*&= 4 \left\lceil
       \frac{  N }{\left\lfloor\sqrt{\frac{2   N }{2-\beta}}\right\rfloor }
       \right\rceil + 2
       \left\lfloor\sqrt{\frac{2  N }{2-\beta}}\right\rfloor (2-\beta)\\
&=4 \left\lceil
       \frac{k^2rs +1}{ks }
       \right\rceil + 2 ks (2-\beta) = 4 \left\lceil
       kr + 1/ks 
       \right\rceil + 2 ks (2-\beta) =4 kr + 4 + 2 ks (2-\beta).
\end{align*}
On the other hand, using again the fact that  for  $s> r \geq 2$ we get that  
$$\frac{k^2rs +1}{ks+1} \in (kr-1,kr]$$ and we can compute
\begin{align*}
P^*&= 4 \left\lceil
       \frac{  N }{\left\lceil\sqrt{\frac{2   N }{2-\beta}}\right\rceil }
       \right\rceil + 2
       \left\lceil\sqrt{\frac{2   N }{2-\beta}}\right\rceil (2-\beta)\\
&=4 \left\lceil
       \frac{k^2rs +1}{ks+1}
       \right\rceil + 2(ks+1) (2-\beta) = 4kr + 2(ks+1) (2-\beta).
\end{align*}
We conclude that 
$$\min\{P_*,P^*\} = P^* \stackrel{\eqref{eq:to_compare}}{=}P(A,B)$$
which proves that $(A,B)$ is optimal.
  \end{proof}

\section{$N^{1/2}$-law and  $N^{3/4}$-law   for minimisers}\label{sec:law}

In this section, we give a quantitative upper bound on the difference of two optimal configurations,  see Theorem \ref{thm:main}.vi.   The goal is to prove that, even though in general there is no uniqueness of the optimal configurations and some of them may even not be in Class $\mathcal{I}$, they all have the same approximate shape.  In the following, an isometry $T\colon \mathbb{Z}^2 \rightarrow \mathbb{Z}^2$ indicates a composition of the translations $x \mapsto x + \tau$ for $\tau \in  \mathbb{Z}^2$, the rotation $(x_1,x_2) \mapsto  (-x_2, x_1) $ by the angle $\pi/2$, and the reflections $(x_1,x_2) \mapsto (x_1,-x_2)$, $(x_1,x_2) \mapsto (-x_1,x_2)$.

\begin{theorem}[$N^{1/2}$-law]\label{thm:nonehalf}
 Fix $N :=  N_A = N_B>0$  and $ \beta \leq 
\frac12$. Then, there exists a constant $ C_\beta$  only
depending on $ \beta $ such that for each two  optimal configurations  $(A,B)$ and $(A',B')$ it holds that 
\begin{equation}\label{eq: N12}
\min \bigg\{ \#(A \triangle   T(A')) +  \#(B \triangle T(B')) \colon \,
T\colon \mathbb{Z}^2 \rightarrow \mathbb{Z}^2 \mbox{ is an isometry}
\bigg\} \leq  C_\beta   N^{ \gamma(\beta)},
\end{equation}
 where $\gamma(\beta) = 1/2$ if $\beta \in \mathbb{R}\setminus \mathbb{Q}$ and    $\gamma(\beta) = 3/4$ if $\beta \in \mathbb{Q}$. 
\end{theorem}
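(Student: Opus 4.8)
The plan is to leverage the structural results already established, in particular Theorem~\ref{thm:classIexistence}, which guarantees the existence of an optimal configuration in Class~$\mathcal{I}$ with a straight interface, together with the exact formula for the minimal energy in Theorem~\ref{thm:classIexact} and the tight estimate $|h - \sqrt{2N/(2-\beta)}| \le C_\beta N^{1/4}$ on the admissible heights. First I would reduce the problem to comparing an arbitrary optimal configuration $(A,B)$ with the canonical one $(A',B')$ of Theorem~\ref{thm:main}.v. By the regularisation procedures of Section~\ref{sec:reg2}, any optimal $(A,B)$ lies (up to isometry and swapping the roles of the types) in Class~$\mathcal{I}$, $\mathcal{IV}$, or $\mathcal{V}$; moreover the passage to Class~$\mathcal{I}$ in Propositions~\ref{prop:classVregularisation}, \ref{prop:wemayrequireclassI} only moves a controlled number of points. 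The key point is that every modification step in those proofs (straightening the interface, filling/emptying boundary columns, reflecting one set across the interface, swapping a single corner point) alters at most $O(\max\{h, \ell\}) = O(N^{1/2})$ points, since it touches at most a bounded number of rows or columns each of length $O(N^{1/2})$.

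The heart of the argument is then the rigidity \emph{within} Class~$\mathcal{I}$. Here I would argue as follows. An optimal Class-$\mathcal{I}$ configuration is, after the regularisation of Proposition~\ref{prop:classIregularisationstep2}, essentially determined by its height $h$: it consists of two rectangles of height $h$ and widths $\ell_1 = \ell_3 = \ell$ (plus at most one partially-filled leftmost/rightmost column and at most one step), with $N = h\ell + r$, $0 \le r < h$. By Theorem~\ref{thm:classIexact}, the height $h$ of \emph{any} optimal configuration satisfies $|h - \bar h| \le C_\beta N^{1/4}$ with $\bar h = \sqrt{2N/(2-\beta)}$; hence any two optimal heights $h, h'$ differ by at most $2C_\beta N^{1/4}$. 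Two rectangles of height $h$ and width $\approx N/h$ versus height $h'$ and width $\approx N/h'$ have symmetric difference bounded by $|h - h'|\cdot (N/h) + h \cdot |N/h - N/h'| = O(N^{1/4}\cdot N^{1/2}) + O(N^{1/2}\cdot N^{1/4}) = O(N^{3/4})$, which already gives the $N^{3/4}$ bound valid for all $\beta$. To upgrade to $N^{1/2}$ when $\beta \in \mathbb{R}\setminus\mathbb{Q}$, I would invoke the \emph{uniqueness} of the minimiser $h$ of $E(h)$ in that case (established in Theorem~\ref{thm:classIexact} via $E(h_1) - E(h_2) \notin \mathbb{Q}$): all optimal configurations then share the same height $h$, so $h = h'$ exactly, and the only remaining freedom is in the placement of the $r < h = O(N^{1/2})$ leftover points and the choice of which side carries the partial column, contributing at most $O(N^{1/2})$ to the symmetric difference. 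For the $N^{3/4}$ case one additionally observes that the number of distinct optimal heights and the number of ``gluing'' choices are all $O(1)$ or $O(N^{1/4})$, so the dominant term is the rectangle-reshaping $O(N^{3/4})$ above.

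Assembling these pieces, for two optimal configurations $(A,B)$ and $(A',B')$ I would: (i) apply isometries and the Class-reduction lemmas to bring both into Class~$\mathcal{I}$ with straight interface, recording the $O(N^{1/2})$ cost of each reduction step and noting the total number of steps is bounded; (ii) apply the regularisation of Proposition~\ref{prop:classIregularisationstep2} to reduce to rectangular configurations parametrised by heights $h, h'$; (iii) bound $\#(A \triangle T A') + \#(B \triangle T B')$ by the rectangle comparison above, which is $O(N^{3/4})$ in general and $O(N^{1/2})$ when $h = h'$, i.e.\ when $\beta$ is irrational. I expect the main obstacle to be step~(i): one must verify carefully that \emph{each} regularisation operation in Sections~\ref{sec:reg1}--\ref{sec:reg2}, when applied to an already-optimal configuration, relocates only $O(N^{1/2})$ points — this requires going back through the constructions (especially the reflection arguments in Propositions~\ref{prop:regularisationofclassIVpart5} and~\ref{prop:regularisationofclassIVstep6}, and the column-by-column shifts in Propositions~\ref{prop:classIIIregularisationstep1} and~\ref{prop:classVregularisation}) and checking that the number of rows/columns touched is bounded, with a uniform bound on the number of such operations needed. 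The other delicate point is ensuring the isometry $T$ in~\eqref{eq: N12} can be chosen uniformly, i.e.\ that aligning the two canonical rectangular configurations requires only a translation plus one of the eight lattice symmetries, which is immediate once both are in the normalised form of Theorem~\ref{thm:main}.v.
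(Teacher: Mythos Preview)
Your overall architecture matches the paper's: compare within Class~$\mathcal{I}$ using the height rigidity from Theorem~\ref{thm:classIexact} (uniqueness of $h$ for irrational $\beta$, an $N^{1/4}$-window for rational $\beta$), and reduce Classes~$\mathcal{IV}$ and~$\mathcal{V}$ to Class~$\mathcal{I}$. The Class~$\mathcal{I}$ part of your argument is essentially the paper's Step~1 and is correct.

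The gap is in your step~(i). You propose to follow the regularisation chain forward and bound the number of points moved at each step by $O(N^{1/2})$. This fails at two places. First, the reflection arguments in Propositions~\ref{prop:regularisationofclassIVpart5} and~\ref{prop:regularisationofclassIVstep6} replace an entire set (say $B$) by a reflected copy of $A$; this moves $N$ points, not $O(N^{1/2})$, unless you already know $A$ and the reflected $B$ are close --- which is precisely what you are trying to prove. Second, the iterative procedure in Proposition~\ref{prop:classIVregularisationstep2} a~priori runs for as many steps as the initial value of $l_2$, which could be of order $\sqrt{N}$; you give no argument bounding the number of iterations.

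The paper resolves this not by tracking points forward through the whole chain, but by running the analysis \emph{backwards}: starting from the final bounds $l_2\le 2$, $h_1\le 2+1/\beta$, $h_3=0$ (which hold just before Propositions~\ref{prop:regularisationofclassIVpart5}--\ref{prop:regularisationofclassIVstep6}), it propagates these constraints back through each intermediate proposition, at each stage using an energy-decrease argument to show that only a \emph{bounded} number of iterations could have been applied to an optimal configuration (e.g.\ at most one step of each type in Proposition~\ref{prop:classIVregularisationstep2}, else a two-for-one move strictly lowers the energy). This yields a~posteriori bounds $l_2\le 6$, $h_1\le 4+1/\beta$, $h_3\le 2$ on the \emph{original} Class-$\mathcal{IV}$ configuration before any modification. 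With these in hand, one shows that the very first modification (Lemma~\ref{lem:classIVregularisationstep1}) already terminates with $h_1=h_3=0$, i.e.\ lands directly in Class~$\mathcal{I}$, after moving only $C_\beta$ rows of length $O(N^{1/2})$. The later regularisations --- including the problematic reflections --- are thus never invoked for the fluctuation estimate; they served only to derive the shape bounds. Your proposal is missing this backward a~posteriori mechanism, without which the $O(N^{1/2})$ control in step~(i) cannot be established.
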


\begin{proof}
Throughout the proof, $ C_\beta  $ is a constant which depends
only on $ \beta $ whose value may vary from line to line. We start the proof by mentioning that it suffices to check the assertion only for $N \ge N_0$ for some $N_0 \in \mathbb{N}$ depending only on $ \beta $. As observed in the proof of Theorem~\ref{thm:classIexistence}, every optimal configuration lies in the  Classes $\mathcal{I}$, $\mathcal{IV}$, $\mathcal{V}$. In Step 1, we show \eqref{eq: N12} for two optimal configurations in Class $\mathcal{I}$. Afterwards, in Step 2 we show that for each optimal configuration $(A,B)$ in Class $\mathcal{IV}$ there exists $(A',B')$ in Class $\mathcal{I}$ such that \eqref{eq: N12} holds. Eventually, in Step 3 we check that for each optimal configuration $(A,B)$ in Class $\mathcal{V}$ there exists $(A',B')$ in Class $\mathcal{IV}$ such that \eqref{eq: N12} holds. The combination of these three steps yields the statement.

{\bf Step 1: Class $\mathcal{I}$.} Let first $(A,B)$ be an optimal
configuration in Class $\mathcal{I}$ such that $l_2 = 0$.  Then by Theorem \ref{thm:main}.iv   
we find $\bar{h} \in \mathbb{N}$ with $|\bar{h}- \sqrt{2  N /(2-\beta)}| \le C_\beta N^{1/4}$ such that   
\begin{equation}\label{eq: fixing hhh} 
h \sim   \bar{h},   \quad \quad \quad
l_1=l_3 \sim  \sqrt{N/\bar{h}}, 
\end{equation}
 where here and in the following $\sim$ indicates that equality holds
up to a  constant  $C_\beta N^{\gamma(\beta) - 1/2}$.  Indeed, for
$\beta \in \mathbb{R}\setminus \mathbb{Q}$, the value  $\bar{h}$
 is unique, whereas for $\beta \in \mathbb{Q}$ it lies in an
interval whose  diameter is at most  of order $N^{1/4}$.   Consequently, two optimal configurations in Class $\mathcal{I}$ with $l_2= 0$ clearly  satisfy \eqref{eq: N12}.  Also, notice that since the interface is straight, reflection along the interface exchanges the roles of the sets $A$ and $B$.  Now, consider an optimal configuration $(A,B)$ in Class~$\mathcal{I}$ with $l_2>0$. Then we get $l_2=1$ by Proposition \ref{prop:classIregularisationstep1}. The regularisation of Proposition \ref{prop:classIregularisationstep2} shows that $(A,B)$ can be modified to a configuration $(A',B')$ in Class $\mathcal{I}$ with $l_2 = 0$ such that \eqref{eq: N12} holds. Indeed, in this regularisation we only alter the configurations involving the single column containing points of both types and possibly merge two connected components by moving one connected component by $(1,0)$. This concludes Step 1  of the proof.

{\bf Step 2: Class $\mathcal{IV}$.} We now consider an optimal
configuration in Class $\mathcal{IV}$ and show that it can be modified
to a configuration in Class $\mathcal{I}$ such that \eqref{eq: N12}
holds.  We will work through the proofs in Subsections
\ref{sec:classIVpartone} and \ref{sec:classIVparttwo} in reverse
order. Our strategy is as follows: we use the knowledge of the
structure of the final step of the regularisation procedure, obtain
some a posteriori bounds on the size of $l_i$ and $h_i$, and go back
to see how these can change at every step of the regularisation
procedure. Eventually, this will allow us to show that already after
the first modification described  in  Lemma
\ref{lem:classIVregularisationstep1} we obtain an optimal
configuration in Class~$\mathcal{I}$, by moving at most $ C_\beta  N^{1/2}$ many points.  This will conclude Step 2 of the proof.

{\bf Step 2.1.} Our starting points are Propositions \ref{prop:regularisationofclassIVpart5} and \ref{prop:regularisationofclassIVstep6}: recall that applying all the intermediate steps, in the end we have $h_3 = 0$ and we land with an alternative $l_1 = 1$ (which is covered in Proposition \ref{prop:regularisationofclassIVpart5}) or $l_2 = 2$ (which is covered by Proposition \ref{prop:regularisationofclassIVstep6}). In both cases, before applying these propositions, we have
\begin{align}\label{eq: 2.1}
l_2 \leq 2, \quad
h_1 \leq 2 + \frac{1}{\beta}, \quad h_3 = 0, \quad h_2 \sim
  \bar{h},  \quad  l_1,l_3 \sim
   \sqrt{N/\bar{h}}.      
\end{align}
In fact, the last  conditions follow from \eqref{eq: fixing hhh} (for $h=h_2$) and the reflection procedure described in the propositions. 

{\bf Step 2.2.} Now, we go  a step back in the regularisation
procedure. In Proposition~\ref{prop:regularisationofclassIVpart4}, for
$ \beta <  1/2$ nothing changes and the same bounds hold. For
$ \beta  = 1/2$, 
\eqref{eq: 2.1} yields  that $\frac{h_1}{h_2}\to 0$ as $N \rightarrow
\infty$. This implies that in
Proposition~\ref{prop:regularisationofclassIVpart4}, for sufficiently
large $N$, we move at most two layers. In fact, if we moved at least
three layers, the energy would strictly decrease since all of them fit
into a single column. Hence, for sufficiently big $N$ (depending only
on $ \beta$),   we have the following bounds
\begin{align}\label{eq: 22}
l_2 \leq 4, \quad h_1 \leq 2 + \frac{1}{\beta}, \quad h_3 = 0,  \quad h_2 
   \sim \bar{h},  \quad  l_1,l_3 \sim
   \sqrt{N/\bar{h}}.     
\end{align}
Finally, let us take one more step back in the regularisation procedure. In Lemma~\ref{lemma: step lemma}, we actually modify the configuration only slightly inside the rectangle $l_2:h_2$. 
In this way, $h_i$ and $l_i$ were not altered, so that the bounds \eqref{eq: 22} still holds.

{\bf Step 2.3.} Now we come to the main part of the regularisation procedure, i.e., Proposition~\ref{prop:classIVregularisationstep2}. In its proof, we apply an iterative procedure, and at every step one of the following changes happens:
\begin{align*}
 {\rm (a)}   \ \ \ h_2 \rightarrow h_2 + 1, \quad l_2 \rightarrow l_2-1, \quad h_3 \rightarrow h_3 - 1, \quad l_3 \rightarrow l_3 + 1, \quad h_1 \rightarrow h_1, \quad l_1 \rightarrow l_1
\end{align*}
or
\begin{align*}
 {\rm (b)}   \ \ \  h_2 \rightarrow h_2 + 1, \quad l_2 \rightarrow l_2-1, \quad h_1 \rightarrow h_1 - 1, \quad l_1 \rightarrow l_1 + 1, \quad h_3 \rightarrow h_3, \quad l_3 \rightarrow l_3.
\end{align*}
  Notice that in both cases $l_1$ and $l_3$ cannot decrease during this procedure, and exactly one of them increases at every step. The procedure can end in two ways:  $h_3 = 0$ (or equivalently $h_1 = 0$) or $l_2 = 1$. In the latter case, however, the proof of Proposition \ref{prop:classIVregularisationstep3} implies that the original configuration was not optimal, so we only need to examine the former case.

Consider the last step of the regularisation procedure in the proof of Proposition~\ref{prop:classIVregularisationstep2}, i.e., the one before we reach $h_3 = 0$. Denote by $\hat{h}_1$ the value of $h_1$ at the end of the regularisation procedure, and note that $\hat{h}_1 \leq 2 + \frac{1}{\beta}$ by \eqref{eq: 22}. There are two possible situations: either
\begin{equation}
 \hat{l}_1 \leq 2  \hat{h}_1 \qquad \mbox{or} \qquad  \hat{l}_1 > 2  \hat{h}_1.
\end{equation}
In the second case, notice that we cannot  have applied  the construction from case (a) twice as  otherwise   a slightly modified procedure would give the following:  we move the $A$-points from the rightmost two columns of the rectangle $l_2:h_1$ to the rectangle $l_1:h_3$, but we place them in a single row. In this way, we have
\begin{equation}
h_2 \rightarrow h_2 + 1, \quad l_2 \rightarrow l_2-2, \quad h_3 \rightarrow h_3 - 1, \quad l_3 \rightarrow l_3 + 2, \quad h_1 \rightarrow h_1, \quad l_1 \rightarrow l_1.
\end{equation}
This shows that the energy \eqref{eq:classIVformula} strictly decreases as the length of the interface is decreased.  Hence, the original configuration was not optimal, so either $ \hat{l}_1 \leq 2  \hat{h}_1$ or we have applied a step of type (a) at most once.

Similarly, since $\hat{h}_3$ at the end of the procedure equals zero, we consider the alternative
\begin{equation*}
 \hat{l}_3 \leq 2 \qquad \mbox{or} \qquad \hat{l}_3 > 2. 
\end{equation*}
We apply a similar argument to conclude that either $ \hat{l}_3 \leq 2 $ or that we have  applied a step of type (b) at most once.

In view of \eqref{eq: 22},  and because $l_1$ and $l_3$ can only
increase during the regularisation procedure,  we see that  $l_1
\leq 2 \hat{h}_1$ and $l_3 \leq 2$  lead to contradictions for $N$
sufficiently large depending only $ \beta $.  This implies that there can be at most one step of type (a) and (b), respectively.
Therefore, using again  \eqref{eq: 22} we see that before the application of Proposition~\ref{prop:classIVregularisationstep2} it holds that 
\begin{align}\label{eq: 23}
l_2 \leq 6, \quad h_1 \leq 4 + \frac{1}{\beta}, \quad h_3 \le  2,
  \quad   \quad  
   h_2 \sim \bar{h},  \quad  l_1,l_3 \sim
   \sqrt{N/\bar{h}}.        
\end{align}

{\bf Step 2.4.} Finally, we consider the modification in  Lemma
\ref{lem:classIVregularisationstep1}. For simplicity, we only address
the modification leading to $\min \lbrace h_1, h_1+h_2 -l_1-l_2\rbrace
\le 0$. Note that each step of the procedure consists in  $h_1
\rightarrow h_1-  1$ and   $l_1 \rightarrow l_1 +1$. As after the
application of Lemma \ref{lem:classIVregularisationstep1} we have
$\hat{h}_2/\hat{l}_1 \ge 2/( 2 - \beta   ) + {\rm O}(1/\sqrt{N})$, see \eqref{eq: 23},  and during its application $h_2$ does not change and $l_1$ can only increase,  at each step of the procedure it holds that ${h}_2/{l}_1 \ge 2/( 2 - \beta ) + {\rm O}(1/\sqrt{N})$. In view of \eqref{eq: 23},  in particular the fact that $l_2 \leq 6$,  for $N$ sufficiently large depending only on  $\beta$  we have 
\begin{align}\label{eq: good con}
(h_1+  h_2) / (l_1+l_2)  \geq h_2/(l_1 + l_2)   \ge  c_\beta 
\end{align} 
at each step of the procedure,  for some constant $ c_\beta
>1$ only depending on $ \beta $.  This ensures that at the beginning we have $h_1 \le M$ for $M \in \mathbb{N}$ such that $(M+1)/M < c_\beta $ since otherwise $M+1$  rows could be moved to $M$ columns leading to a strictly smaller energy. This along with \eqref{eq: 23} shows that at most $ C_\beta  N^{1/2}$ are moved. Moreover, the modifications stops once $h_1=0$ or $h_1 + h_2 \le l_1 + l_2$ as been obtained. By \eqref{eq: good con} we see that it necessarily holds $h_1 = 0$. In a similar fashion, one gets  $h_3 = 0$. This shows that directly after the application of Lemma \ref{lem:classIVregularisationstep1} we obtain a configuration in Class~$\mathcal{I}$. This concludes the proof as we have seen that in the modification of Lemma \ref{lem:classIVregularisationstep1} only $ C_\beta  N^{1/2}$ points are moved.

{\bf Step 3: Class $\mathcal{V}$.} We now consider an optimal
configuration in Class $\mathcal{V}$ and show that it can be modified
to a configuration in Class $\mathcal{IV}$ such that \eqref{eq: N12}
holds. The modification in Proposition~\ref{prop:classVregularisation}
consists in moving at most $h_1$ rows to  the left. 
By Step 2 we know that $h_1\le  C_\beta $ which implies that we have moved at most $ C_\beta  N^{1/2}$ many points. This concludes the proof of Step 3.  
\end{proof}

Let us highlight that in the proof of Theorem \ref{thm:nonehalf} we
have not only shown the $N^{1/2}$-law  and  $N^{3/4}$-law   for
minimisers, but we also get explicit estimates on the shape of the
configuration, written as  a separate    statement here below.
The following corollary  is a consequence of equations \eqref{eq: 23}, \eqref{eq: good con},  and the procedure from Step 3 of the proof of Theorem \ref{thm:nonehalf}. 

\begin{corollary}
Suppose that $(A,B) \in \mathcal{IV}  \cup \mathcal{V} $ is an optimal configuration. Then, 
\begin{align*}
l_2, h_1, h_3 \leq  C_\beta , \quad   h_2 \sim \bar{h},  \quad  l_1,l_3 \sim
   \sqrt{N/\bar{h}},       
\end{align*} 
 where $\bar{h}$ is a minimiser of \eqref{eq: periper}. 
\end{corollary}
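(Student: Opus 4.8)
The plan is to extract the assertion from the proof of Theorem~\ref{thm:nonehalf} rather than re-prove anything from scratch. The statement concerns an arbitrary optimal configuration $(A,B)$ lying in Class~$\mathcal{IV}\cup\mathcal{V}$, and the bounds claimed are precisely those that appear as intermediate estimates in Step~2 and Step~3 of that proof. So the first thing I would do is separate the two cases according to whether $(A,B)\in\mathcal{IV}$ or $(A,B)\in\mathcal{V}$, and in each case identify the point of the earlier argument at which the bounds on $l_i,h_i$ are explicitly established.

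For the case $(A,B)\in\mathcal{IV}$, I would invoke Step~2 of the proof of Theorem~\ref{thm:nonehalf} directly. There it is shown that \emph{before} applying Proposition~\ref{prop:classIVregularisationstep2} --- equivalently, for an arbitrary optimal Class-$\mathcal{IV}$ configuration after the harmless preliminary modification of Lemma~\ref{lem:classIVregularisationstep1}, which moves at most $C_\beta N^{1/2}$ points and does not affect the $\sim$-estimates --- one has \eqref{eq: 23}, namely $l_2\le 6$, $h_1\le 4+1/\beta$, $h_3\le 2$, together with $h_2\sim\bar h$ and $l_1,l_3\sim\sqrt{N/\bar h}$. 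Since $\bar h$ is a minimiser of \eqref{eq: periper} and satisfies $|\bar h-\sqrt{2N/(2-\beta)}|\le C_\beta N^{1/4}$ by Theorem~\ref{thm:main}.iv, the estimates $l_2,h_1,h_3\le C_\beta$ and $h_2\sim\bar h$, $l_1,l_3\sim\sqrt{N/\bar h}$ follow for $N$ large; for the finitely many remaining small values of $N$ one enlarges $C_\beta$ to absorb them. One subtlety I would be careful to state clearly: Lemma~\ref{lem:classIVregularisationstep1} potentially replaces $(A,B)$ by a different optimal configuration, so strictly speaking the bounds of \eqref{eq: 23} are proved for that modified configuration; but since the modification only relocates $O(\sqrt N)$ points and preserves optimality and membership in $\mathcal{I}\cup\mathcal{IV}$, and since in Step~2.4 it is shown that the original optimal Class-$\mathcal{IV}$ configuration must already have $h_1,h_3\le C_\beta$ (via \eqref{eq: good con}), the bounds transfer back to the original $(A,B)$. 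I would spell this transfer out, as it is the only genuinely non-formal part.

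For the case $(A,B)\in\mathcal{V}$, I would appeal to Proposition~\ref{prop:classVregularisation} and Step~3 of the proof of Theorem~\ref{thm:nonehalf}: the passage from Class~$\mathcal{V}$ to Class~$\mathcal{IV}$ moves at most $h_1$ rows, and Step~3 shows $h_1\le C_\beta$, so the associated Class-$\mathcal{IV}$ configuration is optimal and differs from $(A,B)$ by $O(\sqrt N)$ points; applying the Class-$\mathcal{IV}$ bounds just established and noting that the modification changes each $l_i,h_i$ by at most $O(1)$ (indeed $h_1\to h_1$, only the horizontal positions shift), the same bounds hold for $(A,B)$ itself. The main obstacle --- really the only one --- is bookkeeping: making sure that the various regularisation steps invoked are being read in the right direction (the earlier proof runs them in reverse to get a posteriori bounds) and that the constant $C_\beta$ is updated consistently as one steps backward through Lemma~\ref{lem:classIVregularisationstep1}, Lemma~\ref{lemma: step lemma}, Proposition~\ref{prop:regularisationofclassIVpart4}, and Proposition~\ref{prop:classIVregularisationstep2}. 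Since all of this is already carried out in the proof of Theorem~\ref{thm:nonehalf}, the proof of the corollary is essentially a one-paragraph pointer to equations \eqref{eq: 23} and \eqref{eq: good con} and to Step~3, which is exactly what the remark preceding the statement indicates.
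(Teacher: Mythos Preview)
Your proposal is correct and follows exactly the approach the paper itself takes: the corollary is stated as an immediate consequence of equations \eqref{eq: 23} and \eqref{eq: good con} together with Step~3 of the proof of Theorem~\ref{thm:nonehalf}, and your plan simply unpacks this pointer with some additional bookkeeping. The paper offers no further argument beyond that one-line attribution, so your elaboration of how the bounds transfer back through the regularisation steps is, if anything, more detailed than what the paper provides.
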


 Recall that for $\beta \in \mathbb{R} \setminus \mathbb{Q}$ the
minimiser $\bar{h}$ is unique. Thus, in this case   the
quantitative bound given in Theorem \ref{thm:nonehalf} is sharp: the
optimal configuration in Class $\mathcal{IV}$ given by Proposition
\ref{prop:largeminimisersiv} differs from the one given in Theorem
\ref{thm:main}.v by a number of points of exactly this order.  In
the case  $\beta \in  \mathbb{Q}$, the  $N^{3/4}$-law  can
again be checked to be  sharp. This will be addressed  in a forthcoming paper.

\section{Proofs in the continuum setting}\label{sec:wulff}

 We conclude by providing the proofs of Corollaries \ref{cor: wulff}
and \ref{cor: cryst-db} from the Introduction.

\begin{proof}[Proof of Corollary \ref{cor: wulff}]
For the explicit solution $(A'_{ N },B'_{ N })$ in Theorem \ref{thm:main}.v with $N_A=N_B=:  N $, one can directly verify that  $\mu_{A'_{ N }} \stackrel{\ast}{\rightharpoonup}  {\mathcal         L} \mres {\mathcal A}$ and $\mu_{B'_{ N }} \stackrel{\ast}{\rightharpoonup}  {\mathcal L} \mres {\mathcal B}$, where $\mathcal{A}$ and $\mathcal{B}$ are given in \eqref{eq:wulff}. For  a general sequence of solutions $(A_{ N },B_{ N })$  of \eqref{eq:dbp},   the statement follows from the fluctuation estimate in Theorem \ref{thm:main}.vi.  
\end{proof}

\begin{proof}[Proof of Corollary \ref{cor: cryst-db}]
We start by relating point configurations with sets of finite
perimeter: given $(A_{ N },B_{ N })$ with $N_A = N_B
=:  N  $, we define the sets 
\begin{align}\label{eq: sofp}
A^{ N }:= \frac{1}{\sqrt{ N }} {\rm int}\Big(\bigcup_{p \in A_{ N }}  p + [-\tfrac{1}{2}, \tfrac{1}{2}]^2\Big), \quad \quad \quad B^{ N } := \frac{1}{\sqrt{ N }}   {\rm int}\Big( \bigcup_{p \in B_{ N }}  p + [-\tfrac{1}{2}, \tfrac{1}{2}]^2\Big).
\end{align}
Clearly, $A^{ N }$ and $B^{ N }$ satisfy $A^{ N } \cap B^{ N } =
 \emptyset$ and $\mathcal L(A^{ N })=\mathcal L(B^{N})=1$. It is an
  elementary matter to check that \eqref{eq:dbp} and \eqref{eq:dbp2}
  coincide in this case up to normalisation, i.e., 
\begin{align}\label{eq: scal-en-es}
  N ^{-1/2} P(A_{ N },B_{ N }) =  P_{\rm cont}(A^{ N },B^{ N }) := {\rm Per} (A^{ N }) + {\rm Per} (B^{ N }) - 2\beta {\rm L} (\partial^* A^{ N }
  \cap \partial^* B^{ N }).
\end{align}

Now, consider any pair of sets of finite perimeter with  $A \cap B = \emptyset$ and $\mathcal L(A)=\mathcal L(B)=1$. Given $\varepsilon>0$, by the density result \cite[Theorem 2.1 and Corollary 2.4]{BraidesDensity} (for $\mathcal{Z}$ consisting of three values representing $A$, $B$, and the emptyset) we can find $A'$ and $B'$ with polygonal boundary such that $A' \cap B' = \emptyset$, $\mathcal L(A')=\mathcal L(B')=1$, and 
$$P_{\rm cont}(A',B') \le P_{\rm cont}(A,B) + \varepsilon. $$
(Strictly speaking,  the constraint $\mathcal L(A')=\mathcal L(B')=1$
has not been addressed there. However, possibly after scaling one can
assume that $\mathcal L(A')\le 1$, $\mathcal L(B') \le 1$, and then it
suffices to add a disjoint  squares  of small volume and surface to satisfy the constraint.)
We define a point configuration related to $A'$ and $B'$ by setting
$$A_{ N } = \lbrace  p \in \mathbb{Z}^2 \colon \,  p/\sqrt{ N } \in A' \rbrace, \quad \quad \quad   B_{ N } = \lbrace  p \in \mathbb{Z}^2 \colon \,  p/\sqrt{ N } \in B' \rbrace.  $$  
By $A^{ N }$ and $B^{ N }$ we denote the corresponding sets of finite
perimeter defined in \eqref{eq: sofp}.  Note that the sets  $A^{ N }$
and $B^{ N }$ may have different cardinalities, although $\mathcal
L(A')=\mathcal L(B')=1$. Still, equal cardinalities can be restored by
adding points to one of the two sets. This can be achieved at the price of making a
small error in the perimeter, which goes to $0$ with $ N $ after
rescaling.     
The fact that $(A', B')$ have polygonal boundary along with the properties of $\Vert \cdot \Vert_1$ implies that 
$$\lim_{ N  \to \infty} P_{\rm cont}(A^{ N },B^{ N }) = P_{\rm cont}(A',B').$$ 
 In fact, each segment of the polygonal boundary of $(A',B')$ is approximated by a path consisting of horizontal and vertical segments which is contained in the boundary of the squares forming $(A^N,B^N)$, see \eqref{eq: sofp}. The $l^1$-norm of the segment and of the path coincide, up to an error of order $\frac{1}{{\sqrt{N}}}$.   
 This along with \eqref{eq: scal-en-es} and Theorem \ref{thm:main}.iv yields
 \begin{align*}
 P_{\rm cont}(A,B) & \ge \liminf_{ N \to \infty} P_{\rm cont}(A^{ N },B^{ N }) - \varepsilon \ge   \liminf_{ N  \to \infty}  N ^{-1/2}  \min_{h \in \mathbb{N}} \big(4\llceil N/h\rrceil +2 h( 2 - \beta )\big)   -\varepsilon \\
 &  = \liminf_{ N  \to \infty}  \min_{h \in \mathbb{N}} \big(4  \sqrt{N}/h  + 2\frac{h}{\sqrt{N}}( 2 - \beta )\big)   -\varepsilon. 
 \end{align*}
  Optimisation with respect to $h$ yields 
 $$  P_{\rm cont}(A,B) \ge     4  \frac{1}{\sqrt{\frac{2}{2-\beta}} } +2 
  \sqrt{\frac{2}{2-\beta}} (2-\beta) -\varepsilon = 4\sqrt{2}\sqrt{2-\beta} -\varepsilon. $$
  We directly compute  $P_{\rm cont}(\mathcal A,\mathcal B) =
  4\sqrt{2}\sqrt{2-\beta}$. As $\varepsilon>0$  is  arbitrary, we conclude that   the pair $(\mathcal A,\mathcal B)$ is a solution of \eqref{eq:dbp2}. 
 \end{proof}


{\flushleft Acknowledgments.} The authors are indebted to Frank Morgan for pointing out many relevant references. 

{\flushleft Funding.} MF  acknowledges support of the DFG project FR 4083/3-1.  This work was supported by the Deutsche Forschungsgemeinschaft (DFG, German Research Foundation) under Germany's Excellence Strategy EXC 2044-390685587, Mathematics M\"unster: Dynamics--Geometry--Structure.  WG acknowledges support of the FWF grant I4354, the OeAD-WTZ project CZ
01/2021, and the grant 2017/27/N/ST1/02418 funded by the National Science Centre, Poland. US acknowledges support of the FWF grants I4354, F65, I5149, and P\,32788, and by the OeAD-WTZ project CZ 01/2021.


\end{document}